\newcolumntype{C}[1]{>{\centering\arraybackslash}p{#1}}
\newcolumntype{L}[1]{>{\raggedright\arraybackslash}p{#1}}
\newtheorem{theorem}{Theorem}[section]
\newtheorem{lemma}[theorem]{Lemma}
\newtheorem{proposition}[theorem]{Proposition}
\newtheorem{corollary}[theorem]{Corollary}
\newtheorem{example}[theorem]{Example}
\newtheorem{conjecture}[theorem]{Conjecture}
\theoremstyle{definition}
\newtheorem{definition}[theorem]{Definition}
\newtheorem{remark}[theorem]{Remark}
\newtheorem*{remark*}{Remark}
\title[Jung-type Inequalities \& Blaschke-Santaló Diagrams for Diameter Variants]{Jung-type Inequalities and Blaschke-Santaló Diagrams for Different Diameter Variants}
\author[R. Brandenberg]{Ren\'e Brandenberg}
\author[M. Runge]{Mia Runge}
\keywords{Diameter, Blaschke-Santaló diagram, Symmetrizations, Jung-type inequalities, Completion, Asymmetric gauges}
\begin{document}

\maketitle

\section*{Abstract}
We study geometric inequalities for the circumradius and diameter with respect to general gauges, partly also involving the inradius and the Minkowski asymmetry. There are a number of options for defining the diameter of a convex body that fall apart when we consider non-symmetric gauges. These definitions correspond to different symmetrizations of the gauge, i.e.~means of the gauge $C$ and its origin reflection $-C$.

\section{Preliminaries and Introduction}
A compact convex set is called a \cemph{red}{convex body}. The family of convex bodies in $\R^n$ is denoted by $\CC^n$, the convex bodies in $\R^n$ excluding single points by $\bar{\CC}^n$, and those, which contain $0$ in their interior, by $\CC^n_0$.  The \cemph{red}{support function} of $C \in \CC^n$ is denoted by $h_C(\cdot): \R^n \to \R $, $h_C(a):=\max_{x\in C}a^Ts$, while we write $\cnorm{\cdot}{C}: \R^n \to [0, \infty], x \mapsto \cnorm{x}{C} = \min\set{\lambda \geq 0 : x \in \lambda C}$ for the corresponding \cemph{red}{gauge function}. Hyperplanes are denoted by $H_{(a,\beta)}:=\set{x\in\R^n : a^Tx= \beta}$ and halfspaces by  $H^{\leq}_{(a,\beta)}:=\set{x\in\R^n : a^Tx\leq \beta}$. We say that $H_{(a,\beta)}$ or $H^{\leq}_{(a,\beta)}$ \cemph{red}{support} $C\in\CC^n$ in $p\in C$, if $p^Ta=\beta$ and $C \subset H^{\leq}_{(a,\beta)}$. In that case $a$ is called an outer
normal vector of $C$ in $p$. The \cemph{red}{polar} of $C\in\CC^n$ is defined as $C^{\circ}:=\{a\in\R^n:h_C(a)\leq1\}$.
For any $X\subset \R^n$, the \cemph{red}{positive, linear, affine} and \cemph{red}{convex hull} are denoted by $\pos(X)$, $\lin(X)$, $\aff(X)$ and $\conv(X)$, respectively.
We call the convex hull of two points $x$ and $y$ a \cemph{red}{segment} and abbreviate it by $[x,y]$.  The boundary of $X$ is described by $\bd(X)$ and the interior by $\inte(X)$. For any $X,Y\subset\R^n$ and $\rho\in\R$, let $X+Y:=\{x+y: x\in X, y\in Y\}$ be the \cemph{red}{Minkowski sum} of $X$ and $Y$ and $\rho X:=\{\rho x: x\in X\}$ be the $\rho$-\cemph{red}{dilatation} of $X$. We abbreviate $\set{x}+Y=:x+Y$ and $(-1)X=:-X$.
If $X=-X$, the set $X$ is called \cemph{red}{0-symmetric}. If there exists $t\in\R^n$ such that $-(t+X)=t+X$, we say that $X$ is \cemph{red}{symmetric}.

Inequalities between geometric functionals, such as the inradius, circumradius, and diameter, form a central area of convex geometry. They play an important role in many classical works such as \cite{bonnesenfenchel,burago2013geometric, DGK,eggleston1966convexity,hardyIneqs} and are still of interest today \cite{jungpairmink,merino2013ratio, henk1992generalization, cifre2003complete}. These geometric inequalities have proven to be useful for many results in convexity and also have many applications such as providing bounds for approximation algorithms, for example as bounds on the size of core-sets for containment under homothety \cite{NoDimIndep}.
One of the first such inequalities has been given by Jung \cite{Jung1901}. It provides a bound for the diameter-circumradius ratio in euclidean spaces. Variants and natural extensions, given e.g.~in \cite{bohnenblust1938convex, sharpening, Dolnikov}, are typically subsumed under the term \cemph{red}{Jung-type}.

Even when the gauge is not symmetric, the in- and circumradius have a unified definition: The \cemph{red}{circumradius} of $K \in \CC^n$ with respect to $C\in \CC^n$ is defined as
\[
  R(K,C):=\inf\{\rho\geq 0: \exists t \in \R^n \text{ such that } K\subset t+\rho C\}
\]
and the \cemph{red}{inradius} as
\[
  r(K,C):=\sup\{\rho\geq 0: \exists t \in \R^n \text{ such that } t+ \rho C\subset K\}.
\]
One should recognize that the inradius can be expressed as a circumradius: $r(K,C)=R(C,K)^{-1}$.

However, the diameter does not have such a unified definition. Maybe the first diameter definition for non-symmetric gauges has been given by Leichtweiss \cite{leichtweiss}. It differs from the one which later has been studied most \cite{ CompleteSimpl, sharpening, DGK, gruenbaum1959some}. In the following we present four diameter definitions, including the two mentioned above, all being identical if the gauge is 0-symmetric. We show that each belongs to a different symmetrization of the gauge.

This work expands upon Mia Runge's master thesis \cite{MRmaster}.

As part of this investigation, we consider the following question: given a gauge $C\in\CC^n_0$ and values $(r,R,D)$, is there a convex body $K\in\CC^n$ such that its inradius \wrt~$C$ is $r$, its circumradius is $R$, and its diameter is $D$?  This kind of question can be answered by giving a system of inequalities such that for every triple fulfilling these inequalities there exists such a convex body $K$. Such systems have been considered first by Blaschke for the volume, surface area and mean width in (euclidean) 3-space \cite{blaschke} and by
Santaló for some triples of
functionals out of area, perimeter, circumradius, inradius, diameter and width for the euclidean planar case \cite{santalo}. Later, many of the missing triples from Santaló's list have been solved \cite{cifre03,delyon2021,cifre00Dwr,cifre02}
and also other functionals such as the first Dirichlet eigenvalue \cite{ftouhi} or the Cheeger constant \cite{ftouhi2022complete} or even four of these functionals \cite{3dimBSdiagram, ting2005extremal} have been taken into account. As for the well known Blaschke-diagram \cite{blaschke, sangwine1989missing}, no complete descriptions of such diagrams of bodies in three or higher dimensional spaces are known so far, not even for the triple of circumradius, inradius, and diameter. Partial results can be derived from  \cite{Dekster1985,Jung1901,vrecica1981note} (see \cite{BGMtalk} for an overview on the known material).

In \cite{ngons} a complete system of inequalities for the $(r,R,D)$-diagram with triangular gauges and the most common diameter is given. We do the same for the three previously mentioned diameter variants.
Studying these diagrams for triangular gauge bodies is interesting because they also provide us with some generally valid inequalities (for all possible gauge bodies), as we expected by the result in \cite{ngons}.

Moreover, we investigate completeness and completion aspects for these diameters.
A convex body is \cemph{red}{complete} if we cannot add points without increasing the diameter. A \cemph{red}{completion} of a convex body $K$ is a complete body of the same diameter as $K$ containing $K$. Such sets often provide us with extreme cases of geometric inequalities. While one might expect the gauge to be complete, as it is the case with symmetric gauges or the common diameter, this is not always true for non-symmetric gauges and the alternative diameter definitions we consider. We show that the question, about the shape of the completions of the gauge, is closely related to their symmetrizations.

From now on, let us allways assume that $n \geq 2$.
When considering the containment between different convex bodies, such as the mentioned symmetrizations, we use the following notation. For $K,C\in \CC^n$ we say that $K$ is \cemph{red}{optimally contained} in $C$ if $K\subset C$ and $R(K,C)=1$, which is abbreviated by $K\optc C$. The next proposition from \cite{NoDimIndep} characterizes optimal containment.
    \begin{proposition}
    \label{opt}
    Let $K,C\in \CC^n$ and $C$ fulldimensional. Then $K\optc C$ if and only if
    \begin{enumerate}
        \item[i)] $K\subset C$ and
        \item[ii)] for some $k\in\{2,...,n+1\}$, there exist $p^1,...,p^k\in \ext(K)\cap \bd(C)$ and halfspaces $H^{\leq}_{(a^{i},h_C(a^i))}$ supporting $C$ at $p^{i}$ with $a^{i}\in \R^n \setminus\{0\}$, $i\in[k]$, affinely independent, such that $0\in \conv(\{a^1,...,a^k\})$.
    \end{enumerate}
    \end{proposition}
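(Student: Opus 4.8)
The plan is to prove the characterization of optimal containment $K \optc C$. This is essentially a statement that the circumradius equals $1$ exactly when $K \subset C$ and $K$ "touches" $C$ in enough directions that $C$ cannot be shrunk while still being translated to contain $K$. Let me think about the structure of the argument.

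**Direction (⇐):** Assume (i) and (ii) hold. I need to show $R(K,C) = 1$. Since $K \subset C$, we have $R(K,C) \leq 1$. Suppose for contradiction that $R(K,C) < 1$, so there's $t \in \R^n$ and $\rho < 1$ with $K \subset t + \rho C$. The points $p^i \in \bd(C)$ are on the boundary with supporting halfspaces $H^{\leq}_{(a^i, h_C(a^i))}$. The key: since $p^i \in K \subset t + \rho C$, we have $a^i{}^T p^i \leq a^i{}^T t + \rho h_C(a^i)$ (using that $h_{t+\rho C}(a^i) = a^i{}^T t + \rho h_C(a^i)$). But $a^i{}^T p^i = h_C(a^i)$ since the halfspace supports $C$ at $p^i$. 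So $h_C(a^i) \leq a^i{}^T t + \rho h_C(a^i)$, giving $(1-\rho) h_C(a^i) \leq a^i{}^T t$. Now use $0 \in \conv\{a^1, \dots, a^k\}$: write $0 = \sum \lambda_i a^i$ with $\lambda_i \geq 0$, $\sum \lambda_i = 1$. Then $0 = \sum \lambda_i a^i{}^T t \geq \sum \lambda_i (1-\rho) h_C(a^i) = (1-\rho)\sum \lambda_i h_C(a^i)$. Since $\rho < 1$, this forces $\sum \lambda_i h_C(a^i) \leq 0$. But $h_C(a^i) > 0$ would give a contradiction — I need to verify $h_C(a^i) > 0$, which holds because $C$ is fulldimensional (so $0$ is... hmm, actually I should be careful about whether $0 \in C$).

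**The subtle point / main obstacle:** The issue is the sign of $h_C(a^i)$, which depends on the position of $0$ relative to $C$. If $0 \in \inte(C)$ then $h_C(a) > 0$ for all $a \neq 0$ and the argument closes cleanly. But the proposition only assumes $C$ is fulldimensional, not that $0 \in C$. I expect the cleanest fix is to observe the inequality $(1-\rho)h_C(a^i) \leq a^i{}^T t$ and combine it differently, or to translate $C$ so that $0 \in \inte(C)$ at the start (the statement is translation-invariant in $C$ in the appropriate sense — actually $R(K,C)$ is translation invariant in both arguments, and the supporting-halfspace condition transforms covariantly). So I would first reduce to the case $0 \in \inte(C)$ by translation, handle signs, and then the convex combination argument yields the contradiction. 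This normalization step, and checking that condition (ii) is preserved under it, is where I expect to spend the most care.

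**Direction (⇒):** Assume $R(K,C) = 1$ with $K \subset C$. I must produce the touching points $p^i$ and normals $a^i$ with $0 \in \conv\{a^i\}$, affinely independent, $k \leq n+1$. The natural tool is a separation/optimality argument: $R(K,C)=1$ means $K$ cannot be translated into any smaller dilate $\rho C$. Equivalently, there is no translation vector $t$ with $K - t \subset \rho C$ for $\rho < 1$, i.e. no $t$ with $h_{K-t}(a) \leq \rho h_C(a)$ for all $a$. I would set this up as an infeasibility of a system and apply a separating-hyperplane or LP-duality / Helly-type argument: the optimality of $\rho = 1$ gives, via a theorem of the alternative, a finite set of active normals $a^i$ (the directions where $K$ touches $\partial C$) whose nonnegative combination is $0$. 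The cardinality bound $k \leq n+1$ and affine independence follow from Carathéodory's theorem applied to $0 \in \conv\{a^i\}$, reducing any representation to at most $n+1$ affinely independent normals. The affine independence plus $0$ in the convex hull is the standard "rigidity" condition preventing a shrinking translation.

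I would therefore cite or invoke the underlying optimal-containment duality (this is a known result from \cite{NoDimIndep}, so the heavy lifting is referenced), and present the verification of the two conditions as above, with the sign normalization being the one genuinely delicate step.
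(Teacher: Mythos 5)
The paper never proves \Cref{opt}: it is imported verbatim from \cite{NoDimIndep} (``The next proposition from [BK] characterizes optimal containment''), so there is no internal proof to compare against, and your attempt simply goes further than the paper does. Your backward direction is correct and essentially complete. The subtlety you flag (the sign of $h_C(a^i)$) is real, and your fix works, with one caveat worth making explicit: you must translate $K$ and $C$ by the \emph{same} vector, since translating $C$ alone preserves $R(K,C)$ but destroys hypotheses (i) and (ii); after a joint translation puts $0$ into $\inte(C)$, your computation $0=\sum_i\lambda_i (a^i)^Tt\geq(1-\rho)\sum_i\lambda_i h_C(a^i)>0$ gives the desired contradiction to $\rho<1$. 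Your forward direction, however, is only a sketch, and its crux --- that optimality of $\rho=1$ forces $0\in\conv(A)$ for the set $A=\{a\in\R^n : \|a\|_2=1,\ h_K(a)=h_C(a)\}$ of active normals --- is precisely the content of the cited theorem; a ``theorem of the alternative'' does not apply off the shelf here because the system $h_K(a)\leq t^Ta+\rho h_C(a)$ (for all unit $a$) is semi-infinite. To make it self-contained one argues: $A$ is compact (and nonempty, else compactness gives $K\subset\rho C$ for some $\rho<1$ directly); if $0\notin\conv(A)$, strict separation yields $v$ and $\epsilon>0$ with $a^Tv\geq 2\epsilon$ on $A$, and a compactness estimate off a neighborhood of $A$ shows that $t=\delta v$, $\rho=1-\gamma$ with suitably small $\delta,\gamma>0$ satisfy $h_K(a)\leq t^Ta+\rho h_C(a)$ for all unit $a$, contradicting $R(K,C)=1$; then refined Carath\'eodory gives $k\leq n+1$ affinely independent $a^i\in A$ with $0\in\conv(\{a^1,\dots,a^k\})$, and extreme touching points $p^i\in\ext(K)\cap\bd(C)$ are obtained as extreme points of the exposed faces $\{x\in K : x^Ta^i=h_K(a^i)\}$. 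Since you explicitly fall back on citing \cite{NoDimIndep} for exactly this step --- which is all the paper itself does --- there is no gap relative to the paper, but as a free-standing proof your forward direction would need the completion just described.
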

    The symmetrizations which will later correspond to the diameter definitions are the \cemph{red}{minimum} $C_{\MIN}:=C\cap -C$, the \cemph{red}{harmonic mean} $C_{\HM}:=\left(\frac{C \pol - C\pol}{2}\right)\pol$, the \cemph{red}{arithmetic mean} $C_{\AM}:=\frac{C-C}{2}$, and the \cemph{red}{maximum} $C_{\MAX}:=\conv(C \cup -C)$. These symmetrizations and their relations are studied for example in \cite{firey,GeoMean} and later also in \cite{BDG, reversing}.
   The latter is motivated by the objective of achieving a better understanding of these diameters, in particular by relating them to each other through geometric inequalities.

    Except $C_{\AM}$, these symmetrizations depend decisively on the position of $C$ and thus the diameters would do, too. However, as said above, all symmetrization should coincide if $C$ is symmetric, which is achieved if and only if in those cases $C$ is 0-symmetric, i.e.~centered in 0. Thus it seems natural to consider in general only \enquote{somehow} centered gauges and the exact definition of the \enquote{somehow} should fit the theory.

    The \cemph{red}{Minkowski asymmetry} of $C\in \CC^n$ is defined as $s(C):=R(C,-C)$ and
    we say that $C$ is \cemph{red}{Minkowski-centered} if $C\optc -s(C)C$.
    For $C\in \bar{\CC}^n$, the range of the Minkowski asymmetry is $[1, n]$,
    where $s(C) = 1$ if and only if $C$ is symmetric and  $s(C) = n$ if and only if C is an $n$-dimensional
    simplex \cite{gruenbaum}.
    The symmetrizations can be ordered and the first and third containments are always optimal \cite{ BDG, firey}.
    \begin{proposition} \label{prop:firey-chain}
        \label{contsym}
        Let $C\in\CC^n_0$. Then,
        \begin{equation*}
            C_{\MIN}\optc C_{\HM} \subset C_{\AM} \optc C_{\MAX}.
        \end{equation*}
      \end{proposition}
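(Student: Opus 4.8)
The plan is to exploit the polarity duality among the four symmetrizations so that only one of the two optimal containments and the middle inclusion need a genuine argument. First I would record the elementary polarity identities $(C\cap -C)\pol=\conv(C\pol\cup -C\pol)$ and $(\conv(C\cup -C))\pol=C\pol\cap -C\pol$, which give $(C_{\MIN})\pol=(C\pol)_{\MAX}$ and $(C_{\MAX})\pol=(C\pol)_{\MIN}$; a one-line computation with the bipolar theorem then yields $(C_{\HM})\pol=(C\pol)_{\AM}$ and $(C_{\AM})\pol=(C\pol)_{\HM}$. Since all four symmetrizations lie in $\CC^n_0$ and are $0$-symmetric, I would next observe that for $0$-symmetric $K,L$ the circumradius needs no translation: if $K\subset t+\rho L$ then $K=-K\subset -t+\rho L$, so $K\subset(t+\rho L)\cap(-t+\rho L)\subset\rho L$. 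Hence $R(K,L)=\min\set{\rho:K\subset\rho L}$, and since $K\subset\rho L\iff L\pol\subset\rho K\pol$ we get $R(K,L)=R(L\pol,K\pol)$. Together with the inclusion-reversal of polarity this shows $K\optc L\iff L\pol\optc K\pol$. Applying this to $C_{\MIN}\optc C_{\HM}$ turns it into $(C\pol)_{\AM}\optc(C\pol)_{\MAX}$, so it suffices to prove $D_{\AM}\optc D_{\MAX}$ for every $D\in\CC^n_0$ (then specialize $D=C\pol$), together with $C_{\HM}\subset C_{\AM}$.

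For $C_{\AM}\optc C_{\MAX}$ I would argue entirely with support functions. From $h_{C_{\AM}}(u)=\tfrac12(h_C(u)+h_C(-u))$ and $h_{C_{\MAX}}(u)=\max(h_C(u),h_C(-u))$, the inclusion $C_{\AM}\subset C_{\MAX}$ is immediate from $\tfrac12(\alpha+\beta)\leq\max(\alpha,\beta)$, and this also gives $R(C_{\AM},C_{\MAX})=\sup_u h_{C_{\AM}}(u)/h_{C_{\MAX}}(u)\leq1$. For the reverse bound (the optimality) I would produce a direction $u\neq0$ with $h_C(u)=h_C(-u)$: the function $u\mapsto h_C(u)-h_C(-u)$ is continuous and odd, so along a path joining $u_0$ to $-u_0$ on the sphere (which exists since $n\geq2$) the intermediate value theorem forces a zero. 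At such a $u$ the ratio equals $1$, whence $R(C_{\AM},C_{\MAX})=1$ and $C_{\AM}\optc C_{\MAX}$.

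The middle inclusion $C_{\HM}\subset C_{\AM}$ I would prove by a direct decomposition. Writing $P=C$ and $Q=-C$, membership $z\in C_{\HM}=\left(\tfrac12(P\pol+Q\pol)\right)\pol$ unwinds to $\tfrac12(\cnorm{z}{P}+\cnorm{z}{Q})=h_{\frac12(P\pol+Q\pol)}(z)\leq1$, i.e. $\cnorm{z}{P}+\cnorm{z}{Q}\leq2$. For $z\neq0$ put $a=\cnorm{z}{P}>0$ and $b=\cnorm{z}{Q}>0$; the AM--HM inequality gives $\tfrac1a+\tfrac1b\geq\tfrac{4}{a+b}\geq2$, so one can choose $\lambda,\mu\geq0$ with $\lambda+\mu=2$, $\lambda a\leq1$, $\mu b\leq1$. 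Then $\lambda z\in P$, $\mu z\in Q$, and $z=\tfrac12(\lambda z+\mu z)\in\tfrac12(P+Q)=C_{\AM}$, which proves the inclusion.

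I expect the main obstacle to be the optimality (the lower bound $R\geq1$) in the two extreme containments rather than the inclusions themselves: the inclusions reduce to one-line support-function or decomposition estimates, whereas $R=1$ requires exhibiting a genuinely balanced supporting direction, which is exactly where the symmetry of the configuration and the intermediate value argument (equivalently, Proposition~\ref{opt}) enter. The polarity bookkeeping is routine but must be arranged carefully so that translations play no role, which is guaranteed precisely because every body involved is $0$-symmetric.
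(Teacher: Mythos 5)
Your proof is correct, but the comparison here is necessarily one-sided: the paper does not prove this proposition at all — it is imported as a known result from \cite{firey} and \cite{BDG} — so your argument supplies something the paper leaves to the literature. All the ingredients check out. The polarity identities $(C_{\MIN})\pol=(C\pol)_{\MAX}$, $(C_{\MAX})\pol=(C\pol)_{\MIN}$, $(C_{\HM})\pol=(C\pol)_{\AM}$, $(C_{\AM})\pol=(C\pol)_{\HM}$ are valid for $C\in\CC^n_0$ (the polars are compact with $0$ interior, and the bipolar theorem applies). The observation that translations are irrelevant for the circumradius between $0$-symmetric bodies (via $x=\tfrac12\bigl((x-t)+(x+t)\bigr)$) is exactly what makes $K\optc L\iff L\pol\optc K\pol$ compatible with the paper's translation-allowing definition of $R$; this collapses the two optimal containments into the single claim $D_{\AM}\optc D_{\MAX}$ for all $D\in\CC^n_0$. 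Your intermediate-value argument for that claim is correct and is the one place where the standing assumption $n\geq 2$ is genuinely used — and it must be, since for $n=1$ and $C=[-a,b]$ with $a\neq b$ one has $R(C_{\AM},C_{\MAX})<1$, so optimality fails there. Finally, the AM--HM decomposition proving $C_{\HM}\subset C_{\AM}$ is sound (it amounts to saying the radial function of $C_{\HM}$ is the harmonic mean of those of $C$ and $-C$, which is dominated by the arithmetic mean realized inside $\tfrac12(C-C)$). What your route buys is a short, elementary, dimension-free proof that would make the paper self-contained, with the duality trick halving the work (one optimality argument instead of two); what the citation buys the paper is brevity, and the cited sources additionally provide the quantitative companions (the containment factors between the symmetrizations) that the paper relies on later.
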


    \begin{figure}[ht]
        \centering
    \includegraphics[scale=0.1]{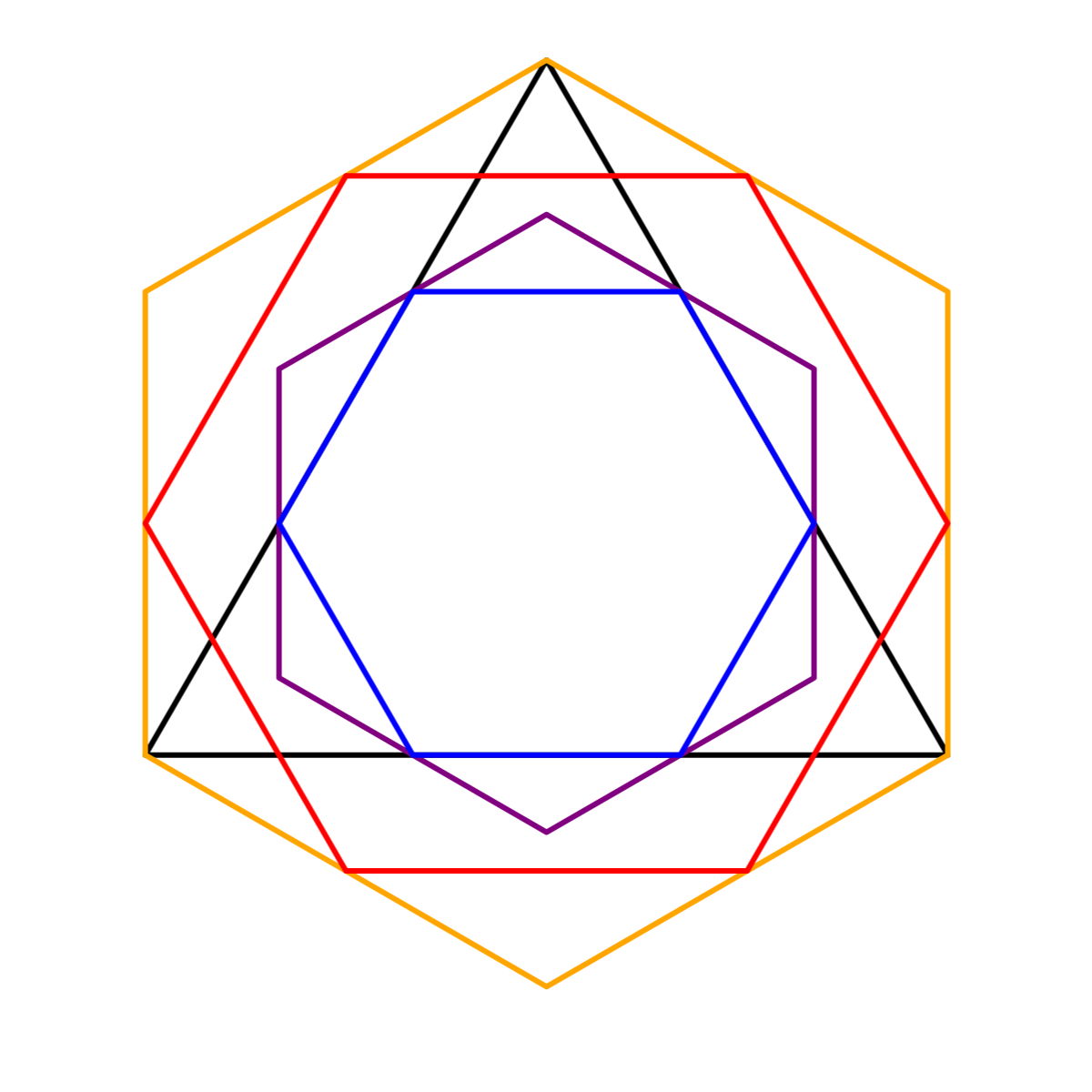}
    \caption{The equilateral triangle (black) and its symmetrizations: minimum (blue), harmonic mean (purple), arithmetic mean (red), maximum (orange) (\cf~\cite{BDG}).
    }
    \label{Symmetrizations}
  \end{figure}

\begin{remark*}
  From now on, unless otherwise specified, we always assume that $K\in \CC^n$, that the gauge $C \in \CC^n_0$ is a Minkowski-centered fulldimensional convex body and $s\in\R^n\setminus\set{0}$.
\end{remark*}

\section{Diameter Definitions} 

There are several ways to interpret the diameter in the symmetric case and we extend these ideas to the non-symmetric case. We will see that these correspond to different symmetrizations of the gauge. First, one can try to define the diameter of $K\in\CC^n$ by finding a "maximal" segment. But how should this "maximality" be defined? We could measure the distance between two points $x,y\in K$ using the gauge function $\cnorm{x-y}{C}$ and define the diameter as the maximal such distance $\max_{x,y \in K} \cnorm{x-y}{C}$. Or, we define it as the maximal circumradius of segments in $K$: $\max_{x,y \in K} 2R([x,y],C)$. However, the diameter could also be defined as the maximal distance between two parallel supporting hyperplanes of $K$.
\begin{equation*}
       \max_{s \in \bd(C^\circ)}   h_K(s)+h_{K}(-s) = \max_{s\in \R^n\setminus\set{0}} \frac{h_K(s)+h_{K}(-s)}{h_C(s)}.
\end{equation*}
As already mentioned, if the gauge $C$ is symmetric, all these definitions lead to the same diameter.
\begin{equation*}\max_{x,y\in K}\cnorm{x-y}{C}=\max_{x,y\in K}2R([x,y],C)
    =\max_{s\in \R^n\setminus\set{0}} \frac{h_K(s)+h_{K}(-s)}{h_C(s)}.
\end{equation*}
The most common diameter corresponds to the segement-radius definition and is therefore equal to two times the first core-radius of the set $K$ \cite{NoDimIndep}.
We call it the \cemph{red}{arithmetic diameter} (or standard diameter).
\begin{definition}
    \begin{enumerate}[i)]
        \item The $s$-length $l_{s,\AM}$ is defined as
                \begin{equation*}
                    l_{s,\AM}(K,C):= \max_{x-y\in (K-K) \cap \lin(s)}2R([x,y],C).
                \end{equation*}
        \item The $s$-breadth $b_{s,\AM}$ is defined as
                \begin{equation*}
                    b_{s,\AM}(K,C):= 2\cdot \frac{h_K(s)+h_{K}(-s)}{h_C(s)+h_C(-s)}.
                \end{equation*}
        \item The arithmetic diameter is defined as the maximal $s$-length:
                \begin{equation*}
                    D_{\AM}(K,C):=\max_{s\in\R^n\setminus \set{0}} l_{s,\AM}(K,C).
                \end{equation*}
    \end{enumerate}
\end{definition}
In \cite{sharpening} the following properties of $D_{\AM}$, which are well known for symmetric gauges (\cf~\cite{GritzmannKlee}), are proven.
\begin{proposition}
    \label{DamProperties}
    \begin{enumerate}[i)]
        \item \begin{equation*}
            D_{\AM}(K,C)=\max_{s\in\R^n\setminus \set{0}} b_{s,\AM}(K,C)
        \end{equation*}
        \item \begin{equation*}
            \begin{split}
                l_{s,\AM}(K,C)&=l_{s,\AM}\left(\frac{K-K}{2},C\right)=l_{s,\AM}\left(K,\frac{C-C}{2}\right)=l_{s,\AM}\left(\frac{K-K}{2},\frac{C-C}{2}\right)
            \end{split}
        \end{equation*}
        \item \begin{equation*}
            \begin{split}
                b_{s,\AM}(K,C)&=b_{s,\AM}\left(\frac{K-K}{2},C\right)=b_{s,\AM}\left(K,\frac{C-C}{2}\right)=b_{s,\AM}\left(\frac{K-K}{2},\frac{C-C}{2}\right)
            \end{split}
        \end{equation*}
        \item \begin{equation*}
            \begin{split}
                D_{\AM}(K,C)&=D_{\AM}\left(\frac{K-K}{2},C\right)=D_{\AM}\left(K,\frac{C-C}{2}\right)\\
                &=D_{\AM}\left(\frac{K-K}{2},\frac{C-C}{2}\right)=2 R\left(\frac{K-K}{2},\frac{C-C}{2}\right)
            \end{split}
        \end{equation*}
        \item \begin{equation*}
            \min_{s\in\R^n\setminus \set{0}} l_{s,\AM}(K,C)=\min_{s\in\R^n\setminus \set{0}} b_{s,\AM}(K,C) = 2r\left(\frac{K-K}{2},\frac{C-C}{2}\right)
        \end{equation*}
    \end{enumerate}
\end{proposition}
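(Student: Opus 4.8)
The plan is to reduce all five parts to statements about the $0$-symmetric bodies $\widetilde K:=\frac{K-K}{2}$ and $\widetilde C:=\frac{C-C}{2}=C_{\AM}$, for which the circumradius, inradius, $s$-length and $s$-breadth all admit clean support-function or gauge descriptions. The engine of the whole argument is the identity
\[
  2R([x,y],C)=\cnorm{x-y}{C_{\AM}}\qquad\text{for all }x,y\in\R^n .
\]
To prove it I would use translation invariance of the circumradius to reduce to $R([0,v],C)$ with $v=x-y$, and write $R([0,v],C)=\min_{t}\max\set{\cnorm{-t}{C},\cnorm{v-t}{C}}$. A radius $\rho$ is feasible exactly when $-t=\rho c_2$ and $v-t=\rho c_1$ for some $c_1,c_2\in C$; subtracting gives $v=\rho(c_1-c_2)$, i.e. $\frac{v}{2\rho}\in\frac{C-C}{2}=C_{\AM}$, and conversely any such representation produces a feasible centre. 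Hence $R([0,v],C)=\tfrac12\cnorm{v}{C_{\AM}}$, which is the claimed identity, and in particular $l_{s,\AM}(K,C)=\max_{w\in(K-K)\cap\lin(s)}\cnorm{w}{C_{\AM}}$.

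Parts ii) and iii) then become bookkeeping. For iii) I would use $h_{A+B}=h_A+h_B$ and $h_{-A}(s)=h_A(-s)$: replacing $K$ by $\widetilde K$ leaves $h_K(s)+h_K(-s)$ unchanged (symmetrizing doubles and then halves this sum), and likewise for $C$, so the quotient defining $b_{s,\AM}$ is invariant under either replacement and the four expressions coincide. For ii) I would use the formula for $l_{s,\AM}$ above: it depends on $K$ only through $K-K$ and on $C$ only through $C_{\AM}$. Since $\widetilde K$ is symmetric, $\widetilde K-\widetilde K=2\widetilde K=K-K$, and since $C_{\AM}$ is symmetric, $(C_{\AM})_{\AM}=C_{\AM}$; hence replacing $K$ by $\widetilde K$ or $C$ by $\widetilde C$ (or both) does not change $l_{s,\AM}$.

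For i) and the closed form in iv) I would compute the two maxima separately and match them. On the length side,
\[
  D_{\AM}(K,C)=\max_{s}\max_{w\in(K-K)\cap\lin(s)}\cnorm{w}{C_{\AM}}=\max_{w\in K-K}\cnorm{w}{C_{\AM}}=R(K-K,C_{\AM})=2R(\widetilde K,\widetilde C),
\]
where the middle step is that the lines $\lin(s)$ sweep out all of $K-K$, and the last two use that for $0$-symmetric $A,B$ the optimal circumradius translate is $0$, so $R(A,B)=\max_{w\in A}\cnorm{w}{B}$, together with homogeneity $R(K-K,C_{\AM})=2R(\widetilde K,C_{\AM})$. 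On the breadth side, using iii) to pass to $\widetilde K,\widetilde C$ gives $b_{s,\AM}=2\,h_{\widetilde K}(s)/h_{\widetilde C}(s)$, and the support-function characterisation of the circumradius of symmetric bodies yields $\max_s b_{s,\AM}=2R(\widetilde K,\widetilde C)$. This proves i) and the last equation of iv); the remaining invariances of $D_{\AM}$ follow by taking $\max_s$ in ii).

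Part v) is where the real work lies. The breadth side is easy: $\min_s b_{s,\AM}=2\min_s h_{\widetilde K}(s)/h_{\widetilde C}(s)=2r(\widetilde K,\widetilde C)$, since for symmetric bodies $r(\widetilde K,\widetilde C)=\min_s h_{\widetilde K}(s)/h_{\widetilde C}(s)$. The length side is the \emph{main obstacle}, because $\min_s l_{s,\AM}$ and $\min_s b_{s,\AM}$ are attained at different (dual) directions, so I cannot compare them pointwise and must identify the minimal length directly with the inradius. Reducing to $\widetilde K,\widetilde C$ and using $\widetilde K-\widetilde K=2\widetilde K$, I get $l_{s,\AM}(\widetilde K,\widetilde C)=2\cnorm{w_0(s)}{\widetilde C}$, where $w_0(s)\in\bd\widetilde K$ is the boundary point of $\widetilde K$ on the ray $\pos(s)$. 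As $s$ ranges over all directions $w_0(s)$ ranges over all of $\bd\widetilde K$, so $\min_s l_{s,\AM}=2\min_{w\in\bd\widetilde K}\cnorm{w}{\widetilde C}$. It then remains to show $\min_{w\in\bd\widetilde K}\cnorm{w}{\widetilde C}=r(\widetilde K,\widetilde C)$, which I would prove by a star-shapedness argument: writing $\rho^\ast$ for this minimum, no boundary point of $\widetilde K$ lies in $\inte(\rho^\ast\widetilde C)$, and scaling any putative point of $\rho^\ast\widetilde C\setminus\widetilde K$ back to $\bd\widetilde K$ along the ray from $0$ would produce a boundary point of strictly smaller $\widetilde C$-gauge, a contradiction; hence $\rho^\ast\widetilde C\subset\widetilde K$ and $\rho^\ast\le r(\widetilde K,\widetilde C)$, while $r(\widetilde K,\widetilde C)\widetilde C\subset\widetilde K$ forces $\cnorm{w}{\widetilde C}\ge r(\widetilde K,\widetilde C)$ on $\bd\widetilde K$, giving the reverse inequality. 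Combining the two sides yields $\min_s l_{s,\AM}=\min_s b_{s,\AM}=2r(\widetilde K,\widetilde C)$.
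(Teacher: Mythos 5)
Your proposal is correct, but there is nothing in the paper to compare it against: the paper does not prove \Cref{DamProperties} itself, it simply quotes it from \cite{sharpening} (noting the statements are classical for symmetric gauges, cf.~\cite{GritzmannKlee}). So your argument is a genuine, self-contained replacement for an external citation, and it is sound. The engine identity $2R([x,y],C)=\cnorm{x-y}{C_{\AM}}$ (feasible radii $\rho$ for $[0,v]$ correspond exactly to representations $v=\rho(c_1-c_2)$, $c_1,c_2\in C$) is exactly the right reduction; combined with the two elementary facts that for $0$-symmetric $A,B$ one may center both the circumball and the inball at the origin, so that $R(A,B)=\max_s h_A(s)/h_B(s)$ and $r(A,B)=\min_s h_A(s)/h_B(s)$, it yields i)--iv) by pure bookkeeping, and your star-shapedness argument correctly identifies $\min_{w\in\bd \widetilde K}\cnorm{w}{\widetilde C}$ with $r(\widetilde K,\widetilde C)$, which is the one step that does not follow from support-function formalism alone (the minimal length and minimal breadth are indeed attained in different, dual directions, so v) cannot be proved by matching directions). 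One minor caveat: in part v) both the claim that $w_0(s)$ sweeps out all of $\bd\widetilde K$ and the scaling-to-the-boundary argument implicitly use that $\widetilde K$ is full-dimensional (so that $0\in\inte\widetilde K$); when $\dim K<n$ all three quantities in v) vanish because $\widetilde K\cap\lin(s)=\set{0}$ and $h_{\widetilde K}(s)=0$ for $s$ orthogonal to $\lin(K-K)$, while $r(\widetilde K,\widetilde C)=0$, so you should dismiss that degenerate case in one line before assuming full dimension.
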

The fact that we can replace $C$ by its symmetrization $\frac{C-C}{2}$ is the reason why this diameter is called $D_{\AM}$. \\

Arguably the most intuitive way to measure a diameter is using the gauge function $\cnorm{x-y}{C}$. This diameter has been studied by Leichtweiss in \cite{leichtweiss} for non-symmetric gauges.
\begin{definition}
    \begin{enumerate}[i)]
        \item The asymmetric \cemph{red}{$s$-length} $l'_{s,\MIN}$ is defined as
        \begin{equation*}
            l'_{s,\MIN}(K,C):=\max_{x-y\in (K-K) \cap \pos(s)}\cnorm{x-y}{C}.
        \end{equation*}
        \item The symmetric \cemph{red}{$s$-length} $l_{s,\MIN}$ is defined as
        \begin{equation*}
            l_{s,\MIN}(K,C):=\max_{x-y\in (K-K) \cap \lin(s)}\cnorm{x-y}{C}.
        \end{equation*}
        \item The \cemph{red}{minimum diameter } is defined as the maximal symmetric $s$-length:
        \begin{equation*}
            D_{\MIN}(K,C):=\max_{s\in\R^n\setminus \set{0}} l_{s,\MIN}(K,C).
        \end{equation*}
    \end{enumerate}
\end{definition}
As we maximize over the $s$-lengths to obtain the diameter, both definitions of the $s$-length lead to the same diameter: $D_{\MIN}(K,C) = \max_{s\in\R^n\setminus \set{0}} l_{s,\MIN}(K,C) = \max_{s\in\R^n\setminus \set{0}} l'_{s,\MIN}(K,C)$.

\begin{lemma}\label{DminProperties}
  \begin{enumerate}[i)]
  \item
    \begin{align*}
      l_{s,\MIN}(K,C) &= \max_{x-y\in (K-K) \cap \pos(s)}\max(\cnorm{x-y}{C},\cnorm{x-y}{-C})\\
      &= l_{s,\MIN}(K,C\cap(-C))
      =l_{s,\AM}(K,C\cap(-C))\\
      &= l_{s,\MIN}\left(\frac{K-K}{2},C \cap (-C)\right)=l_{s,\MIN}\left(\frac{K-K}{2},C \right)
    \end{align*}
  \item
    \begin{equation*}
        \begin{split}
        D_{\MIN}(K,C)&=D_{\MIN}(K,C \cap (-C))=D_{\MIN}\left(\frac{K-K}{2},C \cap (-C)\right)\\
        &=D_{\MIN}\left(\frac{K-K}{2},C\right)=D_{\AM}(K,C \cap (-C))
    \end{split}
  \end{equation*}
  \end{enumerate}
\end{lemma}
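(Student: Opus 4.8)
The plan is to prove part i) as a chain of five equalities, each resting on an elementary property of gauges together with the symmetry of the difference body $K-K$, and then to obtain part ii) simply by taking the maximum over $s\in\R^n\setminus\set{0}$ in each equality of part i). Throughout I would use two basic identities: first, $\cnorm{z}{-C}=\cnorm{-z}{C}$, which is immediate from $z\in\lambda(-C)\Leftrightarrow -z\in\lambda C$; and second, the intersection rule $\cnorm{z}{C\cap(-C)}=\max\bigl(\cnorm{z}{C},\cnorm{z}{-C}\bigr)$, which follows because $z\in\lambda(C\cap(-C))$ holds exactly when $z\in\lambda C$ and $z\in\lambda(-C)$.

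For the first equality I would use that both $K-K$ and $\lin(s)$ are $0$-symmetric, so $(K-K)\cap\lin(s)$ is $0$-symmetric and splits as the union of its intersections with $\pos(s)$ and $\pos(-s)$. Maximizing $\cnorm{\cdot}{C}$ over the line therefore equals the larger of the two ray-maxima; reflecting the $\pos(-s)$-ray onto $\pos(s)$ via $z\mapsto -z$ (which preserves membership in $K-K$) and applying $\cnorm{-z}{C}=\cnorm{z}{-C}$ turns the second ray-maximum into $\max_{z\in(K-K)\cap\pos(s)}\cnorm{z}{-C}$. Combining the two rays gives exactly the desired maximum of $\max(\cnorm{z}{C},\cnorm{z}{-C})$ over $\pos(s)$, and the intersection rule rewrites this as a maximum of $\cnorm{\cdot}{C\cap(-C)}$; since $C\cap(-C)$ is $0$-symmetric the ray-maximum equals the line-maximum, i.e.\ $l_{s,\MIN}(K,C\cap(-C))$.

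For $l_{s,\MIN}(K,C\cap(-C))=l_{s,\AM}(K,C\cap(-C))$ the one geometric input is that for a $0$-symmetric body $\bar C$ and a segment $[x,y]$ one has $2R([x,y],\bar C)=\cnorm{x-y}{\bar C}$: centering the segment at the origin, $[-\tfrac{z}{2},\tfrac{z}{2}]\subset\rho\bar C$ is equivalent to $\tfrac{z}{2}\in\rho\bar C$ by symmetry and convexity, so the optimal $\rho$ is $\tfrac12\cnorm{z}{\bar C}$. Applying this with $\bar C=C\cap(-C)$ and $z=x-y$ matches the two $s$-lengths termwise. The remaining two equalities are the difference-body invariance: since $K-K$ is symmetric and convex, $\tfrac{K-K}{2}-\tfrac{K-K}{2}=\tfrac{(K-K)+(K-K)}{2}=K-K$, so $l_{s,\MIN}$ (which depends on $K$ only through $K-K$) is unchanged when $K$ is replaced by $\tfrac{K-K}{2}$, for any gauge; this gives both $l_{s,\MIN}(K,C\cap(-C))=l_{s,\MIN}(\tfrac{K-K}{2},C\cap(-C))$ and, by reusing the already-proven identity $l_{s,\MIN}(\,\cdot\,,C)=l_{s,\MIN}(\,\cdot\,,C\cap(-C))$ on $\tfrac{K-K}{2}$, the final $=l_{s,\MIN}(\tfrac{K-K}{2},C)$.

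Part ii) then follows by taking $\max_s$ throughout part i); in particular $D_{\MIN}(K,C\cap(-C))=\max_s l_{s,\MIN}(K,C\cap(-C))=\max_s l_{s,\AM}(K,C\cap(-C))=D_{\AM}(K,C\cap(-C))$. I do not expect a serious obstacle here: the content is entirely in the symmetry bookkeeping, and the step most prone to error is the very first equality, where one must be careful that the reflection $z\mapsto -z$ genuinely stays inside $K-K$ (guaranteed by symmetry of the difference body) so that no admissible pair is lost when passing from the full line to a single ray.
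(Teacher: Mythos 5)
Your proposal is correct and follows essentially the same route as the paper's proof: the key identity $\cnorm{v}{C\cap(-C)}=\max(\cnorm{v}{C},\cnorm{-v}{C})$, the use of the symmetry of $K-K$ to pass between ray-maxima over $\pos(s)$ and line-maxima over $\lin(s)$, the reduction of the segment circumradius to the gauge norm for a $0$-symmetric gauge, and part ii) obtained by maximizing over $s$. You merely make explicit two steps the paper leaves implicit (the identity $2R([x,y],\bar C)=\cnorm{x-y}{\bar C}$ for symmetric $\bar C$ and the difference-body invariance $\frac{K-K}{2}-\frac{K-K}{2}=K-K$), which is fine.
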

\begin{proof}
  Since $\cnorm{v}{C\cap -C}=\max(\cnorm{v}{C}, \cnorm{-v}{C})$ for every $v\in\R^n$, we obtain
  \begin{equation*}
    \begin{split}
      \max_{x-y\in (K-K) \cap \pos(s)}\max(\cnorm{x-y}{C}, \cnorm{y-x}{C})&=\max_{x-y\in (K-K) \cap \lin(s)}\cnorm{x-y}{C}\\
      =\max_{x-y\in (K-K) \cap \lin(s)}\|x-y\|_{C\cap-C}&=\max_{x-y\in (K-K) \cap \lin(s)}2R([x,y],C\cap-C).
    \end{split}
  \end{equation*}
    This proves i) and ii) follows obviously.
\end{proof}

The next two diameters have been first introduced in \cite[Appendix]{reversing}.
For the first, instead of taking the maximum of $\cnorm{x-y}{C}$ and $\cnorm{x-y}{-C}$ one takes the arithmetic mean of these two values in the definition of the $s$-length. This diameter definition corresponds to the harmonic mean of the gauge.
\begin{definition}
    \begin{enumerate}[i)]
        \item The \cemph{red}{$s$-length} $l_{s,\HM}$ is defined as
        \begin{equation*}
            l_{s,\HM}(K,C):= \max_{x-y\in (K-K) \cap \lin(s)}\frac{1}{2}(\cnorm{x-y}{C}+\cnorm{x-y}{-C}).
        \end{equation*}
        \item The \cemph{red}{harmonic diameter}  is defined as the maximal $s$-length:
        \begin{equation*}
            D_{\HM}(K,C):=\max_{s\in\R^n\setminus \set{0}} l_{s,\HM}(K,C).
        \end{equation*}
    \end{enumerate}
\end{definition}
\begin{lemma} \label{DhmProperties}
    \begin{enumerate}[i)]
        \item  \begin{equation*}
        \begin{split}
            l_{s,\HM}(K,C)&= \max_{x-y\in (K-K) \cap \lin(s)} \cnorm{x-y}{\left(\frac{C\pol -C\pol}{2}\right)\pol} \\
            &= l_{s,\HM}\left(K,\left(\frac{C\pol -C\pol}{2}\right)\pol\right) = l_{s,\AM}\left(K,\left(\frac{C\pol -C\pol}{2}\right)\pol\right) \\
            &=l_{s,\HM}\left(\frac{K-K}{2},\left(\frac{C\pol -C\pol}{2}\right)\pol\right)= l_{s,\HM}\left(\frac{K-K}{2},C\right)
        \end{split}
            \end{equation*}
        \item \begin{equation*}
            \begin{split}
                D_{\HM}(K,C)&=D_{\HM}\left(K, \left(\frac{C\pol -C\pol}{2}\right)\pol\right)=D_{\HM}\left(\frac{K-K}{2}, \left(\frac{C\pol -C\pol}{2}\right)\pol\right)\\
                &=D_{\HM}\left(\frac{K-K}{2},C\right)=D_{\AM}\left(K, \left(\frac{C\pol -C\pol}{2}\right)\pol\right)
            \end{split}
        \end{equation*}
    \end{enumerate}
\end{lemma}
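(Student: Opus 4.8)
The plan is to push everything through the single polarity identity
\[
  \cnorm{v}{C_{\HM}} = \tfrac12\bigl(\cnorm{v}{C}+\cnorm{-v}{C}\bigr)\qquad\text{for all }v\in\R^n,
\]
which plays exactly the role that $\cnorm{v}{C_{\MIN}}=\max(\cnorm{v}{C},\cnorm{-v}{C})$ played in the proof of \Cref{DminProperties}. To obtain it I would use the dual dictionary $\cnorm{\cdot}{A\pol}=h_A$, valid for $A\in\CC^n_0$: since $C\in\CC^n_0$ is fulldimensional, also $C\pol$ and $\frac{C\pol-C\pol}{2}$ lie in $\CC^n_0$, so $\cnorm{v}{C_{\HM}}=\cnorm{v}{\left(\frac{C\pol-C\pol}{2}\right)\pol}=h_{\frac{C\pol-C\pol}{2}}(v)$. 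Positive homogeneity and additivity of support functions then give $h_{\frac{C\pol-C\pol}{2}}(v)=\frac12\bigl(h_{C\pol}(v)+h_{C\pol}(-v)\bigr)=\frac12\bigl(\cnorm{v}{C}+\cnorm{-v}{C}\bigr)$, using $h_{C\pol}=\cnorm{\cdot}{C}$ and $h_{C\pol}(-v)=\cnorm{-v}{C}$.

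With this identity in hand the chain in i) is essentially bookkeeping. For the first equality I would rewrite $\cnorm{x-y}{-C}=\cnorm{-(x-y)}{C}$ so that the summand $\frac12(\cnorm{x-y}{C}+\cnorm{x-y}{-C})$ in the definition of $l_{s,\HM}$ becomes exactly $\cnorm{x-y}{C_{\HM}}$. The second and third equalities exploit that $C_{\HM}$ is $0$-symmetric: applying the identity to the gauge $C_{\HM}$ (whose reflection is itself) collapses $l_{s,\HM}(K,C_{\HM})$ back to $\max\cnorm{x-y}{C_{\HM}}$, while for $l_{s,\AM}(K,C_{\HM})$ I would use that the smallest homothet of a $0$-symmetric body containing a segment is centered at the segment's midpoint, whence $2R([x,y],C_{\HM})=\cnorm{x-y}{C_{\HM}}$; both expressions therefore agree with the first. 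The two remaining equalities, replacing $K$ by $\frac{K-K}{2}$, follow because $l_{s,\HM}$ sees $K$ only through $(K-K)\cap\lin(s)$ and because $\frac{K-K}{2}-\frac{K-K}{2}=K-K$ (using that $K-K$ is $0$-symmetric and $\frac12 A+\frac12 A=A$ for convex $A$).

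Part ii) then drops out by taking $\max_{s\in\R^n\setminus\{0\}}$ across the equalities of i): the first three give $D_{\HM}(K,C)=D_{\HM}(K,C_{\HM})=D_{\HM}\bigl(\frac{K-K}{2},C_{\HM}\bigr)=D_{\HM}\bigl(\frac{K-K}{2},C\bigr)$, and the identification $l_{s,\HM}(K,C)=l_{s,\AM}(K,C_{\HM})$ yields $D_{\HM}(K,C)=D_{\AM}(K,C_{\HM})$ directly from the definition of $D_{\AM}$.

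I expect the only genuinely delicate point to be the polarity identity: one must ensure that every set being polarized has $0$ in its interior, so that $A\pol\pol=A$ and $\cnorm{\cdot}{A\pol}=h_A$ are legitimate, which is precisely what the standing assumption that $C\in\CC^n_0$ is fulldimensional and Minkowski-centered secures. The auxiliary fact $2R([x,y],C_{\HM})=\cnorm{x-y}{C_{\HM}}$ for symmetric gauges also deserves an explicit (if short) justification, since it is what links the $l_{s,\AM}$ term to the others; everything past these two points is routine and parallels the proof of \Cref{DminProperties}.
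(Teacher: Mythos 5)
Your proposal is correct and follows essentially the same route as the paper: both hinge on the polarity identity $\cnorm{v}{C_{\HM}}=h_{\frac{C\pol-C\pol}{2}}(v)=\frac12(\cnorm{v}{C}+\cnorm{-v}{C})$, obtained from $h_{C\pol}=\cnorm{\cdot}{C}$, after which the chain of equalities in i) follows from the $0$-symmetry of $C_{\HM}$ and ii) follows by maximizing over $s$. You merely spell out the bookkeeping (the segment identity $2R([x,y],C_{\HM})=\cnorm{x-y}{C_{\HM}}$ and the invariance under $K\mapsto\frac{K-K}{2}$) that the paper compresses into "Since $C_{\HM}$ is symmetric, the first part follows."
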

\begin{proof}
  We can use the fact that $h_{C\pol}(a)=\cnorm{a}{C}$ for any $a\in\R^n$ and $C\in\CC^n_0$ to obtain
  \begin{equation*}
    \begin{split}
      \|a\|_{\left(\frac{C\pol-C\pol}{2}\right)\pol}&=h_{\frac{C\pol-C\pol}{2}}(a)=\frac{1}{2}(h_{C\pol}(a)+h_{-C\pol}(a))\\
      &=\frac{1}{2}(\cnorm{a}{C}+\cnorm{a}{-C})=\frac{1}{2}(\cnorm{a}{C}+\cnorm{-a}{C}).
    \end{split}
  \end{equation*}
  Since $C_{\HM}$ is symmetric, the first part follows. The second part follows again directly from the first.
\end{proof}
Instead of dividing by the mean $\frac{h_C(s)+h_C(-s)}{2}$ in the definition of the $s$-breadth, one may also divide by the maximum of $h_C(s)$ and $h_C(-s)$. With this idea, we obtain our last diameter, the maximum diameter.
\begin{definition}
    \begin{enumerate}[i)]
        \item The \cemph{red}{$s$-breadth} $b_{s,\MAX}$ is defined as
        \begin{equation*}
            b_{s,\MAX}(K,C):= \frac{h_K(s)+h_{K}(-s)}{\max(h_{C}(s),h_C(-s))}.
        \end{equation*}
        \item The \cemph{red}{maximum diameter} is defined as the maximal $s$-breadth:
        \begin{equation*}
            D_{\MAX}(K,C):=\max_{s\in\R^n\setminus \set{0}} b_{s,\MAX}(K,C).
        \end{equation*}
    \end{enumerate}
\end{definition}
\begin{lemma} \label{DmaxProperties}
  \begin{enumerate}[i)]
  \item
    \begin{align*}
      b_{s,\MAX}(K,C)&= \frac{h_K(s)+h_{K}(-s)}{h_{\conv(C\cup (-C))}(s)} \\
      &=b_{s,\MAX}(K,\conv(C\cup (-C))) = b_{s,\AM}(K,\conv(C\cup (-C))) \\
      &=  b_{s,\MAX}\left(\frac{K-K}{2},\conv(C\cup (-C))\right) = b_{s,\MAX}\left(\frac{K-K}{2},C\right)
    \end{align*}
  \item
    \begin{equation*}
        \begin{split}
            D_{\MAX}(K,C)&=D_{\MAX}\left(K, \conv(C\cup (-C))\right)=D_{\MAX}\left(\frac{K-K}{2}, \conv(C\cup (-C))\right)\\
            &=D_{\MAX}\left(\frac{K-K}{2},C\right)=D_{\AM}(K, \conv(C\cup (-C))
        \end{split}
    \end{equation*}
  \end{enumerate}
\end{lemma}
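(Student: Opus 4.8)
The plan is to reduce everything to a single pointwise identity for the support function of a convex hull of a union, and then to propagate it through the chain exactly as in the proofs of Lemmas \ref{DminProperties} and \ref{DhmProperties}. The one fact I would invoke first is $h_{\conv(A\cup B)}=\max(h_A,h_B)$ for $A,B\in\CC^n$; applied with $B=-C$ and using $h_{-C}(s)=h_C(-s)$, this gives $h_{\conv(C\cup(-C))}(s)=\max(h_C(s),h_C(-s))$. Substituting this into the denominator in the definition of $b_{s,\MAX}$ immediately yields the first equality of part i).

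For the remaining equalities of part i) I would set $\tilde C:=\conv(C\cup(-C))$ and record two elementary observations. First, $\tilde C$ is $0$-symmetric, since $-\tilde C=\conv((-C)\cup C)=\tilde C$; hence its support function is even, so $\max(h_{\tilde C}(s),h_{\tilde C}(-s))=h_{\tilde C}(s)$ and $h_{\tilde C}(s)+h_{\tilde C}(-s)=2h_{\tilde C}(s)$. These two identities turn both $b_{s,\MAX}(K,\tilde C)$ and $b_{s,\AM}(K,\tilde C)$ into $\frac{h_K(s)+h_K(-s)}{h_{\tilde C}(s)}$, giving the second and third equalities. Second, the numerator is invariant under replacing $K$ by $\frac{K-K}{2}$: since $h_{(K-K)/2}(s)=\frac12(h_K(s)+h_K(-s))$, one gets $h_{(K-K)/2}(s)+h_{(K-K)/2}(-s)=h_K(s)+h_K(-s)$. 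Combining this with the evenness of $h_{\tilde C}$, and with $\max(h_C(s),h_C(-s))=h_{\tilde C}(s)$ for the last argument, yields the fourth and fifth equalities and completes part i).

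Part ii) then follows by taking the maximum over $s\in\R^n\setminus\set{0}$ in the equalities of part i): since the listed $b_{s,\MAX}$-expressions (and the $b_{s,\AM}(K,\tilde C)$-expression) all coincide pointwise in $s$, their maxima coincide, which is precisely the asserted chain for $D_{\MAX}$. For the final equality $D_{\MAX}(K,C)=D_{\AM}(K,\tilde C)$ I would use Proposition \ref{DamProperties} i), which gives $D_{\AM}(K,\tilde C)=\max_s b_{s,\AM}(K,\tilde C)$; together with the pointwise identity $b_{s,\AM}(K,\tilde C)=b_{s,\MAX}(K,C)$ established in part i), this produces $D_{\AM}(K,\tilde C)=\max_s b_{s,\MAX}(K,C)=D_{\MAX}(K,C)$.

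I do not expect a genuine obstacle here; the entire content is carried by the identity $h_{\conv(C\cup(-C))}=\max(h_C(\cdot),h_C(-\cdot))$ together with the $0$-symmetry of $\tilde C$, after which everything is bookkeeping fully analogous to the two preceding lemmas. The only point to watch is keeping the factor $2$ in $b_{s,\AM}$ straight: it is cancelled exactly by $h_{\tilde C}(s)+h_{\tilde C}(-s)=2h_{\tilde C}(s)$, which is the reason the arithmetic and maximum breadths agree for the symmetric gauge $\tilde C$.
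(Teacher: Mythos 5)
Your proposal is correct and follows exactly the paper's route: the paper's entire proof consists of the observation $\max(h_C(s),h_C(-s))=h_{C_{\MAX}}(s)$, from which part i) follows by the same bookkeeping you carry out (evenness of $h_{C_{\MAX}}$, cancellation of the factor $2$ in $b_{s,\AM}$, and $h_{(K-K)/2}(s)=\tfrac12(h_K(s)+h_K(-s))$), and part ii) by maximizing over $s$ together with Proposition \ref{DamProperties}~i). You have simply written out the details the paper leaves implicit.
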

\begin{proof}
 The first part follows directly from the fact that $\max(h_{C}(s),h_C(-s))=h_{C_{\MAX}}(s)$ and the second part again directly from the first.
\end{proof}

Since all definitions are equivalent for 0-symmetric gauges, we can always use that $D_{\m}(K,C)=D_{\m}(K,C_{\m})=D_{\AM}(K,C_{\m})$ for $\m \in \set{\MIN, \HM, \AM, \MAX}$ and results known about the arithmetic diameter. If we consider a symmetric gauge we omit the index and denote the diameter by $D(K,C)$.
Moreover, for all diameter definitions we say that $x,y\in K$ is a \cemph{red}{diametral pair} if
$ D_{\m}(K,C)=D_{\m}([x,y],C)$.

\begin{remark}
  \label{remarkwidth}
  Using the different definitons of the $s$-lengths and -breadths, width-definitions can be done analogously to the diameters. For $g\in\set{l,b}$ such that $g_{s,\m}$ is defined, the \cemph{red}{width} is defined as
  \begin{equation*}
    w_{\m}(K,C):= \min_{s\in\R^n\setminus\set{0}} g_{s,\m}(K,C).
  \end{equation*}
  In the standard case $\m=\AM$ it does not make a difference whether we minimize over the $s$-length or -breadth. By lemmas \ref{DminProperties}, \ref{DhmProperties}, and \ref{DmaxProperties} we can symmetrize the arguments of the width as well.
\end{remark}

\section{Properties of the Diameters and Blaschke-Santaló Diagrams}
In the following, we study properties of the diameters and how concepts such as completeness translate when using other definitions. Furthermore, we compare the diameter to other functionals such as the circumradius and inradius and introduce some theory on Blaschke-Santaló diagrams.

Let $\m\in\{\MIN,\HM,\AM,\MAX\}$ be one of the symmetrizations.

\begin{remark}
\label{contandfactor}
The inradius, circumradius and diameter are increasing and homogenious of degree $+1$ in the first argument and decreasing and homogenious of degree $-1$ in the second argument.
\end{remark}

\begin{lemma}[Linearity of $r,R,D_{\m}$]
\label{linearityrrRD}
Let $K_1,K_2\in \CC^n$ and $\lambda\in[0,1]$.
\begin{enumerate}
    \item[i)] If $r_1C\subset^{\opt}K_1$ and $r_2C\subset^{\opt}K_2$, then
      \begin{equation*} r(\lambda K_1 + (1-\lambda)K_2,C)\geq\lambda r_1 +(1-\lambda)r_2
      \end{equation*}
      and equality holds
      if we can choose the same outer normals of $K_1$ and $K_2$ in \Cref{opt}.
    \item[ii)] If $K_1\subset^{\opt}R_1C$ and $K_2\subset^{\opt}R_2C$, then
    \begin{equation*}
        R(\lambda K_1 + (1-\lambda)K_2,C)\leq\lambda R_1 +(1-\lambda)R_2.
    \end{equation*} If we can choose the same (up to dilatation) touching points $p^i$ in the boundary of $C$ as in in \Cref{opt}, equality is obtained.
    \item[iii)] If $D_1=D_{\m}(K_1,C)$ and $D_2=D_{\m}(K_2,C)$, then
    \begin{equation*}
        D_{\m}(\lambda K_1 + (1-\lambda)K_2,C)\leq\lambda D_1 +(1-\lambda)D_2.
    \end{equation*}
     If the diameters are defined by the same $s$-breadth or $s$-length, we have equality.
\end{enumerate}
\end{lemma}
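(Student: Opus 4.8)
The plan is to derive all three estimates from the interplay between Minkowski combinations and the functionals, reading off the equality statements from the optimal-containment characterization in \Cref{opt}. Two elementary facts are used throughout: Minkowski addition preserves inclusions, and for the convex gauge one has $\lambda\rho_1 C + (1-\lambda)\rho_2 C = (\lambda\rho_1 + (1-\lambda)\rho_2)C$ for $\rho_1,\rho_2\ge 0$; moreover the support function is additive under Minkowski combinations. For i) and ii) I would start from the defining inclusions, written after suitable translation as $t_i + r_i C \subset K_i$ and $K_i \subset t_i + R_i C$. Forming the convex combination of these inclusions and applying the two facts yields
\[
(\lambda t_1 + (1-\lambda) t_2) + (\lambda r_1 + (1-\lambda) r_2) C \subset \lambda K_1 + (1-\lambda) K_2 \subset (\lambda t_1 + (1-\lambda) t_2) + (\lambda R_1 + (1-\lambda) R_2) C
\]
under the respective hypotheses, which gives the inradius lower bound and the circumradius upper bound at once.

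For the equality cases I would invoke \Cref{opt}. Assume both containments are certified by a common family of affinely independent outer normals $a^1, \dots, a^k$ with $0 \in \conv(\{a^1,\dots,a^k\})$ (for the circumradius the shared touching points on $\bd(C)$ supply common normals). Additivity of the support function turns the two contact identities $t_j^T a^i + r_j h_C(a^i) = h_{K_j}(a^i)$ into a single contact identity for the combination with $t = \lambda t_1 + (1-\lambda) t_2$ and $r = \lambda r_1 + (1-\lambda) r_2$. Choosing multipliers $\mu_i \ge 0$ with $\sum_i \mu_i a^i = 0$ and summing the contact identities against these multipliers (using $h_C > 0$ on the chosen directions) rules out any larger inscribed, respectively smaller circumscribed, homothet; this is precisely the certificate required by \Cref{opt}, so equality holds.

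For iii) I would first reduce to the arithmetic diameter: by \Cref{DminProperties}, \Cref{DhmProperties} and \Cref{DmaxProperties} one has $D_{\m}(K, C) = D_{\AM}(K, C_{\m})$, where the symmetrization $C_{\m}$ depends only on $C$ and not on $K$, so it suffices to treat $D_{\AM}$ for a fixed symmetric gauge. The decisive observation is that each $s$-breadth is affine in its first argument, since additivity of the support function gives
\[
b_{s,\AM}(\lambda K_1 + (1-\lambda) K_2, C) = \lambda\, b_{s,\AM}(K_1, C) + (1-\lambda)\, b_{s,\AM}(K_2, C).
\]
Maximizing over $s$ and using that the maximum of a sum is at most the sum of the maxima gives $D_{\AM}(\lambda K_1 + (1-\lambda) K_2, C) \le \lambda D_1 + (1-\lambda) D_2$. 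If a single direction $s^*$ realizes both diameters, evaluating the affine identity at $s^*$ already produces the value $\lambda D_1 + (1-\lambda) D_2$, forcing equality; the same argument runs with $l_{s,\m}$ in place of $b_{s,\AM}$ for the diameters defined through $s$-lengths.

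The inequalities themselves are routine. The step demanding care is the equality analysis in i) and ii): one must verify that a shared certificate really reassembles into a valid instance of \Cref{opt} for the Minkowski combination — in particular that affine independence and the condition $0 \in \conv(\{a^1,\dots,a^k\})$ persist, and, for the circumradius, that the common touching points still meet the boundary of the scaled gauge as extreme points. This reassembly, rather than any of the inclusions, is the main obstacle.
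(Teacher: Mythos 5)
Your proposal is correct, and for parts i) and ii) it is in substance the paper's proof: the inequalities come from taking the Minkowski combination of the defining inclusions (using $\lambda r_1C+(1-\lambda)r_2C=(\lambda r_1+(1-\lambda)r_2)C$), and the equality cases come from the certificates supplied by \Cref{opt}. The one genuine difference there is how equality is closed. The paper exhibits touching points and normals for the combination and concludes optimal containment from \Cref{opt}; you instead sum the contact identities $h_{K_j}(a^i)=r_jh_C(a^i)$ against multipliers $\mu_i\geq 0$ with $\sum_i\mu_ia^i=0$ to exclude any larger inscribed (respectively smaller circumscribed) homothet directly. Your variant is in fact slightly more robust: it needs neither affine independence of the $a^i$ nor that the contact point of the combination is an extreme point -- precisely the ``reassembly'' worries you raise in your last paragraph, which your own multiplier computation already disposes of, since \Cref{opt} is only used to extract certificates from the hypotheses and never re-applied to the combination. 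In part iii) the routes also differ mildly: the paper deduces the inequality from part ii) via $D_{\m}(K,C)=2R\bigl(\frac{K-K}{2},C_{\m}\bigr)$, whereas you maximize the affine-in-$K$ breadths $b_{s,\AM}(\cdot,C_{\m})$ and use subadditivity of the maximum; both are reductions to the symmetrized gauge and both work.

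One caveat in iii): your clause ``the same argument runs with $l_{s,\m}$'' is not literally true, because the $s$-length is \emph{not} affine in its first argument; it is only superadditive. Combining maximal chords of $K_1$ and $K_2$ in direction $s^*$ yields a chord of $\lambda K_1+(1-\lambda)K_2$ in that direction, but not conversely: two orthogonal segments each have $s$-length $0$ in the diagonal direction, while their Minkowski combination does not. Superadditivity is, however, exactly the direction you need: it gives
\begin{equation*}
  D_{\m}(\lambda K_1+(1-\lambda)K_2,C)\;\geq\; l_{s^*,\m}(\lambda K_1+(1-\lambda)K_2,C)\;\geq\;\lambda D_1+(1-\lambda)D_2,
\end{equation*}
which together with your upper bound forces equality. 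So the argument closes once ``affine identity'' is replaced by ``superadditivity'' in the length case. (The paper handles this case differently, by noting that a common diametral direction yields common touching points in the containments $\frac{K_j-K_j}{2}\optc\frac{D_j}{2}C_{\m}$ and invoking the equality case of part ii).)
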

\begin{proof}

\begin{enumerate}[i)]
    \item Obviously, $(\lambda r_1 +(1-\lambda)r_2) C \subset \lambda K_1 + (1-\lambda)K_2 $.  Let $r_1p_i$, $r_2p_i$, with $p_i \in \bd(C)$, be the touching points and $a_i$ the corresponding outer normals as in \Cref{opt}. Then, $h_{\lambda K_1 + (1-\lambda)K_2}(a_i)=\lambda r_1p_i^Ta_i+(1-\lambda)r_2p_i^Ta_i=(\lambda r_1 +(1-\lambda)r_2)p_i^Ta_i=h_{(\lambda r_1 +(1-\lambda)r_2) C}(a_i)$.  Thus, $p_i\in \bd((\lambda r_1 +(1-\lambda)r_2)C)\cap \bd(\lambda K_1 + (1-\lambda)K_2)$ and 0 is in the convex hull of the $a_i$, which shows that we have optimal containment.
    \item  The statement for the circumradius follows analogously. Here, the outer body is $C$ in both cases, so we automatically have the same supporting hyperplanes.
    \item We know $D_{\m}(K,C)=2R(K_{\AM},C_{\m})$. Thus, the inequality follows from part $ii)$. If the diameters are attained by the same $s$-length we have the same touching points in the containments $\frac{K_1-K_1}2 \optc \frac{D_{\m}(K_1,C)}{2}C_{\m}$ and $\frac{K_2-K_2}2 \optc \frac{D_{\m}(K_2,C)}{(2)}C_{\m}$ and the equality follows from part $ii)$. If the diameter is attained by the same $s$-breadth, we have
    \begin{equation*}
        \begin{split}
          \lambda D_{\m}(K_1,C) + (1-\lambda)D_{\m}(K_2,C)
          &=\lambda b_{s,\m}(K_1,C)+   (1-\lambda)b_{s,\m}(K_2,C) \\
          &= \lambda  \frac{h_{K_1-K_1}(s)}{h_{C_{\m}}(s)}+(1-\lambda)\frac{h_{K_2-K_2}(s)}{h_{C_{\m}}(s)} \\
          &= \frac{h_{(\lambda K_1 + (1-\lambda)K_2) - (\lambda K_1 + (1-\lambda)K_2)}(s)}{h_{C_{\m}}(s)} \\
          &= b_{s,\m}(\lambda K_1 + (1-\lambda)K_2,C) \\
          &\le D_{\m}(\lambda K_1 + (1-\lambda)K_2)
        \end{split}
      \end{equation*}
    and equality follows.
\end{enumerate}
\end{proof}

\begin{lemma}[Invariance under transformations]
\label{affineRrD} Let $A:\R^n\to\R^n$ be a non-singular affine transformation and $L_A$ its corresponding linear transformation. Then
\begin{equation*}
\begin{split}
    &D_{\m}(A(K),L_A(C))=D_{\m}(K,C)\\
    &R(A(K),A(C))=R(K,C)\\
    &r(A(K),A(C))=r(K,C).
\end{split}
\end{equation*}
\end{lemma}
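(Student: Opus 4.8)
The plan is to prove the three equalities in a specific order, deriving the later ones from the earlier ones, because the diameter statement is the most delicate: there the gauge is transformed only by the linear part $L_A$, not by the full affine map $A$. Throughout I write $A(x)=L_A(x)+t$ with $L_A$ a linear bijection and $t\in\R^n$.

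First I would prove the circumradius identity $R(A(K),A(C))=R(K,C)$ straight from the definition. The key observation is that for a fixed $\rho\geq 0$ and any translation $t'\in\R^n$,
\begin{equation*}
A(t'+\rho C)=L_A(t')+\rho L_A(C)+t=s'+\rho A(C),\qquad s':=L_A(t')+(1-\rho)t,
\end{equation*}
using $\rho A(C)=\rho L_A(C)+\rho t$. Since $A$ is a bijection, $K\subset t'+\rho C$ is equivalent to $A(K)\subset s'+\rho A(C)$, and as $t'$ ranges over $\R^n$ the point $s'$ ranges over all of $\R^n$ (the map $t'\mapsto L_A(t')+(1-\rho)t$ is an affine bijection). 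Hence the admissible radius set $\set{\rho\geq 0:\exists t',\ K\subset t'+\rho C}$ is unchanged under $A$, and taking infima gives the claim. The inradius identity then follows at once from the duality $r(K,C)=R(C,K)^{-1}$ recorded in the preliminaries: $r(A(K),A(C))=R(A(C),A(K))^{-1}=R(C,K)^{-1}=r(K,C)$.

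For the diameter I would reduce to the circumradius via the symmetrization formulas. By \Cref{DminProperties}, \Cref{DhmProperties}, \Cref{DmaxProperties} (and the trivial case $\m=\AM$) we have $D_{\m}(K,C)=D_{\AM}(K,C_{\m})$, and \Cref{DamProperties}\,iv) gives $D_{\AM}(K,C_{\m})=2R\bigl(\tfrac{K-K}{2},\tfrac{C_{\m}-C_{\m}}{2}\bigr)=2R\bigl(\tfrac{K-K}{2},C_{\m}\bigr)$, the last step using that every symmetrization $C_{\m}$ is $0$-symmetric so $\tfrac{C_{\m}-C_{\m}}{2}=C_{\m}$. Applying the same formula to the transformed data, and noting that the translation part of $A$ cancels in a difference body, $\tfrac{A(K)-A(K)}{2}=L_A\bigl(\tfrac{K-K}{2}\bigr)$, the diameter identity reduces to the two facts
\begin{equation*}
\bigl(L_A(C)\bigr)_{\m}=L_A\bigl(C_{\m}\bigr)\quad\text{and}\quad R\Bigl(L_A\bigl(\tfrac{K-K}{2}\bigr),L_A(C_{\m})\Bigr)=R\Bigl(\tfrac{K-K}{2},C_{\m}\Bigr),
\end{equation*}
the second of which is the circumradius invariance just proved, applied to the linear (hence affine) map $L_A$.

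The main work, and the step I expect to be the main obstacle, is the commutation $\bigl(L_A(C)\bigr)_{\m}=L_A(C_{\m})$. For $\m\in\set{\MIN,\AM,\MAX}$ it is routine, since a linear bijection commutes with intersection, with the Minkowski difference $C-C$, and with $\conv(\cdot)$ and $\cup$; e.g.\ $\bigl(L_A(C)\bigr)_{\MAX}=\conv\bigl(L_A(C)\cup L_A(-C)\bigr)=L_A(\conv(C\cup -C))$. The genuinely delicate case is the harmonic mean, because $C_{\HM}$ is built from polars and polarity does not commute with $L_A$ but transforms by the inverse transpose, $(L_A(C))\pol=M(C\pol)$ with $M:=(L_A^{-1})^T$. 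Applying this rule twice,
\begin{equation*}
\bigl(L_A(C)\bigr)_{\HM}=\Bigl(\tfrac{M(C\pol)-M(C\pol)}{2}\Bigr)\pol=\Bigl(M\bigl(\tfrac{C\pol-C\pol}{2}\bigr)\Bigr)\pol=(M^{-1})^T\Bigl(\bigl(\tfrac{C\pol-C\pol}{2}\bigr)\pol\Bigr),
\end{equation*}
and since $L\mapsto (L^{-1})^T$ is an involution on the invertible linear maps, $(M^{-1})^T=L_A$; thus $\bigl(L_A(C)\bigr)_{\HM}=L_A(C_{\HM})$. The only bookkeeping I would double-check is that $M$ is again a linear bijection on $\CC^n_0$ so that the polar rule legitimately applies at each step, which is immediate.
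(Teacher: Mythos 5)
Your proof is correct, and its skeleton coincides with the paper's: both reduce the diameter statement to circumradius invariance via the identity $D_{\m}(K,C)=D_{\AM}(K,C_{\m})=2R\bigl(\tfrac{K-K}{2},C_{\m}\bigr)$, both use that translations of $K$ cancel in the difference body, and both rest on the commutation $(L_A(C))_{\m}=L_A(C_{\m})$. The difference is in how the two ingredients are established. For the in-/circumradius the paper invokes the optimal-containment characterization (\Cref{opt}), whereas you argue directly from the definition of $R$, tracking how the translate $t'+\rho C$ transforms under $A$ and observing that the set of admissible radii is unchanged; your route is more elementary and, as a bonus, you get the inradius for free from the duality $r(K,C)=R(C,K)^{-1}$ rather than re-running the containment argument. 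For the commutation of symmetrizations with linear maps the paper simply cites \cite{BDG}, while you prove it case by case, correctly isolating the harmonic mean as the only non-routine case and handling it with the transformation rule $(L_A(C))\pol=(L_A^{-1})^T(C\pol)$ applied twice, using that $L\mapsto (L^{-1})^T$ is an involution. So what your write-up buys is self-containedness (no external lemma needed) at the cost of length; the paper's version buys brevity by outsourcing exactly the two facts you prove by hand.
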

\begin{proof}
    It follows from \Cref{opt}  that the in- and circumradius are invariant under affine transformations. All symmetrizations interchange with linear transformations: $(L_A(C))_{\m}=L_A(C_{\m})$ (see \cite{BDG}, Lemma 4). We must confine the transformation in the second argument of the diameter to be linear,
    since the symmetrizations (besides the arithmetic) are not invariant under translations.
    Because we can interprete the diameter as a circumradius with $D_{\m}(K,C)=2R(\frac{K-K}{2},C_{\m})$, the invariance of the diameter follows. The position of $K$ does not change the diameter and therefore we can apply a corresponding affine transformation to $K$.
  \end{proof}

To analyse properties such as constant width and completeness we extend their definitions to the different diameters.
\begin{definition}
Let $K,K^{*}\in \CC^n$, $K^{*} \supset K$, $C\in \CC^n_0$.
\begin{enumerate}[i)]
    \item $K$ is of \cemph{red}{constant width} if $w_{\m}(K,C)=D_{\m}(K,C)$.
    \item $K$ is \cemph{red}{complete}  if  $D_{\m}(K',C)>D_{\m}(K,C)$ for all $K'\in \CC^n$ such that $K' \supsetneq K$.
    \item $K^{*}$ is a \cemph{red}{completion} of $K$ if it is complete and $D_{\m}(K^{*},C)=D_{\m}(K,C)$.
\end{enumerate}
\end{definition}

\begin{remark}
  \label{remarkcwcompl}
Since all diameters can be expressed by the arithmetic diameter $\wrt$ $C_{\m}$, we know the following \cite{Eggleston}:
\begin{enumerate}[i)]
    \item $K$ has constant width if and only if $K_{\AM}=\lambda C_{\m}$ for some $\lambda\in \R$.
     \item If $K$ is of constant width, it is complete.
    \item In the planar case, $K$ is complete if and only if it has constant width.
\end{enumerate}
\end{remark}

\begin{lemma} \label{CCompletion}
Let $K\in \bar{\CC}^n$. If $K^{*}$ is a completion of $K$, then $K\subset^{\opt}K^{*}$.
\end{lemma}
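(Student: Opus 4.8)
The plan is to unwind the definition of optimal containment: $K\optc K^*$ means $K\subset K^*$ together with $R(K,K^*)=1$. Since $K^*$ is a completion of $K$ we already have $K\subset K^*$, which in particular forces $R(K,K^*)\le 1$ (take the trivial homothety $\rho=1$, $t=0$). Hence the whole statement reduces to proving the reverse inequality $R(K,K^*)\ge 1$.

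For this I would argue by contradiction and assume $R(K,K^*)=\rho<1$. Then there is a translation vector $t\in\R^n$ with $K\subset t+\rho K^*$ (and even if the infimum defining $R$ were not attained, any $\rho'\in(\rho,1)$ would serve just as well). Now I invoke the three structural properties of the diameter: monotonicity in the first argument (\Cref{contandfactor}), positive homogeneity of degree $+1$, and translation invariance. The last of these is immediate from the representation $D_{\m}(K,C)=2R\!\left(\frac{K-K}{2},C_{\m}\right)$, since $\frac{(t+K^*)-(t+K^*)}{2}=\frac{K^*-K^*}{2}$. Chaining the three properties gives
\[
 D_{\m}(K,C)\le D_{\m}(t+\rho K^*,C)=\rho\, D_{\m}(K^*,C)=\rho\, D_{\m}(K,C),
\]
where the final equality uses that a completion has, by definition, the same diameter as $K$. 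As $\rho<1$, this inequality can only hold if $D_{\m}(K,C)=0$.

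The point I expect to be the crux is ruling out the degenerate case $D_{\m}(K,C)=0$. This is exactly where the hypothesis $K\in\bar{\CC}^n$ enters: since $K$ is not a single point, $\frac{K-K}{2}\neq\{0\}$, so its circumradius with respect to the fulldimensional symmetrization $C_{\m}$ is strictly positive, giving $D_{\m}(K,C)>0$. With positivity in hand the displayed chain yields $D_{\m}(K,C)<D_{\m}(K,C)$, the desired contradiction, so $R(K,K^*)=1$ and $K\optc K^*$. It is worth noting that this argument never uses completeness of $K^*$ itself, only that $K\subset K^*$ and that the two bodies share the same diameter; completeness will presumably matter for the companion results on the shape of completions rather than for this containment.
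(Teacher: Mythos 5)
Your proof is correct and follows essentially the same route as the paper's: assume $K$ fits in a strictly smaller homothet $c+\rho K^*$, $\rho<1$, and derive $D_{\m}(K,C)\le \rho\,D_{\m}(K^*,C)<D_{\m}(K^*,C)=D_{\m}(K,C)$ via monotonicity, homogeneity, and translation invariance of the diameter. Your treatment is in fact slightly more careful than the paper's, since you explicitly rule out the degenerate case $D_{\m}(K,C)=0$ (where the paper's strict inequality would fail), using the hypothesis $K\in\bar{\CC}^n$.
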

\begin{proof}
  By definition $K\subset K^{*}$ and $D_{\m}(K^{*},C)=D_{\m}(K,C)$. Now, assuming there exist $c\in \R^n$ and  $0 \le \rho<1$ such that $K\optc c+\rho K^{*}$ implies $D_{\m}(K,C)\leq D_{\m}(c+\rho K^{*},C)<D_{\m}(K^{*},C)$, a contradiction.
\end{proof}

\begin{remark}
  Let us observe two things:
  \begin{enumerate}[i)]
  \item Whenever the gauge is symmetric, the only (up to translation and dilatation) complete and symmetric set is the gauge itself. Thus, when considering $D_{\m}$ with respect to a possibly non-symmetric gauge $C$, the only complete and symmetric set is always $C_{\m}$.
  \item  In the case that $\m=\AM$, the gauge itself is always complete. This is not always the case with other diameter definitions. Therefore, in the following sections, we will characterize when the gauge is complete and what the completion looks like.
  \end{enumerate}
\end{remark}

In the following the containment factors between $C_{\AM}$ and $C_{\m}$ will prove helpful to analyse the diameter $D_{\m}$.

\begin{notation}
By $\delta_{\m}:=\delta_{\m}(C)$ and $\rho_{\m}:=\rho_{\m}(C)$ we denote the dilatation factors needed, such that
\begin{equation*}
\label{deltarho}
    \rho_{\m}C_{\m}\optc C_{\AM} \optc \delta_{\m}C_{\m}.
\end{equation*}
These factors always exist since all symmetrizations are 0-symmetric and fulldimensional.
For better readability, we omit the argument $C$, the gauge body, whenever it is clear from the context.

Segments $L$ optimally contained in $C$ with $D(L,C)=2\rho_{\m}$ are denoted by $L_w$ and in case of $D(L,C)=2\delta_{\m}$ by $L_D$. 
\end{notation}

\begin{lemma} \label{CDiam}
  \begin{enumerate}[i)]
  \item For any segment $L \subset^{\opt}C$:
    \begin{equation*}
      2\rho_{\m}\leq D_{\m}(L,C)\leq2\delta_{\m}
    \end{equation*}
    with equality on the right side iff $L$ is diametral and equality on the left iff $L$ is a width chord of $C$.
    All values in between are attained.
  \item The diameter of $C$ with respect to itself is $D_{\m}(C,C)=2\delta_{\m}$, and the width of $C$ with respect to itself is $w_{\m}(C,C)=2\rho_{\m}$.
  \end{enumerate}
\end{lemma}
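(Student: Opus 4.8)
The plan is to reduce everything about the optimally contained segment $L=[x,y]$ to the single vector $w:=\tfrac{y-x}{2}$ and then read off the bounds from the containment chain $\rho_{\m}C_{\m}\optc C_{\AM}\optc\delta_{\m}C_{\m}$. First I would record that, since $\tfrac{L-L}{2}=[-w,w]$ and $C_{\m}$ is $0$-symmetric, Proposition~\ref{DamProperties}~iv) gives $D_{\m}(L,C)=D_{\AM}(L,C_{\m})=2R([-w,w],C_{\m})=2\cnorm{w}{C_{\m}}$, and likewise $R(L,C)=R([-w,w],C_{\AM})=\cnorm{w}{C_{\AM}}$. Hence $L\optc C$ is equivalent to $\cnorm{w}{C_{\AM}}=1$, i.e.\ $w\in\bd(C_{\AM})$, and the whole statement becomes a statement about the function $w\mapsto\cnorm{w}{C_{\m}}$ on $\bd(C_{\AM})$.

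For the inequality itself I would turn the containments $\rho_{\m}C_{\m}\subset C_{\AM}\subset\delta_{\m}C_{\m}$ into the gauge inequalities $\tfrac{1}{\delta_{\m}}\cnorm{v}{C_{\m}}\le\cnorm{v}{C_{\AM}}\le\tfrac{1}{\rho_{\m}}\cnorm{v}{C_{\m}}$, valid for all $v\in\R^n$. Evaluating at $w\in\bd(C_{\AM})$, where $\cnorm{w}{C_{\AM}}=1$, yields $\rho_{\m}\le\cnorm{w}{C_{\m}}\le\delta_{\m}$, which is exactly $2\rho_{\m}\le D_{\m}(L,C)\le2\delta_{\m}$.

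I would prove part ii) next, since it drives the equality conditions. Using $D_{\m}(C,C)=2R(\tfrac{C-C}{2},C_{\m})=2R(C_{\AM},C_{\m})$ together with $C_{\AM}\optc\delta_{\m}C_{\m}$, which by the homogeneity in Remark~\ref{contandfactor} says $R(C_{\AM},C_{\m})=\delta_{\m}$, gives $D_{\m}(C,C)=2\delta_{\m}$; the width statement follows the same way from $w_{\m}(C,C)=2r(C_{\AM},C_{\m})$ (Proposition~\ref{DamProperties}~v) and Remark~\ref{remarkwidth}), the identity $r(C_{\AM},C_{\m})=R(C_{\m},C_{\AM})^{-1}$, and $\rho_{\m}C_{\m}\optc C_{\AM}$. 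The right-hand equality in i) is then immediate: $D_{\m}(L,C)=2\delta_{\m}=D_{\m}(C,C)$ is by definition the statement that $x,y$ is a diametral pair of $C$. For the left-hand equality, $\cnorm{w}{C_{\m}}=\rho_{\m}$ means $w\in\bd(\rho_{\m}C_{\m})\cap\bd(C_{\AM})$, i.e.\ $w$ is a contact point of the inscribed copy $\rho_{\m}C_{\m}\optc C_{\AM}$; I would identify these contact directions with the width directions of $C$, so that $L$ is exactly a width chord.

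Finally, for ``all values in between are attained'' I would use that, since $n\ge2$, $\bd(C_{\AM})$ is connected and $w\mapsto\cnorm{w}{C_{\m}}$ is continuous on it. By Proposition~\ref{opt} the two optimal containments in the chain provide contact points at which this function equals $\delta_{\m}$ and $\rho_{\m}$ respectively, so the intermediate value theorem gives every value in $[\rho_{\m},\delta_{\m}]$. The main obstacle I anticipate is the left-hand equality characterization: establishing cleanly that a contact point of the inscribed symmetrization $\rho_{\m}C_{\m}$ corresponds precisely to a width-realizing chord of $C$ (and not merely to a chord of minimal $D_{\m}$-length) requires relating the contact normals furnished by Proposition~\ref{opt} to the supporting hyperplanes defining $w_{\m}(C,C)$; the inequality and the attainment part are routine once the segment-to-$w$ reduction is in place.
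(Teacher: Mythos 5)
Your proposal is correct and is essentially the paper's own argument in a compact repackaging: both rest on the identity $D_{\m}(L,C)=2R\bigl(\tfrac{L-L}{2},C_{\m}\bigr)$ and the chain $\rho_{\m}C_{\m}\optc C_{\AM}\optc\delta_{\m}C_{\m}$, both prove part ii) first (via $R(C_{\AM},C_{\m})=\delta_{\m}$ and $r(C_{\AM},C_{\m})=\rho_{\m}$), both get the right-hand equality straight from the definition of a diametral pair, and your intermediate value argument on the connected set $\bd(C_{\AM})$ is the same as the paper's appeal to continuity of $s\mapsto l_{s,\m}(C,C)$. The reduction of an optimally contained segment to the single vector $w=\tfrac{y-x}{2}\in\bd(C_{\AM})$ with $D_{\m}(L,C)=2\cnorm{w}{C_{\m}}$ and $R(L,C)=\cnorm{w}{C_{\AM}}$ is valid (for the latter note $[x,y]\subset t+\rho C$ iff $2w\in\rho(C-C)$).

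The one piece you left open --- the left-hand ``iff'' --- is not the obstacle you fear, and \Cref{opt} is the wrong tool for it: no contact normals are needed. Since $L\optc C$ forces $w\in\bd(C_{\AM})$, the segment $L$ is a longest chord of $C$ in its direction $s$; for a segment all $s'$-lengths with $s'\notin\lin(s)$ vanish, so $D_{\m}(L,C)=l_{s,\m}(L,C)$, and because $(L-L)\cap\lin(s)=(C-C)\cap\lin(s)=[-2w,2w]$ this equals $l_{s,\m}(C,C)=2\cnorm{w}{C_{\m}}$. Running over all directions, $\min_{s}l_{s,\m}(C,C)=2\min_{v\in\bd(C_{\AM})}\cnorm{v}{C_{\m}}$, and by part ii) this minimum is $w_{\m}(C,C)=2\rho_{\m}$. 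Hence $D_{\m}(L,C)=2\rho_{\m}$ holds if and only if the direction of $L$ attains $\min_{s}l_{s,\m}(C,C)$, i.e.\ if and only if $L$ is a width chord; equivalently, the contact points of $\rho_{\m}C_{\m}$ with $\bd(C_{\AM})$ are exactly the minimizers of $\cnorm{\cdot}{C_{\m}}$ on $\bd(C_{\AM})$, so your worry that equality might single out ``a chord of minimal $D_{\m}$-length'' rather than a width chord is moot --- for optimally contained segments these two notions coincide. This is precisely how the paper closes the step, in one line, by appealing to part ii).
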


\begin{proof}
  We begin by showing part $ii)$:
  By the definition of $\delta_M$  as well as the diameter properties collected in Proposition \ref{DamProperties} and Lemmas \ref{DminProperties}, \ref{DhmProperties}, and \ref{DmaxProperties}, we have
  \begin{align*}
    D_{\m}(C,C) &= D_{\AM}(C_{\AM},C_{\m})=2R(C_{\AM},C_{\m})=2\delta_{\m}
                  \intertext{as well as}
               w_{\m}(C,C)&=w_{\AM}(C_{\AM},C_{\m})= 2r(C_{\AM},C_{\m}) = 2\rho_{\m}.
  \end{align*}
  Now, for part $i)$, let $L\subset C$ be a segment. It follows that $D_{\m}(L,C)\leq2\delta_{\m}$.
By definition, $L$ is the convex hull of a diametral pair if and only if $D_{\m}(L,C)=D_{\m}(C,C)=2\delta_{\m}$. \\
Segments with circumradius $1$ have diameter $2$ when considering the arithmetic mean. Hence, $2=D_{\AM}(L,C)\leq \frac{1}{\rho_{\m} }D_{\m}(L,C)$.  If $L$ provides us with the minimal $s$-length or -breadth, we obtain by part $ii)$: $D_{\m}(L,C)=\min_{s\in\R^n\setminus\{0\}}l_{s,\m}(C,C_{\m})=2\rho_{\m}$ or the analogue for the $s$-breadth.
  All values in between are attained since the $s$-length and $s$-breadth are continuous as a function of $s$ on $\R^n\setminus\{0\}$.
\end{proof}

\begin{definition}
  The set
  \begin{equation*}
    K^{\sup}=\bigcap_{x\in K} x+ D_{\m}(K,C)C_{\m}.
  \end{equation*}
  is called the \cemph{red}{supercompletion} of $K$.
\end{definition}

Let us remark that Moreno and Schneider \cite{MorenoSchneider} call $K^{\sup}$ the \emph{wide spherical hull}. It is shown in \cite{Eggleston} for arbitrary Minkowski spaces (i.e.~for $0$-symmetric $C$) that a set $K$ is complete \wrt~a symmetric gauge $C$ if and only if $K^{\sup}=K$ and in \cite{Moreno2011} that $K^{\sup}$ is the union of all completions of $K$. All the above were previously only defined for symmetric $C$,
but it is obvious that these properties stay true in the general case.

\begin{definition}
  \begin{enumerate}[i)]
  \item A \cemph{red}{supporting slab} of $K$ is the intersection of two antipodal parallel supporting halfspaces of $K$.
  \item A boundary point of $K$ is called smooth if the supporting hyperplane of $C$ in this point is unique.
  \item A supporting slab is \cemph{red}{regular} if at least one of the bounding hyperplanes contains a smooth boundary point of $K$.
  \item We say that $s\in\R^n\setminus\set{0}$ defines a regular slab if there exists a supporting slab such that the defining halfspaces have outer normals $\pm s$.
  \end{enumerate}
\end{definition}

It is easy to argue that a subdimensional convex body is never complete. On the the other hand, every fulldimensional convex body is the intersection of its regular slabs and completeness can be characterized by using these slabs \cite[Theorem 1]{MorenoSchneider}.

\begin{proposition}
  \label{MorSch}
  Let $K$ be fulldimensional. Then the following are equivalent:
  \begin{enumerate}[i)]
  \item $K$ is complete.
  \item For every outer normal $s$ defining a regular supporting slab of $K$ we have
  $\frac{h_K(s)+h_{-K}(s)}{h_{C_{\m}}(s)} = D_{\m}(K,C)$.
  \end{enumerate}
\end{proposition}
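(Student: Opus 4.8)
The plan is to reduce everything to the symmetric-gauge case, where the assertion is exactly the slab characterization of diametrically complete sets in \cite[Theorem~1]{MorenoSchneider}, and then to check that all the ingredients transfer correctly.

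First I would invoke the identity $D_{\m}(K',C)=D_{\AM}(K',C_{\m})$, valid for every $K'\in\CC^n$ by \Cref{DamProperties}, \Cref{DminProperties}, \Cref{DhmProperties} and \Cref{DmaxProperties}. Since completeness of $K$ is defined solely through the diameter values on bodies containing $K$, this shows that $K$ is complete with respect to $D_{\m}(\cdot,C)$ precisely when it is complete with respect to the standard diameter $D_{\AM}(\cdot,C_{\m})$ in the Minkowski space whose symmetric unit ball is $C_{\m}$. I would then observe that smoothness of a boundary point, supporting slabs, regular slabs, and the directions $s$ defining them are notions depending only on $K$ and not on the gauge, so the family of directions appearing in condition~ii) is the same in both settings.

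Next I would rewrite condition~ii). Using $h_{C_{\m}}(-s)=h_{C_{\m}}(s)$ (as $C_{\m}$ is $0$-symmetric) and $h_{-K}(s)=h_K(-s)$, the left-hand side becomes
\[
\frac{h_K(s)+h_{-K}(s)}{h_{C_{\m}}(s)}
=2\cdot\frac{h_K(s)+h_K(-s)}{h_{C_{\m}}(s)+h_{C_{\m}}(-s)}
=b_{s,\AM}(K,C_{\m}),
\]
and by \Cref{DamProperties}~i) we have $D_{\AM}(K,C_{\m})=\max_{s}b_{s,\AM}(K,C_{\m})$, so the breadth never exceeds the diameter. Hence condition~ii) is equivalent to the statement that, in the symmetric space with unit ball $C_{\m}$, the breadth $b_{s,\AM}(K,C_{\m})$ attains the diameter for every direction $s$ defining a regular supporting slab of $K$. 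This is exactly \cite[Theorem~1]{MorenoSchneider} applied to the symmetric gauge $C_{\m}$, so combining it with the reduction above finishes the equivalence of i) and ii).

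Because the argument is a reduction, I expect no single hard computation; the point that needs the most care is the clean separation of gauge-dependent from gauge-independent data: the diameter and the breadth change when we replace $C$ by $C_{\m}$, whereas smoothness and regularity of slabs are properties of $K$ alone and are therefore unaffected. If instead one wanted a proof independent of \cite[Theorem~1]{MorenoSchneider}, I would build on the criterion that $K$ is complete iff $K=K^{\sup}$ (\cite{Eggleston}, together with $K^{\sup}$ being the union of all completions \cite{Moreno2011}); there the genuinely delicate direction is ``$K$ not complete $\Rightarrow$ some regular slab is deficient,'' which would require exhibiting, at a smooth touching point of a deficient slab, a point of $K^{\sup}\setminus K$ that may be added without increasing the diameter.
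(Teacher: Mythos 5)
Your reduction is correct and is precisely the paper's (implicit) argument: the paper states this proposition as a direct import of \cite[Theorem 1]{MorenoSchneider}, justified by the previously established identity $D_{\m}(\cdot,C)=D_{\AM}(\cdot,C_{\m})$ and the observation that completeness and regular-slab notions transfer to the symmetric gauge $C_{\m}$. Your write-up simply makes explicit the breadth computation $\frac{h_K(s)+h_{-K}(s)}{h_{C_{\m}}(s)}=b_{s,\AM}(K,C_{\m})$ and the gauge-independence of slab regularity, which the paper leaves to the reader.
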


\begin{remark} \label{remarkXcompletion}
  As mentioned after the definition of the supercompletion, $K^* = K^{\sup}$ implies uniqueness for the completion $K^*$ of $K$. Thus, defining $K_X:=\bigcap_{x\in X} x+D_{\m}(K,C) C_{\m}$ for some $X \subset K$ we obviously have $K^* \subset K^{\sup} \subset K_X$. This means that describing properties for such subsets $X$ in the following that imply $K^* = K_X$ implicitely guarantee uniqueness of the completion $K^*$.

\end{remark}
\begin{lemma}
  \label{completion1}
  Let $X$ be a closed subset of $K$.
  Then the following are equivalent:
  \begin{enumerate}[i)]
  \item $K_X:=\bigcap_{x\in X} x+D_{\m}(K,C) C_{\m}$ is a completion of $K$.
  \item For every $s\in \R^n\setminus \set{0}$ that defines a regular slab of $C_{\m}$ there exist $\tilde s \in \set{s,-s}$ and $p\in X$ such that $p^T(-\tilde{s})=h_{K_X}(-\tilde{s})$ and $h_{K_X}(\tilde{s})=h_{p+D_{\m}(K,C)C_{\m}}(\tilde{s})$.
  \end{enumerate}
\end{lemma}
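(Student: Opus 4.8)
The plan is to recast condition ii) as a statement about the breadths of $K_X$ and then to read off completeness from the Moreno--Schneider criterion \Cref{MorSch}. Write $D:=D_{\m}(K,C)$. Since $D_{\m}(\cdot,C)=D_{\AM}(\cdot,C_{\m})$ and $C_{\m}$ is $0$-symmetric, \Cref{DamProperties} gives
\begin{equation*}
  D_{\m}(A,C)=\max_{s\in\R^n\setminus\set{0}} f_A(s),\qquad f_A(s):=\frac{h_A(s)+h_{-A}(s)}{h_{C_{\m}}(s)}\quad(A\in\CC^n),
\end{equation*}
which is exactly the quotient occurring in \Cref{MorSch}; note $h_{-A}(s)=h_A(-s)$ and $h_{C_{\m}}(s)=h_{C_{\m}}(-s)$. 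From $X\subset K$ we always have $K\subset K^{\sup}\subset K_X$, so $D_{\m}(K_X,C)\ge D$ and, by definition of completion, it suffices to decide when $K_X$ is complete with $D_{\m}(K_X,C)=D$. The computational heart is the following: for a fixed $s$ and $\tilde s\in\set{s,-s}$, a point $p\in X$ fulfils the two support equalities in ii) iff $p^T(-\tilde s)=h_{K_X}(-\tilde s)$ and $h_{K_X}(\tilde s)=p^T\tilde s+D\,h_{C_{\m}}(\tilde s)$; summing and using $h_{C_{\m}}(\tilde s)=h_{C_{\m}}(s)$ gives $f_{K_X}(s)=D$. Conversely, if $f_{K_X}(s)=D$, then choosing $p_0\in K_X$ minimising $x\mapsto x^T\tilde s$ forces $h_{K_X}(\tilde s)=D\,h_{C_{\m}}(\tilde s)+p_0^T\tilde s=h_{p_0+DC_{\m}}(\tilde s)$, so the support equalities hold with $p=p_0$. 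Hence, up to the membership $p_0\in X$ and the freedom in the orientation $\tilde s$, condition ii) at $s$ is equivalent to $f_{K_X}(s)=D$.

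The second ingredient controls the directions in \Cref{MorSch}: \emph{every outer normal defining a regular slab of $K_X$ also defines a regular slab of $C_{\m}$.} If $y$ is a smooth boundary point of $K_X$ with normal $s$, then, $X$ being compact, $y$ lies on some binding ball $x+D\,C_{\m}$ with $x\in X$; since $K_X\subset x+D\,C_{\m}$, every supporting hyperplane of $x+D\,C_{\m}$ at $y$ also supports $K_X$ at $y$, and smoothness of $K_X$ at $y$ forces there to be only one such hyperplane. Thus $(y-x)/D$ is a smooth boundary point of $C_{\m}$ with normal $s$, i.e.\ $s$ defines a regular slab of $C_{\m}$.

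These pieces assemble into the two implications. For $ii)\Rightarrow i)$, the first paragraph yields $f_{K_X}(s)=D$ for every $s$ defining a regular slab of $C_{\m}$, hence for every regular slab of $K_X$ by the inclusion just proved; it remains to see $D_{\m}(K_X,C)=D$. Taking a diametral pair $u,v\in K_X$, the contact point $z:=(u-v)/D_{\m}(K_X,C)\in\bd(C_{\m})$ satisfies $h_{C_{\m}}(s')=z^Ts'$ and $h_{K_X}(s')+h_{-K_X}(s')\ge(u-v)^Ts'=D_{\m}(K_X,C)\,h_{C_{\m}}(s')$ for every $s'$ in its normal cone, so $f_{K_X}(s')=D_{\m}(K_X,C)$ there; picking such an $s'$ that defines a regular slab of $C_{\m}$ and invoking ii) gives $D_{\m}(K_X,C)=D$. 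Now \Cref{MorSch} shows $K_X$ complete, hence a completion. For $i)\Rightarrow ii)$, a completion $K_X$ has $D_{\m}(K_X,C)=D$ and is complete; given $s$ defining a regular slab of $C_{\m}$, the smooth contact point of $C_{\m}$ together with completeness produces a $D$-chord of $K_X$ running through the top of a binding ball centred at a point $p\in X$, in a suitable orientation $\tilde s$, which is exactly the configuration in ii).

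I expect the main obstacle to be the interface between the two regularity notions and the membership $p\in X$. Concretely, in $ii)\Rightarrow i)$ one must ensure the normal cone of the contact point $z$ contains a direction defining a regular slab of $C_{\m}$ --- immediate when $z$ is smooth or $C_{\m}$ is a polytope, but requiring a general-position argument in full generality. In $i)\Rightarrow ii)$ one must show that the binding ball realising the top of $K_X$ in the chosen smooth direction is centred at a point of $X$ and that this centre is simultaneously extreme in the opposite direction; this is where smoothness of $C_{\m}$ at the contact point (to pin down the normal) and completeness of $K_X$ (to force the chord to have full length $D$) are both essential, and where the choice of orientation $\tilde s\in\set{s,-s}$ matters.
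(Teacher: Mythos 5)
Your framework is the right one (breadth identities for $K_X$ plus \Cref{MorSch}), and your argument that every direction defining a regular slab of $K_X$ also defines a regular slab of $C_{\m}$ --- via a binding ball at a smooth boundary point --- is a correct elaboration of a step the paper only asserts. But the two ``obstacles'' you flag at the end are not technical loose ends; they are genuine gaps, and they sit exactly where the content of the lemma lies. For $ii)\Rightarrow i)$, your route to $D_{\m}(K_X,C)=D$ needs a direction $s'$ in the normal cone of the contact point $z$ of a diametral pair of $K_X$ that defines a regular slab of $C_{\m}$. This can fail: if $z$ is not a smooth point of $C_{\m}$, then (at least in the plane) any $s'$ in the relative interior of the normal cone of $z$ exposes only the face $\set{z}$, and by $0$-symmetry the opposite bounding hyperplane of that slab touches $C_{\m}$ only in $-z$, which is equally non-smooth; so no such $s'$ defines a regular slab. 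The paper closes this hole not with \Cref{MorSch} but with a second fact quoted from Moreno and Schneider in its proof: the diameter of any body with respect to $C_{\m}$ is the supremum of the breadths $b_{s}(\cdot,C_{\m})$ taken only over directions $s$ that define regular slabs of $C_{\m}$. With that fact, condition ii) immediately forces $D_{\m}(K_X,C)=D$; without it (or a density-of-smooth-points argument you do not give), your step is incomplete.

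For $i)\Rightarrow ii)$ the gap is worse. Your preliminary ``equivalence'' only produces a witness $p_0\in K_X$, not $p_0\in X$, and your closing paragraph simply asserts that completeness ``produces'' a $D$-chord through the top of a binding ball centred at a point of $X$. That assertion is the entire difficulty: if ii) could be replaced by the breadth condition $f_{K_X}(s)=D$, the lemma would be a trivial corollary of \Cref{MorSch} and the set $X$ would play no role in the statement. The paper proves this direction by an explicit contradiction computation which your sketch does not replace: using that $X$ is closed, pick $p^1,p^2\in X$ with $h_{K_X}(s)=(p^1)^Ts+h_{DC_{\m}}(s)$ and $h_{K_X}(-s)=(p^2)^T(-s)+h_{DC_{\m}}(-s)$; if no $p$ as in ii) exists, then $(p^2)^Ts<(p^1)^Ts+h_{DC_{\m}}(s)$, and substituting this into the breadth gives
\begin{equation*}
  b_{s}(K_X,C_{\m})=\frac{(p^1)^Ts+h_{DC_{\m}}(s)+(p^2)^T(-s)+h_{DC_{\m}}(-s)}{h_{C_{\m}}(s)}
  >\frac{h_{DC_{\m}}(-s)}{h_{C_{\m}}(s)}=D,
\end{equation*}
contradicting $D_{\m}(K_X,C)=D_{\m}(K,C)$. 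Nothing in your proposal supplies this (or any other) argument placing the witness in $X$, so as written the direction $i)\Rightarrow ii)$ is unproven.
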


\begin{proof}
  Let us abbreviate $D:=D_{\m}(K,C)$ for the proof.
  $ii)\Rightarrow i)$: In \cite{MorenoSchneider} it is shown that the diameter is the supremum of the breadthes $b_{s}(K_X,C_{\m})$ where $s$ defines a regular slab of $C_{\m}$. For any such $s$ and $p \in X$ as defined in $ii)$ we have
  \begin{equation*}
    \begin{split}
      b_{s}(K_X,C_{\m})&=\frac{h_{K_X}(s)+h_{K_X}(-s)}{h_{C_{\m}}(s)}
                       =\frac{h_{p+D C_{\m}}(\tilde{s})+p^T(-\tilde{s})}{h_{C_{\m}}(\tilde{s})} \\
                     &=\frac{p^T\tilde{s}+D h_{C_{\m}}(\tilde{s})+p^T(-\tilde{s})}{h_{C_{\m}}(\tilde{s})}
                       =D.
    \end{split}
  \end{equation*}
  Thus $D_{\m}(K_X,C)=D$. 
  To show completeness of $K_X$ using \Cref{MorSch}, we need that all the regular slabs of $K_X$ are of diametral breadth. However, by the construction of $K_X$, every $s$ which defines a regular slab of $K_X$ also defines a regular slab of $C_{\m}$.

$i)\Rightarrow ii)$: Assume $K_X$ is a completion of $K$ and there exists some $s\in \R^n\setminus\set{0}$ that defines a regular slab such that there is no $p$ as defined in $ii)$. By the construction of $K_X$ there exist $p^1,p^2\in X$ such that $h_{K_X}(s)=(p^1)^T s+h_{D C_{\m}}(s)$ and $h_{K_X}(-s)=(p^2)^T(-s)+h_{D C_{\m}}(-s)$. By our assumption $(p^2)^Ts< (p^1)^Ts + h_{D C_{\m}}(s)$, otherwise we could choose $p=p^2$. Then,
\begin{equation*}
  \begin{split}
    b_{s}(K_X,C_{\m})&=\frac{h_{K_X}(s)+h_{K_X}(-s)}{h_{C_{\m}}(s)}
                       = \frac{(p^1)^T s + h_{D C_{\m}}(s)+(p^2)^T(-s)+h_{D C_{\m}}(-s)}{h_{C_{\m}}(s)}\\
                     &> \frac{(p^2)^T s+(p^2)^T(-s)+h_{D C_{\m}}(-s)}{h_{C_{\m}}(s)}
                       =\frac{h_{D C_{\m}}(-s)}{h_{C_{\m}}(s)}
                       =D = D_{\m}(K,C).
    \end{split}
\end{equation*}
which implies that $K_X$ is not a completion of $K$.
\end{proof}

Now, we consider the special case where $X$ is a simplex.

\begin{definition}
  We say that a subset $X\subset K \in \CC^n$ is a \cemph{red}{diametric simplex} of $K$ if
  \begin{enumerate}[i)]
  \item $X$ is a simplex, and
  \item $D_{\m}([x,y],K)=D_{\m}(K,C)$ for all pairs of vertices $x,y$ of $X$.
  \end{enumerate}
\end{definition}

\begin{lemma}
\label{diamtriagle2dim}
Let $X$ be a diametric triangle of $K\in \CC^2$. Then, $K_X$ is the unique completion of $K$. As a consequence, any triangle $T$ for which $X=T$ is diametric has a unique completion.
\end{lemma}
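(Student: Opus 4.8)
The plan is to show that $K_X$ is itself a completion of $K$; by \Cref{remarkXcompletion} this is exactly what forces uniqueness. Write $v^1,v^2,v^3$ for the vertices of $X$ and set $D:=D_{\m}(K,C)$. Since for a segment $D_{\m}([x,y],C)=D_{\AM}([x,y],C_{\m})=2R\!\left(\frac{[x,y]-[x,y]}{2},C_{\m}\right)=\cnorm{x-y}{C_{\m}}$ (using that $C_{\m}$ is $0$-symmetric), the diametric condition reads $\cnorm{v^i-v^j}{C_{\m}}=D$ for all $i\neq j$. The same identity gives $\cnorm{x-y}{C_{\m}}=D_{\m}([x,y],C)\le D$ for all $x,y\in K$, so $K\subset v^i+DC_{\m}$ for each $i$ and hence $K\subset K_X$; moreover, as $C_{\m}=-C_{\m}$, one has $z\in\bigcap_{x\in X}(x+DC_{\m})$ iff $X\subset z+DC_{\m}$ iff $z\in\bigcap_i(v^i+DC_{\m})$, so $K_X=\bigcap_{i=1}^3(v^i+DC_{\m})$. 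Because $K\subset K_X$ and the diameter is monotone (\Cref{contandfactor}), $D_{\m}(K_X,C)\ge D$; I will show that in fact every breadth of $K_X$ equals $D$, so that $K_X$ has constant width $D$, is therefore complete by \Cref{remarkcwcompl}, and, containing $K$ with the same diameter, is a completion of $K$.

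First I would record the boundary structure of $K_X$. The triangle is nondegenerate: collinearity of the $v^i$ would force one pairwise distance to be the sum of the other two, i.e.\ $D=2D$. For each $i$ the two other vertices lie on $\bd(v^i+DC_{\m})$ while $v^i$ lies in its interior, so $\bd K_X$ decomposes into three arcs $\beta_i\subset\bd(v^i+DC_{\m})$, the arc $\beta_i$ joining the two vertices $\neq v^i$, and these arcs meet precisely at the corners $v^1,v^2,v^3$. Denote by $A_i\subset\R^n\setminus\set{0}$ the set of outer normals of $K_X$ realized along $\beta_i$, and by $V_i$ the normal cone of $K_X$ at $v^i$; since the active constraints at $v^i$ are $v^j+DC_{\m}$ and $v^k+DC_{\m}$, we have $V_i=\pos(N(v^i-v^j)\cup N(v^i-v^k))$, where $N(w)$ denotes the set of outer normals at the boundary point $w\in\bd(DC_{\m})$.

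The crux is the identity $A_i=-V_i$. Put $a:=v^2-v^1$ and $b:=v^3-v^1$. Then $\pm a,\pm b,\pm(a-b)$ all have $C_{\m}$-norm $D$ (note $a-b=v^2-v^3$), so they lie on $\bd(DC_{\m})$, and they satisfy the affine-regular hexagon relations $p_{\ell+2}=p_{\ell+1}-p_{\ell}$; hence they appear in the cyclic order $a,b,b-a,-a,-b,a-b$ around $\bd(DC_{\m})$, and $\beta_1-v^1$ is the arc between the consecutive vertices $a$ and $b$. Using $N(-w)=-N(w)$ we get $-V_1=\pos(N(a)\cup N(b))$, so $A_1=-V_1$ amounts to $\bigcup_{w\in\beta_1-v^1}N(w)=\pos(N(a)\cup N(b))$, which holds exactly when the outer normal turns by less than $\pi$ along $\beta_1-v^1$. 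This turning bound is where I expect the real work to lie, and it is forced by central symmetry: the six hexagon arcs split into three antipodal pairs of equal normal-turning, and the total turning around $\bd(DC_{\m})$ is $2\pi$, so the three distinct turnings sum to $\pi$ and each is strictly below $\pi$. Care is needed here when $C_{\m}$ is neither smooth nor strictly convex, since ``support on $\beta_i$'' and the normal cones must then be read off the faces of $C_{\m}$; the symmetry/turning argument is designed precisely to sidestep a direct case analysis on the faces.

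Finally I would assemble the width computation. From $A_i=-V_i$ the six Gauss regions pair up antipodally. For $s\in A_i$ the support of $K_X$ in direction $s$ sits on $\beta_i\subset\bd(v^i+DC_{\m})$, whence $h_{K_X}(s)=(v^i)^Ts+D\,h_{C_{\m}}(s)$, while $-s\in V_i$ yields $h_{K_X}(-s)=-(v^i)^Ts$; therefore
\[
\frac{h_{K_X}(s)+h_{K_X}(-s)}{h_{C_{\m}}(s)}=D,
\]
and the case $s\in V_i$ is symmetric. Thus all breadths of $K_X$ equal $D$, so $K_X$ is of constant width $D$, hence complete, and, containing $K$ with equal diameter, a completion of $K$; by \Cref{remarkXcompletion} it is the unique one. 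Applying this with $K=T$, for which $T$ is a diametric triangle of itself, gives the stated consequence.
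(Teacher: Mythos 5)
Your strategy is, at its core, the same as the paper's own: the paper also reduces $K_X$ to $\bigcap_{i}(v^i+DC_{\m})$, uses the six differences $\pm(v^i-v^j)$ sitting on $\bd(DC_{\m})$ and the resulting decomposition of $\bd K_X$ into three arcs, and then checks that every direction is served by an opposite vertex--arc pair; the paper packages this as condition ii) of \Cref{completion1}, while your constant-width formulation via \Cref{remarkcwcompl} together with your final breadth computation simply re-derives the implication ii)$\Rightarrow$i) of \Cref{completion1}. The only substantive difference is how the covering of directions is justified: the paper fixes one outer normal $a_k$ per antipodal pair and uses that the six cones $\pos(\set{a_i,-a_j})$ cover the plane, whereas you assert $A_i=-V_i$ and support it by a normal-turning count.

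It is exactly at that point --- which you yourself flag as the crux --- that your argument has a genuine gap. First, ``the three distinct turnings sum to $\pi$, so each is strictly below $\pi$'' is a non sequitur: three nonnegative numbers summing to $\pi$ may include one equal to $\pi$ with the other two equal to $0$, and zero turnings do occur (e.g.\ when $C_{\m}$ is a polygon and an arc lies inside a single edge). To exclude the distribution $(\pi,0,0)$ you must use the hexagon relations, not central symmetry alone: if the arcs from $b$ to $b-a$ and from $b-a$ to $-a$ both have zero turning and the junction at $b-a$ is smooth, then $b$, $b-a$, $-a$ are collinear, forcing $(b-a)-b=-a$ and $(-a)-(b-a)=-b$ to be parallel, i.e.\ $a\parallel b$, contradicting the nondegeneracy of the triangle. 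You establish nondegeneracy at the outset but never feed it back in here, and without it the strictness does not follow. Second, in the non-smooth case that your parenthetical remark claims to sidestep, the bookkeeping itself changes: what must stay below $\pi$ for $A_i=-V_i$ is the full angular span of $\set{s: h_{K_X}(s)=h_{v^i+DC_{\m}}(s)}$, which includes the normal cones $N(a)$, $N(b)$ of $DC_{\m}$ at the arc's endpoints; these cones are shared with the adjacent arcs, so the three spans sum not to $\pi$ but to $\pi$ plus the widths of the cones at $a$, $b$, $b-a$, and one needs the refined estimate (span of arc $(a,b)$) $\le \pi-|N(b-a)|$, obtained by subtracting the endpoint cones of the other two arcs, before the degeneracy argument above closes the case. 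Both defects are repairable --- the identity $A_i=-V_i$ is true --- but as written the turning argument fails precisely in the non-smooth situation it was designed to handle, so the proof is incomplete at its central step.
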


\begin{proof}
  We show that property $ii)$ of \Cref{completion1} is fulfilled. Assume \Wlog that $D_{\m}(X,C)=D_{\m}(K,C)=1$ and let $X=\conv \left(\set{p^1,p^2,p^3}\right)$.
  Then, the translations $-p^i+X$ with  $i\in\set{1,2,3}$ are subsets of $C_{\m}$, all  with one vertex in the origin and the other two on the boundary of $C_{\m}$ (\cf~\Cref{diamtrianglefig}).
  Since $p_i-p_j$ and $p_j-p_i$ are each other's negative, we have three pairs of points in the boundary of $C_{\m}$. For each, we choose an outer normal $a_k$, $k\in\set{1,2,3}$ ordered as given in \Cref{diamtrianglefig}. Now, the boundary of $K_X=K_{\ext(X)}$ consists of three parts which are built by parts of the boundary of $C_{\m}$ (colored in blue in the left part of \Cref{diamtrianglefig}).
Then, if $s\in \pos(\set{a_i,-a_j})$, $i\neq j$, property $ii)$ of \Cref{completion1} holds for $K_X$ with $p=p_k$, $k\neq i,j$. Hence, for all $s\in \R^n\setminus\set{0}$, property $ii)$ is fulfilled and it follows that $K_X$ is a completion.
Using \Cref{remarkXcompletion} we obtain the uniqueness.
\end{proof}

\begin{figure}[ht]
    \centering
    \begin{minipage}[c]{.4\linewidth} 
        \begin{tikzpicture}[scale=2.3]
            \tkzDefPoint(0,1){v1}
            \tkzDefPoint(-0.86602,-0.5){v2}
            \tkzDefPoint(0.86602,-0.5){v3}
            \tkzDefPoint(0,-1){v1m}
            \tkzDefPoint(0.86602,0.5){v2m}
            \tkzDefPoint(-0.86602,0.5){v3m}
             \tkzDrawPolygon[black, thick](v1,v2m,v3,v1m,v2,v3m);
             \tkzDefPoint(0.43301,0.75){w1}
             \tkzDrawPoint[radius=0.4pt](w1)
             \tkzDefPoint(0.86602,0){w2}
             \tkzDrawPoint[radius=0.4pt](w2)
             \tkzDefPoint(0.43301,-0.75){w3}
             \tkzDrawPoint[radius=0.4pt](w3)
             \tkzDefPoint(-0.43301,-0.75){w4}
             \tkzDrawPoint[radius=0.4pt](w4)
             \tkzDefPoint(-0.43301,0.75){w6}
             \tkzDrawPoint[radius=0.4pt](w6)
             \tkzDefPoint(-0.86602,0){w5}
             \tkzDrawPoint[radius=0.4pt](w5)
             \tkzDefPoint(0,0){z}
             \tkzDrawPolygon[black, thick](z,w1,w2);
             \tkzDrawPolygon[black, thick](z,w3,w4);
             \tkzDrawPolygon[black, thick](z,w5,w6);
             \tkzDrawPolySeg[blue,thick](w1,v2m,w2)
             \tkzDrawPolySeg[blue,thick](w3,v1m,w4)
             \tkzDrawPolySeg[blue,thick](w5,v3m,w6)
             \tkzDefPoints{1/0/a1,-0.5/-0.86602/a2,-0.5/0.86602/a3,-1/0/a1m,0.5/0.86602/a2m,0.5/-0.86602/a3m}
             \tkzDrawSegments[vector style](w1,a2m w2,a1 w3,a3m w4,a2 w5,a1m w6,a3)
             \tkzLabelSegment[right,pos=1.0](w2,a1){$a^1$}
             \tkzLabelSegment[left,pos=1.0](w5,a1m){$-a^1$}
             \tkzLabelSegment[above right,pos=1.0](w1,a2m){$-a^3$}
             \tkzLabelSegment[below left,pos=1.0](w4,a2){$a^3$}
             \tkzLabelSegment[below right,pos=1.0](w3,a3m){$-a^2$}
             \tkzLabelSegment[above left,pos=1.0](w6,a3){$a^2$}
             \tkzLabelSegment[below right, pos=0.3](w6,w1){$p^1-p^2$}
             \tkzLabelSegment[above right,pos=0.25](z,w2){$p^3-p^2$}
             \tkzLabelSegment[above left, pos=0.3](w3,w4){$p^2-p^1$}
             \tkzLabelSegment[above left,pos=0.7](w5,z){$p^2-p^3$}
        \end{tikzpicture}
    \end{minipage}
    \hspace{.1\linewidth}
    \begin{minipage}[c]{.4\linewidth} 
        \begin{tikzpicture}[scale=2]
        \tkzDefPoint(0,1){v1}
        \tkzDefPoint(-0.86602,-0.5){v2}
        \tkzDefPoint(0.86602,-0.5){v3}
        \tkzDefPoint(0,-1){v1m}
            \tkzDefPoint(0.86602,0.5){v2m}
            \tkzDefPoint(-0.86602,0.5){v3m}
            \tkzLabelPoint[above](v1){$p^1$}
            \tkzLabelPoint[below left](v2){$p^2$}
            \tkzLabelPoint[below right](v3){$p^3$}
            \tkzLabelPoint[below](v1m){$p^1+ DC_{\m}$}
            \tkzLabelPoint[above right](v2m){$p^2+DC_{\m}$}
            \tkzLabelPoint[above left](v3m){$p^3+DC_{\m}$}
        \tkzDrawPolygon[black, thick](v1,v2,v3);
        \tkzDrawPolygon[blue, thick](v1,v2m,v3,v1m,v2,v3m);
    \end{tikzpicture}
    \end{minipage}
    \caption{If $K$ contains a diametric triangle, its completion is constructed similar to the Reuleaux triangle in the euclidean case since it suffices to consider the extreme points, \ie~the vertices of a diametric triangle. }
    \label{diamtrianglefig}
 \end{figure}
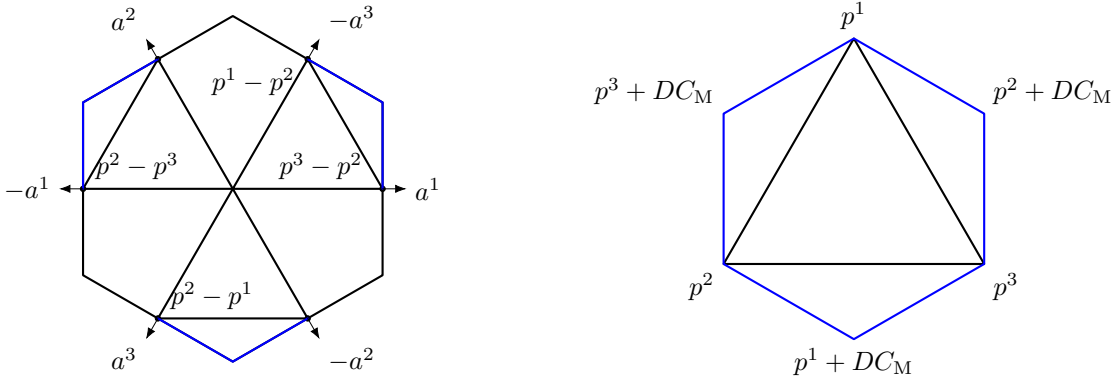

\label{subsecineqs}
From the containment chain in \Cref{prop:firey-chain} we know
\begin{equation*}
  D_{\MAX}(K,C)\leq D_{\AM}(K,C)\leq D_{\HM}(K,C)\leq D_{\MIN}(K,C)
\end{equation*}

The containment factors between the symmetrizations of the gauge can be used to improve this chain and to formulate new inequalities.

\begin{lemma} \label{lem:basicineq}
  \begin{enumerate}[i)]
  \item 
    $\displaystyle \rho_{\m}D_{\AM}(K,C)\leq D_{\m}(K,C)\leq\delta_{\m}D_{\AM}(K,C)$,
  \item  
    $\displaystyle \delta_{\m} r(K,C) \leq \frac{D_{\m}(K,C)}{2}$,
  \item
    $\displaystyle \frac{D_{\m}(K,C)}{2} \leq \delta_{\m} R(K,C)$,
  \item
    $\displaystyle \rho_{\m}(s(C)r(K,C)+R(K,C)) \leq (s(C)+1)\frac{D_{\m}(K,C)}{2}$, and
    \item
    $\displaystyle r(K,C)+R(K,C) \leq R(C_{\m},C) D_{\m}(K,C)$.
  \end{enumerate}
\end{lemma}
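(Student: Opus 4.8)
The plan is to settle parts i)--iii) by pure monotonicity/homogeneity arguments, and to obtain the two genuinely geometric inequalities iv) and v) from a single ``master inequality'' extracted from the optimal circumscription of $K$ via \Cref{opt}.

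For i) I would only use that the circumradius is decreasing in its second argument, together with the containments $\rho_{\m}C_{\m}\optc C_{\AM}\optc\delta_{\m}C_{\m}$ from the notation. Since $D_{\m}(K,C)=2R(K_{\AM},C_{\m})$ and $D_{\AM}(K,C)=2R(K_{\AM},C_{\AM})$, the inclusions $\rho_{\m}C_{\m}\subset C_{\AM}\subset\delta_{\m}C_{\m}$ combined with the homogeneity $R(K,\lambda C)=\lambda^{-1}R(K,C)$ give exactly $\rho_{\m}D_{\AM}(K,C)\le D_{\m}(K,C)\le\delta_{\m}D_{\AM}(K,C)$. For ii) and iii) I would use that $D_{\m}$ is increasing, translation invariant and positively homogeneous of degree $1$ in its first argument (\Cref{contandfactor}) together with $D_{\m}(C,C)=2\delta_{\m}$ from \Cref{CDiam}: from an inscribed body $t+r(K,C)C\subset K$ one gets $D_{\m}(K,C)\ge D_{\m}(r(K,C)C,C)=2\delta_{\m}r(K,C)$, and from a circumscribed body $K\subset t+R(K,C)C$ one gets $D_{\m}(K,C)\le D_{\m}(R(K,C)C,C)=2\delta_{\m}R(K,C)$.

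The core tool for iv) and v) is a master inequality. Abbreviating $R:=R(K,C)$, $r:=r(K,C)$, $D:=D_{\m}(K,C)$ and translating $K$ so that $K\optc RC$ (legitimate, as all three functionals are translation invariant in the first argument), \Cref{opt} provides contact points $p^i\in\bd(RC)$ with outer normals $a^i$ and coefficients $\lambda_i\ge 0$, $\sum_i\lambda_i=1$, with $\sum_i\lambda_i a^i=0$; at each $p^i$ one has $h_K(a^i)=R\,h_C(a^i)$. The diametral characterization $D/2=R(K_{\AM},C_{\m})=\max_u\frac{h_K(u)+h_K(-u)}{2h_{C_{\m}}(u)}$ yields $h_K(a^i)+h_K(-a^i)\le D\,h_{C_{\m}}(a^i)$, while an inscribed body $c+rC\subset K$ yields $h_K(-a^i)\ge -c^Ta^i+r\,h_C(-a^i)$. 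Substituting the latter into the former and taking the $\lambda_i$-weighted sum, the term $c^T(\sum_i\lambda_i a^i)=0$ drops out, so with $A:=\sum_i\lambda_i h_C(a^i)$, $B:=\sum_i\lambda_i h_C(-a^i)$ and $E:=\sum_i\lambda_i h_{C_{\m}}(a^i)$ (all positive since $0\in\inte C$) I obtain $R\,A+r\,B\le D\,E$.

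From this, v) follows quickly: using an optimal circumscription $C_{\m}\subset t^*+R(C_{\m},C)C$ and its reflection gives the two bounds $h_{C_{\m}}(a^i)\le (t^*)^Ta^i+R(C_{\m},C)h_C(a^i)$ and $h_{C_{\m}}(a^i)=h_{C_{\m}}(-a^i)\le -(t^*)^Ta^i+R(C_{\m},C)h_C(-a^i)$, whose $\lambda_i$-weighted sums (again cancelling the $t^*$-term) give $E\le R(C_{\m},C)\min(A,B)$; combined with $(R+r)\min(A,B)\le R\,A+r\,B\le D\,E$ and division by $\min(A,B)>0$ this is $r+R\le R(C_{\m},C)\,D$. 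For iv) I would first take $\m=\AM$, where $h_{C_{\AM}}(u)=\tfrac12(h_C(u)+h_C(-u))$ forces $E=\tfrac12(A+B)$, so the master inequality reads $\frac{R\,A+r\,B}{A+B}\le\frac{D_{\AM}(K,C)}{2}$; Minkowski-centeredness gives $h_C(-u)\le s(C)h_C(u)$ and $h_C(u)\le s(C)h_C(-u)$, hence $B\le s(C)A$, so the weight $\frac{B}{A+B}$ on the smaller value $r$ is at most $\frac{s(C)}{s(C)+1}$; since $r\le R$, the weighted average is at least $\frac{R+s(C)r}{s(C)+1}$, giving $s(C)r+R\le(s(C)+1)\frac{D_{\AM}(K,C)}{2}$, which I then multiply by $\rho_{\m}$ and combine with the left inequality $\rho_{\m}D_{\AM}(K,C)\le D_{\m}(K,C)$ of part i) to reach the stated bound. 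The routine steps are i)--iii); the main obstacle is setting up the master inequality so that the position-dependent terms cancel, which is precisely why one sums against the barycentric coefficients of the optimal circumscription, and, for iv), controlling the weighted average by the Minkowski asymmetry, where $r\le R$ and the two-sided ratio bound $A/B\in[s(C)^{-1},s(C)]$ are exactly what make the monotonicity of the average work.
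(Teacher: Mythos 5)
Your proof is correct, and for parts i)--iii) it is essentially the paper's own argument: monotonicity and homogeneity (\Cref{contandfactor}), the containments $\rho_{\m}C_{\m}\optc C_{\AM}\optc\delta_{\m}C_{\m}$, and $D_{\m}(C,C)=2\delta_{\m}$, $D_{\m}(r(K,C)C,C)\le D_{\m}(K,C)\le D_{\m}(R(K,C)C,C)$ from \Cref{CDiam}. Where you genuinely diverge is in iv) and v). The paper proves iv) by quoting $s(C)r(K,C)+R(K,C)\le\frac{s(C)+1}{2}D_{\AM}(K,C)$ from \cite[Theorem 1.1]{CompleteSimpl} and transferring it to $D_{\m}$ via part i) (exactly your final step), and proves v) by applying the classical Minkowski-space inequality $r+R\le D$ to the symmetric gauge $C_{\m}$ together with the transfer estimates $r(K,C_{\m})\ge r(K,C)/R(C_{\m},C)$ and $R(K,C_{\m})\ge R(K,C)/R(C_{\m},C)$. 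You instead re-derive both ingredients from \Cref{opt}: your master inequality $R\,A+r\,B\le D\,E$, obtained by pairing the contact normals with the barycentric coefficients $\lambda_i$ so that every position-dependent term cancels, specializes for $0$-symmetric $C$ (where $A=B=E$) to the classical $r+R\le D$, gives v) after the estimate $E\le R(C_{\m},C)\min(A,B)$, and for $\m=\AM$ reproves the cited theorem of \cite{CompleteSimpl} via $B\le s(C)A$ (Minkowski-centeredness) and $r\le R$. So the paper's proof is shorter but leans on two external results, while yours is self-contained, makes explicit why translation terms disappear, and unifies iv) and v) under one mechanism; the AM-to-$\m$ transfer via part i) is identical in both. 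One pedantic point: invoking \Cref{opt} requires $R(K,C)>0$, so you should dispatch the trivial case where $K$ is a single point (all three functionals vanish) separately.
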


\begin{proof}
  \begin{enumerate}[i)]
  \item Follows directly from \Cref{contandfactor} and \Cref{contsym}.
  \item Since $r(K,C)C$ is contained in a translate of $K$, we obtain
    \begin{equation*}
      \delta_{\m} r(K,C) = \frac12 D_{\m}(C,C) r(K,C) = \frac 12 D_{\m}(r(K,C)C,C) \leq \frac{D_{\m}(K,C)}{2}.
    \end{equation*}
    \item Follows directly from \Cref{CDiam} and \Cref{contandfactor}.
  \item By \cite[Theorem 1.1]{CompleteSimpl} we have
    \[s(C)r(K,C)+R(K,C) \leq \frac{s(C)+1}{2}D_{\AM}(K,C)\]
    and \[(s(C)+1)\frac{D_{\AM}(K,C)}{2} \leq (s(C)+1)\frac{D_{\m}(K,C)}{2\rho_{\m}}\]
    follows directly from part $i)$.
    \item For the symmetrization $C_{\m}$ it holds $r(K,C_{\m})+R(K,C_{\m}) \leq D(K,C_{\m})$. Thus,
    \begin{equation*}
      D_{\m}(K,C)\geq r(K,C_{\M})+ R(K,C_{\m}) \geq \frac{1}{R(C_{\m},C)} \left(r(K,C)+R(K,C)\right).
    \end{equation*}
  \end{enumerate}
\end{proof}

We would like to describe the values the inradius, circumradius, and diameter of sets $K\in\CC^n$ may have, when we consider a fixed, Minkowski-centered $C\in \CC^n_0$. To do so, we study the following Blaschke-Santaló diagrams.
\begin{definition}
Let $f_{\m}$ be the following mapping.
\begin{equation}
    f_{\m}: \bar{\CC}^n\times\CC^n_0 \to \R^2, \: f_{\m}(K,C)=\left(\frac{r(K,C)}{R(K,C)}, \frac{D_{\m}(K,C)}{2R(K,C)}\right)
\end{equation}
The set $f_{\m}(\bar{\CC}^n,C)$ is called the \cemph{red}{Blaschke-Santaló diagram} for the inradius, circumradius,
and diameter (depending on the respective definitions) with regard to the gauge $C$ -- the ($r,R,D_{\m}$)-diagram.
\end{definition}
As for the diameter we only write $f$ if the gauge is symmetric.
In \cite{ngons}  $f_{\AM}(\bar{\CC}^2,S)$ is described and it is shown that this diagram is equal to the union of the diagrams over all possible gauges.

\begin{proposition}
  \label{prop:diagramAM}
  For every triangle $S\in\CC^2$, the diagram $f_{\AM}(\bar{\CC}^2,S)$ is fully described by the  inequalities
  \begin{align*}
    D_{\AM}(K,S)&\leq 2R(K,C)\\
    4r(K,S)+2R(K,S)&\leq 3 D_{\AM}(K,S)\\
    \frac{D_{\AM}(K,C)}{2R(K,C)}\left(1-\frac{D_{\AM}(K,C)}{2R(K,C)}\right) &\leq \frac{r(K,C)}{R(K,C)}.
  \end{align*}
  Moreover,  $f_{\AM}(\bar{\CC}^2,S)=f_{\AM}(\bar{\CC}^2,\CC^2_0)$.
\end{proposition}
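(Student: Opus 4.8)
The plan is to pass to the normalized coordinates $(x,y):=\left(\frac{r(K,S)}{R(K,S)},\frac{D_{\AM}(K,S)}{2R(K,S)}\right)$, in which the three asserted inequalities read $y\le 1$, $3y\ge 2x+1$, and $y(1-y)\le x$; call the region they cut out $\mathcal D$. The argument then splits into three parts: (a) the three inequalities are valid for every $K$ (and, with a small refinement, for every planar gauge); (b) when $S$ is a triangle every point of $\mathcal D$ is attained; and (c) the identity $f_{\AM}(\bar{\CC}^2,S)=f_{\AM}(\bar{\CC}^2,\CC^2_0)$. Parts (a) and the easy inclusion of (c) follow from the machinery already set up, whereas (b) is the substantive content, for which I would lean on \cite{ngons}.

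For (a), note first that $C_{\AM}$ is always its own arithmetic-mean symmetrization, so $\rho_{\AM}=\delta_{\AM}=1$ for any gauge. Hence the first inequality $D_{\AM}(K,S)\le 2R(K,S)$ is precisely \Cref{lem:basicineq}~iii), and, inserting $s(S)=n=2$ for a planar triangle, \Cref{lem:basicineq}~iv) becomes $2r(K,S)+R(K,S)\le \frac32 D_{\AM}(K,S)$, i.e.\ the second inequality $4r+2R\le 3D_{\AM}$. I would then record the monotonicity fact that for $x\le 1$ the map $s\mapsto \frac{sx+1}{s+1}$ is nonincreasing on $[1,2]$; since $s(C)\in[1,2]$ for every $C\in\CC^2_0$, \Cref{lem:basicineq}~iv) yields $y\ge \frac{s(C)x+1}{s(C)+1}\ge \frac{2x+1}{3}$, so the second inequality holds for \emph{all} planar gauges, with the triangle giving the weakest (hence extremal) bound. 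The third inequality $y(1-y)\le x$ is the genuinely new relation; I would derive it by applying the optimal-containment characterization \Cref{opt} to $K\optc R(K,S)\,S$ and estimating the inradius from below in terms of the diametral chord, or else quote the corresponding inequality of \cite{ngons}.

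For (b) I would fix a triangle $S$, whose difference body $C_{\AM}=\frac{S-S}{2}$ is a centrally symmetric hexagon, and exhibit explicit extremizers along each boundary arc of $\mathcal D$: segments and \enquote{Reuleaux-type} completions built from arcs of $C_{\AM}$ as in \Cref{diamtriagle2dim} for the curved boundary $x=y-y^2$, the dilate $K=S$ and thin segments for the straight pieces, and points or segments pinning the corners. The interior is then filled by a connectedness argument: $f_{\AM}(\cdot,S)$ is Hausdorff-continuous, and using the behavior of $r,R,D_{\AM}$ under Minkowski combinations from \Cref{linearityrrRD} one can deform any boundary body continuously across $\mathcal D$, so the image of the connected family of deformations covers the whole simply connected region.

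Finally, for (c) the inclusion $f_{\AM}(\bar{\CC}^2,S)\subseteq f_{\AM}(\bar{\CC}^2,\CC^2_0)$ is immediate because a Minkowski-centered copy of $S$ lies in $\CC^2_0$. The reverse inclusion is exactly the payoff of the monotonicity observation in (a): the three inequalities carve out $\mathcal D$ for every gauge, so $f_{\AM}(\bar{\CC}^2,\CC^2_0)\subseteq\mathcal D$, while (b) gives $\mathcal D\subseteq f_{\AM}(\bar{\CC}^2,S)$; chaining these yields equality throughout. The main obstacle is squarely part (b): proving that inequality three is sharp and constructing a realizing body for each interior point, especially near the parabolic boundary where the extremal shapes are least transparent. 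This is the technical heart handled in \cite{ngons}, which I would cite rather than reprove from scratch.
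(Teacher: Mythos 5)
Your proposal cannot be checked against a proof in the paper, because the paper does not prove this statement: Proposition \ref{prop:diagramAM} is imported verbatim from \cite{ngons}, with no argument given. Measured on its own terms, your plan is correct where it is concrete and defers to \cite{ngons} exactly where the paper does. The derivation of the two linear inequalities is right and non-circular: $\rho_{\AM}=\delta_{\AM}=1$ turns \Cref{lem:basicineq}~iii) into $D_{\AM}\le 2R$, and \Cref{lem:basicineq}~iv) (which rests on \cite{CompleteSimpl}, not on \cite{ngons}) with $s(S)=2$ gives $4r+2R\le 3D_{\AM}$; your monotonicity observation that $s\mapsto\frac{sx+1}{s+1}$ is nonincreasing for $x\le 1$ correctly extends the second inequality to every planar gauge. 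Your filling mechanism in (b) is also the right one for $\m=\AM$: since $b_{s,\AM}(C,C)=2$ for every $s$, the diameter is genuinely linear along $K_t=(1-t)K+tC$ (cf.~\Cref{linearityrrRD} and the proof of \Cref{simpleconn}), which is precisely the star-shapedness with respect to $f_{\AM}(C,C)=(1,1)$ that \cite{ngons} establishes.

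The one soft spot is in your part (c). You assert that \enquote{the three inequalities carve out $\mathcal{D}$ for every gauge}, but part (a) establishes this only for the first two. Your sketch for the parabolic inequality $y(1-y)\le x$ (optimal containment $K\optc R(K,S)S$ plus an inradius estimate) is formulated for a triangular gauge, and nothing in it visibly generalizes; yet the inclusion $f_{\AM}(\bar{\CC}^2,\CC^2_0)\subseteq\mathcal{D}$ needs that inequality for \emph{arbitrary} $C\in\CC^2_0$. That validity for all gauges is exactly the nontrivial half of the union theorem of \cite{ngons} — the paper itself relies on it later when deriving \eqref{eq:upperboundunion} — and it does not follow from your monotonicity argument. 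So either you cite \cite{ngons} for it (in which case your chain $f_{\AM}(\bar{\CC}^2,\CC^2_0)\subseteq\mathcal{D}\subseteq f_{\AM}(\bar{\CC}^2,S)\subseteq f_{\AM}(\bar{\CC}^2,\CC^2_0)$ is valid, but the proposal, like the paper, is ultimately a citation dressed with correct supporting computations), or you owe a genuine proof of that inequality for general gauges together with the boundary realization in (b).
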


The diagrams $f(\bar{\CC}^2,\B^2_2)$ \cite{santalo} (name giving) 
and $f_{\AM}(\bar{\CC}^2,S)$ \cite{ngons} can be seen in \Cref{FigBSSantalo}.

\begin{figure}[ht]
  \centering
\noindent\begin{minipage}[c]{.4\linewidth}
    \begin{tikzpicture}[scale=4]
        \draw[->] (-0.1,0) -- (1.1,0) node[right] {$\frac{r}{R}$};
        \draw[->] (0,-0.1) -- (0,1.2) node[above] {$\frac{D}{2R}$} ;
        \draw[ domain=0:1, smooth, variable=\x] plot ({\x}, {1}) ;
        \draw[ domain=0.73205:1, smooth, variable=\x] plot ({\x}, {0.5*(\x)+0.5});
        \draw[ domain=0.5:0.73205, smooth, variable=\x] plot ({\x}, {0.86602});
        \draw[ domain=0.86602:1, smooth, variable=\y] plot ({((2*(\y)^2)*(sqrt(1-(\y)^2)))/(1+sqrt(1-(\y)^2))},{\y} );
        \draw[ domain=0:0.5, dotted, variable=\x] plot ({\x}, {0.86602}) ;
         \node[label=below left:$\frac{\sqrt{3}}{2}$] (jung) at (0,0.86602) {.};
        \draw[ domain=0:1, dotted, variable=\y] plot ({1}, {\y}) ;
         \node[label=below:$1$] (one) at (1,0) {.};
        \draw[ domain=0:0.86602, dotted, variable=\y] plot ({0.73205}, {\y});
        \node[label=below:$\sqrt{3}-1$] (minus) at (0.73205,0) {.};
         \draw[ domain=0:0.86602, dotted, variable=\y] plot ({0.5}, {\y})
        ;
         \node[label=below left:$\frac{1}{2}$] (max) at (0.5,0) {.};
         \node[label=left:$1$] (delta) at (0,1) {.};
        \node[label=above:$\B^2_2$] (b2) at (1,1) {.};
        \draw[fill=black] (1,1) circle[radius=0.4pt];
        \node[label=above right:$L$] ($L$) at (0,1) {.};
        \draw[fill=black] (0,1) circle[radius=0.4pt];
        \node[label=below left:$T$] (T) at (0.5,0.86602) {.};
        \draw[fill=black] (0.5,0.86602) circle[radius=0.4pt];
         \node[label=below right:$\rt$] (Trot) at (0.73205,0.86602) {.};
        \draw[fill=black] (0.73205,0.86602) circle[radius=0.4pt];
    \end{tikzpicture}
\end{minipage}
\noindent\begin{minipage}[c]{.4\linewidth}
    \begin{tikzpicture}[scale=4]
        \draw[->] (-0.1,0) -- (1.1,0) node[right] {$\frac{r}{R}$};
        \draw[->] (0,-0.1) -- (0,1.2) node[above] {$\frac{D_{\AM}}{2R}$} ;
        \draw[ domain=0:1, smooth, variable=\x] plot ({\x}, {1}) ;
        \draw[ domain=0.25:1, smooth, variable=\x] plot ({\x}, {0.6666*(\x)+0.3333});
        \draw[ domain=0.5:1, smooth, variable=\y] plot ({(\y)*(1-\y)},{\y} );
        \draw[ domain=0:0.25, dotted, variable=\x] plot ({\x}, {0.5}) ;
         \node[label=left:$\frac{1}{2}$] (jung) at (0,0.5) {.};
        \draw[ domain=0:1, dotted, variable=\y] plot ({1}, {\y}) ;
         \node[label=below:$1$] (one) at (1,0) {.};
        \draw[ domain=0:0.5, dotted, variable=\y] plot ({0.25}, {\y});
        \node[label=below:$\frac{1}{4}$] (minus) at (0.25,0) {.};
         \node[label=left:$1$] (delta) at (0,1) {.};
        \node[label=above:$S$] (b2) at (1,1) {.};
        \draw[fill=black] (1,1) circle[radius=0.4pt];
        \node[label=above right:$L$] ($L$) at (0,1) {.};
        \draw[fill=black] (0,1) circle[radius=0.4pt];
        \node[label=below right:$-S$] (sm) at (0.25,0.5) {.};
        \draw[fill=black] (0.25,0.5) circle[radius=0.4pt];
    \end{tikzpicture}

\end{minipage}
    \caption{The $(r,R,D)$-diagram \wrt~$\B^2_2$ (left) and the $(r,R,D_{\AM})$-diagram \wrt~a triangle $S$ (right).}
    \label{FigBSSantalo}
\end{figure}
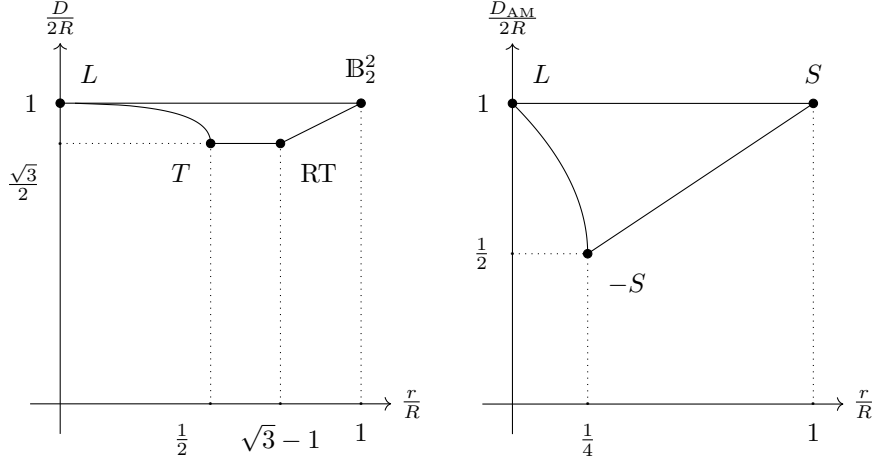

It is shown in \cite{ngons} that $f_{\AM}(\bar{\CC}^n,C)$ is star-shaped with respect to the vertex $f_{\AM}(C,C) = (1,1)$. This means that these diagrams can be fully described by characterizing the boundaries of the set. In the following, we prove similar (slightly weaker, but sufficient for our purposes) results for the other diameters. One may note that all diagrams with respect to triangles that are described in the following chapters are still star-shaped w.r.t.~$f_{\m}(C,C)$.


\begin{lemma}
    \label{simpleconn}
 The diagram $f_{\m}(\bar{\CC}^n,C)$ is closed and if there is a continous description of the outer boundary, it is simply connected.
    \end{lemma}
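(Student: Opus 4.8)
The plan is to prove the two assertions separately, obtaining closedness unconditionally and simple connectivity under the stated hypothesis.

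\emph{Closedness.} First I would exploit that, by homogeneity of degree $+1$ and translation invariance of $r$, $R$ and $D_{\m}$ in their first argument (\Cref{contandfactor}), the value $f_{\m}(K,C)$ depends only on the translation-and-scaling class of $K$. Hence every point of $f_{\m}(\bar{\CC}^n,C)$ is realized by a body with $K\optc C$, i.e.\ $K\subset C$ and $R(K,C)=1$. Given a sequence $(x_k,y_k)\to(x,y)$ in the diagram, I pick such representatives $K_k\optc C$. The family $\set{K\in\CC^n:K\subset C}$ is compact in the Hausdorff metric (Blaschke selection), so after passing to a subsequence $K_k\to K\subset C$. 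Since $R$ is Hausdorff-continuous and $R(K_k,C)=1$, the limit satisfies $R(K,C)=1>0$, so $K$ is not a point and $K\in\bar{\CC}^n$. Continuity of $r$ and $R$, together with continuity of $K\mapsto\frac{K-K}{2}$ and the identity $D_{\m}(K,C)=2R(\frac{K-K}{2},C_{\m})$, then gives $f_{\m}(K_k,C)\to f_{\m}(K,C)=(x,y)$, so the diagram is closed.

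\emph{Compactness and connectivity.} The diagram is bounded: one always has $\frac{r(K,C)}{R(K,C)}\in[0,1]$, while \Cref{lem:basicineq} yields $\delta_{\m}\frac{r(K,C)}{R(K,C)}\le\frac{D_{\m}(K,C)}{2R(K,C)}\le\delta_{\m}$. Together with closedness this makes the diagram compact. It is moreover path-connected: for $K_0,K_1\in\bar{\CC}^n$ the Minkowski interpolation $K_t:=(1-t)K_0+tK_1$ stays in $\bar{\CC}^n$ (a Minkowski combination of non-points is never a point) and $f_{\m}(\cdot,C)$ is continuous, so $t\mapsto f_{\m}(K_t,C)$ is a path in the diagram joining any two of its points.

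\emph{Simple connectivity.} A compact connected planar set is simply connected exactly when it has no holes, i.e.\ when its complement is connected; so under the hypothesis that the outer boundary is a continuous curve it remains to show that the region it encloses is \emph{filled}. For this I would establish star-shapedness with respect to the apex $f_{\m}(C,C)=(1,\delta_{\m})$, using $R(C,C)=r(C,C)=1$ and $D_{\m}(C,C)=2\delta_{\m}$ from \Cref{CDiam}. Given $K\optc C$, the interpolation $K_\lambda=(1-\lambda)K+\lambda C$ satisfies $K_\lambda\subset C$; since $C$ touches $\bd(C)$ in every boundary point, the touching points of $K$ in \Cref{opt} can be kept, so the equality case of \Cref{linearityrrRD} ii) gives $R(K_\lambda,C)\equiv1$, while parts i) and iii) push $r$ toward $1$ and $D_{\m}$ toward $2\delta_{\m}$, producing a controlled path from every diagram point to the apex.

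\emph{Main obstacle.} The delicate point is precisely the filling: \Cref{linearityrrRD} only gives one-sided (inequality) control, so $K_\lambda$ sweeps the shadow below-and-right of the segment $[f_{\m}(K,C),(1,\delta_{\m})]$ rather than the segment itself. Showing that \emph{every} interior point is attained -- equivalently, that the slices of the diagram are full intervals, so that it coincides with the closed region under its continuous outer boundary -- is the hard part, and is exactly the reason the statement is conditioned on a continuous description of that boundary; the complementary inner (lower) boundary is treated separately for the concrete gauges in the later sections.
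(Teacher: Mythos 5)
Your closedness argument is correct and essentially identical to the paper's (representatives with $K\optc C$, Blaschke selection, continuity of $r$, $R$, $D_{\m}$), and your boundedness/path-connectivity observations are fine. The problem is the second half: you never actually prove simple connectivity. Your own ``main obstacle'' paragraph concedes the point -- star-shapedness with respect to $f_{\m}(C,C)$ cannot be extracted from \Cref{linearityrrRD}, because part iii) only gives $D_{\m}(K_\lambda,C)\leq(1-\lambda)D_{\m}(K,C)+\lambda D_{\m}(C,C)$, with equality requiring the diameters to be attained by a common $s$-length or $s$-breadth, which fails for general $K$ when $\m\neq\AM$. So the proposal stops exactly where the lemma's content begins; as written it is a proof of closedness plus a correct diagnosis of why the naive filling argument fails, not a proof of the statement.

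The paper closes this gap without ever needing the curves to be segments or every point of a segment to be attained. It works with the continuous curves $\Gamma_K(t)=f_{\m}((1-t)K+tC,\,C)$, which join each diagram point to $f_{\m}(C,C)$, and proves that $K\mapsto\Gamma_K$ is continuous in the \emph{uniform} sense: the inradius component satisfies $|r(K_t,C)-r(K^n_t,C)|\leq|r(K,C)-r(K^n,C)|$ directly, and the diameter component converges uniformly because $t\mapsto D_{\m}(K^n_t,C)$ is convex and continuous, and pointwise convergence of convex functions on a compact interval to a convex continuous limit is automatically uniform (this is the cited Lemma 21 of the reference on uniform convergence). Then, since the diagram is closed, any hole would be open; taking bodies $K^n\to K$ on the (continuously parametrized) outer boundary whose curves $\Gamma_{K^n}$ pass on one side of the hole while $\Gamma_K$ passes on the other yields a contradiction with uniform convergence. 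This is where the hypothesis of a continuous description of the outer boundary is used -- to produce such a converging family of boundary bodies -- not, as you suggest, to guarantee that the slices of the diagram are intervals. If you want to salvage your write-up, the convexity-implies-uniform-convergence step applied to $t\mapsto D_{\m}(K_t,C)$ is the missing idea you need to import.
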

    \begin{proof}
      Assume there is a sequence $(K_n)_{n\in\N}\subset \CC^n$ such that $K_n \optc C$ for all $n\in\N$ and $r(K_n,C)\to r^{*}$ and $D_{\m}(K_n,C) \to D^{*}$ for $n \to \infty$. The sequence $(K_n)_{n\in\N}$ is bounded as all sets are contained in $C$. Thus, by the Blaschke-Selection-Theorem there exists a converging subsequence $K_{n_{k}}\to K^{*}$ for $k\to\infty$. The inradius and diameter are continuous and therefore $r(K^{*},C)=r^{*}$ and $D_{\m}(K^{*},C)=D^{*}$. Hence, $f_{\m}(\bar{\CC}^n,C)$ is closed. \\
      As a consequence, we know that $f_{\m}(\bar{\CC}^n,C)$ can only have open holes and therefore only fulldimensional holes.
    For $K\in \CC^n$ such that $K\optc C$, define $K_t:=(1-t)K+tC$ for $t\in[0,1]$. \\
    Then by \Cref{linearityrrRD},
    \begin{equation*}
        r(K_t,C)=(1-t)r(K,C)+t,
    \end{equation*}
    \begin{equation*}
        R(K_t,C)=(1-t)R(K,C)+t=1
    \end{equation*}
    and
    \begin{equation*}
        D_{\m}(K_t,C) \leq (1-t)D_{\m}(K,C)+tD_{\m}(C,C).
    \end{equation*}
    In the case $\m = \AM $, we also have equality for the diameter, but this does not necessarily hold for the other diameters.
    Since $R(\cdot,C),r(\cdot,C)$ and $D_{\m}(\cdot,C)$ are continuous with respect to the Hausdorff distance and $t\in [0,1] \mapsto (1-t)K+tC$ is continuous in $t$, the composition $\Gamma_K: [0,1]\to \R^2,\: t\mapsto \left(r(K_t,C),\frac{D_{\m}(K_t,C)}{2}\right)$ is continuous as well.
    Thus, for every such $K$ there is a continuous curve $\Gamma_K$ in the diagram from $f_{\m}(K,C)$ to $f_{\m}(C,C)$. \\
    Let $(K^n)_{n\in \N}$ be a sequence of bodies on the boundary converging to $K$ on the boundary. We show that the functions $\Gamma_{K^n}$ converge uniformly to $\Gamma_{K}$.
    We can consider the components separately.
    For the inradius, we know
    \begin{equation*}
        \begin{split}
            |r(K_t,C)-r(K^n_t,C)|&=|{(1-t)r(K,C)+t-(1-t)r(K^n,C) -t}|\\
            &=(1-t)|r(K,C)-r(K^n,C)|\\
            &\leq |r(K,C)-r(K^n,C)|.
        \end{split}
    \end{equation*}
    Let $\epsilon>0$. Since $|r(K,C)-r(K^n,C)|\to 0$ for $n \to \infty$, there exists an $N$ such that  for all $n\geq N$, $ |r(K_t,C)-r(K^n_t,C)|< \epsilon$ for all  $t\in[0,1]$.
    It is known that when convex, continuous functions $f_n :[a,b]\to \R$ converge pointwise to a convex and continuous function $f$, the convergence is uniform \cite[Lemma 21]{Uniform}.
    The functions $g_n: [0,1] \to \R$, $t \mapsto D_{\m}(K^n_t,C)$ are convex and continuous in $t$ and they converge pointwise to the convex and continuous function $g: [0,1] \to \R$, $t \mapsto D_{\m}(K_t,C)$. Thus, this convergence is also uniform and the curves converge uniformly.

    Now, assume the diagram has a hole. For $K$ on the boundary of the diagram we say that the curve $\Gamma_K$ lies \textit{above} the hole, if the set enclosed by $[f_{\m}(L_D,C),f_{\m}(C,C)]$, $\Gamma_K$ and the boundary between $f_{\m}(K,C)$ and $f_{\m}(L_D,C)$ which does not contain the segment $[f_{\m}(L_D,C),f_{\m}(C,C)]$ does not contain
    the hole. Analogously, we say that $\Gamma_K$ lies \textit{below} the hole if the set contains the hole. Thus, $\Gamma_{L_D}$ lies above the hole and $\Gamma_{C}$ below. Then, there exists a converging sequence $(K_n)_{n\in\N}$ with $K_n\to K$ of bodies on the boundary such that all $\Gamma_{K^n}$ lie above the hole and $\Gamma_{K}$ below or vice versa. This contradicts the fact that the curves converge uniformly.
    \begin{figure}[ht]
        \centering
         \begin{tikzpicture}[scale=5]

         \draw[->] (-0.1,0) -- (1.1,0) node[right] {$\frac{r}{R}$};
         \draw[->] (0,-0.1) -- (0,1.2) node[above] {$\frac{D_{\m}}{2R}$}
         ;

         \draw[ domain=0:1, smooth, variable=\x] plot ({\x}, {1})
         ;
         \draw[ domain=0.6:1, smooth, variable=\x] plot ({\x}, {1.25*\x-0.25})
         ;
         \draw[ domain=0.25:0.6, smooth, variable=\x] plot ({\x}, {0.5})
         ;
         \draw[ domain=0.5:0.75, smooth, variable=\y] plot ({(2*\y-0.5)*(1.5-2*\y)},{\y} ) 
         ;
         \node[label=below left:$K$] (K) at (77/324,5/9) {.};
            \draw[fill=black] (K) circle[radius=0.2pt];
            \node[label=below:$K_n$] (Kn) at (5/36,2/3) {.};
            \draw[fill=black] (Kn) circle[radius=0.2pt];
            \tkzDefPoint(15/49,17/28){O}
            \tkzDefPoint(0.35,17/28){A}
            \tkzDrawCircle[smooth, black](O,A);
            \draw[ domain=5/36:1, dashed, variable=\x] plot ({\x}, {((12*36)/(31*31))*(\x-1)*(\x+13/18)+1});
            \draw[ domain=77/324:1, dashed, variable=\x] plot ({\x}, {0.764739*(\x-1)*(\x+85/162)+1});

         \node[label=above:$C$] (C) at (1,1) {.};
         \draw[fill=black] (1,1) circle[radius=0.4pt];

         \node[label=left:$L_D$] (LD) at (0,1) {.};
         \draw[fill=black] (0,1) circle[radius=0.4pt];

         \node[label=left:$L_w$] (Lw) at (0,0.75) {.};
         \draw[fill=black] (0,0.75) circle[radius=0.2pt];

     \end{tikzpicture}
     \caption{Proof of \Cref{simpleconn}: $K_n\to K$ but $\Gamma_K$ lies below the hole and $\Gamma_{K_n}$ above.}

     \end{figure}
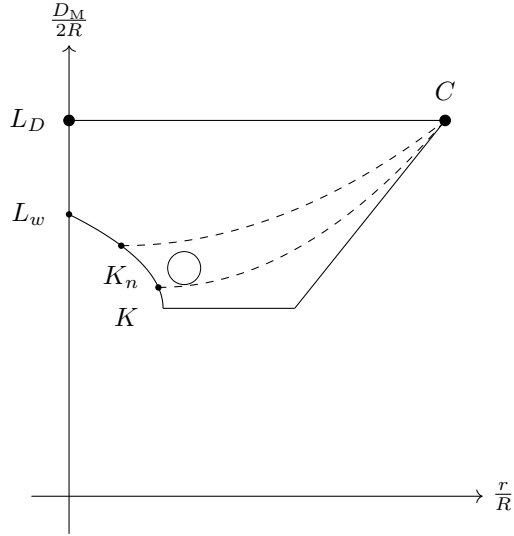
    \end{proof}

    In the standard diameter case it was sufficient to describe the Blaschke-Santaló diagram \wrt a triangle to obtain $f_{\AM}(\bar{\CC}^2,\CC^2_0)$. Thus,
    it seems reasonable to look at the diagrams for the three other diameters $D_{\MIN}$, $D_{\MAX}$ and $D_{\HM}$ in terms of triangular gauges first, which we do in the remaining sections.

\section{The diameter $D_{\MAX}$}
When we use the notions \enquote{equilateral}, \enquote{regular}, and \enquote{isosceles} in the following, then it is meant in the euclidean sense. Unless otherwise specified, we fix the \cemph{red}{equilateral triangle} to be $T:=\conv(\{p^1,p^2,p^3\})\subset \R^2$ with $p^1=(0,1)^T$, $p^2=(-\sqrt{3}/2,-1/2)^T$ and $p^3=(\sqrt{3}/2,-1/2)^T$. It is Minkowski-centered with $s(T)=2$. Moreover, in this case $T_{\MAX}$ is the regular hexagon $\conv(\{p^1,p^2,p^3,-p^1,-p^2,-p^3\})$.

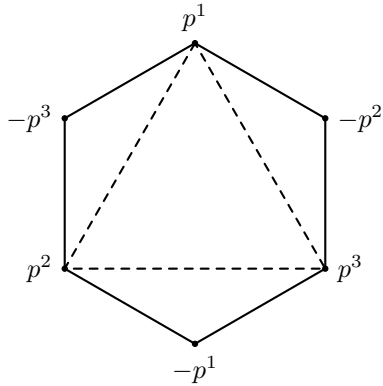
\begin{figure}[ht]
    \centering
\begin{tikzpicture}[scale=2]
    \tkzDefPoint(0,1){p1}
    \tkzDrawPoint[radius=0.4pt,label=above:$p^1$](p1)
    \tkzDefPoint(-0.86602,-0.5){p2}
    \tkzDrawPoint[radius=0.4pt,label=left:$p^2$](p2)
    \tkzDefPoint(0.86602,-0.5){p3}
    \tkzDrawPoint[radius=0.4pt,label=right:$p^3$](p3)
    \tkzDrawPolygon[black, thick,dashed](p1,p2,p3);
        \tkzDefPoint(0,-1){p1m}
     \tkzDrawPoint[radius=0.4pt,label=below:$-p^1$](p1m)
    \tkzDefPoint(0.86602,0.5){p2m}
    \tkzDrawPoint[radius=0.4pt,label=right:$-p^2$](p2m)
    \tkzDefPoint(-0.86602,0.5){p3m}
    \tkzDrawPoint[radius=0.4pt,label=left:$-p^3$](p3m)
     \tkzDrawPolygon[black, thick](p1,p2m,p3,p1m,p2,p3m);
\end{tikzpicture}
    \caption{The equilateral triangle $T$ and its maximum $T_{\MAX}$}
    \label{regtrianglemax}
  \end{figure}

In the case of the maximum both factors $\rho_{\MAX}$ and $\delta_{\MAX}$ are known \cite{reversing} and depend at most on $s(C)$:
\begin{proposition}
  \begin{equation*}
    C_{\AM}{\subset}^{\opt}C_{\MAX}\subset^{\opt}\frac{2s(C)}{s(C)+1}C_{\AM},
  \end{equation*}
  \ie~$\rho_{\MAX} = \frac{s(C)+1}{2s(C)}$ and $\delta_{\MAX} = 1$.
\end{proposition}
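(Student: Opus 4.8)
The claim consists of two optimal containments, equivalent to the assertions $\delta_{\MAX}=1$ and $\rho_{\MAX}=\frac{s(C)+1}{2s(C)}$; I would treat them separately. The first, $C_{\AM}\optc C_{\MAX}$, is nothing but the rightmost link of the Firey chain in \Cref{prop:firey-chain}, so it requires no further work and immediately gives $\delta_{\MAX}=1$.

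For the second containment the plan is to compute the circumradius $R(C_{\MAX},C_{\AM})$ directly; once its value is known, optimal containment is automatic, since $C_{\MAX}\optc R(C_{\MAX},C_{\AM})\,C_{\AM}$ by the definition of the circumradius together with homogeneity. Because $C_{\MAX}=\conv(C\cup(-C))$ and $C_{\AM}=\frac{C-C}{2}$ are both $0$-symmetric, a best circumscribing copy can be taken concentric at the origin, so the circumradius is the support-function quotient $R(C_{\MAX},C_{\AM})=\max_{u\neq 0} h_{C_{\MAX}}(u)/h_{C_{\AM}}(u)$, into which I would substitute $h_{C_{\MAX}}(u)=\max\bigl(h_C(u),h_C(-u)\bigr)$ and $h_{C_{\AM}}(u)=\tfrac12\bigl(h_C(u)+h_C(-u)\bigr)$.

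Next I would exploit the invariance of the quotient under $u\mapsto -u$ to restrict to directions with $h_C(u)\ge h_C(-u)$, reducing the problem to
\begin{equation*}
  R(C_{\MAX},C_{\AM})=\frac{2}{1+\min_{u\neq 0}\dfrac{h_C(-u)}{h_C(u)}}.
\end{equation*}
The heart of the argument is to identify $\min_{u}\frac{h_C(-u)}{h_C(u)}$ with $1/s(C)$. Since $C$ is Minkowski-centered we have $C\optc s(C)(-C)$, hence $C\subset s(C)(-C)$ and therefore $h_C(u)\le s(C)\,h_C(-u)$ for all $u$; minimality of the circumradius $R(C,-C)=s(C)$ forbids any smaller factor, so in fact $s(C)=\max_{u}\frac{h_C(u)}{h_C(-u)}$, and consequently $\min_{u}\frac{h_C(-u)}{h_C(u)}=1/s(C)$. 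Substituting gives $R(C_{\MAX},C_{\AM})=\frac{2s(C)}{s(C)+1}$, i.e.\ $\rho_{\MAX}=\frac{s(C)+1}{2s(C)}$.

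The only step that needs genuine care is this last identification: it hinges on the standing assumption that $C$ is Minkowski-centered, which is precisely what lets the asymmetry $s(C)$ be read off as the concentric directional ratio $h_C(u)/h_C(-u)$ with no auxiliary translation. The remaining support-function manipulations are routine, and both $0$-symmetry statements, used to realize the two circumradii concentrically, hold because $C_{\MAX}$ and $C_{\AM}$ are symmetrizations.
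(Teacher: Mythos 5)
Your proof is correct. Note that the paper itself offers no proof of this proposition: it is quoted verbatim from \cite{reversing}, so there is nothing internal to compare against, and your argument supplies a self-contained derivation. Your two halves are sound: $\delta_{\MAX}=1$ is indeed just the rightmost optimal containment in \Cref{prop:firey-chain}, and your computation of $\rho_{\MAX}$ via support functions is clean. The three pivotal points are all handled correctly: (1) since $C_{\MAX}$ and $C_{\AM}$ are $0$-symmetric, the circumradius is attained concentrically, so $R(C_{\MAX},C_{\AM})=\max_{u\neq 0} h_{C_{\MAX}}(u)/h_{C_{\AM}}(u)$ and the resulting containment needs no translation, which is exactly what optimal containment in the paper's sense requires; (2) the reduction to $\min_{u\neq 0} h_C(-u)/h_C(u)$ is a correct monotonicity argument; and (3) the identification of this minimum with $1/s(C)$ uses Minkowski-centeredness in both directions exactly as needed — the containment $C\subset s(C)(-C)$ \emph{without translation} (which is what $C\optc -s(C)C$ gives) yields $h_C(u)\le s(C)h_C(-u)$, while any strictly smaller concentric factor would contradict $R(C,-C)=s(C)$. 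One could isolate this last step as a lemma of independent interest: for Minkowski-centered $C$, the asymmetry satisfies $s(C)=\max_{u\neq 0} h_C(u)/h_C(-u)$, i.e.~the circumradius of $C$ in $-C$ is attained concentrically; this is the precise content of the centering assumption, and your proof makes that dependence explicit, whereas the general inequality $\max_u h_C(u)/h_C(-u)\ge s(C)$ holds for any $C\in\CC^n_0$.
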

Taking $K=C_{\MAX}$, we have $R(C_{\MAX},C)=s(C)$, $r(C_{\MAX},C)=1$ and $D_{\MAX}(C_{\MAX},C)=2$ \cite{reversing}.

The inequalities from \Cref{lem:basicineq} have the form:

\begin{equation}
  \label{upMaxT}
  \frac{D_{\MAX}(K,C)}{2}\leq R(K,C),
\end{equation}

\begin{equation}
  \label{rightMaxT}
  r(K,C) \leq \frac{D_{\MAX}(K,C)}{2},
\end{equation}

\begin{equation}
  \label{rRDMAX}
  s(C)r(K,C)+R(K,C)\leq s(C)D_{\MAX}(K,C),
\end{equation}
  and
\begin{equation}
  \label{eqrRzero}
  0 \leq r(K,C).
\end{equation}
In case of $K=C$ \eqref{upMaxT} and \eqref{rightMaxT} become tight while \eqref{rightMaxT} and \eqref{rRDMAX} become tight for
  $K=C_{\MAX}$.


Asymmetric gauges are not complete, but a completion is easy to find.
\begin{definition}
  Let $A_C^{\out} := \bd(C\pol) \cap \bd(-C\pol)$.

  We define the \cemph{red}{outer symmetric support}:
    \begin{equation*}
    C^{\out}:=\bigcap_{a\in A_C^{\out}} H_{(a,1)}^{\leq}
    \end{equation*}
\end{definition}

\begin{lemma} \label{thm:max-completion}
    \begin{enumerate}[i)]
        \item $C_{\MAX}$ is always a completion of $C$ with maximal circumradius $R(C_{\MAX},C) = s(C)$,
        \item $C_{\MAX} \subset C^{\sup} \subset C^{\out}$,
        \item $C_{\MAX}= C^{\sup}$ if and only if $C_{\MAX}= C^{\out}$, and

        \item $C_{\MAX}$ is always the unique 0-symmetric completion of $C$.
    \end{enumerate}
  \end{lemma}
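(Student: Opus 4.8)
The plan is to prove the four parts of \Cref{thm:max-completion} in a logical order, establishing the completion property first, then the containment chain, then the equivalence, and finally uniqueness among symmetric sets.

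For part $i)$, I would use \Cref{DmaxProperties} together with the Moreno--Schneider characterization in \Cref{MorSch}. Recall $D_{\MAX}(K,C) = D_{\AM}(K,C_{\MAX})$ and $C_{\MAX}$ is $0$-symmetric, so $C$ is complete \wrt~$D_{\MAX}$ exactly when $C_{\MAX}$ is complete \wrt~the arithmetic diameter and gauge $C_{\MAX}$. First I would compute $D_{\MAX}(C,C) = 2\delta_{\MAX} = 2$ using the proposition $\delta_{\MAX}=1$ just stated, and observe that $C_{\MAX} = \conv(C \cup -C)$ trivially satisfies $D_{\MAX}(C_{\MAX},C) = 2 = D_{\MAX}(C,C)$ since $(C_{\MAX})_{\AM} = C_{\MAX}$. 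To see $C_{\MAX}$ is complete, I would verify condition $ii)$ of \Cref{MorSch}: for every $s$ defining a regular supporting slab of $C_{\MAX}$, we need $\tfrac{h_{C_{\MAX}}(s)+h_{C_{\MAX}}(-s)}{h_{C_{\MAX}}(s)} = 2$, which holds because $C_{\MAX}$ is $0$-symmetric so $h_{C_{\MAX}}(-s)=h_{C_{\MAX}}(s)$. The circumradius claim $R(C_{\MAX},C)=s(C)$ is already recorded in the excerpt immediately after the preceding proposition.

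For part $ii)$, the left containment $C_{\MAX} \subset C^{\sup}$ follows from \Cref{Moreno2011} (cited in the excerpt): $C^{\sup}$ is the union of all completions of $C$, and $C_{\MAX}$ is a completion by part $i)$. For the right containment $C^{\sup} \subset C^{\out}$, I would unwind the definitions: $C^{\sup} = \bigcap_{x\in C}(x + 2 C_{\MAX})$ using $D_{\MAX}(C,C)=2$, and $C^{\out} = \bigcap_{a\in A_C^{\out}} H^{\le}_{(a,1)}$ where $A_C^{\out} = \bd(C\pol)\cap\bd(-C\pol)$. The key observation is that for $a \in A_C^{\out}$ we have $h_C(a) = h_{-C}(a) = 1$, hence $h_{C_{\MAX}}(a)=1$, so each halfspace $H^{\le}_{(a,1)}$ is a supporting halfspace of $C_{\MAX}$ touching both $C$ and $-C$; I would then show that the supercompletion, being an intersection of translates of $2C_{\MAX}$ over all $x \in C$, respects exactly these directions. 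For $a\in A_C^{\out}$, picking $x \in C$ with $a^Tx = h_C(a)=1$ gives $x + 2C_{\MAX} \subset H^{\le}_{(a,\,1 + 2h_{C_{\MAX}}(a))}$, but the binding constraint across all $x$ yields the bound defining $C^{\out}$.

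For part $iii)$, the forward direction is immediate from the sandwich in $ii)$: if $C_{\MAX}=C^{\out}$ then the chain $C_{\MAX}\subset C^{\sup}\subset C^{\out}=C_{\MAX}$ forces equality throughout. Conversely, if $C_{\MAX}=C^{\sup}$, then again the chain collapses and $C^{\sup}\subset C^{\out}$ with $C_{\MAX}=C^{\sup}$ gives $C_{\MAX}\subset C^{\out}$; the reverse inclusion $C^{\out}\subset C_{\MAX}$ requires showing that the symmetric-support halfspaces cannot cut off more than $C_{\MAX}$ once $C^{\sup}$ already equals $C_{\MAX}$, which follows because any point of $C^{\out}\setminus C_{\MAX}$ would have to lie in the supercompletion. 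For part $iv)$, I would argue that any $0$-symmetric completion $K^*$ of $C$ satisfies $C \subset K^*$ and $C\optc K^*$ by \Cref{CCompletion}; being $0$-symmetric and containing $C$, it also contains $-C$ and hence $C_{\MAX}=\conv(C\cup -C)$. Then $D_{\MAX}(K^*,C)=D_{\MAX}(C,C)=2=D_{\MAX}(C_{\MAX},C)$ together with $C_{\MAX}\subset K^*$ and the completeness of $C_{\MAX}$ forces $K^*=C_{\MAX}$, since adding any point to the complete set $C_{\MAX}$ strictly increases the diameter. The main obstacle I anticipate is part $ii)$, specifically pinning down the precise interaction between the translate-intersection form of $C^{\sup}$ and the finitely-many-direction form of $C^{\out}$, since $A_C^{\out}$ may be a proper subset of the directions relevant to the supercompletion; I would need to check carefully that directions outside $A_C^{\out}$ contribute no additional constraints to $C^{\out}$ beyond what $C_{\MAX}$ already provides.
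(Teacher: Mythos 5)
Parts i) (the completion property itself) and iv) of your proposal are sound: part iv) coincides with the paper's argument, and deriving completeness of $C_{\MAX}$ from \Cref{MorSch} together with $0$-symmetry is a legitimate alternative to the paper's direct computation $D_{\MAX}(K,C)=D_{\MAX}(K,C_{\MAX})=2R(K,C_{\MAX})>2$ for $K\supsetneq C_{\MAX}$. But you leave two genuine gaps before part iii). In part i), the lemma does not merely record the value $R(C_{\MAX},C)=s(C)$ (which is indeed quoted from the literature); it asserts that this circumradius is \emph{maximal among all completions} of $C$, and that claim needs a proof. The paper gets it from \eqref{rRDMAX} together with \Cref{CCompletion}: any completion $K$ of $C$ contains $C$, so $r(K,C)\geq 1$ and $D_{\MAX}(K,C)=2$, whence $R(K,C)\leq s(C)\bigl(D_{\MAX}(K,C)-r(K,C)\bigr)\leq s(C)=R(C_{\MAX},C)$. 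In part ii), your explicit computation uses the wrong extremizer: taking $x\in C$ with $a^Tx=h_C(a)=1$ only yields $h_{C^{\sup}}(a)\leq 3$, and the ``binding constraint across all $x$'', which you correctly sense is what matters, is exactly the step you leave unverified (and yourself flag as the main obstacle). The fix is the paper's one-liner: for $a\in A_C^{\out}$ choose $p\in -C\cap H_{(a,1)}$; then $-p\in C$, so $C^{\sup}\subset -p+2C_{\MAX}$, giving $h_{C^{\sup}}(a)\leq -p^Ta+2h_{C_{\MAX}}(a)=-1+2=1=h_{C^{\out}}(a)$.

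The decisive gap is part iii). The implication $C_{\MAX}=C^{\out}\Rightarrow C_{\MAX}=C^{\sup}$ does follow from the sandwich of part ii), as you say. But for the other implication your justification --- ``any point of $C^{\out}\setminus C_{\MAX}$ would have to lie in the supercompletion'' --- is not an argument: the known containment is $C^{\sup}\subset C^{\out}$, which gives no reason for a point of $C^{\out}\setminus C_{\MAX}$ to lie in $C^{\sup}$; you are restating the conclusion you need. This direction is where the paper does its real work, proving the contrapositive: if $C_{\MAX}\neq C^{\out}$, then, since $C_{\MAX}$ is the intersection of its regular slabs, there exists $a$ with $h_C(a)=1>h_C(-a)$ defining a regular slab of $C_{\MAX}$; a smooth boundary point $x$ of $C_{\MAX}$ in $H_{(a,1)}$ must then lie in $\bd(C)$, and if $x$ were also in $\bd(C^{\sup})$ there would exist $p_x\in C$ and an outer normal $a_x$ with $x^Ta_x=h_{C^{\sup}}(a_x)$ and $a_x^T(x-p_x)=2$; smoothness of $x$ forces $a_x=a$, hence $a^Tp_x=-1$, which is impossible because $h_C(-a)<1$. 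So $x\in\bd(C_{\MAX})\setminus\bd(C^{\sup})$ and $C_{\MAX}\neq C^{\sup}$. Some argument of this kind --- exploiting regular slabs and smooth boundary points, not just the containment chain --- is indispensable here, and your proposal has nothing in its place.
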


One should recognize that we need a scaling factor of up to $n$ to cover the completion $C_{\MAX}$ by $C$ here, while with the arithmetic diameter $C$ is always already complete itself.

\begin{proof}
  $C_{\MAX}$ is a completion of $C$ since $C \subset C_{\MAX}$ and $D_{\MAX}(C,C) =2\delta_{\MAX}= 2 = D_{\MAX}(C_{\MAX},C)$, while for all $K \supset C_{\MAX}$ we have $D_{\MAX}(K,C) = D_{\MAX}(K,C_{\MAX}) = 2R(K,C_{\MAX}) > 2$.

  The maximality of the circumradius can be seen as follows: Let $K$ be any completion of $C$. Then, from \eqref{rRDMAX} and \Cref{CCompletion} we obtain
  \begin{align*}
    R(K,C) & \le s(C)(D_{\MAX}(K,C) - r(K,C)) = s(C)(D_{\MAX}(C_{\MAX},C) - r(C_{\MAX},C)) \\
           &= s(C)=R(C_{\MAX},C).
  \end{align*}
  Next, we show that  $C_{\MAX} \subset C^{\sup} \subset C^{\out}$. The first containment follows from the fact that $C^{\sup}$ is the union of all completions of $C$. For the second containment it suffices to show that $h_{C^{\sup}}(a)\leq h_{C^{\out}}(a)$ for all $a\in A^{\out}_C$. Let $a\in A^{\out}_C$. Then, there exists $p\in -C\cap H_{(a,1)}$. Since $-p\in C$ we obtain $C^{\sup} \subset -p + 2 C_{\MAX}$ and therefore
  \begin{equation*}
    h_{C^{\sup}}(a)\leq h_{-p + 2 C_{\MAX}}(a)= -p^Ta + 2 h_{C_{\MAX}}(a)= 1 = h_{C^{\out}}(a).
  \end{equation*}

  The containment chain directly shows the backward direction of part $iii)$.

  To show the forward direction, assume $C_{\MAX}\neq C^{\out}$. Since $C_{\MAX}$ is the intersection of its regular slabs, there must exist some $a\in \R^n\setminus \set{0}$ which defines a regular slab of $C_{\MAX}$ but $h_C(a)>h_C(-a)$. Assuming $a\in \bd (C\pol)$, there exists a smooth boundary point $x$ of $C_{\MAX}$ supported by the hyperplane $H_{(a,1)}$. Because of $h_C(a)>h_C(-a)$, the point $x$ must belong to  $\bd(C) \cap H_{(a,1)}$.

    Now, assume $x\in \bd(C^{\sup})$ as well.
    Then, there exist an outer normal $a_x \in \bd (C_{\MAX}\pol)$ and a point $p_x \in C$ such that $x^Ta_x=h_{C^{\sup}}(a_x)$ and $(a_x)^T(x-p_x)=2$.
    Then, $H_{(a_x,h_{C^{\sup}}(a_x))}$ also supports $C_{\MAX}$, which implies $a_X=a$ since $x$ is a smooth boundary point of $C_{\MAX}$.
    But since $h_C(-a)<h_C(a)$ we cannot choose $p_x\in C$. Thus, $x$ is not contained in the boundary of $C^{\sup}$ and therefore, $C_{\MAX}\neq C^{\sup}$.

    Finally, any 0-symmetric completion of $C$ must contain $C$ and $-C$ and therefore $C_{\max}$.
\end{proof}

\begin{example}
    Trapezoids within the following family have completions besides their maximum:
    \begin{equation*}
        Z_{\lambda}:=\conv\left((\sqrt{3}/2,-1/2)^T,(-\sqrt{3}/2,-1/2)^T,(\lambda \sqrt{3}/2, 1-\lambda/2)^T, (-\lambda \sqrt{3}/2, 1-\lambda/2)^T\right)
    \end{equation*}
    with $\lambda\in(0,1)$. $Z_{\lambda}$ is Minkowski-centered with Minkowski asymmetry $2-\lambda$ and $(Z_{\lambda})^{\out}\neq {(Z_{\lambda})}_{\MAX}$, which because of \Cref{thm:max-completion} means that $(Z_{\lambda})_{\MAX}$ is not the unique completion of $Z_{\lambda}$. In the extreme cases $\lambda\in\set{0,1}$, $Z_{\lambda}$ is a triangle or a rectangle and $(Z_{\lambda})_{\MAX}=(Z_{\lambda})^{\sup}=(Z_{\lambda})^{\out}$.
    \begin{figure}[ht]
        \centering
    \begin{tikzpicture}[scale=2]
        \tkzDefPoint(0,1){p1}
        \tkzDefPoint(-0.86602,-0.5){p2}
        \tkzDefPoint(0.86602,-0.5){p3}
        \tkzDefPoint(0,-1){p1m}
        \tkzDefPoint(0.86602,0.5){p2m}
        \tkzDefPoint(-0.86602,0.5){p3m}

         \tkzDefPointOnLine[pos=0.4](p3m,p1)\tkzGetPoint{q3}
         \tkzDefPointOnLine[pos=0.4](p2m,p1)\tkzGetPoint{q2}

         \tkzDefPointOnLine[pos=0.4](p3,p1m)\tkzGetPoint{q3m}
         \tkzDefPointOnLine[pos=0.4](p2,p1m)\tkzGetPoint{q2m}
         \tkzDrawPolygon[fill=teal!30, opacity=0.7](p1,p3m,p2,q2m,q3m,p3,p2m);
         \tkzDrawPolygon[black, dashed](p2m,q3m,q2m,p3m);
         \tkzDrawPolygon[blue, thick](p1,p3m,p2,p1m,p3,p2m);
         \tkzDrawPolygon[black, very thick](p2,q3,q2,p3);
         \tkzDrawPoint[radius=0.4pt](p1)
         \tkzDrawPoint[radius=0.4pt](p2)
         \tkzDrawPoint[radius=0.4pt](p3)
          \tkzDrawPoint[radius=0.4pt](p3m)
         \tkzDrawPoint[radius=0.4pt](p2m)
         \tkzDrawPoint[radius=0.4pt](p1m)
         \tkzDrawPoint[radius=0.4pt](q2)
         \tkzDrawPoint[radius=0.4pt](q3)
         \tkzDrawPoint[radius=0.4pt](q2m)
         \tkzDrawPoint[radius=0.4pt](q3m)
         \tkzDefPoint(0,0){O}

         \tkzDrawPoint[radius=0.4pt](O)
         \tkzLabelSegment[right,blue](p3,p2m){$(Z_{\lambda})^{\out}$}
         \tkzLabelSegment[above,teal](p2,p3){$(Z_{\lambda})^{\sup}$}
         \tkzLabelSegment[right,black](p2,q3){$Z_{\lambda}$}

    \end{tikzpicture}
        \caption{For trapezoids $Z_{\lambda}$ the maximum $(Z_{\lambda})_{\MAX}$ is not their unique completion since $(Z_{\lambda})_{\MAX}\neq (Z_{\lambda})^{\out}$.}
        \label{fig:ex:notuniquecomp}
    \end{figure}
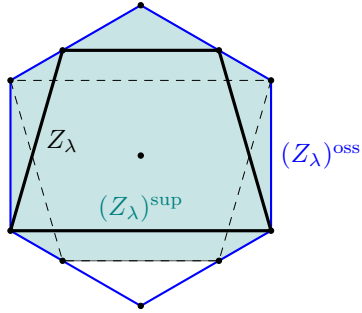
\end{example}


The first new inequality we provide is a lower bound for the diameter-circumradius ratio, a so called Jung-type inequality, which stays true independently of the gauge $C$.

\begin{theorem}
    \label{thm:jungbound_Dmax_mink}
    Let $K,C\in\CC^2$, \st~$C$ is Minkowski-centered. Then
    \begin{equation*}
        D_{\MAX}(K,C)\geq R(K,C)
    \end{equation*}
\end{theorem}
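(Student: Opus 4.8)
The plan is to normalize the configuration, invoke the optimal-containment characterization, and split into the two cases coming from \Cref{opt}, reducing the genuinely hard case to an extremal inequality for triangles inscribed in the gauge. Since $R(\cdot,C)$ and $D_{\MAX}(\cdot,C)$ are both homogeneous of degree $+1$ and translation invariant in the first argument, I would first scale so that $R(K,C)=1$ and translate so that $K\optc C$, i.e.\ $K\subseteq C$ with optimal containment; the goal becomes $D_{\MAX}(K,C)\geq 1$. By \Cref{opt} there are $k\in\{2,3\}$ extreme points $p^1,\dots,p^k\in\ext(K)\cap\bd(C)$ with outer normals $a^i$ of $C$ and $0\in\conv\{a^1,\dots,a^k\}$, and at each such point $h_K(a^i)=h_C(a^i)$.

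The case $k=2$ is immediate. Here the two normals are antipodal, say $a^2=-\mu a^1$ with $\mu>0$, so $p^1,p^2$ lie on opposite supporting lines of $C$ with normals $\pm a$ where $a:=a^1$. Then $h_K(a)\geq (p^1)^Ta=h_C(a)$ and $h_K(-a)\geq (p^2)^T(-a)=h_C(-a)$, whence, testing the single breadth in direction $a$,
\[
  D_{\MAX}(K,C)\;\geq\; b_{a,\MAX}(K,C)\;=\;\frac{h_K(a)+h_K(-a)}{\max(h_C(a),h_C(-a))}\;\geq\;\frac{h_C(a)+h_C(-a)}{\max(h_C(a),h_C(-a))}\;\geq\;1 .
\]

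For the case $k=3$ I would pass to the inscribed triangle $P:=\conv\{p^1,p^2,p^3\}\subseteq K$. Since the $D_{\MAX}$-diameter of a triangle equals its longest side measured in the symmetric gauge $C_{\MAX}$, monotonicity in the first argument gives
\[
  D_{\MAX}(K,C)\;\geq\;D_{\MAX}(P,C)\;=\;\max_{i\neq j}\cnorm{p^i-p^j}{C_{\MAX}} ,
\]
so everything reduces to showing that some side of $P$ has $C_{\MAX}$-gauge at least $1$, given that $p^i\in\bd(C)$ carry outer normals with $0\in\conv\{a^1,a^2,a^3\}$ and that $C$ is Minkowski-centered. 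The model is $C=T$ equilateral: the three supporting lines at the $p^i$ bound a triangle, the extremal inscribed triangle is the medial one, and its three sides all have $T_{\MAX}$-gauge exactly $1$ (matching the tight instance $K=\tfrac12 C_{\MAX}$ inside a triangular gauge, for which $D_{\MAX}(K,C)=R(K,C)$). For a triangular gauge this can be made rigorous by using \Cref{affineRrD} to reduce (through linear maps, which preserve Minkowski-centering) to the standard $T$, observing that the longest-side functional is a convex, dihedral-symmetry-invariant function of the three edge parameters, and hence is minimized at the unique symmetric configuration, the medial triangle, where it equals $1$.

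The main obstacle is upgrading this from the triangular gauge to an arbitrary Minkowski-centered $C$, because $D_{\MAX}$ is genuinely position-sensitive (it is defined through $C_{\MAX}=\conv(C\cup -C)$), so one cannot simply replace $C$ by its off-center supporting triangle. The control that must be exploited is that Minkowski-centering forces $\tfrac12\leq h_C(a)/h_C(-a)\leq s(C)\leq 2$ for all $a$, equivalently $C_{\MAX}\subseteq s(C)\,C$ with $s(C)\leq 2$, so that $C_{\MAX}$ cannot be too unbalanced relative to $C$. The delicate point is that a naive averaging of the duality bounds $\cnorm{p^i-p^j}{C_{\MAX}}\geq (a^i)^T(p^i-p^j)/h_{C_{\MAX}}(a^i)$ over the three normals (using $0=\sum\lambda_i a^i$) only yields the constant $\tfrac34$ rather than the sharp $1$, because each normal $a^i$ is misaligned with the relevant side; closing this gap requires a careful direction-by-direction extremal analysis tying the three contacts together, which I expect to be the crux of the argument.
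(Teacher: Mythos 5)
Your normalization, your $k=2$ case, and your convexity-plus-dihedral-symmetry argument are all correct as far as they go; in particular the symmetrization argument does settle the case where the gauge $C$ is itself a Minkowski-centered triangle (and it is consistent with the paper's equality family $T_\alpha$, $\alpha\in[\frac13,\frac23]$, since a convex invariant functional may attain its minimum on a whole segment of configurations, not only at the fixed point). However, the theorem is stated for an \emph{arbitrary} Minkowski-centered $C\in\CC^2$, and your proposal explicitly leaves that case open, ending with the expectation that a ``direction-by-direction extremal analysis'' is needed. That missing analysis is not a technicality: it is the crux of the theorem and constitutes essentially the entire proof in the paper. So what you have is a genuine gap, not a complete argument.

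For the record, the paper closes exactly this gap as follows. With $K=\conv(\set{q^1,q^2,q^3})$ the triangle of touching points, it replaces the gauge $C$ by the supporting triangle $S=\bigcap_{i=1}^3 H^{\leq}_{(a^i,1)}$ at those points; this replacement \emph{is} legitimate, contrary to your remark that one cannot use the off-center supporting triangle, because $C\subset S$ gives $D_{\MAX}(K,C)\geq D_{\MAX}(K,S)$ by monotonicity in the second argument. What is true is that $S$ is no longer Minkowski-centered with respect to the origin, so your triangular-gauge result cannot be applied to $S$; instead the paper works directly with $\conv(S\cup(-S))$, where $-S$ is the reflection about the off-center origin, after normalizing $S=c+T$. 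It then splits on the position of the origin. If $0\in\inte(\conv(\set{c-\tfrac12 p^i : i=1,2,3}))$, it constructs three normals $z^i$ with $h_{\conv(S\cup(-S))}(z^i)=1$ and shows, by adding the two corresponding inequalities, that if two of the breadths $b_{z^i}$ were smaller than $1$ then the third must be at least $1$. If the origin lies outside that shrunken triangle, it uses Minkowski-centering quantitatively, via $h_C(-a)\leq s(C)h_C(a)$ together with the planar bound $s(C)\leq 2$, to force a breadth at least $\tfrac{2}{s(C)}\geq 1$. Your observation that averaging the duality bounds over $0\in\conv(\set{a^1,a^2,a^3})$ only yields $\tfrac34$ is precisely why such a pigeonhole-type case split, rather than a single averaged estimate, is required to reach the sharp constant $1$.
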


\begin{proof}
  If $K$ is a single point, $R(K,C)=D_{\MAX}(K,C)=0$. Thus, we can assume $K\in \bar{\CC}^2$ and  $K\subset^{\opt}C$, which implies $R(K,C)=1$. Then, there exist touching points $q^1,\dots,q^k\in \bd(K) \cap \bd(C)$ with $k\in \set{2,3}$ and corresponding outer normals $a^i$ as described in \Cref{opt}. If $k=2$ is possible, there exists a segment $L\subset K$ with the same circumradius as $K$ and by \Cref{CDiam} $D_{\MAX}(K,C)\geq D_{\MAX}(L,C)\geq 2\rho_{\MAX}(C)=\frac{s(C)+1}{s(C)} > 1$. For $k=3$, the triangle $\conv(\set{q^1,q^2,q^3})$ has the same circumradius as $K$ and $D_{\MAX}(K,C)\geq D_{\MAX}(\conv(\set{q^1,q^2,q^3}),C)$. Thus, it suffices to prove the claim for the case that $K$ is a proper triangle $K=\conv(\set{q^1,q^2,q^3})$.

  Let $S:=\bigcap_{i=1}^3 H_{(a^i,1)}^{\leq}$ be the intersection of the three supporting halfspaces of $C$ \st~$q^i\in H_{(a^i,1)}$. Denote the vertex opposing the edge defined by $a^i$ by $\tilde{p}^i$. Then, $R(K,S)=R(K,C)=1$ and $D_{\MAX}(K,C)\geq D_{\MAX}(K,S)$. By invariance under linear transformations we can assume that $S=c+T$ where $T$ is the Minkowski-centered equilateral triangle as described before: $\tilde{p}^i=c+p^i$, $i=1,2,3$. In the following indices are to be understood modulo 3. Let $\alpha_i\in[0,1]$ be, \st~$q^i=\alpha_i \tilde{p}^{i+1} + (1-\alpha) \tilde{p}^{i+2}$. Since $C$ is Minkowski-centered, $0$ lies in the interior of $C$.

  We split the proof into two parts. First, we consider the case where the origin is close to the center $c$, \ie~$0\in \inte (\conv(\set{c-\frac{1}{2}p^i,\quad i=1,2,3}))$. Afterwards, we care about the case, where $c$ is further apart from the origin.

  Let us start with the case where $0\in \inte (\conv(\set{c-\frac{1}{2}p^i,\quad i=1,2,3}))$, which is equivalent to $c\in \inte (\conv(\set{\frac{1}{2}p^i,\quad i=1,2,3 }))$. Define $\lambda_1,\lambda_2,\lambda_3 > 0$ with $\sum_{i=1}^3 \lambda_i = \frac{1}{2}$ such that $c=\sum_{i=1}^3 \lambda_i p^i$. Let $z^i \in\R^2$ be the direction such that
  \begin{equation*}
    (z^i)^T \tilde{p}^{i+1}=-1 \quad \text{and} \quad
    (z^i)^T \tilde{p}^{i+2}=1.
  \end{equation*}
  This is possible since $0\in\inte S$.
  Since $c=\frac{1}{3}\sum_{i=1}^3 \tilde{p}^i$ we have
  \begin{equation*}
    (z^i)^T \tilde{p}^{i}= 3(z^i)^Tc.
  \end{equation*}
  Inserting $c=\sum_{i=1}^3 \lambda_i p^i$ yields
  \begin{align*}
    (z^i)^Tc &= (z^i)^T \sum_{i=1}^3 \lambda_i (\tilde{p}^i -c)= -\frac{1}{2}  (z^i)^Tc + 3\lambda_i  (z^i)^Tc -\lambda_{i+1}+\lambda_{i+2}
  \end{align*}
  which implies
  \begin{align*}
    3 (z^i)^Tc = \frac{\lambda_{i+2}-\lambda_{i+1}}{\frac{1}{2}-\lambda_i}= \frac{\lambda_{i+2}-\lambda_{i+1}}{\lambda_{i+2}+\lambda_{i+1}}.
  \end{align*}
  Thus,
  \begin{equation*}
    1+3(z^i)^Tc = \frac{2\lambda_{i+2}}{\lambda_{i+2}+\lambda_{i+1}}\geq 0 \quad \text{and} \quad
    1-3(z^i)^Tc = \frac{2\lambda_{i+1}}{\lambda_{i+2}+\lambda_{i+1}} \geq 0,
  \end{equation*}
It follows that
  \begin{equation*}
    +/- (z^i)^T \tilde{p}^i \leq 1,
  \end{equation*}
   which shows that $z^i$ is an outer normal of $\conv(S\cup (-S))$ with $h_{\conv(S\cup (-S))}(z^i)=1$.

  \begin{figure}[ht]
    \centering
    \begin{tikzpicture}[scale=3,vect/.style={->,
        shorten >=1pt,>=latex'}]
      \tkzDefPoint(0,1){p1}
      \tkzDefPoint(-0.86602,-0.5){p2}
      \tkzDefPoint(0.86602,-0.5){p3}
      \tkzDrawPoints[fill=black](p1,p2,p3)
      \tkzDefMidPoint(p1,p2)
      \tkzGetPoint{m3}
      \tkzDefMidPoint(p2,p3)
      \tkzGetPoint{m1}
      \tkzDefMidPoint(p3,p1)
      \tkzGetPoint{m2}
      \tkzDrawPolygon[fill=teal!10, draw=teal, dashed](m1,m2,m3)

      \node[label=below:$0$] (O) at (0.04,-0.08) {.};
      \draw[fill=black] (O) circle[radius=0.4pt];
      \node[label=above:$c$] (c) at (0,0) {.};
      \draw[fill=black] (c) circle[radius=0.4pt];

      \tkzDefPointBy[symmetry=center O](p1)
      \tkzGetPoint{p1m}
      \tkzDefPointBy[symmetry=center O](p2)
      \tkzGetPoint{p2m}
      \tkzDefPointBy[symmetry=center O](p3)
      \tkzGetPoint{p3m}
      \tkzDrawPoints[fill=black](p1m,p2m,p3m)
      \tkzDrawPolygon(p1,p2,p3)
      \tkzDrawPolygon[dashed](p1m,p2m,p3m)
      \tkzDrawPolygon(p1,p3m,p2,p1m,p3,p2m)
      \tkzLabelPoint[above](p1){$\tilde{p}^1$}
      \tkzLabelPoint[left](p2){$\tilde{p}^2$}
      \tkzLabelPoint[right](p3){$\tilde{p}^3$}
      \tkzLabelPoint[below](p1m){$-\tilde{p}^1$}
      \tkzLabelPoint[right](p2m){$-\tilde{p}^2$}
      \tkzLabelPoint[left](p3m){$-\tilde{p}^3$}

      \tkzDefPointOnLine[pos=.6](p2,p3)
      \tkzGetPoint{q1}
      \tkzLabelPoint[below](q1){$q^1$}
      \tkzDefPointOnLine[pos=.7](p3,p1)
      \tkzGetPoint{q2}
      \tkzLabelPoint[right](q2){$q^2$}
      \tkzDefPointOnLine[pos=.6](p1,p2)
      \tkzGetPoint{q3}
      \tkzLabelPoint[above left](q3){$q^3$}
      \tkzDrawPoints[fill=black](q1,q2,q3)
      \tkzDrawPolygon(q1,q2,q3)

      \tkzDefMidPoint(p1,p3m)
      \tkzGetPoint{b2b}
      \tkzDefMidPoint(p2,p1m)
      \tkzGetPoint{b3b}
      \tkzDefMidPoint(p3,p2m)
      \tkzGetPoint{b1b}
      \tkzDefPointWith[orthogonal,K=0.4](b3b,p2)
      \tkzGetPoint{b3}
      \tkzDefPointWith[orthogonal,K=0.4](b2b,p1)
      \tkzGetPoint{b2}
      \tkzDefPointWith[orthogonal,K=0.4](b1b,p3)
      \tkzGetPoint{b1}
      \tkzDrawSegments[vect](b3b,b3 b2b,b2 b1b,b1)
      \tkzLabelPoint[right](b1){$z^1$}
      \tkzLabelPoint[above left](b2){$z^2$}
      \tkzLabelPoint[below](b3){$z^3$}
    \end{tikzpicture}
    \caption{Proof of \Cref{thm:jungbound_Dmax_mink}. If $0\in \inte (\conv(\set{c-\frac{1}{2}p^i,\quad i=1,2,3}))$, then $\max_{i=1,2,3} b_{z^i}(K,S\cup (-S))\geq 1 $.}
    \label{proofjungTmax1}
  \end{figure}

  We know
  \begin{equation}
    \label{eq:breadthsjung}
    \begin{split}
      D_{\MAX}(K,S)&\geq \max_{i=1,2,3} b_{z^i}(K,\conv(S\cup (-S)))\\
                   &\geq \max_{i=1,2,3}\frac{(z^i)^Tq^{i+1}-(z^i)^Tq^{i+2}}{h_{\conv(S\cup (-S))}(z^i)}\\
                   &= \max_{i=1,2,3}(z^i)^T(\alpha_{i+1}\tilde{p}^{i+2}+(1-\alpha_{i+1})\tilde{p}^{i})-(z^i)^T(\alpha_{i+2}\tilde{p}^{i}+(1-\alpha_{i+2})\tilde{p}^{i+1})\\
                   &=\max_{i=1,2,3} \alpha_{i+1}(1-3(z^i)^Tc) +(1-\alpha_{i+2}) (1+3(z^i)^Tc)\\
                   &=\max_{i=1,2,3}  \frac{2\alpha_{i+1}\lambda_{i+1}+ 2(1-\alpha_{i+2})\lambda_{i+2}}{\lambda_{i+2}+\lambda_{i+1}}
    \end{split}
  \end{equation}
  Next, we show that the last term is larger than or equal to 1. If $b_{z^i}(K,T_{\MAX})\geq 1$ for $i\in\set{1,2}$ there is nothing to show. Thus let us
  assume that those breadths are smaller than 1.
  We show that the latter implies $b_{z^3}(K,T_{\MAX})\geq 1$.
  \begin{align*}
    b_{z^1}(K,S_{\MAX})<1 \text{\quad implies \quad} 2\alpha_2\lambda_2 +2(1-\alpha_3)\lambda_3 &< \lambda_2+\lambda_3 \text{\quad and}\\
    b_{z^2}(K,S_{\MAX})<1 \text{\quad implies \quad}
    2\alpha_3\lambda_3 +2(1-\alpha_1)\lambda_1 &< \lambda_3+\lambda_1.
  \end{align*}
  By adding these inequalities we obtain
  \begin{align*}
    2 \lambda_3 + 2\alpha_2\lambda_2 +2(1-\alpha_1)\lambda_1 &< \lambda_2+2\lambda_3 + \lambda_1
                                                               \intertext{or equivalently}
                                                               2\alpha_1\lambda_1 +2(1-\alpha_2)\lambda_2 &> \lambda_1+\lambda_2,
  \end{align*}
  which proves $b_{z^3}(K,T_{\MAX})\geq 1$.
  Thus, $D_{\MAX}(K,C)\geq D_{\MAX}(K,S)\geq 1$.

  Now, consider the case that $0\notin \inte (\conv(\set{c-\frac{1}{2}p^i,\quad i=1,2,3}))$. Because of the symmetries of $T$ we can assume that  $0=\sum_{i=1}^3 \beta_i \tilde{p^i}$  with $\sum_{i=1}^3 \beta_i=1$, $\beta_3\geq \frac{1}{2}$ and $\beta_2\geq \beta_1> 0$ (\cf~\Cref{proofjungTmax2}).

  \begin{figure}[ht]
    \centering
    \begin{tikzpicture}[scale=3.5,vect/.style={->,
        shorten >=1pt,>=latex'}]
      \tkzDefPoint(0,1){p1}
      \tkzDefPoint(-0.86602,-0.5){p2}
      \tkzDefPoint(0.86602,-0.5){p3}
      \tkzDrawPoints[fill=black](p1,p2,p3)

      \tkzDefMidPoint(p2,p1)
      \tkzGetPoint{a3b}
      \tkzDefPointWith[orthogonal,K=0.35](a3b,p1)
      \tkzGetPoint{a3}
      \tkzDrawSegment[vect](a3b,a3)
      \tkzLabelPoint[above left](a3){$a^3$}
      \tkzDefMidPoint(p2,p3)
      \tkzGetPoint{a1b}
      \tkzDefPointWith[orthogonal,K=0.35](a1b,p2)
      \tkzGetPoint{a1}
      \tkzDrawSegment[vect](a1b,a1)
      \tkzLabelPoint[below](a1){$a^1$}
      \tkzDefMidPoint(p3,p1)
      \tkzGetPoint{a2b}
      \tkzDefPointWith[orthogonal,K=0.35](a2b,p3)
      \tkzGetPoint{a2}
      \tkzDrawSegment[vect](a2b,a2)
      \tkzLabelPoint[above right](a2){$a^2$}
      \tkzDefPointOnLine[pos=.5](p3,a3b)
      \tkzGetPoint{m3}
      \tkzDefPointOnLine[pos=.5](p3,p2)
      \tkzGetPoint{m1}
      \tkzDrawPolygon[fill=teal!10, draw=teal](m1,m3,p3)

      \def\mu{5/12}
      \tkzDefPointOnLine[pos=\mu](p3,p1)
      \tkzGetPoint{y1}
      \tkzDefPointOnLine[pos=\mu](p3,p2)
      \tkzGetPoint{y2}
      \tkzDefPointOnLine[pos=0.65](y1,y2)
      \tkzGetPoint{O}
      \tkzDrawSegment[dashed, blue](y1,y2)
      \tkzDrawPoint[fill=black](O)
      \tkzLabelPoint[below right](O){$0$}
      \node[label=above:$c$] (c) at (0,0) {.};
      \draw[fill=black] (c) circle[radius=0.4pt];
      \draw[black, dashed] (0,-0.5) -- (0.43301,0.25) -- (-0.43301,0.25) -- cycle;

      \tkzDrawPolygon(p1,p2,p3)
      \tkzLabelPoint[above](p1){$\tilde{p}^1$}
      \tkzLabelPoint[left](p2){$\tilde{p}^2$}
      \tkzLabelPoint[right](p3){$\tilde{p}^3$}

      \tkzDefPointOnLine[pos=.56](p2,p3)
      \tkzGetPoint{q1}
      \tkzLabelPoint[below](q1){$q^1$}
      \tkzDefPointOnLine[pos=.44](p3,p1)
      \tkzGetPoint{q2}
      \tkzLabelPoint[right](q2){$q^2$}
      \tkzDefPointOnLine[pos=.43](p1,p2)
      \tkzGetPoint{q3}
      \tkzLabelPoint[above left](q3){$q^3$}
      \tkzDrawPoints[fill=black](q1,q2,q3)
      \tkzDrawPolygon(q1,q2,q3)
    \end{tikzpicture}
    \caption{Proof of \Cref{thm:jungbound_Dmax_mink}.
      If $0\notin \inte (\conv(\set{c-\frac{1}{2}p^i,\quad i=1,2,3}))$ we may assume, \Wlog, that the origin is within the colored area. Then, $\max_{i=1,3} b_{a^i}(K,\conv(C\cup (-C)))\geq 1 $.}
    \label{proofjungTmax2}
  \end{figure}
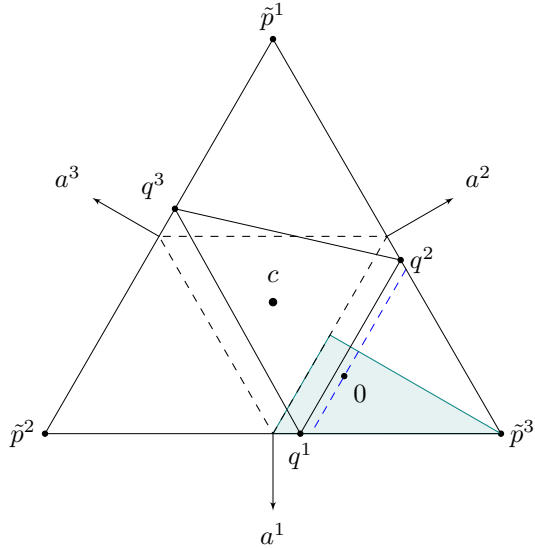

  By the above conditions we have
  \begin{equation*}
    h_C(-a^3)\leq h_S(-a^3)=(-a^3)^T\tilde{p}^3= \frac{\beta_1+\beta_2}{\beta_3}\leq 1=h_C(a^3).
  \end{equation*}
  Thus, using $\alpha_1,\alpha_2,\alpha_3$ as above, we know, if $\alpha_1< 1-\beta_3$ or $\alpha_2>\beta_3$, then
  \begin{align*}
    h_K(-a^3)&\geq\max\set{(-a^3)^Tq^1,(-a^3)^Tq^2}\\
             &=\max\set{(-a^3)^T(\alpha_1\tilde{p}^2+(1-\alpha_1)\tilde{p}^3), (-a^3)^T(\alpha_2\tilde{p}^3+(1-\alpha_2)\tilde{p}^1)}\\
             &=\max\set{-\alpha_1+(1-\alpha_1)\frac{1-\beta_3}{\beta_3}, \alpha_2\frac{1-\beta_3}{\beta_3}-(1-\alpha_2)}>0
  \end{align*}
  and therefore,
  \begin{equation*}
    D_{\MAX}(K,C) \geq b_{a^3}(K,\conv(C\cup(-C))) \geq \frac{h_K(a^3)+h_K(-a^3)}{h_C(a^3)} > 1.
  \end{equation*}

  Now, let $\alpha_1\geq 1-\beta_3$ and $\alpha_2\leq \beta_3$. Since $\beta_1\leq \beta_2$ we have $\beta_1\leq \frac{1-\beta_3}{2}$.
  Thus,
  \begin{align*}
    (-{a^1})^T q^2&= \alpha_2((-{a^1})^T\tilde{p}^3) +(1-\alpha_2)((-a^1)^T \tilde{p}^1)
                    = -\alpha_2 +(1-\alpha_2)\left(\frac{\beta_2}{\beta_1}+\frac{\beta_3}{\beta_1}\right)\\
                  &\geq -\beta_3 +(1-\beta_3)(1+\frac{2\beta_3}{1-\beta_3})
                    =1.
  \end{align*}

 Using that $C$ is Minkowski-centered, we know $h_C(-a^1)\leq s(C)h_C(a^1)=s(C)$ and we obtain
  \begin{align*}
    D_{\MAX}(K,C)&\geq b_{a^1}(K,\conv(C\cup(-C)))
                   \geq \frac{(a^1)^T q^1 -(a^1)^T q^2 }{\max(h_C(a^1),h_C(-a^1))}\\
                 &\geq \frac{1+1}{s(C)}=\frac{2}{s(C)}\geq 1.
  \end{align*}
\end{proof}

Since $D_{\MAX}$ is the smallest diameter, this provides a lower bound for all four diameter definitions.

\begin{corollary} \label{cor:jung-MC-all}
  Let $K,C\in\CC^2$, $C$ Minkowski-centered. Then
  \begin{equation*}
    D_{\m}(K,C)\geq R(K,C).
  \end{equation*}
\end{corollary}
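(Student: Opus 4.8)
The plan is to reduce the general statement to the single case $\m=\MAX$, which has already been established in \Cref{thm:jungbound_Dmax_mink}. The only ingredient needed beyond that theorem is the observation that among the four diameter variants, $D_{\MAX}$ is the smallest, so that a lower bound for $D_{\MAX}$ transfers automatically to all of them.

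Concretely, I would first recall the ordering of the diameters induced by the Firey containment chain $C_{\MIN}\optc C_{\HM}\subset C_{\AM}\optc C_{\MAX}$ of \Cref{prop:firey-chain}. Since the diameter is decreasing in its second argument (\Cref{contandfactor}) and each $D_{\m}(K,C)$ can be written as $D_{\AM}(K,C_{\m})$, the inclusions among the symmetrizations reverse into the chain
\begin{equation*}
  D_{\MAX}(K,C)\leq D_{\AM}(K,C)\leq D_{\HM}(K,C)\leq D_{\MIN}(K,C),
\end{equation*}
already recorded in the excerpt. In particular $D_{\m}(K,C)\geq D_{\MAX}(K,C)$ for every $\m\in\set{\MIN,\HM,\AM,\MAX}$.

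Then I would simply invoke \Cref{thm:jungbound_Dmax_mink}, which gives $D_{\MAX}(K,C)\geq R(K,C)$ for Minkowski-centered $C\in\CC^2$, and concatenate the two estimates to obtain
\begin{equation*}
  D_{\m}(K,C)\geq D_{\MAX}(K,C)\geq R(K,C).
\end{equation*}

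There is essentially no obstacle here: all of the geometric work was carried out in the (rather involved) proof of \Cref{thm:jungbound_Dmax_mink}, and the corollary is a one-line consequence. The only point requiring a moment's care is to confirm the direction of the monotonicity—that the smaller symmetrization $C_{\MAX}$ yields the larger gauge and hence the \emph{smallest} diameter—so that $D_{\MAX}$ is indeed the correct quantity to bound from below and the inequality propagates to the remaining three definitions.
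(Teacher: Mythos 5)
Your proposal is correct and coincides with the paper's own argument: the paper introduces the corollary with the remark that $D_{\MAX}$ is the smallest of the four diameters (via the chain $D_{\MAX}(K,C)\leq D_{\AM}(K,C)\leq D_{\HM}(K,C)\leq D_{\MIN}(K,C)$ obtained from \Cref{prop:firey-chain} and monotonicity in the second argument) and then applies \Cref{thm:jungbound_Dmax_mink}. Your one extra check on the direction of the monotonicity is exactly the right point to verify, and it is handled correctly.
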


If $\m\neq \MAX$ it follows directly from \Cref{prop:firey-chain} and the translation invariance in the arithmetic case that \Cref{cor:jung-MC-all} stays true for non-Minkowski-centered $C\in\CC^2$.
However, since $D_{\AM}(K,C)= R(K,C)$ is obtained if and only if $C$ is a triangle and $K$ is a homothet of $-C$ \cite{sharpening,CompleteSimpl}, we will see in the next section that this bound cannot be reached if $\m \in \set{\MIN,\HM}$.

If we omit the restriction of $C$ being Minkowski-centered, \Cref{thm:jungbound_Dmax_mink} is not necessarily true. However, for gauges that still contain the origin, we obtain the following Jung-type inequality.

\begin{theorem}
  \label{thm:jungbound_Dmax}
  Let $K,C\in\CC^2$, \st~$0\in C$ and $\dim(C)=2$. Then,
  \begin{equation*}
    D_{\MAX}(K,C)\geq \frac{2}{3}R(K,C).
  \end{equation*}
  Moreover, equality can be attained for some $K$ if and only if $C$ is a triangle with one vertex at the origin.
\end{theorem}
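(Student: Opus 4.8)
The plan is to follow the reduction scheme of the proof of \Cref{thm:jungbound_Dmax_mink} and then carry out a position-sensitive breadth estimate; the genuinely new feature is that the origin may now sit anywhere in $C$, in particular at a vertex of a circumscribing triangle, and this is exactly what degrades the constant from $1$ to $\tfrac23$. As there, if $K$ is a point both sides vanish, so I may assume $K\in\bar{\CC}^2$ and, after scaling, $K\optc C$, i.e.\ $R(K,C)=1$; it then suffices to show $D_{\MAX}(K,C)\ge\tfrac23$. By \Cref{opt} there are touching points $q^1,\dots,q^k\in\bd(K)\cap\bd(C)$, $k\in\{2,3\}$, with outer normals $a^i$ and $0\in\conv(\{a^i\})$.

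First I dispose of the segment case $k=2$. Here some segment $L\subset K$ already satisfies $R(L,C)=1$. Since $0\in C$ we have $C\subset C-C$ and $-C\subset C-C$, hence $C_{\MAX}=\conv(C\cup -C)\subset C-C=2C_{\AM}$; because the circumradius of a (symmetric) segment only sees the symmetrization of the gauge, $R(L,C)=1$ gives $R(L,C_{\MAX})\ge\tfrac12 R(L,C_{\AM})=\tfrac12$, so $D_{\MAX}(K,C)\ge D_{\MAX}(L,C)=2R(L,C_{\MAX})\ge 1>\tfrac23$. For $k=3$ I set $S:=\bigcap_{i=1}^3 H^{\le}_{(a^i,h_C(a^i))}$. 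Then $C\subset S$, hence $C_{\MAX}\subset S_{\MAX}$ and $D_{\MAX}(K,C)\ge D_{\MAX}(K,S)$, while $K\optc S$ keeps $R(K,S)=1$ and $0\in C\subset S$. Thus everything reduces to proving $D_{\MAX}(K,S)\ge\tfrac23$ for a triangle $S$ with $0\in S$ and an inscribed triangle $K=\conv(\{q^1,q^2,q^3\})\optc S$. By \Cref{affineRrD} I normalize $S=c+T$ with the equilateral $T$ of \Cref{thm:jungbound_Dmax_mink}, $\tilde p^i=c+p^i$, and write $q^i=\alpha_i\tilde p^{i+1}+(1-\alpha_i)\tilde p^{i+2}$ on the edge opposite $\tilde p^i$.

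The core step is to exhibit, for every admissible position of the origin, a direction whose maximum-breadth is at least $\tfrac23$, using $D_{\MAX}(K,S)\ge b_{s,\MAX}(K,S)=\tfrac{h_K(s)+h_K(-s)}{\max(h_S(s),h_S(-s))}$ and $h_K(a^i)=h_S(a^i)$. As in the two-case argument of \Cref{thm:jungbound_Dmax_mink}, I split on the barycentric position of $0$: when $0$ lies deep inside $S$ (in the medial triangle region) the edge-parallel directions $z^i$, normalized so that $h_{S_{\MAX}}(z^i)=1$, still admit the same averaging argument and produce some $z^i$ with breadth $\ge 1>\tfrac23$. The binding regime is when $0$ is close to a vertex, say $\tilde p^3$; there the denominator $\max(h_S(a^i),h_S(-a^i))=h_{S_{\MAX}}(a^i)$ inflates relative to the centered case, and for the edge normals $a^1,a^3$ one recovers only the factor $\tfrac23$. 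I expect the main obstacle to be precisely this near-vertex estimate: bounding $h_K(a^i)+h_K(-a^i)$ from below in terms of the parameters $\alpha_i$ and the position of $0$, and choosing between $a^1$ and $a^3$, so that the extremum collapses exactly onto the configuration with a vertex at the origin. This parallels the final $\tfrac{2}{s(C)}$ computation there, but here the constant cannot beat $\tfrac23=\tfrac{2}{s(T)+1}$.

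Finally I settle the equality characterization. Tracing the chain of reductions, $D_{\MAX}(K,C)=\tfrac23 R(K,C)$ forces $C_{\MAX}=S_{\MAX}$ (so $C$ is a triangle) together with the worst case of the breadth analysis, which places a vertex of $C$ at the origin; conversely I verify attainment directly. If $C$ is a triangle with a vertex at $0$, then $C_{\MAX}=\conv(C\cup -C)$ is a parallelogram, and after an affine normalization making $C_{\MAX}$ an axis-parallel rectangle a short explicit computation gives $R(C_{\MAX},C)=3$ (the rectangle touches the scaled triangle along its base and at its two top corners, an optimal containment). Taking $K=C_{\MAX}$ then yields $D_{\MAX}(C_{\MAX},C)=2R(C_{\MAX},C_{\MAX})=2$ while $R(C_{\MAX},C)=3$, so $\tfrac{D_{\MAX}(K,C)}{R(K,C)}=\tfrac23$; equivalently the inscribed triangle at the three touching points realizes the bound in the normalized setting with $R=1$ and $D_{\MAX}=\tfrac23$. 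Strictness in every other configuration follows from the case analysis of the preceding paragraph.
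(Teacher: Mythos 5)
Your reduction scheme is sound, and two details of your write-up are genuinely valuable: the segment case $k=2$ is handled correctly via $C_{\MAX}\subset C-C=2C_{\AM}$ (which holds precisely because $0\in C$), giving $D_{\MAX}(L,C)=2R(L,C_{\MAX})\geq R(L,C_{\AM})=R(L,C)=1>\tfrac23$ — a point the paper leaves implicit when it says the reduction works "as in \Cref{thm:jungbound_Dmax_mink}"; and your equality witness is correct and arguably cleaner than the paper's: for a triangle $C$ with a vertex at $0$, one indeed has $R(C_{\MAX},C)=3$ and $D_{\MAX}(C_{\MAX},C)=2$, so $K=C_{\MAX}$ attains the bound, whereas the paper exhibits an inscribed triangle instead.

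However, the core of the proof is missing. After reducing to an inscribed triangle $K\optc S=c+T$ with $0\in S$, everything hinges on a quantitative lower bound valid for \emph{every} position of the origin, and at exactly this point you write that you "expect the main obstacle to be precisely this near-vertex estimate" — that is, the decisive bound on $h_K(a^i)+h_K(-a^i)$ when $0$ is near a vertex is announced, not proven. This is not a routine verification: the argument you want to imitate from \Cref{thm:jungbound_Dmax_mink} uses $h_C(-a)\leq s(C)h_C(a)$, which is unavailable without Minkowski-centeredness, and the paper closes the gap by a \emph{different} case split — not on the position of $0$, but on the touching parameters $\alpha_i$. If some $\alpha_i\notin\left[\tfrac13,\tfrac23\right]$, a breadth already exceeds $\tfrac23$ because $\max(h_C(a^j),h_C(-a^j))\leq h_C(a^j)+h_C(-a^j)$; if all $\alpha_i\in\left[\tfrac13,\tfrac23\right]$, one shows that at least one $z^i$ supports $\conv(S\cup(-S))$ at height $1$ (an existence claim that itself needs justification for arbitrary $0\in S$), and then \eqref{eq:breadthsjung} gives $\alpha_1(1-3(z^i)^Tc)+(1-\alpha_2)(1+3(z^i)^Tc)\geq\tfrac23$. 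Without this (or an equivalent) analysis, neither the inequality itself nor the "only if" half of the equality characterization is established — the latter you obtain by "tracing" the very analysis you skipped, and it moreover needs the additional step that equality forces $\tilde{p}^1,\tilde{p}^2\in C$ and hence $C=S$, not merely $C_{\MAX}=S_{\MAX}$.
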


\begin{proof}
  We use the same notation as in the previous proof. As in \Cref{thm:jungbound_Dmax_mink}, we can assume that $K$ is a triangle and $K\optc C$. It suffices to show $D(K,\conv(S\cup(-S)))\geq \frac{2}{3}$ for $S$ as given in \Cref{thm:jungbound_Dmax_mink} since $D_{\MAX}(K,C)= D(K,\conv(C\cup(-C)))\geq D(K,\conv(S\cup(-S)))$.

  If there exists $\alpha_i \notin \left[\frac{1}{3},\frac{2}{3}\right]$, there is an $ a^j$ with $\frac{{a^j}^T q^j- {a^j}^T q^i}{h_C(a^j)+h_C(-a^j)}
  > \frac{2}{3}$ (\cf~\Cref{proofjungTmax3}) and therefore,
  \begin{align*}
    D(K,\conv(S\cup(-S)))&\geq b_{a^j}(K,\conv(S\cup(-S)))
                         \geq \frac{{a^j}^T q^j-{a^j}^T q^i }{\max(h_C(a^j),h_C(-a^j))}\\
                         &\geq \frac{{a^j}^T q^j- {a^j}^T q^i}{h_C(a^j)+h_C(-a^j)}
                         > \frac{2}{3}.
  \end{align*}

  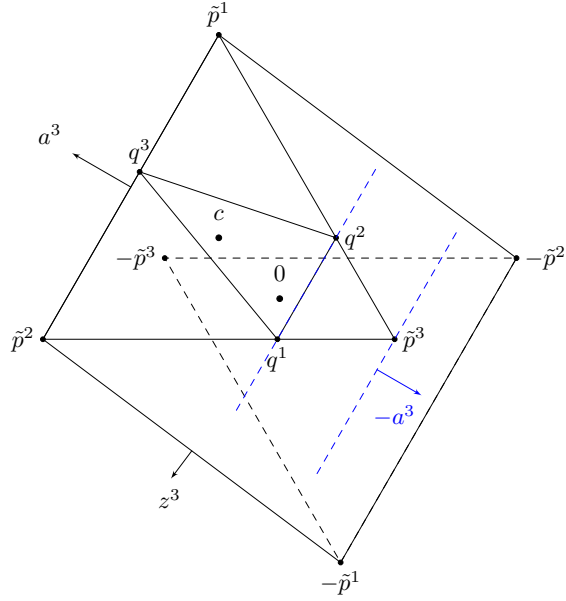
\begin{figure}[ht]
    \centering
    \scalebox{0.9}{
    \begin{tikzpicture}[scale=3,vect/.style={->,
        shorten >=1pt,>=latex'}]
      \tkzDefPoint(0,1){p1}
      \tkzDefPoint(-0.86602,-0.5){p2}
      \tkzDefPoint(0.86602,-0.5){p3}
      \tkzDrawPoints[fill=black](p1,p2,p3)

      \node[label=above:$0$] (O) at (0.3,-0.3) {.};
      \draw[fill=black] (O) circle[radius=0.4pt];
      \node[label=above:$c$] (c) at (0,0) {.};
      \draw[fill=black] (c) circle[radius=0.4pt];
      \tkzDefPointBy[symmetry=center O](p1)
      \tkzGetPoint{p1m}
      \tkzDefPointBy[symmetry=center O](p2)
      \tkzGetPoint{p2m}
      \tkzDefPointBy[symmetry=center O](p3)
      \tkzGetPoint{p3m}
      \tkzDrawPoints[fill=black](p1m,p2m,p3m)
      \tkzDrawPolygon(p1,p2,p3)
      \tkzDrawPolygon[dashed](p1m,p2m,p3m)
      \tkzDrawPolygon(p1,p2,p1m,p2m)
      \tkzLabelPoint[above](p1){$\tilde{p}^1$}
      \tkzLabelPoint[left](p2){$\tilde{p}^2$}
      \tkzLabelPoint[right](p3){$\tilde{p}^3$}
      \tkzLabelPoint[below](p1m){$-\tilde{p}^1$}
      \tkzLabelPoint[right](p2m){$-\tilde{p}^2$}
      \tkzLabelPoint[left](p3m){$-\tilde{p}^3$}

      \tkzDefPointOnLine[pos=2/3](p2,p3)
      \tkzGetPoint{q1}
      \tkzLabelPoint[below](q1){$q^1$}
      \tkzDefPointOnLine[pos=1/3](p3,p1)
      \tkzGetPoint{q2}
      \tkzLabelPoint[right](q2){$q^2$}
      \tkzDefPointOnLine[pos=.45](p1,p2)
      \tkzGetPoint{q3}
      \tkzLabelPoint[above ](q3){$q^3$}
      \tkzDrawPoints[fill=black](q1,q2,q3)
      \tkzDrawPolygon(q1,q2,q3)

      \tkzDefMidPoint(p2,p1m)
      \tkzGetPoint{b3b}

      \tkzDefPointWith[orthogonal,K=0.2](b3b,p2)
      \tkzGetPoint{b3}

      \tkzDrawSegment[vect](b3b,b3)
      \tkzLabelPoint[below](b3){$z^3$}
      \tkzDefMidPoint(p1,p2)
      \tkzGetPoint{a3b}

      \tkzDefPointWith[orthogonal,K=0.4](a3b,p1)
      \tkzGetPoint{a3}

      \tkzDrawSegment[vect](a3b,a3)
      \tkzLabelPoint[above left](a3){$a^3$}

      \tkzDrawLine[dashed,blue, add=0.7 and 0.7](q1,q2)
      \tkzDefLine[parallel=through p3](p1,p2) \tkzGetPoint{c'}
      \tkzDrawLine[dashed,blue, add=0.35 and -0.55](p3,c')
      \tkzDefPointOnLine[pos=0.1](p3,c')
      \tkzGetPoint{a3bm}

      \tkzDefPointWith[orthogonal,K=0.17](a3bm,c')
      \tkzGetPoint{a3m}

      \tkzDrawSegment[vect,blue](a3bm,a3m)
      \tkzLabelPoint[below left, blue](a3m){$-a^3$}

    \end{tikzpicture}}
    \caption{Proof of \Cref{thm:jungbound_Dmax}. If $\alpha_1<\frac{1}{3}$ or $\alpha_2>\frac{2}{3}$, we have $b_{a^3}(K,\conv(S\cup(-S)))>\frac{2}{3}$.}
    \label{proofjungTmax3}
  \end{figure}

  Now, consider the case where $\alpha_i \in \left[\frac{1}{3},\frac{2}{3}\right]$ for all $i\in\set{1,2,3}$.
  There exists $i\in\set{1,2,3}$ such that $z^i$ defined as in the previous proof defines a supporting hyperplane of  $\conv\left(S\cup (-S)\right)$ with $h_{\conv(S\cup(-S))}(z^i)=1$, \Wlog $i=3$. Then, we know $1=h_{\conv(S\cup(-S))}(z^3) \geq (z^3)^T (\pm p^3)= \pm 3(z^3)^Tc$ and using \eqref{eq:breadthsjung} we obtain
  \begin{equation}
    \label{eq:notminkchain}
    \begin{split}
      D(K,\conv(S\cup(-S)))&\geq b_{z^3}(K,\conv(S\cup(-S)))
                           \geq \alpha_1(1-3(z^3)^Tc) +(1-\alpha_2)(1+3(z^3)^Tc)\\
                           &\geq \frac{1}{3}(1-3(z^3)^Tc)+\frac{1}{3} (1+3(z^3)^Tc)
                           =\frac{2}{3}
    \end{split}
  \end{equation}

  Reaching equality in the last inequality chain we need $\alpha_1=(1-\alpha_2)=\frac{1}{3}$. In this case we need $0\in \set{\tilde{p}^3}\cup [\tilde{p}^1,\tilde{p}^2]$ for $b_{a^3}$ not to be larger than $\frac{2}{3}$. However, if $0 \in[\tilde{p}^1,\tilde{p}^2]$ we have that $z^3$ cannot define a supporting hyperplane as described above. Thus, $0$ remains to be a vertex of $S$ to reach equality.
  For $D_{\MAX}(K,C)=\frac{2}{3}$ to be true, we need
  \begin{equation*}
    b_{z^3}(K,\conv(C\cup(-C)))=  b_{z^3}(K,\conv(S\cup(-S))) = \frac{2}{3},
  \end{equation*}
  which is only possible if $\tilde{p}^1\in C$ or
  $\tilde{p}^2\in C$. If only one of these vertices is contained in $C$, $\frac{3}{2}(q^2-q^1)\notin \conv(C\cup(-C))$ which implies $D_{\MAX}(K,C)>\frac{2}{3}$. Thus, $\tilde{p}^1, \tilde{p}^2\in C$, which means  $C=S$.

  All in all, we see that equality can be obtained for $C=\conv(\set{0,\tilde{p}^1,\tilde{p}^2})$ and $\conv(\set{q^1,q^2,q^3})$ with $\alpha_1=(1-\alpha_2)=\frac{1}{3}$ and $\alpha_3\in\left[\frac{1}{3},\frac{2}{3}\right]$. In that case
  \[D_{\MAX}(K,C)=\max\set{b_{a^3}(K,\conv(C\cup(-C))),b_{z^3}(K,\conv(C\cup(-C)))}=\frac{2}{3}\]
  (see \Cref{proofjungTmax4}), which is achieved, e.g.~if we choose $K = \conv\left(\set{\frac{1}{3}\tilde{p}^1,\frac{1}{3}\tilde{p}^2, \frac{1}{2}(\tilde{p}^1+\tilde{p}^2)}\right)$.

  \begin{figure}[ht]
    \centering
    \begin{tikzpicture}[scale=1.5,vect/.style={->,
        shorten >=1pt,>=latex'}]
      \tkzDefPoint(0,1){p1}
      \tkzDefPoint(-0.86602,-0.5){p2}
      \tkzDefPoint(0.86602,-0.5){p3}
      \tkzDrawPoints[fill=black](p1,p2,p3)

      \node (O) at (0.86602,-0.5) {.};

      \tkzDefPointBy[symmetry=center O](p1)
      \tkzGetPoint{p1m}
      \tkzDefPointBy[symmetry=center O](p2)
      \tkzGetPoint{p2m}
      \tkzDefPointBy[symmetry=center O](p3)
      \tkzGetPoint{p3m}
      \tkzDrawPoints[fill=black](p1m,p2m,p3m)
      \tkzDrawPolygon(p1,p2,p3)
      \tkzDrawPolygon[dashed](p1m,p2m,p3m)
      \tkzDrawPolygon(p1,p2,p1m,p2m)
      \tkzLabelPoint[above](p1){$\tilde{p}^1$}
      \tkzLabelPoint[left](p2){$\tilde{p}^2$}
      \tkzLabelPoint[above right](p3){$0$}

      \tkzDefPointOnLine[pos=2/3](p2,p3)
      \tkzGetPoint{q1}
      \tkzLabelPoint[below](q1){$q^1$}
      \tkzDefPointOnLine[pos=1/3](p3,p1)
      \tkzGetPoint{q2}
      \tkzLabelPoint[right](q2){$q^2$}
      \tkzDefPointOnLine[pos=.5](p1,p2)
      \tkzGetPoint{q3}
      \tkzLabelPoint[above](q3){$q^3$}
      \tkzDrawPoints[fill=black](q1,q2,q3)
      \tkzDrawPolygon(q1,q2,q3)

      \tkzDefMidPoint(p2,p1m)
      \tkzGetPoint{b3b}

      \tkzDefPointWith[orthogonal,K=0.2](b3b,p2)
      \tkzGetPoint{b3}

      \tkzDrawSegment[vect](b3b,b3)
      \tkzLabelPoint[below](b3){$z^3$}
      \tkzDefPointWith[orthogonal,K=0.4](q3,p1)
      \tkzGetPoint{a3}

      \tkzDrawSegment[vect](q3,a3)
      \tkzLabelPoint[above left](a3){$a^3$}

    \end{tikzpicture}
    \caption{Equality case in \Cref{thm:jungbound_Dmax}: $C=\conv(\set{0,\tilde{p}^1,\tilde{p}^2})$ and $K=\conv\left(\set{\frac{1}{3}\tilde{p}^1,\frac{1}{3}\tilde{p}^2, \frac{1}{2}(\tilde{p}^1+\tilde{p}^2)}\right)$.
    }
    \label{proofjungTmax4}
  \end{figure}
\end{proof}
For fixed $C$, we refer to any $\tilde{K}$ that fulfills $\frac{D_{\m}(\tilde{K},C)}{R(\tilde{K},C)}=\min_{K\in\CC^n}\frac{D_{\m}(K,C)}{R(K,C)}$ as \cemph{red}{Jung-extremal}. If $C=T$, we have equality in \Cref{thm:jungbound_Dmax_mink} for the following family of triangles.

\begin{example}
  \label{example:jungboundmax}
  Let $T$ be the equilateral triangle as defined above and
  \begin{equation*}
    T_{\alpha}:=\conv(\set{\alpha p^1+ (1-\alpha)p^2, \alpha p^2+ (1-\alpha)p^3,\alpha p^3+ (1-\alpha)p^1})
  \end{equation*}
  for $\alpha\in\left[\frac{1}{3},\frac{2}{3}\right]$ (\cf~\Cref{examplejung}). Then,
  \begin{align*}
    D_{\MAX}(T_{\alpha},T)=R(T_{\alpha},T) \quad  \text{and} \quad r(T_{\alpha},T)&=(1-3\alpha+3\alpha^2)R(T_{\alpha},T)
  \end{align*}

  The triangles $T_{\alpha}$ are all equilaterals. Special cases are $\alpha=\frac{1}{2}$, where $T_\frac{1}{2} = -\frac{1}{2}T$, and $\alpha \in \set{\frac{1}{3},\frac{2}{3}}$, rotations of dilatated $T$ by $\pm \frac{\pi}{6}$.

  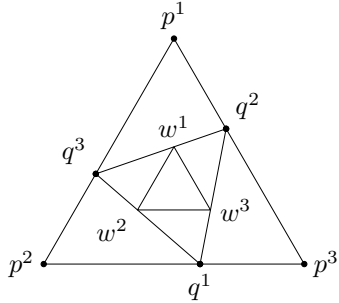
\begin{figure}[ht]
    \centering
    \begin{tikzpicture}[scale=2,vect/.style={->,
        shorten >=1pt,>=latex'}]
        \tkzDefPoint(0,1){p1}
        \tkzDefPoint(-0.86602,-0.5){p2}
        \tkzDefPoint(0.86602,-0.5){p3}
        \tkzDrawPoints[fill=black](p1,p2,p3)

    \tkzDrawPolygon(p1,p2,p3)
    \tkzLabelPoint[above](p1){$p^1$}
    \tkzLabelPoint[left](p2){$p^2$}
    \tkzLabelPoint[right](p3){$p^3$}

    \tkzDefPointOnLine[pos=.6](p2,p3)
    \tkzGetPoint{q1}
    \tkzLabelPoint[below](q1){$q^1$}
    \tkzDefPointOnLine[pos=.6](p3,p1)
    \tkzGetPoint{q2}
    \tkzLabelPoint[above right](q2){$q^2$}
    \tkzDefPointOnLine[pos=.6](p1,p2)
    \tkzGetPoint{q3}
    \tkzLabelPoint[above left](q3){$q^3$}
        \tkzDrawPoints[fill=black](q1,q2,q3)
        \tkzDrawPolygon(q1,q2,q3)
        \tkzDefPointOnLine[pos=.6](q3,q2)
        \tkzGetPoint{w1}
        \tkzLabelPoint[above](w1){$w^1$}
        \tkzDefPointOnLine[pos=.6](q1,q3)
        \tkzGetPoint{w2}
        \tkzLabelPoint[below left](w2){$w^2$}
        \tkzDefPointOnLine[pos=.6](q2,q1)
        \tkzGetPoint{w3}
        \tkzLabelPoint[right](w3){$w^3$}
            \tkzDrawPoints[fill=black](q1,q2,q3)
            \tkzDrawPolygon(w1,w2,w3)

    \end{tikzpicture}
    \caption{All the equilateral triangles $T_\alpha$ are Jung-extremal (\wrt~$T$).}
    \label{examplejung}
\end{figure}
\end{example}

\begin{proof}
  By construction $T_{\alpha}\optc T $. The diameter of $T_{\alpha}$ is attained between two of its vertices and since $T_{\MAX}$ is a regular hexagon which has rotational symmetry of order six, is does not matter which of the edges of $T_{\alpha}$ we consider. Let $q^i=\alpha p^{i+1} + (1-\alpha) p^{i+2}$. By using $p^3=-p^2-p^1$ we obtain
  \begin{align*}
    q^2-q^1&=\alpha p^3+(1-\alpha)p^1 -\alpha p^2-(1-\alpha)p^3\\
           &= (2-3\alpha)p^1 +(3\alpha-1)(-p^2) \\
           &\subset [p^1,-p^2] \subset \bd\left(T_{\MAX}\right)
  \end{align*}
  Hence, $D_{\MAX}(T_{\alpha},T)=\cnorm{q^2-q^1}{T_{\MAX}}=1$.

  The triangle $\conv(\set{w^1,w^2,w^3})$ with $w^i:=\alpha q^{i+2} +(1-\alpha)q^{i+1}$ is optimally contained in $T_{\alpha}$ by \Cref{opt}. Furthermore,
  \begin{align*}
    w^i &= \alpha q^{i+2} +(1-\alpha)q^{i+1} \\
        &= \alpha \left(\alpha p^i+ (1-\alpha)p^{i+1}\right) +(1-\alpha)\left(\alpha p^{i+2}+ (1-\alpha)p^{i}\right) \\
        &= (1-2\alpha+2\alpha^2)p^i +\alpha(1-\alpha)p^{i+1}+\alpha(1-\alpha)p^{i+2}
  \end{align*}
  Using this we compute
  \begin{align*}
    w^{i+1}-w^i &= (1-2\alpha+2\alpha^2)p^{i+1} +\alpha(1-\alpha)p^{i} - (1-2\alpha+2\alpha^2)p^i -\alpha(1-\alpha)p^{i+1}\\
                &= (1-3\alpha-3\alpha^2)(p^{i+1}-p^i).
  \end{align*}
  Thus, $\conv(\set{w^1,w^2,w^3})$ is a translate of $(1-3\alpha-3\alpha^2)T$ and the inradius is $r(T_{\alpha},T)=1-3\alpha-3\alpha^2$.
\end{proof}

$T_{\frac{2}{3}}$ is especially interesting: it
  is complete since its arithmetic mean is a dilatation of $T_{\MAX}$.
Since the symmetrization $T_{\MAX}$ of $T$ is also Jung-extremal, there are at least two complete Jung-extremal bodies, $T_{\MAX}$ and $T_{\frac{2}{3}}$ with different inradius-circumradius ratio.

Now, we compute the values of the functionals for a second family of triangles. We will see that these lie on the boundary of the diagram \wrt triangles.
\begin{lemma}
\label{leftMaxTequalcase}
Let $T$ be the equilateral triangle as defined above and
    $S_{\lambda}=\conv(\{q^1,q^2,q^3\})$
with $q^1=\frac{1}{2}(p^2+p^3)$, $q^2=\lambda p^1 + (1-\lambda)p^3$ and $q^3=\lambda p^1 + (1-\lambda)p^2$
for some $\lambda\in [\frac{1}{2},1]$.
Then, $R(S_{\lambda},T)=1, D_{\MAX}(S_{\lambda},T)=\lambda+\frac{1}{2}$ and $r(S_{\lambda},T)=\lambda(1-\lambda)$.
\end{lemma}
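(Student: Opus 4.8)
The plan is to treat the three functionals separately: \Cref{opt} for the circumradius, \Cref{DmaxProperties} for the diameter, and a direct largest-homothet computation for the inradius. First I would record coordinates using $p^1+p^2+p^3=0$: one finds $q^1=(0,-\frac12)^T$ and $q^{2},q^3=(\pm(1-\lambda)\frac{\sqrt3}{2},\,\frac{3\lambda}{2}-\frac12)^T$, so $S_\lambda$ is isosceles, symmetric about the $x_2$-axis, with apex $q^1$ pointing \emph{down} (opposite to $T$). I would also note that at $\lambda=1$ the body degenerates to the segment $[q^1,p^1]$, to be handled separately and trivially at the end.

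For $R(S_\lambda,T)=1$: each $q^i$ is by construction a convex combination of two vertices of $T$, hence lies on a distinct edge, so $S_\lambda\subset T$ and $R(S_\lambda,T)\le 1$. For optimality I would exhibit the three outer normals of $T$ at those edges, $a^1=(0,-1)^T$, $a^2=(\sqrt3,1)^T$, $a^3=(-\sqrt3,1)^T$ (supporting $T$ along $[p^2,p^3]$, $[p^3,p^1]$, $[p^1,p^2]$), and check that $0=\frac12 a^1+\frac14 a^2+\frac14 a^3\in\conv(\set{a^1,a^2,a^3})$ with the $a^i$ affinely independent. Then \Cref{opt} yields $S_\lambda\optc T$. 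For $\lambda=1$ I would instead use the antipodal pair $(0,-1)^T$ at $q^1$ and $(0,1)^T$ at $p^1$.

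For the diameter I would pass to the symmetrization: by \Cref{DmaxProperties}, $D_{\MAX}(S_\lambda,T)=D(S_\lambda,T_{\MAX})$ with $T_{\MAX}=\conv(\set{\pm p^1,\pm p^2,\pm p^3})$ the regular hexagon, and since $T_{\MAX}$ is symmetric this equals $\max_{x,y}\cnorm{x-y}{T_{\MAX}}$, attained at a pair of vertices of $S_\lambda$. The key identity is $q^2-q^1=(\lambda+\frac12)p^1+(1-\lambda)p^3$ (again via $p^1+p^2+p^3=0$); since $-p^2=p^1+p^3$, the edge $[p^1,-p^2]$ of $T_{\MAX}$ equals $\set{p^1+tp^3:t\in[0,1]}$, and for $\lambda\in[\frac12,1]$ one has $\frac{1-\lambda}{\lambda+1/2}\in[0,1]$, so $q^2-q^1$ is $(\lambda+\frac12)$ times a boundary point of $T_{\MAX}$; hence $\cnorm{q^2-q^1}{T_{\MAX}}=\lambda+\frac12$, and by reflection symmetry $\cnorm{q^3-q^1}{T_{\MAX}}=\lambda+\frac12$ too. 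The third pair gives the horizontal vector $q^3-q^2=(-(1-\lambda)\sqrt3,0)^T$, whose hexagon-length is $2(1-\lambda)$, and $2(1-\lambda)\le\lambda+\frac12$ precisely when $\lambda\ge\frac12$. Taking the maximum gives $D_{\MAX}(S_\lambda,T)=\lambda+\frac12$.

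For the inradius I would compute $r(S_\lambda,T)=\max\set{\rho:\exists c,\ c+\rho T\subset S_\lambda}$ by intersecting the half-plane conditions from the edge normals of $S_\lambda$, namely $m^1=(0,1)^T$, $m^2=(3\lambda,-(1-\lambda)\sqrt3)^T$, $m^3=(-3\lambda,-(1-\lambda)\sqrt3)^T$. Containment is equivalent to $c^Tm^j+\rho\,h_T(m^j)\le h_{S_\lambda}(m^j)$ for $j=1,2,3$; using the reflection symmetry to set $c=(0,c_2)^T$ collapses this to two linear inequalities in $(c_2,\rho)$, and eliminating $c_2$ gives $\rho\cdot\frac{3}{2(1-\lambda)}\le\frac{3\lambda}{2}$, that is $\rho\le\lambda(1-\lambda)$, with equality when the inscribed triangle touches all three edges (so $0\in\conv(\set{m^1,m^2,m^3})$ and \Cref{opt} certifies optimality). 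I expect this last step to be the main obstacle: because $S_\lambda$ is oriented oppositely to $T$ and is not a homothet of it (its slanted edges are parallel to those of $T$ only at $\lambda=\frac12$), the inradius cannot be read off by inspection, and the support-function system is what makes the contact configuration and the factor $\lambda(1-\lambda)$ transparent; evaluating the three $h_T(m^j)$ is the one genuinely computational point. As sanity checks, $\lambda=\frac12$ gives the medial triangle $S_{1/2}=-\frac12 T$ with $r=\frac14$, and $\lambda=1$ gives the degenerate segment with $r=0$.
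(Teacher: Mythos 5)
Your proposal is correct, and two of its three parts coincide with the paper's proof: for the diameter you perform exactly the same computation (reduce to the hexagon gauge $T_{\MAX}$, use $p^1+p^2+p^3=0$ to write $q^2-q^1$ as $(\lambda+\tfrac12)$ times a point of the edge $[p^1,-p^2]$ of $T_{\MAX}$, and check the pair $q^2,q^3$ is shorter), while for the circumradius the paper simply says \enquote{by construction} and you supply the explicit certificate (normals $(0,-1)^T$, $(\pm\sqrt3,1)^T$ with $0$ in their convex hull, plus the degenerate case $\lambda=1$), which is a welcome amount of extra care. The genuine difference is the inradius. The paper argues synthetically: by axial symmetry the inscribed homothet of $T$ has its top vertex at $w^1=\tfrac12(q^2+q^3)$, and a similar-triangles (intercept theorem) computation of its euclidean edge length $a$ gives $r=a/\sqrt3=\lambda(1-\lambda)$. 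You instead encode $c+\rho T\subset S_\lambda$ as three support-function inequalities in the edge normals of $S_\lambda$, use the reflection symmetry to restrict to $c=(0,c_2)^T$, and eliminate $c_2$ — a small linear program with optimum $\lambda(1-\lambda)$. Both are sound; your route has the advantage that it proves maximality of $\rho$ directly (the active constraints give the contact configuration certifying optimality via \Cref{opt}), whereas the paper computes the inscribed triangle after positing its contact structure; the paper's argument is in turn shorter and matches its figure. Two details you should make explicit when writing this up: the symmetry reduction to $c=(0,c_2)^T$ needs the standard averaging argument (if $c+\rho T\subset S_\lambda$, so is its mirror image, hence their midpoint translate), and the values $h_T(m^2)=h_T(m^3)=\tfrac{\sqrt3}{2}(2\lambda+1)$, $h_{S_\lambda}(m^2)=\tfrac{\sqrt3}{2}(1-\lambda)$ should be recorded, since the whole elimination rests on them (they do yield $\rho\le\lambda(1-\lambda)$, as claimed).
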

\begin{proof} By construction, $R(S_\lambda,T)=1$.
  Obviously,
  $\norm{q^2-q^3}_{T_{\MAX}}\leq 1$ and $\norm{q^2-q^1}_{T_{\MAX}}=\norm{q^3-q^1}_{T_{\MAX}}$. Using $p^3=-(p^1+p^2)$ we can compute:
\begin{equation*}
    \begin{split}
        q^2-q^1&= \lambda p^1 + (1-\lambda)p^3 - \frac{1}{2}(p^2+p^3) = \lambda p^1 + \frac{1}{2}(-p^2) + (\frac{1}{2}-\lambda)(-p^1 -p^2)\\
        &= (2\lambda -\frac{1}{2})p^1 +(1-\lambda)(-p^2) \in (\lambda+\frac{1}{2}) \conv(\{p^1,-p^2\})\\
        &\subset (\lambda+\frac{1}{2}) \bd(T_{\MAX}).
    \end{split}
\end{equation*}
Since $\lambda+\frac{1}{2}\geq 1$ for $\lambda \in [\frac{1}{2},1]$, it follows $D_{\MAX}(S_{\lambda},T)=\lambda+\frac{1}{2}$.

Now we show the formula for the inradius (\cf~\Cref{proofleftMaxTequalcase}). Let $\conv(\set{w^1,w^2,w^3})$ be the inner triangle. By axial symmetry of $T$ and $S_{\lambda}$ we know that $w^1=\frac{1}{2}(q^2+q^3)$.
Denote the euclidean edge length of the inner triangle by $a$. Since $T$ has edge length $\sqrt{3}$, we obtain $r(S_{\lambda},T)=\frac{a}{\sqrt{3}}$. The segments $[q^3,q^2]$ and $[p^2,p^3]$ are parallel and the corresponding edges of the inner and outer triangle are parallel as well. Thus, the triangles $\conv(\set{w^1,w^3,q^2})$ and $\conv(\set{p^3,q^2,q^1})$ are similar, implying $\frac{a}{\norm{q^2-p^3}_2}=\frac{q^2_1}{p^3_1}$. We obtain
\begin{equation*}
    a = \frac{q^2_1}{p^3_1} \cdot \norm{q^2-p^3}_2 = (1-\lambda)\norm{q^2-p^3}_2
      = (1-\lambda)\lambda \norm{p^1 - p^3}_2 = \sqrt{3}\lambda (1-\lambda),
\end{equation*}
and therefore $r(S_{\lambda},T)=\frac{a}{\sqrt{3}}=\lambda (1-\lambda)$.
\end{proof}

\begin{figure}[ht]
    \centering
    \begin{tikzpicture}[scale=3]
    \tkzDefPoint(0,1){p1}
    \tkzDrawPoint[radius=0.4pt,label=above:$p^1$](p1)
    \tkzDefPoint(-0.86602,-0.5){p2}
    \tkzDrawPoint[radius=0.4pt,label=left:$p^2$](p2)
    \tkzDefPoint(0.86602,-0.5){p3}
    \tkzDrawPoint[radius=0.4pt,label=right:$p^3$](p3)
    \tkzDrawPolygon[black, thick](p1,p2,p3);
      \tkzDefPoint(0,-0.5){q1}
    \tkzDrawPoint[radius=0.4pt,label=below:$q^1$](q1)
    \tkzDefPoint(0.28867,0.5){q2}
    \tkzDrawPoint[radius=0.4pt,label=right:$q^2$](q2)
    \tkzDefPoint(-0.28867,0.5){q3}
    \tkzDrawPoint[radius=0.4pt,label=left:$q^3$](q3)
    \tkzDrawPolygon[black, thick](q1,q2,q3);
        \tkzDefPoint(0,0.5){w1}
        \tkzDrawPoint[radius=0.4pt](w1)
        \tkzLabelPoint[above](w1){$w_1$}
        \tkzDefPoint(-0.19245,0.166666){w2}
        \tkzDrawPoint[radius=0.4pt](w2)
        \tkzLabelPoint[left](w2){$w^2$}
        \tkzDefPoint(0.19245,0.166666){w3}
        \tkzDrawPoint[radius=0.4pt](w3)
        \tkzLabelPoint[right](w3){$w^3$}
    \node[label=left:\textcolor{blue}{$a$}] (a) at (0.12,0.3) {};
    \tkzDrawSegments[black, thick](w1,w2 w2,w3);
    \tkzDrawSegment[blue,thick](w3,w1)
    \tkzFillAngle[fill=teal!25, opacity=0.5,size=0.2](p3,q1,q2)
    \tkzMarkAngle[size=0.2,mark=none](p3,q1,q2);
    \tkzFillAngle[fill=teal!25, opacity=0.5,size=0.1](w1,q2,w3)
    \tkzMarkAngle[size=0.1,mark=none](w1,q2,w3);

    \tkzFillAngle[fill=orange!25, opacity=0.5,size=0.2](q2,p3,q1)
    \tkzMarkAngle[size=0.2,mark=none](q2,p3,q1);

    \tkzFillAngle[fill=orange!25, opacity=0.5,size=0.1](w3,w1,q2)
    \tkzMarkAngle[size=0.1,mark=none](w3,w1,q2);

    \end{tikzpicture}
    \caption{Calculating the inradius of $S_\lambda$.}
    \label{proofleftMaxTequalcase}
\end{figure}
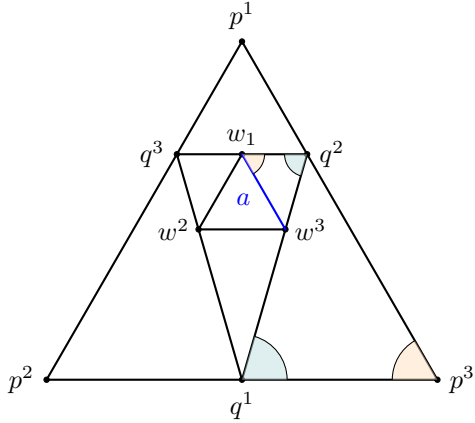

$S_{\frac 12} = -\frac{1}{2}T$ is Jung-extremal and $S_1$ is a segment $L_w$.

\begin{theorem} \label{leftMaxT}
 Let $K \in \bar{\CC}^n$ and $T$ an equilateral Minkowski-centered triangle. Then
  \begin{equation*}
    \left(\frac{D_{\MAX}(K,T)}{R(K,T)}-\frac{1}{2}\right)\left(\frac{3}{2}-\frac{D_{\MAX}(K,T)}{R(K,T)}\right) \leq \frac{r(K,T)}{R(K,T)}
  \end{equation*}
  with equality for the triangles $S_{\lambda}$, $\lambda \in [\frac12,1]$, as described in \Cref{leftMaxTequalcase}.
\end{theorem}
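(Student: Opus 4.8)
The plan is to normalize, reduce to triangles inscribed in $T$, and finish with an explicit computation on that class. Set $d:=D_{\MAX}(K,T)/R(K,T)$ and $g(d):=\left(d-\tfrac12\right)\left(\tfrac32-d\right)$, so the assertion is $g(d)\le r(K,T)/R(K,T)$. Since $r,R,D_{\MAX}$ are homogeneous of degree one in $K$ (\Cref{contandfactor}) and invariant under the relevant linear maps (\Cref{affineRrD}), I may assume $R(K,T)=1$, i.e.\ $K\optc T$, and that $T$ is the standard equilateral triangle, so that $T_{\MAX}=\conv(\{\pm p^1,\pm p^2,\pm p^3\})$ is the regular hexagon. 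The function $g$ is a downward parabola with roots $\tfrac12,\tfrac32$, maximum $\tfrac14$ at $d=1$, and strictly decreasing on $\left[1,\tfrac32\right]$. By \Cref{thm:jungbound_Dmax_mink} we have $d\ge 1$; if $d\ge\tfrac32$ then $g(d)\le 0\le r(K,T)$ and there is nothing to prove, so I may assume $d\in\left[1,\tfrac32\right]$.

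Next I reduce to inscribed triangles. Applying \Cref{opt} to $K\optc T$ yields points $q^1,\dots,q^k\in\ext(K)\cap\bd(T)$ with $k\in\{2,3\}$ whose supporting normals certify optimal containment, so $K':=\conv(\{q^1,\dots,q^k\})\subseteq K$ inherits $K'\optc T$. Because $r(\cdot,T)$ and $D_{\MAX}(\cdot,T)$ are monotone under inclusion (\Cref{contandfactor}), $r(K')\le r(K)$ and $D_{\MAX}(K')\le D_{\MAX}(K)$, and since $g$ is decreasing on $\left[1,\tfrac32\right]$ this gives $g\big(D_{\MAX}(K,T)\big)\le g\big(D_{\MAX}(K',T)\big)$. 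Hence $r(K',T)\ge g\big(D_{\MAX}(K',T)\big)$ would imply $r(K,T)\ge r(K',T)\ge g\big(D_{\MAX}(K',T)\big)\ge g\big(D_{\MAX}(K,T)\big)$, as required. If $k=2$ then $K'$ is a segment with $K'\optc T$, so $r(K',T)=0$ while \Cref{CDiam} gives $D_{\MAX}(K',T)\ge 2\rho_{\MAX}=\tfrac32$ (as $\rho_{\MAX}=\tfrac{s(T)+1}{2s(T)}=\tfrac34$); thus $g\big(D_{\MAX}(K',T)\big)\le 0=r(K',T)$, which is the limiting case $S_1$.

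It remains to treat a nondegenerate inscribed triangle $K=\conv(\{q^1,q^2,q^3\})$ with $q^i=\alpha_i p^{i+1}+(1-\alpha_i)p^{i+2}$, $\alpha_i\in[0,1]$ and indices mod $3$. Using $p^1+p^2+p^3=0$, each difference $q^i-q^j$ expands along an edge of $T_{\MAX}$ exactly as in \Cref{leftMaxTequalcase}, so that $D_{\MAX}(K,T)=\max_{i\neq j}\cnorm{q^i-q^j}{T_{\MAX}}$ is the maximum of three expressions affine in $(\alpha_1,\alpha_2,\alpha_3)$. The inradius $r(K,T)$, being the size of the largest positive homothet of $T$ fitting inside $K$, is read off from the three edge lines of $K$ by the similar-triangles argument carried out for $S_\lambda$ in \Cref{leftMaxTequalcase}. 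The triangular claim then becomes the quadratic inequality $g\big(D_{\MAX}(K,T)\big)\le r(K,T)$ in these parameters.

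The main obstacle is this final verification: $r(K,T)$ must be controlled against $D_{\MAX}(K,T)$ over the full two-parameter shape family, and the minimizers of the ratio identified as the isosceles triangles $S_\lambda$. I would establish this by exploiting the dihedral symmetry of $T$ (equivalently the six-fold symmetry of $T_{\MAX}$): after applying a symmetry of $T$ placing the diametral edge opposite $p^1$, the superadditivity of the inradius and subadditivity of the diameter under Minkowski averaging (\Cref{linearityrrRD}) should force the extremal $K$, for fixed $D_{\MAX}$, to be symmetric about the $p^1$-axis, i.e.\ of the form $S_\lambda$; alternatively one checks the faces of $[0,1]^3$ and the interior critical points directly. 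For $S_\lambda$, \Cref{leftMaxTequalcase} gives $D_{\MAX}(S_\lambda,T)=\lambda+\tfrac12$ and $r(S_\lambda,T)=\lambda(1-\lambda)$, so that $g\big(\lambda+\tfrac12\big)=\lambda(1-\lambda)=r(S_\lambda,T)$, which simultaneously yields the inequality and shows that equality holds exactly along the family $S_\lambda$.
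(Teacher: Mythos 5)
Your overall architecture follows the paper's proof: normalize to $K\optc T$, observe $g(d)=(d-\frac12)(\frac32-d)$ is decreasing for $d\ge 1$ (using \Cref{thm:jungbound_Dmax_mink}), reduce via \Cref{opt} to a triangle inscribed in $T$ (with the segment case $k=2$ handled by \Cref{CDiam}), and check that $g(\lambda+\frac12)=\lambda(1-\lambda)=r(S_\lambda,T)$ gives the equality cases. All of that is correct, but it is exactly the easy half. The entire substance of the theorem is the step you defer: showing that every inscribed triangle $K=\conv(\set{q^1,q^2,q^3})$ with one vertex on each edge of $T$ satisfies $g(D_{\MAX}(K,T))\le r(K,T)$, and neither of your two suggestions for this step works as stated.

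The symmetrization idea fails because the inequalities of \Cref{linearityrrRD} point the wrong way. If $\sigma$ is the reflection of $T$ fixing $p^1$ and $K^\sigma:=\frac12(K+\sigma(K))$, then superadditivity of the inradius gives $r(K^\sigma,T)\ge r(K,T)$ and subadditivity of the diameter gives $D_{\MAX}(K^\sigma,T)\le D_{\MAX}(K,T)$, hence $g(D_{\MAX}(K^\sigma,T))\ge g(D_{\MAX}(K,T))$. So symmetrization weakly increases \emph{both} sides of the target inequality: knowing $g(D_{\MAX}(K^\sigma,T))\le r(K^\sigma,T)$ says nothing about $K$, and for the extremal problem \enquote{minimize $r$ at fixed $D_{\MAX}$} the symmetrized body has larger inradius and leaves the constraint set, so it does not show the minimizer is some $S_\lambda$ (if anything, this argument locates maximizers of $r$, the wrong extremum). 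What is needed — and what the paper constructs — is a chain of modifications that weakly \emph{decrease} both $r$ and $D_{\MAX}$, ending at an $S_\lambda$; Minkowski averaging cannot be such a step. The fallback \enquote{check the faces of $[0,1]^3$ and interior critical points} is also not routine, and your premise for it is false: $D_{\MAX}(K,T)$ is \emph{not} a maximum of three affine functions of $(\alpha_1,\alpha_2,\alpha_3)$. For instance, $\alpha_1=\alpha_2=0$ gives $q^2-q^1=p^1-p^3$ with $\cnorm{q^2-q^1}{T_{\MAX}}=2$, whereas the affine formula valid along the hexagon edge $[p^1,-p^2]$ (coefficient sum $1+\alpha_1-\alpha_2$) gives $1$; the norm is only affine on the subregion where $q^i-q^j$ is a nonnegative combination of two \emph{adjacent} vertices of $T_{\MAX}$, and pinning the configuration into such a region is itself nontrivial. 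Likewise $r(K,T)$ is the value of a parametric program whose constraints involve the nonlinearly varying edge normals of $K$. This is precisely what the paper spends several pages on: the \enquote{steep} lemma (\Cref{steep}) to move vertices without increasing inradius or diameter until two segments are diametral, placing $q^2,q^3$ on specific edges of $q^1+D_{\MAX}(S,T)T_{\MAX}$, and then translating the inner triangle of the comparison body $S_\lambda$ into $S$ via the intercept theorem and law of sines. Without an argument of comparable substance for this step, your proposal does not prove the theorem.
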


To prepare the proof of \Cref{leftMaxT} we need the following lemma.

\begin{lemma} \label{steep}
  Let $K=\conv\left(\{q^1,q^2,q^3\}\right)$ be a triangle that is optimally contained in $T$, \st~$q^i$ belongs to the edge of $T$ opposing $p^i$.
  We say that $(q^{1}, q^{3})$ is \emph{steep} if $\norm{q^3-p^1}_2\leq \norm{q^1-p^3}_2$. Let $\tilde{K}:=\conv(\{q^{1},q^{3},\tilde{q}^2\})\optc T$ be a triangle such that $\tilde{q}^2$ is on the same edge of $T$ as $q^2$ and  $\norm{q^2-p^1}_2\leq \norm{\tilde{q}^2-p^1}_2$. If $(q^{1}, q^{3})$ is steep, the inradius of $\tilde{K}$ \wrt~$T$ is greater or equal than the one of $K$.
\end{lemma}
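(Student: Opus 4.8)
The plan is to reduce the inradius to a ratio of areas and then exploit that one of these areas is \emph{unchanged} as $q^2$ slides along its edge of $T$. Fix the coordinates $p^1=(0,1)$, $p^2=(-\sqrt3/2,-1/2)$, $p^3=(\sqrt3/2,-1/2)$ of $T$ and write the three vertices of $K$ as $q^1=p^2+\nu(p^3-p^2)$, $q^2=p^1+\mu_2(p^3-p^1)$, $q^3=p^1+\mu_3(p^2-p^1)$ with $\nu,\mu_2,\mu_3\in(0,1)$; passing to $\tilde K$ means increasing $\mu_2$, since $\norm{q^2-p^1}_2=\mu_2\sqrt3$. A short computation gives $\norm{q^3-p^1}_2=\mu_3\sqrt3$ and $\norm{q^1-p^3}_2=(1-\nu)\sqrt3$, so the steepness hypothesis is exactly $\mu_3\le 1-\nu$, i.e.\ $1-\nu-\mu_3\ge 0$.

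First I would establish the identity $r(K,T)=A(K)/A(K,T)$, where $A(\cdot)$ is area and $A(\cdot,\cdot)$ the planar mixed area. The largest inscribed positive homothet $\rho T+t\subset K$ solves the linear program $\max\{\rho: t^\top b^j+\rho\,h_T(b^j)\le h_K(b^j),\ j=1,2,3\}$ over the three outer edge-normals $b^j$ of $K$; since these positively span $\R^2$ and $h_T(b^j)>0$ (as $0\in\inte T$), the optimum makes all three constraints tight, and eliminating $t$ via the (edge-length) weights $\lambda_j$ with $\sum_j\lambda_j b^j=0$ yields $\rho=\frac{\sum_j\lambda_j h_K(b^j)}{\sum_j\lambda_j h_T(b^j)}=\frac{2A(K)}{2A(K,T)}$.

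The decisive step is that $A(K,T)$ is independent of $\mu_2$. Writing $J$ for the clockwise quarter turn $J(x,y)=(y,-x)$, one has $2A(K,T)=\sum_j h_T\!\big(J(v_{j+1}-v_j)\big)$ over the counterclockwise edges of $K$. Only the two edges incident to $q^2$ move; I would verify through the normal cones of $T$ that the outer normal of the right edge $[q^1,q^2]$ stays in the normal cone of $p^3$ and that of the top edge $[q^2,q^3]$ stays in the normal cone of $p^1$, so these two contributions equal $(p^3)^\top J(q^2-q^1)$ and $(p^1)^\top J(q^3-q^2)$. Their $q^2$-dependent part is $(p^3-p^1)^\top J(q^2)=(p^3-p^1)^\top J\big(p^1+\mu_2(p^3-p^1)\big)$, whose $\mu_2$-term $\mu_2\,(p^3-p^1)^\top J(p^3-p^1)$ vanishes because $J(p^3-p^1)\perp(p^3-p^1)$; the third summand belongs to the fixed edge $[q^3,q^1]$. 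Hence $A(K,T)$ is constant along the slide. On the other hand, subtracting the three corner triangles cut off $T$ gives $A(K)/A(T)=\mu_2(1-\nu-\mu_3)+\nu\mu_3$, which is affine in $\mu_2$ with slope $A(T)(1-\nu-\mu_3)$.

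Combining these, $r(K,T)=A(K)/A(K,T)$ is affine in $\mu_2$ with one and the same positive constant denominator for $K$ and $\tilde K$, and with slope of the sign of $1-\nu-\mu_3$. Under steepness this slope is nonnegative, so raising $\mu_2$ from the value of $q^2$ to that of $\tilde q^2$ cannot decrease the inradius, giving $r(\tilde K,T)\ge r(K,T)$. The main obstacle is the bookkeeping behind the two structural facts that carry everything: the inradius-as-area-ratio identity (equivalently, that the maximal inscribed homothet touches all three edges of $K$) and the verification that the two relevant support vertices of $T$ stay $p^1$ and $p^3$ throughout the motion; once these hold, the orthogonality $J(p^3-p^1)\perp(p^3-p^1)$ and the affine corner-area formula finish the argument, and the boundary case $\mu_3=1-\nu$ (zero slope) is consistent with the symmetric extremizers $S_\lambda$ of \Cref{leftMaxTequalcase}.
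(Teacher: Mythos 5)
Your proof is correct, and it follows a genuinely different route from the paper's. The paper argues by translation: it takes the maximal inscribed homothet $\conv(\{w^1,w^2,w^3\})$ of $T$ in $K$ (observing, as you do, that it necessarily touches all three sides of $K$), traps it in the slab between the two lines $H_1,H_2$ parallel to $[p^1,p^3]$ that support it, and uses steepness plus the intercept theorem to slide this inner triangle within that slab until it lies in $\tilde K$; the decisive containment step is essentially read off the figure. You instead make the inradius an explicit function of the moving vertex: $r(K,T)=A(K)/A(K,T)$, which is legitimate because the maximal inscribed homothet of a triangle is tangent to all three edges (your KKT/positive-spanning argument for this is sound); the mixed area $A(K,T)$ is invariant as $q^2$ slides along $[p^1,p^3]$, since the normals of the two moving edges stay in the normal cones of $p^3$ and of $p^1$ (indeed $[q^1,q^2]$ separates $p^3$ from $\{p^1,p^2\}$ inside $T$, and $[q^2,q^3]$ separates $p^1$), so the only $\mu_2$-dependent term is $\mu_2\,(p^3-p^1)^T J(p^3-p^1)=0$; and $A(K)/A(T)=\mu_2(1-\nu-\mu_3)+\nu\mu_3$ is affine in $\mu_2$ with slope whose sign is exactly the steepness condition $1-\nu-\mu_3\ge 0$. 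Your route buys a quantitative statement: the inradius is a monotone affine function of the position of $q^2$, the direction of monotonicity is precisely steepness, and equality holds exactly in the borderline case $\mu_3=1-\nu$, consistent with the extremal family $S_\lambda$ of \Cref{leftMaxTequalcase}; it also isolates the geometry into two cleanly checkable facts (full tangency of the inscribed homothet, and the supporting normals staying in fixed normal cones). The paper's route is shorter and coordinate-free, but it yields only the inequality and leaves its key claim (that the part of $H_2$ below $w^2$ meets $\tilde K$) largely to the picture.
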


We call this property "steep" since the segment $[q^1,q^3]$ is steeper than $[p^3,p^1]$. By symmetry of $T$ we can generalize this result to all choices $i,j\in\set{1,2,3}$: $(q^{i}, q^{j})$, $i,j\in\{1,2,3\}$, $i\neq j$ is steep if $\norm{q^j-p^i}_2\leq \norm{q^i-p^j}_2$. At least one of the ordered pairs $(q^i,q^j)$ or $(q^j,q^i)$ is always steep.

\begin{proof}[Proof of \Cref{steep}]
  Let $H_1$ and $H_2$ be the two lines parallel to $[p^1,p^3]$ supporting the inner triangle $\conv(\set{w^1,w^2,w^3})$ of $K$ (that necessarily touches all three edges of $K$), s.t.~$H_1$ contains the edge $[w^1,w^3]$ and $H_2$ the opposing vertex $w^2$ (\cf~\Cref{proofsteep}). Since $(q^{1}, q^{3})$ is steep the part of $H_2$ below $w^2$ intersects $\tilde{K}$. By the intercept theorem with the two parallel lines $H_1$ and $H_2$ and the points $q^2$ or $\tilde{q}^2$ the segment of $H_1$ contained in $\tilde{K}$ is greater or equal than the one contained in $K$. Thus, we can move the inner triangle of $K$ within the slab between $H_1$ and $H_2$ until it touches $[q^1,\tilde{q}^2]$. The resulting translations of $w^1,w^2,w^3$ are all contained in $\tilde{K}$, the translation of $w^2$ due to the steepness of $(q^1,q^3)$. Thus, $r(K,T)\leq r(\tilde{K},T)$.
\end{proof}

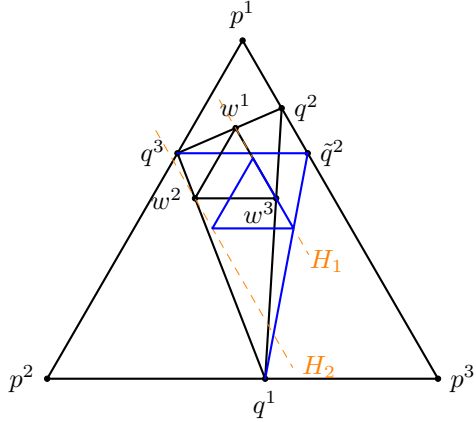
\begin{figure}[ht]
  \centering
  \begin{tikzpicture}[scale=3]
    \tkzDefPoint(0,1){p1}
    \tkzDrawPoint[radius=0.4pt,label=above:$p^1$](p1)
    \tkzDefPoint(-0.86602,-0.5){p2}
    \tkzDrawPoint[radius=0.4pt,label=left:$p^2$](p2)
    \tkzDefPoint(0.86602,-0.5){p3}
    \tkzDrawPoint[radius=0.4pt,label=right:$p^3$](p3)
    \tkzDrawPolygon[black, thick](p1,p2,p3);

    \tkzDefPoint(0.1,-0.5){q1}
    \tkzDrawPoint[radius=0.4pt,label=below:$q^1$](q1)
    \tkzDefPoint(0.173205,0.7){q2}
    \tkzDrawPoint[radius=0.4pt,label=right:$q^2$](q2)
    \tkzDefPoint(-0.28867,0.5){q3}
    \tkzDrawPoint[radius=0.4pt,label=left:$q^3$](q3)
    \tkzDrawPolygon[black, thick](q1,q2,q3);
    \tkzDefPoint(0.28867,0.5){q2t}
    \tkzDrawPoint[radius=0.4pt,label=right:$\tilde{q}^2$](q2t);
    \tkzDrawPolySeg[blue,thick](q1,q2t,q3);


    \tkzDefPoint(-0.0310653, 0.611546){w1}
    \tkzDrawPoint[radius=0.2pt,label=above:$w^1$](w1)
    \tkzDefPoint(-0.210935, 0.3){w2}
    \tkzDrawPoint[radius=0.2pt,label=left:$w^2$](w2)
    \tkzDefPoint(0.148805, 0.3){w3}
    \tkzDrawPoint[radius=0.2pt](w3)
    \tkzLabelSegment[left, pos=1.2](w1,w3){$w^3$}
    \tkzDrawPolygon[thick](w1,w2,w3);
    \tkzDefLine[parallel=through w2](p1,p3) \tkzGetPoint{c1};
    \tkzDrawLine[color=orange,dashed, add=0.2 and -0.5](w2,c1);
    \tkzDrawLine[color=orange,dashed, add=0.4 and 0.8](w1,w3);
    \tkzLabelLine[color=orange,below right, pos=1.6](w1,w3){$H_1$}
    \tkzLabelLine[color=orange,right](w2,c1){$H_2$}
    \tkzInterLL(w1,w3)(q1,q2t)
    \tkzGetPoint{w3t}
    \tkzDefPointsBy[translation= from w3 to w3t](w1,w2){w1t,w2t}
    \tkzDrawPolygon[thick,color=blue](w3t,w2t,w1t);
  \end{tikzpicture}
  \vspace*{-1.5cm}
  \caption{Proof of \Cref{steep}. Since $(q^1,q^3)$ is steep, the inner triangle can be translated along the orange hyperplanes.}
  \label{proofsteep}
\end{figure}

\begin{proof}[Proof of \Cref{leftMaxT}]
  The equality case follows directly from \Cref{leftMaxTequalcase}. So we only need to prove the correctness of the inequality.

  Let $K \optc T$. As shown in the proof of \Cref{thm:jungbound_Dmax_mink}, we either have $D_{\MAX}(K,T)\geq \frac{3}{2}$ or three touching points of $K$ to the boundary of $T$, each situated on a different edge of $T$.

  In the first case, the left side of the inequality in \Cref{leftMaxT} is non-positive while the right side is always non-negative. Hence, in this case the inequality is fulfilled.

  In the other case we consider the triangle $S:=\conv(\{q^1,q^2,q^3\})$, where $q^i\in K $ belongs to the edge of $T$ opposing $p^i$, $i = 1, 2, 3$. Assume \Wlog that the diameter of $S$ is attained between $q^1$ and one of the other points and that $\norm{q^1-p^3}_2\leq\norm{q^1-p^2}_2$. Our goal is to show that there exists a triangle $S_{\lambda}\optc T$, $\lambda\in[\frac{1}{2},1]$, as defined in \Cref{leftMaxTequalcase} with at most the same diameter and inradius as $S$. Using the fact that $\left(D_{\MAX}-\frac{1}{2}\right)\left(\frac{3}{2}-D_{\MAX}\right)$ is decreasing in $D_{\MAX}$ if $D_{\MAX}\geq 1$, we may then conclude
\begin{equation*}
\begin{split}
     \left(\frac{D_{\MAX}(K,T)}{R(K,T)}-\frac{1}{2}\right)\left(\frac{3}{2}-\frac{D_{\MAX}(K,T)}{R(K,T)}\right) &\leq \left(\frac{D_{\MAX}(S,T)}{R(S,T)}-\frac{1}{2}\right)\left(\frac{3}{2}-\frac{D_{\MAX}(S,T)}{R(S,T)}\right) \\
     &\leq \left(\frac{D_{\MAX}(S_{\lambda},T)}{R(S_{\lambda},T)}-\frac{1}{2}\right)\left(\frac{3}{2}-\frac{D_{\MAX}(S_{\lambda},T)}{R(S_{\lambda},T)}\right) \\
     &= \frac{r(S_{\lambda},T)}{R(S_{\lambda},T)} \leq \frac{r(S,T)}{R(S,T)}
     \leq \frac{r(K,T)}{R(K,T)}.
\end{split}
\end{equation*}
We distinguish between the two cases if the diameter is attained by $[q^1,q^2]$ or $[q^1,q^3]$ and show that we may always assume that both segments are diametral. 

\begin{enumerate}[{Case} 1]
\item
  This case is depicted in \Cref{proofleftsidefig1a}. If $D_{\MAX}(S,T)=D_{\MAX}([q^1,q^2],T)$ our assumption $\norm{q^1-p^3}_2\leq\norm{q^1-p^2}_2$ implies that $\norm{q^2-p^1}_2\leq\norm{q^2-p^3}_2$. Let us assume otherwise.
  Then we would have  $[q^1,q^2] \subset \conv(\set{p^3,\frac{1}{2}(p^2+p^3),\frac{1}{2}(p^1+p^3)}=\frac{1}{2}(p^3+T)$ with $[q^1,q^2]$ not being an edge of $\frac12(T+p^3)$, which implies $D_{\MAX}([q^1,q^2],T)=2R([q^1,q^2],T_{\MAX})<2R(\frac12 T,T_{\MAX}) = 1 = R(K,T)$, contradicting \Cref{thm:jungbound_Dmax_mink}.

  Hence, $\norm{q^2-p^1}_2\leq \sqrt{3}/2 \leq \norm{q^1-p^2}_2$, which implies the steepness of $(q^1,q^2)$. Since $[q^1,q^2]$ is diametral, $q^3$ lies inside $q^1+D_{\MAX}(S,T)T_{\MAX}$. However, $\norm{q^1-p^3}_2\leq\norm{q^1-p^2}_2$ now implies the existence of an intersection point between  $[p^1,p^2]$ and the boundary of $q^1+D_{\MAX}(S,T)T_{\MAX}$  which is not further from $p^1$ than $q^3$. Choosing this point as our new $q^3$ neither increases the diameter nor the inradius (the latter because of \Cref{steep}). Doing so, $[q^1,q^3]$ becomes diametral, too.

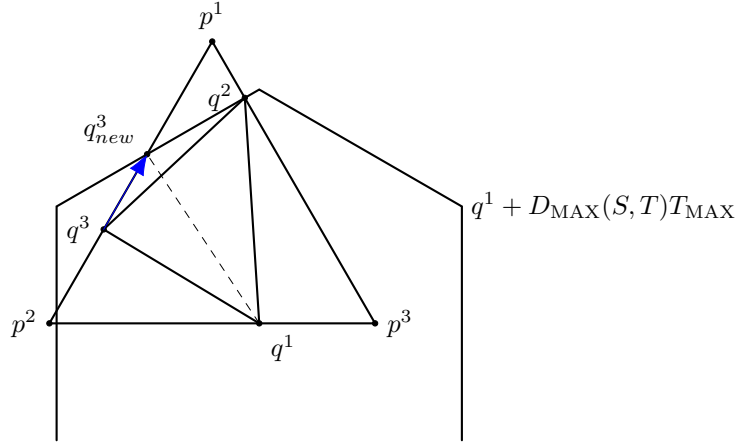
\begin{figure}[ht]
    \centering
    \begin{tikzpicture}[scale=2.5]
    \tkzDefPoint(0,1){p1}
    \tkzDrawPoint[radius=0.4pt,label=above:$p^1$](p1)
    \tkzDefPoint(-0.86602,-0.5){p2}
    \tkzDrawPoint[radius=0.4pt,label=left:$p^2$](p2)
    \tkzDefPoint(0.86602,-0.5){p3}
    \tkzDrawPoint[radius=0.4pt,label=right:$p^3$](p3)
    \tkzDrawPolygon[black, thick](p1,p2,p3);
    \tkzDefPoint(0,-1){p1m}
    \tkzDefPoint(0.86602,0.5){p2m}
    \tkzDefPoint(-0.86602,0.5){p3m}
    \tkzDefPoint(0.173205,-0.5){q1t}
    \tkzDefPoint(0.173205,0.7){q2}
    \tkzDrawPoint[radius=0.4pt,label=left:$q^2$](q2);
    \tkzDefPoint(0.25,-0.5){q1}
    \tkzDrawPoint[radius=0.4pt,label=below right:$q^1$](q1);
    \tkzDefLine[orthogonal=through q2](p3,p1) \tkzGetPoint{c1};
    \tkzInterLL(q2,c1)(p1,p2) \tkzGetPoint{q3}
    \tkzDrawPoint[radius=0.4pt,label=above left:$q^3_{new}$](q3);
    \tkzDefPoint(-0.5773,0){q3t}
    \tkzDrawPoint[radius=0.4pt,label=left:${q}^3$](q3t);
    \tkzDefLine[orthogonal=through q2](p1,p2) \tkzGetPoint{c2};
    \tkzDefLine[orthogonal=through q1](p2,p3)
    \tkzGetPoint{c3};
    \tkzInterLL(q1,c3)(q2,q3)
    \tkzGetPoint{lp}
    \tkzDefLine[parallel=through lp](q2,c2)
    \tkzGetPoint{c4};
    \tkzDrawPolygon[black, thick](q1,q2,q3t);
    \tkzDrawPolySeg[black,dashed](q1,q3)

\tkzDrawSegment[-{Latex[scale=2]},blue](q3t,q3)

\tkzDefPoint(0,0){z}
\tkzDefPointsBy[translation= from z to q1](p2,p3m,p1,p2m,p3){a1,a2,a3,a4,a5}
\tkzDefPointsBy[homothety=center q1 ratio 1.24434](a1,a2,a3,a4,a5){b1,b2,b3,b4,b5}
\tkzDrawPolySeg[black, thick](b1,b2,b3,b4,b5);
\tkzLabelPoint[right](b4){$q^1+D_{\MAX}(S,T)T_{\MAX}$}

    \end{tikzpicture}
    \caption{Proof of Case 1 of \Cref{leftMaxT}. The diameter is attained between $q^1$ and $q^2$. We can replace $q^3$ such that it is attained between $q^1$ and $q^3$ as well since $(q^1,q^2)$ is steep.}
    \label{proofleftsidefig1a}
\end{figure}
\item If $D_{\MAX}(S,T)=D_{\MAX}([q^1,q^3],T)$, we need to consider three subcases:
  \begin{enumerate}[a)]
  \item If $\norm{q^3-p^2}_2\leq\norm{q^3-p^1}_2$ this corresponds to Case 1 with $q^3$ in the role of $q^1$ and $q^2$ being the vertex that is moved.
  \item If $\norm{q^3-p^2}_2\geq\norm{q^3-p^1}_2$ and $(q^1,q^3)$ is steep one can move $q^2$ in the direction of $p^1$ such that $[q^1,q^2]$ becomes diametral, too.
  \item If $\norm{q^3-p^2}_2\geq\norm{q^3-p^1}_2$ and $(q^3,q^1)$ is steep this corresponds to Case 2b) with roles of $q^1$ and $q^3$ interchanged, which means that we may move $q^2$ towards $p^3$.
  \end{enumerate}
\end{enumerate}

Altogether we see that assuming $\norm{q^2-q^1}_{T_{\MAX}}=\norm{q^3-q^1}_{T_{\MAX}}$ is possible and doing so the points $q^2$ and $q^3$ do not only lie on the boundary of $q^1+ D_{\MAX}(S,T)T_{\MAX}$, they essentialy lie on the (translated and dilatated) edges $[p^1,-p^3]$ or $[p^1,-p^2]$ of $T_{\MAX}$. For $q^2$ this follows from the fact that it has to lie closer to $p^1$ than to $p^3$. 
As described in the proof of Case 1, the boundary of  $q^1+ D_{\MAX}(S,T)T_{\MAX}$ intersects $[p^1,p^2]$ once or twice, but it is not possible that it only intersects with the segment $q^1 + D_{\MAX}(S,T)[p^2,-p^3]$
as this would contradict our assumption $D_{\MAX}(S,T) = D_{\MAX}([q^1,q^2],T)$. If we have two intersection points, we can replace $q^3$ if necessary by the upper one without increasing the inradius since $(q^1,q^2)$ is steep. Thus, we can assume $q^3 \in q^1+D_{\MAX}(S,T)[-p^3,p^1]$.

For the next part of the proof we now assume that $q^3 \in q^1+D_{\MAX}(S,T)[p^1,-p^3]$ and $q^2 \in q^1+D_{\MAX}(S,T)[p^1,-p^2]$ is true. This is not the case if  $q^1_1>q^2_1$ (see \Cref{proofleftsidefig1}). But then, we know $q^3_2<q^2_2$ and $(q^2,q^3)$ is steep. Thus, replacing $q^1$ by $\tilde{q}^1$ such that $\tilde{q}^1_1=q^2_1$ does not increase the diameter or the inradius and we still have $\cnorm{q^3-q^1}{T_{\MAX}}=\cnorm{q^2-q^1}{T_{\MAX}}$. This shows that we can assume $q^1_1\leq q^2_1$ and that $q^2 \in q^1+D_{\MAX}(S,T)[-p^2,p^1]$.

\begin{figure}[ht]
    \centering
    \begin{tikzpicture}[scale=2.5]
    \tkzDefPoint(0,1){p1}
    \tkzDrawPoint[radius=0.4pt,label=above:$p^1$](p1)
    \tkzDefPoint(-0.86602,-0.5){p2}
    \tkzDrawPoint[radius=0.4pt,label=left:$p^2$](p2)
    \tkzDefPoint(0.86602,-0.5){p3}
    \tkzDrawPoint[radius=0.4pt,label=right:$p^3$](p3)
    \tkzDrawPolygon[black, thick](p1,p2,p3);
    \tkzDefPoint(0,-1){p1m}
    \tkzDefPoint(0.86602,0.5){p2m}
    \tkzDefPoint(-0.86602,0.5){p3m}
    \tkzDefPoint(0.173205,-0.5){q1t}
    \tkzDrawPoint[radius=0.4pt,label=below left:$\tilde{q}^1$](q1t);
    \tkzDefPoint(0.173205,0.7){q2}
    \tkzDrawPoint[radius=0.4pt,label=left:$q^2$](q2);
    \tkzDefPoint(0.25,-0.5){q1}
    \tkzDrawPoint[radius=0.4pt,label=below right:$q^1$](q1);
    \tkzDefLine[orthogonal=through q2](p3,p1) \tkzGetPoint{c1};
    \tkzInterLL(q2,c1)(p1,p2) \tkzGetPoint{q3}
    \tkzDrawPoint[radius=0.4pt,label=left:$q^3$](q3);
    \tkzDefLine[orthogonal=through q2](p1,p2) \tkzGetPoint{c2};
    \tkzDefLine[orthogonal=through q1](p2,p3)
    \tkzGetPoint{c3};
    \tkzInterLL(q1,c3)(q2,q3)
    \tkzGetPoint{lp}
    \tkzDefLine[parallel=through lp](q2,c2)
    \tkzGetPoint{c4};
    \tkzDrawPoint[radius=0.4pt](lp)
    \tkzDrawPolygon[black, thick](q1,q2,q3);
    \tkzDrawPolySeg[blue, thick](q3,q1t,q2);
    \tkzDrawPolySeg[black,dashed](q1,lp)
    \tkzMarkRightAngle[blue, size=0.05](p2,q1t,q2)
    \tkzMarkRightAngle[black, size=0.05](p3,q1,lp)
\tkzDefPoint(0,0){z}
\tkzDefPointsBy[translation= from z to q1t](p2,p3m,p1,p2m,p3){v1,v2,v3,v4,v5}
\tkzDefPointsBy[homothety=center q1t ratio 1.2](v1,v2,v3,v4,v5){w1,w2,w3,w4,w5}
\tkzDrawPolySeg[blue, thick](w1,w2,w3,w4,w5);
\tkzDefPointsBy[translation= from z to q1](p2,p3m,p1,p2m,p3){a1,a2,a3,a4,a5}
\tkzDefPointsBy[homothety=center q1 ratio 1.24434](a1,a2,a3,a4,a5){b1,b2,b3,b4,b5}
\tkzDrawPolySeg[black, thick](b1,b2,b3,b4,b5);
\tkzLabelSegment[above right](lp,b4){$q^1+D_{\MAX}(S,T)T_{\MAX}$}
\tkzLabelPoint[left,blue](w2){$\tilde{q}^1+D_{\MAX}(\tilde S,T)T_{\MAX}$};
    \end{tikzpicture}
    \caption{Proof of \Cref{leftMaxT}. We can assume that $q^3$ lies on $q^1+D_{\MAX}(S,T)[p^1,-p^3]$ and $q^2$ lies on $q^1+D_{\MAX}(S,T)[p^1,-p^2]$. Otherwise we can consider the triangle $\tilde S = \conv(\set{\tilde{q}^1,q^2,q^3})$ which has smaller or equal inradius and diameter.}
    \label{proofleftsidefig1}
\end{figure}
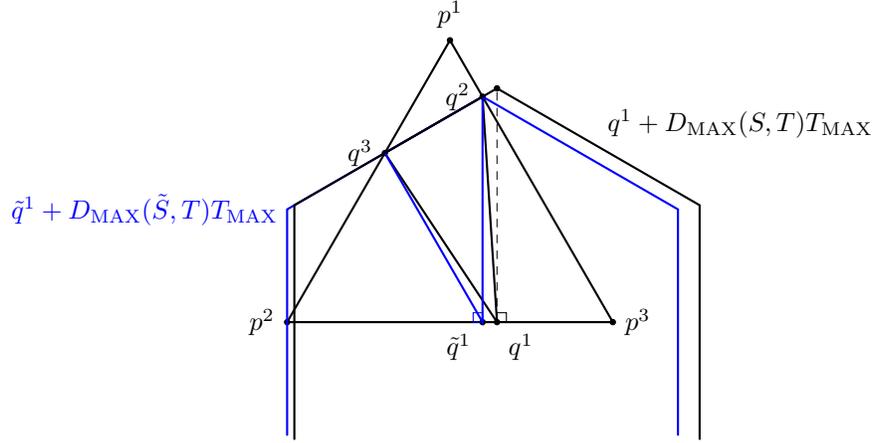
Now, we consider the triangle $S_{\lambda}=\conv(\{\tilde{q}^1,\tilde{q}^2,\tilde{q}^3\})$ with $\lambda$ as in \Cref{leftMaxTequalcase} such that it has the same diameter. Due to symmetry reasons and our assumptions about the positions of $q^2$ and $q^3$, $\norm{q^1-\tilde{q}^1}_2=\norm{q^2-\tilde{q}^2}_2=\norm{q^3-\tilde{q}^3}_2=:\kappa$ (see \Cref{proofleftsideisosceles}).

\begin{figure}[ht]
    \centering
    \begin{tikzpicture}[scale=3]
    \tkzDefPoint(0,1){p1}
    \tkzDrawPoint[radius=0.4pt,label=above:$p^1$](p1)
    \tkzDefPoint(-0.86602,-0.5){p2}
    \tkzDrawPoint[radius=0.4pt,label=left:$p^2$](p2)
    \tkzDefPoint(0.86602,-0.5){p3}
    \tkzDrawPoint[radius=0.4pt,label=right:$p^3$](p3)
      \tkzDefPoint(0,-0.5){q1t}
    \tkzDrawPoint[radius=0.4pt](q1t)

    \tkzDefPoint(0.28867,0.5){q2t}
    \tkzDrawPoint[radius=0.4pt](q2t)

    \tkzDefPoint(-0.28867,0.5){q3t}
    \tkzDrawPoint[radius=0.4pt](q3t)
      \tkzDefPoint(0.2,-0.5){q1}
    \tkzDrawPoint[radius=0.4pt](q1)

    \tkzDefPoint(0.18867,0.6732){q2}
    \tkzDrawPoint[radius=0.4pt](q2)

    \tkzDefPoint(-0.38867,0.3268){q3}
    \tkzDrawPoint[radius=0.4pt](q3)

    \tkzDrawLine[dashed,add=0.3 and 0.4](q3,q2)
    \tkzDefLine[parallel=through q3t](q3,q2)
    \tkzGetPoint{c1}
    \tkzDrawLine[dashed,add=0.3 and 0.2](q3t,c1);
     \tkzDefLine[parallel=through q1t](q3,q2)
    \tkzGetPoint{c2}
    \tkzDrawLine[dashed](q1t,c2);;
     \tkzDefLine[parallel=through q1](q3,q2)
    \tkzGetPoint{c3}
    \tkzDrawLine[dashed](q1,c3);
     \tkzDefLine[orthogonal=through q1](p2,p1)
    \tkzGetPoint{c4}
    \tkzDrawLine[dashed, add=0.2 and -0.4](q1,c4);
      \tkzDefLine[orthogonal=through q1t](p2,p1)
    \tkzGetPoint{c5}
    \tkzDrawLine[dashed, add=0.2 and -0.4](q1t,c5);
      \tkzDefLine[orthogonal=through q2](p2,p1)
    \tkzGetPoint{c6}
    \tkzDrawLine[dashed, add=0.2 and -0.6](q2,c6);
      \tkzDefLine[orthogonal=through q2t](p2,p1)
    \tkzGetPoint{c7}
    \tkzDrawLine[dashed, add=0.2 and -0.5](q2t,c7);

    \tkzLabelPoint[above right, fill=white ](q2){$q^2$}
    \tkzLabelPoint[below left, fill=white](q1t){$\tilde{q}^1$}
    \tkzLabelPoint[below right, fill=white](q1){$q^1$}
    \tkzLabelSegment[left, pos=0.5, fill=white](p2,p1){$q^3$}
    \tkzLabelSegment[right, pos=0.6, fill=white](p3,p1){$\tilde{q}^2$}
    \tkzLabelPoint[above left, fill=white](q3t){$\tilde{q}^3$}
    \tkzLabelSegment[below,color=blue, fill=white](q1t,q1){$\kappa$}
    \tkzLabelSegment[right,color=blue, fill=white](q2t,q2){$\kappa$}
    \tkzLabelSegment[left,color=blue, fill=white](q3t,q3){$\kappa$};
    \tkzDrawPolygon[black, thick](q1,q2,q3);
    \tkzDrawPolygon[black, thick](q1t,q2t,q3t);

    \tkzDrawPolygon[black, thick](p1,p2,p3);
    \tkzDrawSegment[blue,thick](q1t,q1)
    \tkzDrawSegment[blue,thick](q2t,q2)
    \tkzDrawSegment[blue,thick](q3t,q3)
    \end{tikzpicture}
    \caption{Proof of \Cref{leftMaxT}. Transformation of $S$ into $S_{\lambda}$ with the same diameter. The distances $\cnorm{q^i-\tilde{q}^i}{2}$ are equal.}
    \label{proofleftsideisosceles}
\end{figure}
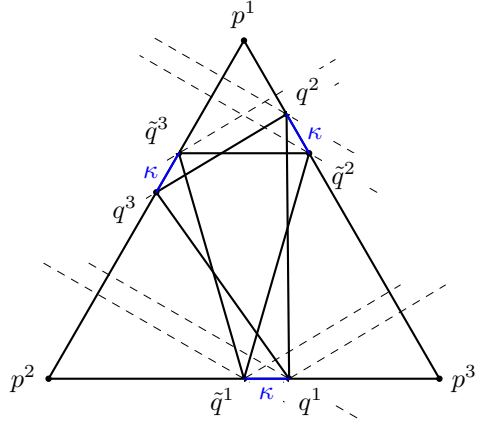
By using the intercept theorem and the law of sines we will show that, possibly after a suitable translation, all vertices of the inner triangle of $S_{\lambda}$ are contained in $S$. This way we see that the inradius of $S_{\lambda}$ is at most the one of $S$. We denote the vertices of the inner triangle of $S_\lambda$ by $w^i$ (see \Cref{proofleftsidelabel_l23}) and the euclidean distance in the horizontal direction of $w^i$ to the segment $[q^j,q^k]$, $\set{i,j,k}=\set{1,2,3}$, by $l_i$. If $\kappa \neq0$, every side of $S$ intersects the corresponding side of $S_{\lambda}$ exactly once. Let us denote these intersection points by $v^i$ (\cf~\Cref{proofleftsidelabel_l1}).
We will show that if we shift the inner triangle of $S_\lambda$ by $l_3$ to the left it is completely contained in $S$. To do so we compute all the values $l_i$, $i=1,2,3$ and prove that we have $l_3 \le l_j$, $j=1,2$.

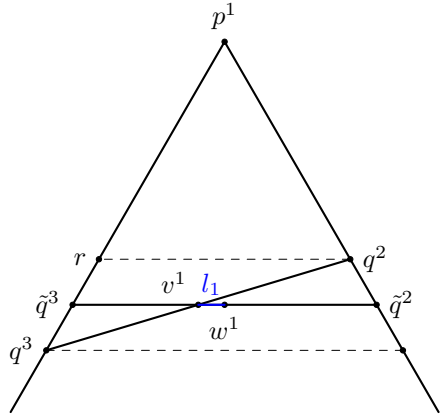
\begin{figure}[ht]
    \centering
    \begin{tikzpicture}[scale=7]
      \tkzDefPoint(0,1){p1}
    \tkzDrawPoint[radius=0.4pt,label=above:$p^1$](p1)
    \tkzDefPoint(0.28867,0.5){q2t}
    \tkzDrawPoint[radius=0.4pt,label=right:$\tilde{q}^2$](q2t)
    \tkzDefPoint(-0.28867,0.5){q3t}
    \tkzDrawPoint[radius=0.4pt,label=left:$\tilde{q}^3$](q3t)
    \tkzDrawPolySeg[black, thick](q2t,q3t);
    \tkzDefPoint(0.23867,0.5866){q2}
    \tkzDrawPoint[radius=0.4pt,label=right:$q^2$](q2)
    \tkzDefPoint(-0.33867,0.4134){q3}
    \tkzDrawPoint[radius=0.4pt,label=left:$q^3$](q3)
    \tkzDrawPolySeg[black, thick](q3,q2);
    \tkzDefPoint(0,0.5){w1}
    \tkzDrawPoint[radius=0.4pt,label=below:$w^1$](w1)
    \tkzInterLL(q3,q2)(q3t,q2t)
    \tkzGetPoint{f1}
    \tkzDrawPoint[radius=0.4pt,label=above left:$v^1$](f1)
    \tkzDrawSegment[blue,thick](f1,w1)
    \tkzLabelSegment[above,color=blue](f1,w1){$l_1$};
     \tkzDefPoint(-0.23867,0.5866){q2p}
    \tkzDrawPoint[radius=0.4pt,label=left:$r$](q2p)
    \tkzDefPoint(0.33867,0.4134){q3p}
    \tkzDrawPoint[radius=0.4pt](q3p)
    \tkzDrawSegment[dashed](q2,q2p)
    \tkzDrawSegment[dashed](q3,q3p);
    \tkzDrawLine[black,thick,add=0 and 0.2](p1,q3p)
    \tkzDrawLine[black,thick,add=0 and 0.2](p1,q3);
    \end{tikzpicture}
    \caption{Proof of \Cref{leftMaxT}. Computation of $l_1$. The triangles defined by $p^1$ and the intersection points of the parallel lines are all three equilateral.}
    \label{proofleftsidelabel_l1}
    \end{figure}

  Computation of $l_1$ (\cf~\Cref{proofleftsidelabel_l1}): We use the intercept theorem for $[\tilde{q}^2,\tilde{q}^3]$ and the two lines parallel to this segment through $q^2$ and $q^3$, respectively. Since $T$ is an equilateral triangle, $\conv(\set{p^1,\tilde{q}^3,\tilde{q}^2})$ is also equilateral with an edge length of $(1-\lambda)\sqrt{3}$. Furthermore,
  \[\normeuc{\tilde{q}^3-v^1} = \frac{1}{2}\normeuc{r-q^2} = \frac{1}{2}\normeuc{p^1-q^2} = \frac{1}{2}(\normeuc{p^1 - \tilde{q}^2}-\kappa).\]
  It follows that $w^1$ is always contained in $S$ and
  \begin{equation*}
      l_1 = \frac{1}{2}\normeuc{p^1 - \tilde{q}^2} - \normeuc{\tilde{q}^3-v^1} = \frac{1}{2}\kappa.
  \end{equation*}

  Computation of $l_2$ (\cf~\Cref{proofleftsidelabel_l23}): Let $\alpha_2=\angle v^2q^3\tilde{q}^3$, $\beta_2=\angle v^2q^1p^2$ and $\gamma_2=\angle \tilde{q}^1v^2q^1$. We compute $\frac{\normeuc{\tilde{q}^3-v^2}}{\normeuc{\tilde{q}^1-v^2}}$ using the law of sines, first for the triangles $\conv(\set{v^2,q^1,\tilde{q}^1})$ and $\conv(\set{v^2,q^3,\tilde{q}^3})$, and then for $\conv(\set{p^2,q^1,q^3})$.

  \begin{equation*}
    \begin{split}   \frac{\normeuc{\tilde{q}^3-v^2}}{\normeuc{\tilde{q}^1-v^2}}&=\frac{\sin{(\gamma_2)}}{\kappa\sin{(\beta_2)}}\cdot\frac{\kappa\sin{(\alpha_2)}}{\sin{(\gamma_2)}} = \frac{\sin{(\alpha_2)}}{\sin{(\beta_2)}}\\
        &=\frac{\sin{(\pi-\alpha_2)}}{\sin{(\beta_2)}} = \frac{\frac{\sqrt{3}}{2}+\kappa}{\sqrt{3}\lambda-\kappa}
    \end{split}
\end{equation*}
Furthermore, we know from \Cref{leftMaxTequalcase} that $r(S_{\lambda},T)=\lambda(1-\lambda)$. Together with the intercept theorem we obtain
\begin{align*}
  \frac{\normeuc{\tilde{q}^3-w^2}}{\normeuc{\tilde{q}^1-w^2}} = \frac{w^1_2-w^2_2}{(\tilde{q}^3_2-\tilde{q}^1_2)-(w^1_2-w^2_2)} = \frac{1-\lambda}{\lambda}.
\end{align*}

Since $\frac{\frac{\sqrt{3}}{2}+\kappa}{\sqrt{3}\lambda-\kappa}\geq \frac{1-\lambda}{\lambda} $ for $\lambda\in[\frac{1}{2},1]$, $v^2$ is closer to $\tilde{q}^1$ than $w^2$ and therefore also $w^2\in S$.

The distance of $w^2$ to $[q^1,q^3]$ in the direction of $(-1,0)^T$ is by the intercept theorem
\begin{equation*}
    \begin{split}
        l_2&=\kappa\cdot \frac{\normeuc{v^2-w^2}}{\normeuc{\tilde{q}^1-v^2}}
        = \kappa \cdot \frac{\normeuc{\tilde{q}^3-v^2}-\normeuc{\tilde{q}^3-w^2}}{\normeuc{\tilde{q}^1-v^2}} 
    \end{split}
  \end{equation*}

  Computation of $l_3$ (\cf~\Cref{proofleftsidelabel_l23}): If $w^3$ is also contained in $S$, we have that the complete inner triangle of $S_\lambda$ is contained in $S$ and we are done.

  Otherwise, $\normeuc{\tilde{q}^1-v^3}\leq \normeuc{\tilde{q}^1-w^3} $ and we need to show that we can translate the inner triangle to be in $S$. To do so, we compute the distance $l_3$ of $w^3$ to $[q^1,q^2]$ in the direction of $(-1,0)^T$, which can be done completely analogously to $l_2$: 
  Let $ \alpha_3=\angle q^1q^2p^3$, $\beta_3=\angle v^3q^1p^2$ and $\gamma_3=\angle \tilde{q}^1v^3q^1$. Then,
  \begin{equation*}
    \frac{\normeuc{\tilde{q}^2-v^3}}{\normeuc{\tilde{q}^1-v^3}} = \frac{\sin{(\gamma_3)}}{\kappa\sin{(\beta_3)}}\cdot \frac{\kappa\sin{(\alpha_3)}}{\sin{(\gamma_3)}} = \frac{\sin{(\alpha_3)}}{\sin{(\pi-\beta_3)}} = \frac{\frac{\sqrt{3}}{2}-\kappa}{\sqrt{3}\lambda+\kappa}.
  \end{equation*}
  and $\frac{\normeuc{\tilde{q}^2-w^3}}{\normeuc{\tilde{q}^1-w^3}}=\frac{1-\lambda}{\lambda}$.
  Thus,
\begin{equation*}
  \frac{\normeuc{\tilde{q}^2-v^3}}{\normeuc{\tilde{q}^1-v^3}} \leq \frac{\normeuc{\tilde{q}^3-v^2}}{\normeuc{\tilde{q}^1-v^2}}.
\end{equation*}
Together with $\normeuc{\tilde{q}^2-\tilde{q}^1}=\normeuc{\tilde{q}^3-\tilde{q}^1}$ we obtain
$\normeuc{\tilde{q}^1-v^2}\leq\normeuc{\tilde{q}^1-v^3}$ and $\normeuc{\tilde{q}^2-v^3}\leq\normeuc{\tilde{q}^3-v^2}$
Hence,
  \begin{align*}
    l_3 &= \kappa\cdot \frac{\normeuc{v^3-w^3}}{\normeuc{\tilde{q}^1-v^3}}
    =\kappa \cdot \left(\frac{\normeuc{\tilde{q}^2-v^3}-\normeuc{\tilde{q}^2-w^3}}{\normeuc{\tilde{q}^1-v^3}} \right) \\
    &\leq \kappa \cdot \left(\frac{\normeuc{\tilde{q}^3-v^2}-\normeuc{\tilde{q}^3-w^2}}{\normeuc{\tilde{q}^1-v^2}} \right) \\
    &= l_2.
  \end{align*}

Moreover, since $\kappa \geq 0$ and $\lambda\in[\frac{1}{2},1]$ it follows

\begin{align*}
  l_3 &= \kappa \cdot \left(\frac{\normeuc{\tilde{q}^2-v^3}-\normeuc{\tilde{q}^2-w^3}}{\normeuc{\tilde{q}^1-v^3}} \right) \\
  &\leq \kappa \cdot \left(\frac{\normeuc{\tilde{q}^2-v^3}}{\normeuc{\tilde{q}^1-v^3}}-\frac{\normeuc{\tilde{q}^2-w^3}}{\normeuc{\tilde{q}^1-w^3}} \right) \\
  &= \kappa \cdot\left( \frac{\frac{\sqrt{3}}{2}-\kappa}{\sqrt{3}\lambda+\kappa} - \frac{1-\lambda}{\lambda}\right) \\
  &\leq \kappa \cdot \left( \frac{1}{2\lambda} - \frac{1-\lambda}{\lambda} \right) = \kappa \cdot \left(1 - \frac{1}{2\lambda} \right)\\
  &\leq \frac{1}{2}\cdot \kappa = l_1.
\end{align*}
 Hence, if we translate the inner triangle of $S_\lambda$ by $(l_3,0)^T$, it is contained in $S$, which proves $r(S_{\lambda},T)\leq r(S,T)$.
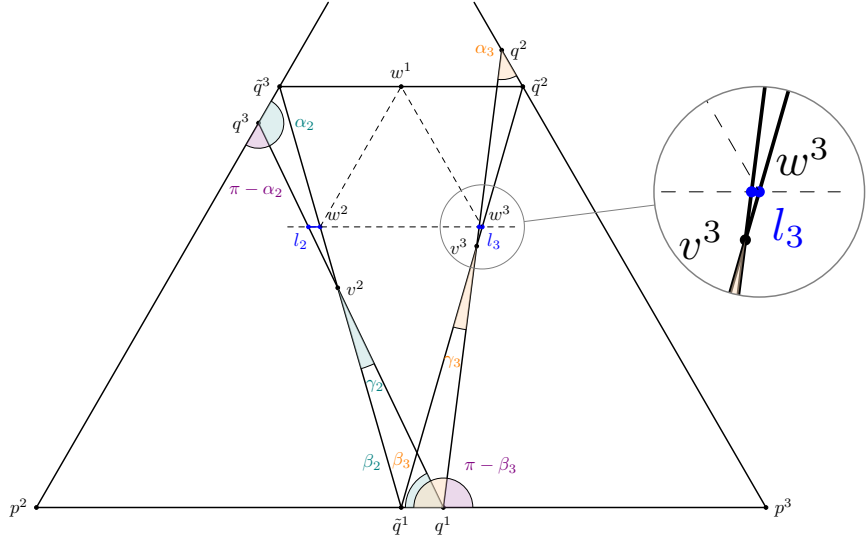
\begin{figure}[h]
  \centering
  \scalebox{0.7}{
  \begin{tikzpicture}[scale=8, spy using outlines={circle, magnification=2.5, size=3cm, connect spies}]
    \tkzDefPoint(-0.86602,-0.5){p2}
    \tkzDrawPoint[radius=0.4pt,label=left:$p^2$](p2)
    \tkzDefPoint(0.86602,-0.5){p3}
    \tkzDrawPoint[radius=0.4pt,label=right:$p^3$](p3)
    \tkzDrawSegment[black, thick](p2,p3);
    \tkzDefPoint(0,-0.5){q1t}
    \tkzDrawPoint[radius=0.4pt,label=below:$\tilde{q}^1$](q1t)
    \tkzDefPoint(0.28867,0.5){q2t}
    \tkzDrawPoint[radius=0.4pt,label=right:$\tilde{q}^2$](q2t)
    \tkzDefPoint(-0.28867,0.5){q3t}
    \tkzDrawPoint[radius=0.4pt,label=left:$\tilde{q}^3$](q3t)
    \tkzDrawPolygon[black, thick](q1t,q2t,q3t);
    \tkzDefPoint(0.1,-0.5){q1}
    \tkzDrawPoint[radius=0.4pt,label=below:$q^1$](q1)
    \tkzDefPoint(0.23867,0.5866){q2}
    \tkzDrawPoint[radius=0.4pt,label=right:$q^2$](q2)
    \tkzDefPoint(-0.33867,0.4134){q3}
    \tkzDrawPoint[radius=0.4pt,label=left:$q^3$](q3)
    \tkzDrawPolySeg[black, thick](q3,q1,q2);
    \tkzDefPoint(0,0.5){w1}
    \tkzDrawPoint[radius=0.4pt,label=above:$w^1$](w1)
    \tkzDefPoint(-0.19245,0.166666){w2}
    \tkzDrawPoint[radius=0.4pt,color=blue](w2)
    \tkzLabelPoint[above right](w2){$w^2$}
    \tkzDefPoint(0.19245,0.166666){w3}
    \tkzDrawPoint[radius=0.05pt,color=blue](w3)
    \tkzLabelPoint[above right](w3){$w^3$}
    \tkzDrawPolySeg[dashed](w2,w1,w3)
    \tkzDrawLine[black,dashed](w2,w3);
    \tkzDrawLine[black,thick,add=0 and 0.2](p2,q3t);
    \tkzDrawLine[black,thick,add=0 and 0.2](p3,q2t);
    \tkzInterLL(w2,w3)(q1,q2)
    \tkzGetPoint{f2}
    \tkzDrawPoint[radius=0.05pt,color=blue](f2);
    \tkzInterLL(w2,w3)(q3,q1)
    \tkzGetPoint{f3}
    \tkzDrawPoint[radius=0.4pt,color=blue](f3);
    \tkzDrawSegment[blue,thick](f2,w3)
    \tkzLabelSegment[below right,color=blue](f2,w3){$l_3$};
    \tkzDrawSegment[blue,thick](f3,w2)
    \tkzLabelSegment[below left,color=blue](f3,w2){$l_2$};
    \tkzInterLL(q1,q2)(q1t,q2t)
    \tkzGetPoint{v3}
    \tkzDrawPoint[radius=0.4pt,label=left:$v^3$](v3);
    \tkzInterLL(q1,q3)(q1t,q3t)
    \tkzGetPoint{v2}
    \tkzDrawPoint[radius=0.4pt,label=right:$v^2$](v2);
    \tkzFillAngle[fill=teal!25, opacity=0.5,size=0.2](q1t,v2,q1)
    \tkzLabelAngle[pos=.25,color=teal](q1t,v2,q1){$\gamma_2$}
    \tkzMarkAngle[size=0.2,mark=none](q1t,v2,q1);
    \tkzFillAngle[fill=teal!25, opacity=0.5,size=0.09](v2,q1,p2)
    \tkzLabelAngle[pos=.2,color=teal](v2,q1,p2){$\beta_2$}
    \tkzMarkAngle[size=0.09,mark=none](v2,q1,p2)

    \tkzFillAngle[fill=teal!25, opacity=0.5,size=0.06](v2,q3,q3t)
    \tkzLabelAngle[pos=.11,color=teal](v2,q3,q3t){$\alpha_2$}
    \tkzMarkAngle[size=0.06,mark=none](v2,q3,q3t);

    \tkzFillAngle[fill=violet!50, opacity=0.3,size=0.06](p2,q3,q1)
    \tkzLabelAngle[pos=.16,color=violet](p2,q3,q1){$\pi-\alpha_2$}
    \tkzMarkAngle[size=0.06,mark=none](p2,q3,q1);
    \tkzFillAngle[fill=orange!25, opacity=0.5,size=0.2](q1t,v3,q1)
    \tkzLabelAngle[pos=.28,color=orange](q1t,v3,q1){$\gamma_3$}
    \tkzMarkAngle[size=0.2,mark=none](q1t,v3,q1);

    \tkzFillAngle[fill=orange!25, opacity=0.5,size=0.07](v3,q1,q1t)
    \tkzLabelAngle[pos=.15,color=orange](v3,q1,q1t){$\beta_3$}
    \tkzMarkAngle[size=0.07,mark=none](v3,q1,q1t);

    \tkzFillAngle[fill=violet!50, opacity=0.3,size=0.07](p3,q1,q2)
    \tkzLabelAngle[pos=.15,color=violet](p3,q1,q2){$\pi-\beta_3$}
    \tkzMarkAngle[size=0.07,mark=none](p3,q1,q2);

    \tkzFillAngle[fill=orange!25, opacity=0.5,size=0.07](q1,q2,p3)
    \tkzLabelPoint[left,color=orange](q2){$\alpha_3$}
    \tkzMarkAngle[size=0.07,mark=none](q1,q2,p3);
    \spy [gray , size=4cm] on (w3)
    in node [right] at (0.6,0.25);
  \end{tikzpicture} }
  \vspace*{-1.5cm}
  \caption{Proof of \Cref{leftMaxT}. Computation of $l_2$ and $l_3$. }
  \label{proofleftsidelabel_l23}
\end{figure}
\end{proof}

The collected inequalities are now sufficient to provide a full description of the Blaschke-Santaló-diagram $f_{\MAX}(C^2,T)$ (\cf~\Cref{figdiagramTmaxwithineqs}).

\begin{theorem}
    \label{fulldiamaxT}
    For every Minkowski-centered triangle $S$ the diagram $f_{\MAX}(\bar{\CC}^2,S)$ is fully described by the inequalities
    \begin{align*}
        \frac{D_{\MAX}(K,S)}{2}&\leq R(K,S)\\
        r(K,S)&\leq \frac{D_{\MAX}(K,S)}{2}\\
        0&\leq r(K,S)\\
        R(K,S)&\leq D_{\MAX}(K,S)\\
        \left(\frac{D_{\MAX}(K,S)}{R(K,S)}-\frac{1}{2}\right)&\left(\frac{3}{2}-\frac{D_{\MAX}(K,S)}{R(K,S)}\right)\leq \frac{r(K,S)}{R(K,S)}.
    \end{align*}
  \end{theorem}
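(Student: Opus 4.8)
The plan is to prove the two inclusions for the planar region $\mathcal R\subseteq\R^2$ carved out by the five inequalities. The forward inclusion $f_{\MAX}(\bar{\CC}^2,S)\subseteq\mathcal R$ is immediate: the five inequalities are precisely \eqref{upMaxT}, \eqref{rightMaxT}, \eqref{eqrRzero}, the Jung-type bound of \Cref{thm:jungbound_Dmax_mink}, and \Cref{leftMaxT}, all already established. Since every Minkowski-centered triangle is a linear image of the equilateral $T$, and the three functionals defining $f_{\MAX}$ are invariant under a simultaneous non-singular linear transformation of $K$ and the gauge (\Cref{affineRrD}), it suffices to treat $S=T$. Writing $x=r(K,T)/R(K,T)$ and $y=D_{\MAX}(K,T)/(2R(K,T))$, the boundary $\partial\mathcal R$ is a Jordan curve built from five arcs: the bottom edge $y=\tfrac12$, $x\in[\tfrac14,\tfrac12]$ (Jung bound tight), the right edge $x=y$, $y\in[\tfrac12,1]$ (\eqref{rightMaxT} tight), the top edge $y=1$ (\eqref{upMaxT} tight), the left edge $x=0$, $y\in[\tfrac34,1]$ (\eqref{eqrRzero} tight), and the parabolic arc $x=(2y-\tfrac12)(\tfrac32-2y)$, $y\in[\tfrac12,\tfrac34]$ (\Cref{leftMaxT} tight).

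For the reverse inclusion I will realize all of $\partial\mathcal R$ by an explicit continuous loop of bodies and then close the argument with \Cref{simpleconn}. The parabolic arc is traced by the triangles $S_\lambda$, $\lambda\in[\tfrac12,1]$, of \Cref{leftMaxTequalcase}, and the left edge by the optimally contained segments of \Cref{CDiam}, whose $D_{\MAX}$ sweeps all of $[2\rho_{\MAX},2\delta_{\MAX}]=[\tfrac32,2]$ while $r=0$. The top edge is realized by $K_t=(1-t)[p^1,p^2]+tT$, where $[p^1,p^2]$ is a diametral edge of $T$ sharing the touching normals of $T$: then \Cref{linearityrrRD} keeps $R(K_t,T)=1$, the containment of a translate of $[p^1,p^2]$ forces $D_{\MAX}(K_t,T)=2$, and $r(K_t,T)$ runs continuously from $0$ to $1$. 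The right edge is realized by $K_t=(1-t)T+t\tfrac12 T_{\MAX}$: a support-function computation using $h_{K_t-K_t}(s)=2(1-t)h_{T_{\AM}}(s)+t\,h_{T_{\MAX}}(s)$ and $R(T_{\AM},T_{\MAX})=\delta_{\MAX}=1$ gives $D_{\MAX}(K_t,T)=2-t$, while $R(K_t,T)=1$ by common touching normals; the value $r(K_t,T)=1-\tfrac t2$ is then pinned between the linearity lower bound of \Cref{linearityrrRD} and the upper bound \eqref{rightMaxT}, so $f_{\MAX}(K_t,T)=(1-\tfrac t2,1-\tfrac t2)$ traces $x=y$ from $(1,1)$ to $(\tfrac12,\tfrac12)$.

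The bottom edge requires two families glued at $(\tfrac13,\tfrac12)$. The Jung-extremal triangles $T_\alpha$ of \Cref{example:jungboundmax} cover $x\in[\tfrac14,\tfrac13]$ on $y=\tfrac12$, and the Minkowski interpolation $K_t=(1-t)T_{2/3}+t\tfrac12 T_{\MAX}$ covers $x\in[\tfrac13,\tfrac12]$. Along the latter $R(K_t,T)=1$ (common normals $a^1,a^2,a^3$) and $D_{\MAX}(K_t,T)=1$, the diameter being squeezed between the linearity upper bound ($D_{\MAX}(T_{2/3},T)=D_{\MAX}(\tfrac12 T_{\MAX},T)=1$) and the Jung lower bound $D_{\MAX}\ge R$ of \Cref{thm:jungbound_Dmax_mink}; meanwhile $r(K_t,T)$ increases from $\tfrac13$ to $\tfrac12$, ending exactly at $f_{\MAX}(\tfrac12 T_{\MAX},T)=(\tfrac12,\tfrac12)$. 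Verifying that consecutive families share the corner bodies at $(0,\tfrac34)$, $(\tfrac14,\tfrac12)$, $(\tfrac13,\tfrac12)$, $(\tfrac12,\tfrac12)$, $(1,1)$ and $(0,1)$ shows that $\partial\mathcal R$ is the image of a single continuous loop, hence $\partial\mathcal R\subseteq f_{\MAX}(\bar{\CC}^2,T)$. By \Cref{simpleconn} the diagram is closed and, now possessing a continuous outer-boundary description, simply connected; a closed simply connected set that contains the Jordan curve $\partial\mathcal R$ and is contained in the disk $\mathcal R$ can have no interior hole, and therefore equals $\mathcal R$.

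I expect the main obstacle to be the bottom edge. The Jung-extremal locus $\{D_{\MAX}=R\}$ is \emph{not} exhausted by the natural family $T_\alpha$, which stops at $x=\tfrac13$; one must recognize that $\tfrac12 T_{\MAX}$ occupies the far corner $(\tfrac12,\tfrac12)$ and, crucially, that \Cref{thm:jungbound_Dmax_mink} can be reused as the lower diameter bound keeping the interpolating family pinned to $y=\tfrac12$. The only other delicate point is purely organizational: checking that the six corner bodies are genuinely shared by the adjacent families, so that the traced boundary is a single continuous curve, which is exactly the hypothesis needed to invoke \Cref{simpleconn}. Everything else reduces to the linearity estimates of \Cref{linearityrrRD} together with the already-proven inequalities used as matching upper and lower bounds.
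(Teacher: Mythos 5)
Your proposal is correct and follows the same overall architecture as the paper's proof: reduce to the equilateral triangle $T$ by linear invariance, obtain the forward inclusion from the five already-established inequalities, realize each boundary arc by a continuous family of bodies, and close the argument with \Cref{simpleconn}. Your families for the left edge (segments), the top edge ($(1-t)L_D+tT$ with the explicit choice $L_D=[p^1,p^2]$), the right edge (interpolation towards $\tfrac12 T_{\MAX}$ instead of $T_{\MAX}$, a cosmetic difference), and the parabolic arc (the triangles $S_\lambda$ of \Cref{leftMaxTequalcase}) coincide with the paper's. The one genuine divergence is the bottom edge $\{D_{\MAX}=R\}$: the paper covers it in a single stroke by observing that $T_{\MAX}$ is the completion of $-T$ (\Cref{thm:max-completion}), so for any inclusion-monotone family $-T\subset -T_+\subset T_{\MAX}$ monotonicity forces $R\equiv 2$ and $D_{\MAX}\equiv 2$ while $r$ sweeps $\left[\tfrac12,1\right]$, tracing the whole segment $x\in\left[\tfrac14,\tfrac12\right]$, $y=\tfrac12$ at once. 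You instead glue two families at $x=\tfrac13$: the Jung-extremal triangles $T_\alpha$ of \Cref{example:jungboundmax} for $x\in\left[\tfrac14,\tfrac13\right]$, and the Minkowski interpolation $(1-t)T_{2/3}+t\,\tfrac12 T_{\MAX}$ for $x\in\left[\tfrac13,\tfrac12\right]$, whose diameter you pin at $1$ by squeezing between the subadditivity bound of \Cref{linearityrrRD} and the Jung bound of \Cref{thm:jungbound_Dmax_mink}. Both routes are valid: the paper's completion trick is shorter and dispenses with the corner-matching bookkeeping your gluing requires, while your construction is more explicit, needs no completion theory, and shows that the Jung inequality itself can serve as the lower pin for interpolating families — a device the paper does not exploit.
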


\begin{proof}
    Since all Minkowski-centered triangles can be linearly transformed into the equilateral triangle $T$ it suffices to show the claim for $T$.  We give a continuous description of the boundaries described by the inequalities \eqref{upMaxT}, \eqref{rightMaxT}, and \eqref{eqrRzero}, as well as those given by \Cref{thm:jungbound_Dmax_mink} and \Cref{leftMaxT}.
     First, \eqref{upMaxT} is attained with equality for $(1-\lambda)L_D+\lambda T$, $\lambda\in[0,1]$ where $L_D$ and $T$ are the extreme cases. Second, \eqref{rightMaxT} is attained with equality for $(1-\lambda)T+\lambda T_{\MAX}$, $\lambda\in[0,1]$ by \Cref{linearityrrRD} where $T$ and $T_{\MAX}$ are the extreme cases. The boundary induced by \eqref{eqrRzero} is filled by segments from $L_w$ to $L_D$.
     Next, since $T_{\MAX}$ is the completion of $-T$, the boundary  from \Cref{thm:jungbound_Dmax_mink} is filled by the sets $-T_+$ with $-T \subset -T_+ \subset T_{\MAX}$. 
     Finally, the inequality in \Cref{leftMaxT} is fulfilled with equality by the triangles $S_{\lambda}$ as introduced in \Cref{leftMaxTequalcase}.
     Since we have presented a continuous description of the boundary we can apply \Cref{simpleconn} and follow that the diagram is simply connected.
\end{proof}

 While the inequalities \eqref{upMaxT}, \eqref{rightMaxT}, \eqref{eqrRzero} and the one given by \Cref{thm:jungbound_Dmax_mink} are valid for all choices of planar, Minkowski-centered gauges, the inequality from \Cref{leftMaxT} is only proven for triangles. Using the result from \Cref{prop:diagramAM} and $D_{\AM}(K,C)\leq \frac{4}{3}D_{\MAX}(K,C)$ which follows from $\frac{s(C)+1}{2s(C)}C_{\MAX}\optc C_{\AM}$ \cite{reversing}, we are able to give another general inequality
 \begin{equation}
   \label{eq:upperboundunion}
   \frac{2}{3}\frac{D_{\MAX}(K,C)}{R(K,C)}\left(1-\frac{2}{3}\frac{D_{\MAX}(K,C)}{R(K,C)}\right)\leq \frac{r(K,C)}{R(K,C)},
 \end{equation}
  which enables us to give a bound for the union of the diagrams $f_{\MAX}(\bar{\CC}^2,C)$ with $C$ Minkowski-centered (depicted in red within \Cref{figdiagramTmaxwithineqs}).
\begin{conjecture}
  The diagram of any triangle is dominating, i.e.~for every Minkowski-centered $C$ we have $f_{\MAX}(\bar{\CC}^2,C) \subset f_{\MAX}(\bar{\CC}^2,S)$.
\end{conjecture}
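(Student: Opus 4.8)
The plan is to reduce the conjecture to a single scalar inequality and then to a triangle-versus-triangle comparison. By \Cref{fulldiamaxT} the region $f_{\MAX}(\bar{\CC}^2,S)$ is cut out by five inequalities, and four of them---\eqref{upMaxT}, \eqref{rightMaxT}, \eqref{eqrRzero}, and the Jung-type bound $R(K,C)\le D_{\MAX}(K,C)$ of \Cref{thm:jungbound_Dmax_mink}---are already known to hold for \emph{every} planar Minkowski-centered gauge. Hence $f_{\MAX}(\bar{\CC}^2,C)\subset f_{\MAX}(\bar{\CC}^2,S)$ is equivalent to establishing the left-boundary inequality of \Cref{leftMaxT},
\begin{equation*}
  \left(\frac{D_{\MAX}(K,C)}{R(K,C)}-\frac12\right)\left(\frac32-\frac{D_{\MAX}(K,C)}{R(K,C)}\right)\le \frac{r(K,C)}{R(K,C)},
\end{equation*}
for all such $C$. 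Writing $g(t):=\left(t-\frac12\right)\left(\frac32-t\right)$, the claim is trivial once $t\ge\frac32$ (there $g\le0$), while \Cref{thm:jungbound_Dmax_mink} forces $D_{\MAX}(K,C)/R(K,C)\ge1$ and $g$ is strictly decreasing on $\left[1,\frac32\right]$; these two facts are what make the reduction below go in the right direction.

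Normalizing $R(K,C)=1$, I would first replace $K$ by the triangle $\conv(\set{q^1,q^2,q^3})$ on its three touching points from \Cref{opt}, exactly as in the proofs of \Cref{thm:jungbound_Dmax_mink} and \Cref{leftMaxT}: this preserves $R=1$ and can only decrease $r(K,C)$ and $D_{\MAX}(K,C)$, so by monotonicity of $g$ on $\left[1,\frac32\right]$ it suffices to treat this triangle. Next I would pass to the circumscribed triangle $S:=\bigcap_{i=1}^{3}H^{\leq}_{(a^i,h_C(a^i))}\supset C$ formed from the supporting halfspaces of $C$ at the $q^i$. Then $K\optc S$, so $R(K,S)=1$, and $C\subset S$ yields both $D_{\MAX}(K,S)\le D_{\MAX}(K,C)$ and $r(K,S)\le r(K,C)$. \emph{If $S$ were Minkowski-centered}, the argument would close at once: applying \Cref{leftMaxT} to $S$ and using that $g$ is decreasing gives
\begin{equation*}
  g\left(\frac{D_{\MAX}(K,C)}{R(K,C)}\right)\le g\left(\frac{D_{\MAX}(K,S)}{R(K,S)}\right)\le \frac{r(K,S)}{R(K,S)}\le \frac{r(K,C)}{R(K,C)}.
\end{equation*}

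The main obstacle is exactly that $S$ need not be Minkowski-centered: the origin is the Minkowski center of $C$ but in general not the centroid of $S$, and the proof of \Cref{leftMaxT} leans on $T_{\MAX}$ being a \emph{regular} hexagon, a feature lost once the origin is off-center. I therefore expect the hard part to lie in one of two directions. Either one extends \Cref{leftMaxT} to triangular gauges whose origin is an arbitrary interior point---tracking, through the intercept-theorem computations of the quantities $l_1,l_2,l_3$ in that proof, how the left boundary deforms as the origin moves---or one bounds how far off-center the origin of $S$ can be. The latter is delicate, since \Cref{thm:jungbound_Dmax} shows that as the origin of a triangular gauge slides toward a vertex the Jung constant degrades from $1$ down to $\frac23$ (so that $D_{\MAX}(K,S)/R(K,S)$ may fall below $1$, flipping the monotonicity used above); one would have to exploit the fact that $S$ circumscribes the Minkowski-centered body $C$ to keep the origin a definite distance from $\partial S$, enough to preserve the stronger triangle bound---which at $D_{\MAX}/R=1$ demands $r/R\ge\frac14$, strictly more than the $\frac29$ already guaranteed by the union bound \eqref{eq:upperboundunion}. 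A different route worth testing is to work through $C_{\MAX}$ via $D_{\MAX}(K,C)=D_{\AM}(K,C_{\MAX})$ and \Cref{prop:diagramAM}, which places every $D_{\AM}$-point in the triangle diagram; but this requires converting the pair $(r,R)$ taken with respect to $C$ into the pair taken with respect to $C_{\MAX}$, a distortion by factors up to $s(C)$ that appears hard to control tightly enough.
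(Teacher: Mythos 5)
The statement you are addressing is labelled a \emph{conjecture} in the paper: the authors give no proof of it, and the remark immediately preceding it records precisely the state of affairs your first paragraph re-derives, namely that \eqref{upMaxT}, \eqref{rightMaxT}, \eqref{eqrRzero} and \Cref{thm:jungbound_Dmax_mink} hold for every planar Minkowski-centered gauge, while the left-boundary inequality of \Cref{leftMaxT} is only proven for triangles. Your reduction of the conjecture to that single inequality (via \Cref{fulldiamaxT} and the monotonicity of $g(t)=\left(t-\frac12\right)\left(\frac32-t\right)$ on $\left[1,\frac32\right]$) is correct, as is the further reduction to the triangle $\conv(\set{q^1,q^2,q^3})$ on the touching points, using that $r$, $D_{\MAX}$ are monotone and that the Jung bound keeps both diameter ratios at least $1$.

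However, your proposal does not prove the conjecture, and you say so yourself. The step that fails is exactly the one you flag: the circumscribed triangle $S=\bigcap_{i} H^{\leq}_{(a^i,h_C(a^i))}$ inherits the origin of $C$, which is in general not its centroid, so $S$ is not Minkowski-centered and \Cref{leftMaxT} cannot be invoked for it (its proof uses throughout that $T_{\MAX}$ is a regular hexagon). Worse, as you note via \Cref{thm:jungbound_Dmax}, for an off-centered triangular gauge the ratio $D_{\MAX}(K,S)/R(K,S)$ can drop below $1$, which breaks the monotonicity step in your chain; and the needed repair --- either showing that circumscribing a Minkowski-centered $C$ keeps the origin far enough from $\bd(S)$ to preserve the stronger triangle bound, or redoing the $l_1,l_2,l_3$ computations of \Cref{leftMaxT} for an arbitrary interior origin --- is sketched but not carried out. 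So what you have is a correct reformulation of the open problem together with a plausible plan of attack, not a proof; the gap you identify is the entire substance of the conjecture, and it is open in the paper as well.
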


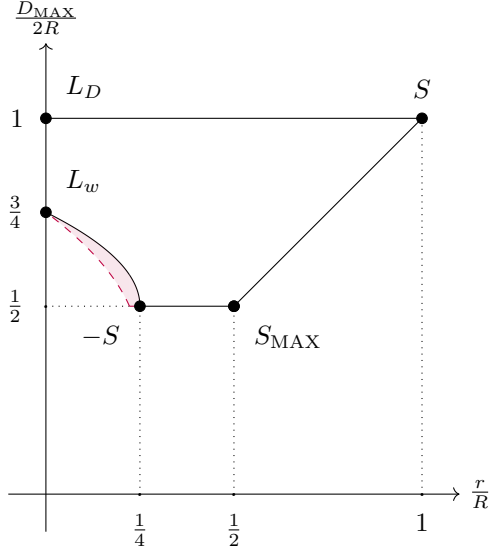
\begin{figure}[ht]
    \centering
     \begin{tikzpicture}[scale=5]

     \draw[->] (-0.1,0) -- (1.1,0) node[right] {$\frac{r}{R}$};
     \draw[->] (0,-0.1) -- (0,1.2) node[above] {$\frac{D_{\MAX}}{2R}$}
     ;

     \draw[ domain=0:1, smooth, variable=\x, color=black] plot ({\x}, {1})
     ;
     \draw[ domain=0.5:1, smooth, variable=\x] plot ({\x}, {\x})
     ;
     \draw[ domain=0.25:0.5, smooth, variable=\x] plot ({\x}, {0.5})
     ;
     \draw[name path=F1, domain=0.5:7/12, smooth, variable=\y] plot ({(2*\y-0.5)*(1.5-2*\y)},{\y} ) 
     ;
     \draw[name path=F2, domain=7/12:0.75, smooth, variable=\y] plot ({(2*\y-0.5)*(1.5-2*\y)},{\y} ) 
     ;
     \draw[ domain=0:2/9, dotted, variable=\x] plot ({\x}, {0.5})
     ;
      \node[label=left:$\frac{1}{2}$] (jung) at (0,0.5) {.};
       \draw[ domain=0:1, dotted, variable=\y] plot ({1}, {\y})
     ;
      \node[label=below:$1$] (one) at (1,0) {.};
     \draw[ domain=0:0.5, dotted, variable=\y] plot ({0.25}, {\y})
     ;
      \node[label=below:$\frac{1}{4}$] (minus) at (0.25,0) {.};
      \draw[ domain=0:0.5, dotted, variable=\y] plot ({0.5}, {\y})
     ;
      \node[label=below:$\frac{1}{2}$] (max) at (0.5,0) {.};
      \node[label=left:$1$] (delta) at (0,1) {.};
      \node[label=left:$\frac{3}{4}$] (rho) at (0,0.75) {.};

     \node[label=above:$S$] ($T$) at (1,1) {.};
     \draw[fill=black] (1,1) circle[radius=0.4pt];

     \node[label=above right:$L_D$] ($L_D$) at (0,1) {.};
     \draw[fill=black] (0,1) circle[radius=0.4pt];

     \node[label=above right:$L_w$] (Lw) at (0,0.75) {.};

     \node[label=below right:$S_{\MAX}$] (Tmax) at (0.5,0.5) {.};
     \draw[fill=black] (0.5,0.5) circle[radius=0.4pt];

     \node[label=below left:$-S$] (T) at (0.25,0.5) {.};

     \draw[ name path=F3,domain=2/9:0.25, dashed, smooth, variable=\x, color=purple] plot ({\x}, {0.5});
     \draw[ name path=F4, domain=0.5:0.75, dashed, smooth, variable=\y, color=purple] plot ({4/3*\y*(1-4/3*\y)}, {\y});
    \tikzfillbetween[of=F1 and F3]{purple, opacity=0.1};
     \tikzfillbetween[of=F2 and F4]{purple, opacity=0.1};
     \draw[fill=black] (0.5,0.5) circle[radius=0.4pt];
     \draw[fill=black] (0,0.75) circle[radius=0.4pt];
     \draw[fill=black] (0.25,0.5) circle[radius=0.4pt];
 \end{tikzpicture}
   \caption{The diagram $f_{\MAX}(\bar{\CC}^2,S)$ \wrt~a Minkowski-centered triangle $S$ (black) and an upper bound for the union over all Minkowski-centered gauges bounded by \eqref{eq:upperboundunion} (additional purple region).}
  \label{figdiagramTmaxwithineqs}
\end{figure}

\section{The diameter $D_{\HM}$}
In the case of the harmonic mean, the factors $\rho_{\HM}$ and $\delta_{\HM}$ are not bound to the Minkowski asymmetry $s(C)$. However, in \cite{reversing} the following bounds are proven:
\begin{proposition}
    \begin{equation*}
        1 \leq \rho_{\HM} \leq \frac{(s(C)+1)^2}{4s(C)} \leq \delta_{\HM} \leq \frac{s(C)+1}{2}.
     \end{equation*}
\end{proposition}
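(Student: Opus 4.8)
The plan is to route every bound through the minimum symmetrization $C_{\MIN}$, for which the relevant circumradii can be computed exactly, and then to glue things together with the submultiplicativity of the circumradius, $R(A,C)\le R(A,B)\,R(B,C)$ (which I would justify in one line by composing the two containments; for the $0$-symmetric bodies at hand the optimal circumcenter is forced to the origin, so no translations intervene). First I record that, by the definition of the factors, $\delta_{\HM}=R(C_{\AM},C_{\HM})$ and $\rho_{\HM}=r(C_{\AM},C_{\HM})=R(C_{\HM},C_{\AM})^{-1}$. The leftmost inequality $1\le\rho_{\HM}$ is then immediate from the containment $C_{\HM}\subset C_{\AM}$ in \Cref{prop:firey-chain}, since it gives $R(C_{\HM},C_{\AM})\le1$.

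The two computations I need are the exact values $R(C_{\AM},C_{\MIN})=\tfrac{s(C)+1}{2}$ and $R(C_{\HM},C_{\MIN})=\tfrac{2s(C)}{s(C)+1}$. For the first I would use that $\mu C_{\MIN}=\mu C\cap\mu(-C)$, so $C_{\AM}\subset\mu C_{\MIN}$ is equivalent to the two support-function inequalities $h_{C_{\AM}}(a)=\tfrac12(h_C(a)+h_C(-a))\le\mu\,h_C(a)$ and $\le\mu\,h_C(-a)$ for all $a$; the smallest admissible $\mu$ is $\tfrac12\bigl(1+\max_a\tfrac{h_C(-a)}{h_C(a)}\bigr)=\tfrac12(1+R(-C,C))=\tfrac{s(C)+1}{2}$, where $R(-C,C)=s(C)$ because $C$ is Minkowski-centered (applying $x\mapsto-x$ to $C\optc -s(C)C$ via \Cref{affineRrD} gives $-C\optc s(C)C$ about the origin). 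For the second I would switch to gauges: using $\cnorm{x}{C_{\MIN}}=\max(\cnorm{x}{C},\cnorm{x}{-C})$ and $\cnorm{x}{C_{\HM}}=\tfrac12(\cnorm{x}{C}+\cnorm{x}{-C})$, the centered formula $R(C_{\HM},C_{\MIN})=\max_{x\ne0}\tfrac{\cnorm{x}{C_{\MIN}}}{\cnorm{x}{C_{\HM}}}$ becomes $\max\tfrac{2\max(u,v)}{u+v}$ with $u=\cnorm{x}{C}$, $v=\cnorm{x}{-C}$; since the ratio $u/v$ ranges exactly over $[1/s(C),s(C)]$ (again by $R(C,-C)=R(-C,C)=s(C)$), this maximum equals $\tfrac{2s(C)}{s(C)+1}$, attained in the extremal asymmetry direction.

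With these two values in hand, the three remaining inequalities fall out of submultiplicativity together with the optimal containment $C_{\MIN}\optc C_{\HM}$ from \Cref{prop:firey-chain}, i.e.\ $R(C_{\MIN},C_{\HM})=1$. Chaining $C_{\AM}\to C_{\MIN}\to C_{\HM}$ gives $\delta_{\HM}=R(C_{\AM},C_{\HM})\le R(C_{\AM},C_{\MIN})\,R(C_{\MIN},C_{\HM})=\tfrac{s(C)+1}{2}$, the rightmost inequality. Chaining $C_{\AM}\to C_{\HM}\to C_{\MIN}$ gives $\tfrac{s(C)+1}{2}=R(C_{\AM},C_{\MIN})\le\delta_{\HM}\cdot\tfrac{2s(C)}{s(C)+1}$, hence $\delta_{\HM}\ge\tfrac{(s(C)+1)^2}{4s(C)}$. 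Finally, chaining $C_{\HM}\to C_{\AM}\to C_{\MIN}$ gives $\tfrac{2s(C)}{s(C)+1}=R(C_{\HM},C_{\MIN})\le R(C_{\HM},C_{\AM})\cdot\tfrac{s(C)+1}{2}$, so $R(C_{\HM},C_{\AM})\ge\tfrac{4s(C)}{(s(C)+1)^2}$ and therefore $\rho_{\HM}=R(C_{\HM},C_{\AM})^{-1}\le\tfrac{(s(C)+1)^2}{4s(C)}$.

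The algebra of the last paragraph is routine; the real content — and the step I expect to be the main obstacle — is pinning down the two exact circumradii. Both rest on the identity $R(C,-C)=R(-C,C)=s(C)$ and, crucially, on the fact that for Minkowski-centered $C$ these circumradii are realized at the origin, so that the centered support/gauge formulas apply and the extremal ratio $\cnorm{x}{C}/\cnorm{x}{-C}=s(C)$ is actually attained. I would therefore first record, once and for all, that the circumradii occurring here are centered — which holds because $C_{\AM},C_{\HM},C_{\MIN}$ are all $0$-symmetric, forcing the optimal circumcenter to the origin — and state the submultiplicativity of $R$ explicitly, since every inequality above is an immediate consequence of it.
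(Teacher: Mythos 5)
Your proof is correct; but note that the paper itself offers no proof to compare against: the proposition is stated as a quotation of bounds proven in \cite{reversing}. So your argument is a genuine, self-contained addition rather than a variant of anything in this text.

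All the steps check out. The identifications $\delta_{\HM}=R(C_{\AM},C_{\HM})$ and $\rho_{\HM}=r(C_{\AM},C_{\HM})=R(C_{\HM},C_{\AM})^{-1}$ are exactly what the notation $\rho_{\m}C_{\m}\optc C_{\AM}\optc\delta_{\m}C_{\m}$ means, and your reduction to centered containments is valid: if $K=-K$, $C=-C$ and $K\subset t+\rho C$, then also $K=-K\subset -t+\rho C$, hence $K\subset(t+\rho C)\cap(-t+\rho C)\subset\rho C$. Your two exact circumradii are right, and you correctly isolate where Minkowski-centeredness enters: $C\optc -s(C)C$ gives the \emph{translation-free} containment $-C\subset s(C)C$, whence $\sup_a h_C(-a)/h_C(a)=s(C)$ exactly (without centeredness one only gets an inequality, and the computation would break). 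The value $R(C_{\HM},C_{\MIN})=\max_{x\neq 0}\frac{2\max(\cnorm{x}{C},\cnorm{x}{-C})}{\cnorm{x}{C}+\cnorm{x}{-C}}=\frac{2s(C)}{s(C)+1}$ uses the gauge identities $\cnorm{x}{C_{\MIN}}=\max(\cnorm{x}{C},\cnorm{x}{-C})$ and $\cnorm{x}{C_{\HM}}=\frac12(\cnorm{x}{C}+\cnorm{x}{-C})$, both of which the paper itself establishes in the proofs of \Cref{DminProperties} and \Cref{DhmProperties}. Submultiplicativity $R(A,C)\le R(A,B)R(B,C)$ holds even allowing translations (compose the two containments), and $R(C_{\MIN},C_{\HM})=1$ is supplied by \Cref{prop:firey-chain}, so the three chained estimates give precisely the three nontrivial inequalities, while $C_{\HM}\subset C_{\AM}$ gives $\rho_{\HM}\ge 1$. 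Two things your route buys beyond the citation: it is elementary and stays entirely within the paper's toolkit, and your intermediate value $R(C_{\AM},C_{\MIN})=\frac{s(C)+1}{2}$ is precisely the statement $\delta_{\MIN}=\frac{s(C)+1}{2}$ of \Cref{prop:rhodeltamin}, which the paper likewise only cites — so your argument proves that as well. The only thing it does not address is tightness of the four bounds, but the proposition as stated claims none.
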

Taking $K=C_{\HM}$, we know $R(C_{\HM},C)=\frac{2s(C)}{s(C)+1}$, $r(C_{\HM},C)=\frac{2}{s(C)+1}$ and $D_{\HM}(C_{\HM},C)=2$ \cite{reversing}.
Furthermore, the inequalities from \Cref{lem:basicineq} have the form:
\begin{align} \label{eq:HMdeltaDR}
    \frac{D_{\HM}(K,C)}{2} & \leq \delta_{\HM}R(K,C),
\intertext{}  \label{eq:HMdeltaDr}
  \delta_{\HM}r(K,C) & \leq \frac{D_{\HM}(K,C)}{2},
\intertext{} \label{rRDfromAM}
     s(C)r(K,C)+R(K,C) &\leq \frac{s(C)+1}{2\rho_{\HM}} D_{\HM}(K,C)\leq \frac{s(C)+1}{2} D_{\HM}(K,C),
     \intertext{} \label{rRDfromAM2}
     r(K,C)+R(K,C)&\leq \frac{2s(C)}{s(C)+1}D_{\HM}(K,C)
     \intertext{and} \label{eq:zerohm}
     0 & \leq r(K,C)
\end{align}

  Thus, if $\delta_{\HM}=\frac{s(C)+1}{2}$ then $C_{\HM}$ fulfills \eqref{eq:HMdeltaDr} with equality and if  $\rho_{\HM}=\frac{(s(C)+1)^2}{4s(C)}$ then $C_{\HM}$ fulfills the first inequality in \eqref{rRDfromAM} with equality, which
  in this case can be rewritten as
  \begin{equation}
    \label{rRDfromAM3}
    s(C)r(K,C)+R(K,C)\leq \frac{2s(C)}{s(C)+1}D_{\HM}(K,C).
  \end{equation}
Unlike with $D_{\MAX}$, (a dilatate of) the symmetrization $C_{\HM}$ is not always a completion of the gauge.
\begin{lemma}
  \label{omegaequivalences}
  The following are equivalent:
  \begin{enumerate}[i)]
  \item $\delta_{\HM}=\frac{s(C)+1}{2}$,
  \item $\frac{s(C)+1}{2}C_{\HM}$ is a completion of $C$ \wrt~$C$, and
  \item $D(C,C_{\HM})=2R(C,C_{\HM})$.
  \end{enumerate}
\end{lemma}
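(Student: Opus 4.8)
The plan is to reduce all three statements to a single scalar identity, namely $2\delta_{\HM} = s(C)+1$, by computing the two quantities $R(C,C_{\HM})$ and $D(C,C_{\HM})$ explicitly. Once these are in hand, the equivalence of (i) and (iii) is immediate, and (ii) follows after checking that the containment and completeness requirements in the definition of a completion hold automatically, so that being a completion reduces exactly to the diameter-equality condition.

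First I would compute $R(C,C_{\HM})$. Using the stated value $r(C_{\HM},C) = \tfrac{2}{s(C)+1}$ together with $r(K,C) = R(C,K)^{-1}$, one obtains
\[
  R(C,C_{\HM}) = \frac{1}{r(C_{\HM},C)} = \frac{s(C)+1}{2}.
\]
Next I would compute $D(C,C_{\HM})$. Since $C_{\HM}$ is $0$-symmetric we have $D(C,C_{\HM}) = D_{\AM}(C,C_{\HM})$, and by \Cref{DamProperties} iv) this equals $2R\!\left(\tfrac{C-C}{2},\tfrac{C_{\HM}-C_{\HM}}{2}\right) = 2R(C_{\AM},C_{\HM}) = 2\delta_{\HM}$ (equivalently one may quote \Cref{CDiam} ii), $D_{\HM}(C,C)=2\delta_{\HM}$, together with $D_{\HM}(C,C)=D(C,C_{\HM})$). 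Substituting both values, statement (iii) reads $2\delta_{\HM} = 2\cdot\tfrac{s(C)+1}{2}$, which is precisely (i). Hence (i) $\Leftrightarrow$ (iii).

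For the equivalence with (ii) I would verify the three defining properties of a completion one at a time. The containment $C \subset \tfrac{s(C)+1}{2}C_{\HM}$ holds for every Minkowski-centered $C$: using $\cnorm{x}{C_{\HM}} = \tfrac12(\cnorm{x}{C}+\cnorm{-x}{C})$ (from the proof of \Cref{DhmProperties}), the bound $\cnorm{x}{C}\le 1$ for $x\in C$ and the bound $\cnorm{-x}{C}\le s(C)$ (from $C\optc -s(C)C$, which gives $-x\in s(C)C$ for $x\in C$) yield $\cnorm{x}{C_{\HM}} \le \tfrac{s(C)+1}{2}$. Completeness of $\tfrac{s(C)+1}{2}C_{\HM}$ also always holds: any dilatate $\lambda C_{\HM}$ satisfies $(\lambda C_{\HM})_{\AM} = \lambda C_{\HM}$, i.e.\ its arithmetic symmetrization is $\lambda$ times the gauge symmetrization $C_{\HM}$, so it has constant width and is therefore complete by \Cref{remarkcwcompl}. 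The only remaining condition for $\tfrac{s(C)+1}{2}C_{\HM}$ to be a completion of $C$ is the diameter equality $D_{\HM}\!\left(\tfrac{s(C)+1}{2}C_{\HM},C\right) = D_{\HM}(C,C)$. By homogeneity (\Cref{contandfactor}) the left-hand side equals $\tfrac{s(C)+1}{2}D_{\HM}(C_{\HM},C) = s(C)+1$, while the right-hand side is $2\delta_{\HM}$; so this equality is again exactly (i). This closes (i) $\Leftrightarrow$ (ii).

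The computations are all routine, so there is no serious obstacle; the two points requiring care are the following. First, one must track whether each containment is literal (centered at the origin) or only up to translation: the argument for $C \subset \tfrac{s(C)+1}{2}C_{\HM}$ has to be the centered one, which is why I would derive it directly from the gauge inequality $\cnorm{x}{C_{\HM}}\le\tfrac{s(C)+1}{2}$ rather than from $R(C,C_{\HM})=\tfrac{s(C)+1}{2}$ alone. Second, one must correctly invoke that for the symmetric gauge $C_{\HM}$ the $\HM$-diameter coincides with the ordinary (arithmetic) diameter $D$, so that the reductions to the $D_{\AM}$-facts in \Cref{DamProperties} and to \Cref{remarkcwcompl} are legitimate.
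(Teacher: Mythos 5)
Your proof is correct and follows essentially the same route as the paper: both reduce all three statements to the single scalar identity $2\delta_{\HM}=s(C)+1$ via the facts $R(C,C_{\HM})=\frac{s(C)+1}{2}$, $D(C,C_{\HM})=D_{\HM}(C,C)=2\delta_{\HM}$, and the observation that $\frac{s(C)+1}{2}C_{\HM}$ always contains $C$ and is always complete (being a dilatate of the symmetric body $C_{\HM}$). The only cosmetic differences are that you derive the containment $C\subset\frac{s(C)+1}{2}C_{\HM}$ directly from the gauge inequality and Minkowski-centeredness instead of quoting the optimal containment $C\optc\frac{s(C)+1}{2}C_{\HM}$ from the literature as the paper does, and you organize the argument as two pairwise equivalences rather than the paper's cycle $i)\Rightarrow ii)\Rightarrow iii)\Rightarrow i)$.
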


\begin{proof}
We know from \cite{reversing} that $C\optc \frac{s(C)+1}{2}C_{\HM}$ and therefore $\frac{s(C)+1}{2}C_{\HM}$ is a complete set containing $C$ with $R(C,C_{\HM}) = \frac{s(C)+1}{2}$.

\begin{enumerate}[align=left]
\item[$i)\Rightarrow ii)$:]
  If $\delta_{\HM}=\frac{s(C)+1}{2}$ then
  $D_{\HM}\left(\frac{s(C)+1}{2}C_{\HM},C\right)=s(C)+1=2\delta_{\HM} = D_{\HM}(C,C)$,
  implying that $\frac{s(C)+1}{2}C_{\HM}$ is a completion of $C$.
    \item[$ii)\Rightarrow iii)$:] If $\frac{s(C)+1}{2}C_{\HM}$ is a completion of $C$, $D(C,C_{\HM})=D_{\HM}(C,C)=s(C)+1=2R(C,C_{\HM})$.
    \item[$iii)\Rightarrow i)$:] Assuming (iii) it follows $2\delta_{\HM}=2R(C_{\AM},C_{\HM})=D(C,C_{\HM})=2R(C,C_{\HM})=s(C)+1$.
\end{enumerate}
\end{proof}

If $\frac{s(C)+1}{2}C_{\HM}$ is a completion of the gauge $C$, then it is also a completion of $-C$ and since $R(-C,C)=s(C)=R(\frac{s(C)+1}{2}C_{\HM},C)$ for $-C$ it is even a Scott-completion.

\begin{example}
    The Reuleaux triangle $\rt$ is the completion of the equilateral triangle $T$ in the euclidean case:
\begin{equation*}
    \rt:= \bigcap_{i=1}^3 p^i + \sqrt{3}\B^2_2,
\end{equation*}
where the $p_i$ are the vertices of $T$. For the Reuleaux triangle one obtains (omitting the detailed calculations) $s(\rt)=\frac{1}{\sqrt{3}-1}\approx 1.366$, $\delta_{\HM}=\frac{\sqrt{3}}{\sqrt{11}-\sqrt{3}} \approx 1.093 <\frac{s(\rt)+1}{2}$ and $\rho_{\HM}=\frac{(s(\rt)+1)^2}{4s(\rt)}=\frac{3(\sqrt{3}+1)}{8}\approx 1.025$.
Thus, by \Cref{omegaequivalences} this is a case where the (dilatated) harmonic mean $\rt_{\HM}$ is not a completion of the gauge $\rt$.

Since $T\subset \rt$ is diametric we obtain from \Cref{diamtriagle2dim} that the unique completion of $\rt$ is
\begin{equation*}
  \rt^{*}:=\bigcap_{i=1}^3 p_i + 2\delta_{\HM}\rt_{\HM} \quad \text{(\cf~\Cref{reultrianglehm})}.
\end{equation*}

\begin{figure}[ht]
  \centering

  \begin{minipage}[c]{.4\linewidth}
    \includegraphics[scale=0.3]{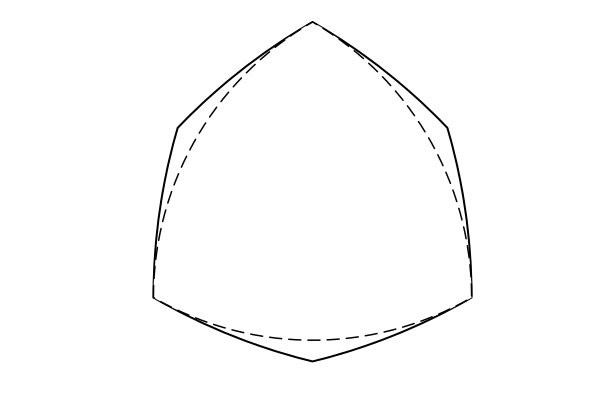}
  \end{minipage}
  \noindent
  \begin{minipage}[c]{.4\linewidth}
    \includegraphics[scale=0.3]{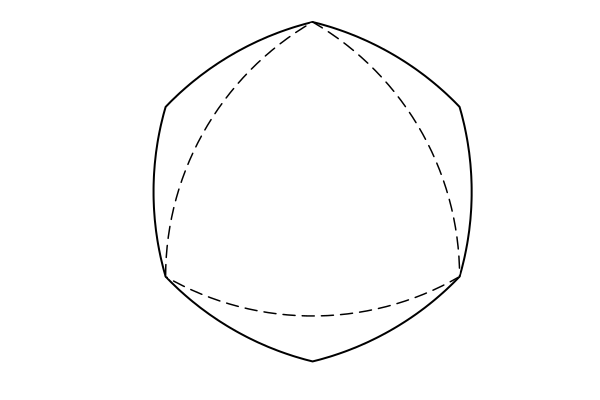}
  \end{minipage}

  \caption{Reuleaux Triangle $\rt$ (dashed), its completion $\rt^{*}$ (left) and its harmonic symmetrization $\rt_{\HM}$ (right).}
  \label{reultrianglehm}
\end{figure}

Using the symmetries of the Reuleaux triangle we obtain $R(\rt^{*},\rt)=(s(\rt)+1)\cdot\frac{\delta_{\HM}}{\rho_{\HM}}-s(\rt)$ and thus $\rt^{*}$ fulfills \eqref{rRDfromAM3} with equality, which is also true for $\rt_{\HM}$.

The diagram $f_{\HM}(\bar{\CC}^2,\rt)$ is given in \Cref{diagramreul}. In addition to known inequalities \eqref{eq:HMdeltaDR}, \eqref{eq:HMdeltaDr}, \eqref{rRDfromAM}, \eqref{eq:zerohm} and points, two conjectured inequalities are shown.

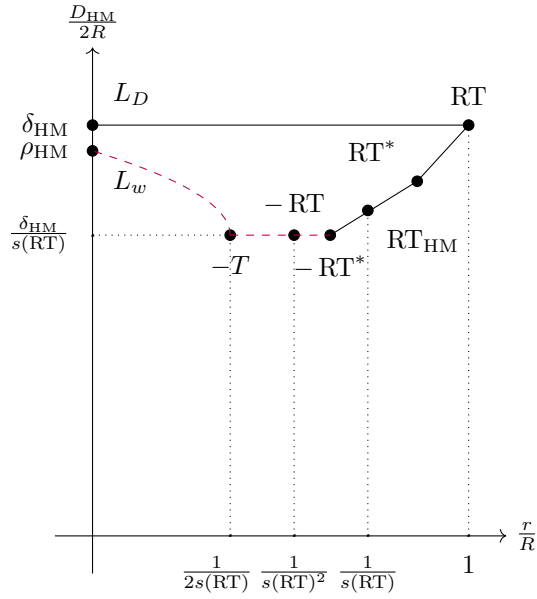
\begin{figure}[ht]
  \centering
  \begin{tikzpicture}[scale=5]
    \def\sC{1/(sqrt(3)-1)}
    \def\deltaC{(sqrt(11)-sqrt(3))/(sqrt(3))}
    \def\rhoC{0.976067}
    \def\f{(\deltaC)/((\sC+1)*(\rhoC)-(\deltaC)*(\sC))}
    \def\a{(-8*\sC)/(4/(\deltaC)-(\sC+1)^2)^2}
    \draw[->] (-0.1,0) -- (1.1,0) node[right] {$\frac{r}{R}$};
    \draw[->] (0,-0.1) -- (0,1.3) node[above] {$\frac{D_{\HM}}{2R}$};
    \draw[ domain=0:1, smooth, variable=\x] plot ({\x}, {1/(\deltaC)});
    \draw[ domain={\f}:1, smooth, variable=\x] plot ({\x}, {(1/(\deltaC))*(\x)});
    \node[label=above:$ \rt$] (rt) at (1,{1/(\deltaC)}) {.};
    \draw[fill=black] (1,{1/(\deltaC)}) circle[radius= 0.4pt];

    \node[label=above right:$L_D$] ($L_D$) at (0,{1/(\deltaC)}) {.};
    \draw[fill=black] (0,{1/(\deltaC)}) circle[radius= 0.4pt];

    \node[label=below right:$L_w$] ($L_w$) at (0,{1/(\rhoC)}) {.};
    \draw[fill=black] (0,{1/(\rhoC)}) circle[radius= 0.4pt];

    \node[label= above left:$ \rt^{*}$] (Tco) at ({\f},{(\f)/(\deltaC)}) {.};
    \draw[fill=black] ({\f},{(\f)/(\deltaC)}) circle[radius= 0.4pt];

    \node[label=below right:$ \rt_{\HM}$] (Thm) at ({1/(\sC)},{(\sC+1)/(2*(\sC))}) {};
    \draw[fill=black] ({1/(\sC)},{(\sC+1)/(2*(\sC))}) circle[radius= 0.4pt];

    \node[label=below:$-T$] (T) at ({1/(2*(\sC))},{1/((\deltaC)*(\sC))}) {.};
    \draw[fill=black] ({1/(2*(\sC))},{1/((\deltaC)*(\sC))}) circle[radius= 0.4pt];
    \node[label=above:$- \rt$] (RTminus) at ({1/((\sC)*(\sC))},{1/((\deltaC)*(\sC))}) {.};
    \draw[fill=black] ({1/((\sC)*(\sC))},{1/((\deltaC)*(\sC))}) circle[radius= 0.4pt];
    \node[label=below:$- \rt^{*}$] (Tcon)  at ({(\f)/(\sC)},{1/((\deltaC)*(\sC))}) {.};
    \draw[fill=black] ({(\f)/(\sC)},{1/((\deltaC)*(\sC))}) circle[radius= 0.4pt];

    \draw plot[smooth] ({(\f)/(\sC)},{1/((\deltaC)*(\sC))}) -- ({\f},{(\f)/(\deltaC)}) ;
    \draw[ domain={1/(2*(\sC))}:{(\f)/(\sC)},purple, dashed, variable=\x] plot ({\x}, {1/((\deltaC)*(\sC))}) ;
    \draw[ domain=0:{1/(2*(\sC))}, dotted, variable=\x] plot ({\x}, {1/((\deltaC)*(\sC))}) ;
    \node[label=left:$\frac{\delta_{\HM}}{s(\rt)}$] (jung) at (0,{1/((\deltaC)*(\sC))}) {.}
    ;
    \draw[ domain=0:{1/(\deltaC)}, dotted, variable=\y] plot ({1}, {\y}) ;
    \node[label=below:$1$] (one) at (1,0) {.}
    ;
     \draw[ domain=0:{1/((\deltaC)*(\sC))}, dotted, variable=\y] plot ({1/(2*(\sC))}, {\y}) ;
    \node[label=below:$\frac{1}{2s(\rt)}\quad$] (third) at ({1/(2*(\sC))},0) {.}
    ;
    \draw[ domain=0:{(\sC+1)/(2*(\sC))}, dotted, variable=\y] plot ({1/(\sC)}, {\y}) ;
    \node[label=below:$\frac{1}{s(\rt)}$] (hm) at ({1/(\sC)},0) {.};
    \draw[ domain=0:{1/((\deltaC)*(\sC))}, dotted, variable=\y] plot ({1/((\sC)*(\sC))}, {\y}) ;
    \draw[ domain={1/((\deltaC)*(\sC))}:{1/(\rhoC)},purple, dashed, variable=\y] plot ({\a*(\y-1/(\rhoC))*(\y-(2/((\deltaC)*(\sC)) -1/(\rhoC)))}, {\y}) ;
    \node[label=below:$\frac{1}{s(\rt)^2}$] (hm) at ({1/((\sC)*(\sC))},0) {.};
     \node[label=left:$\delta_{\HM}$] (delta) at (0,{1/(\deltaC)}) {.};
     \node[label=left:$\rho_{\HM}$] (rho) at (0,{1/(\rhoC)}) {.};
   \end{tikzpicture}
   \caption{The Blaschke-Santaló diagram $f_{\HM}(\bar{\CC}^2, \rt)$ with conjectured inequalities (purple). Keep in mind that the (dilatated) harmonic symmetrization of the Reuleaux triangle is not its completion.}
   \label{diagramreul}
 \end{figure}
\end{example}

For the equilateral triangle, we have $\delta_{\HM}=\frac{s(T)+1}{2}=\frac{3}{2}$, $\rho_{\HM}=\frac{(s(T)+1)^2}{4s(T)}=\frac{9}{8}$ and $T_{\HM}=\frac{2}{3}T_{\MAX}$ \cite{BDG}. Thus, $D_{\HM}(K,T)=\frac{3}{2}D_{\MAX}(K,T)$ for all $K\in\CC^2$ and we can transfer all inequalities concerning triangles from the previous section.

\begin{corollary}
  \label{cor:diagramHM}
  For every Minkowski-centered triangle $S$ the diagram $f_{\HM}(\bar{\CC}^2,S)$ is fully described by the inequalities

  \begin{align*}
    D_{\HM}(K,S)&\leq 3R(K,S)\\
    3r(K,S)&\leq D_{\HM}(K,S)\\
    0&\leq r(K,S)\\
    R(K,S)&\leq \frac{2}{3}D_{\HM}(K,S)\\
    \frac{4}{9}\left(\frac{D_{\HM}(K,S)}{R(K,S)}-\frac{3}{4}\right)&\left(\frac{9}{4}-\frac{D_{\HM}(K,S)}{R(K,S)}\right)\leq \frac{r(K,S)}{R(K,S)}
  \end{align*}
(\cf~\Cref{fig:diagramHM}).
\end{corollary}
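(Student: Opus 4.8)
The plan is to exploit the exact proportionality $D_{\HM}(K,T) = \frac{3}{2}D_{\MAX}(K,T)$ recorded just above the corollary, which collapses the entire statement into a rescaling of the $D_{\MAX}$-diagram already characterized in \Cref{fulldiamaxT}. First I would reduce to the equilateral triangle $T$: since every Minkowski-centered triangle is the image of $T$ under a non-singular linear map and the functionals $r$, $R$, and $D_{\HM}$ are invariant under such transformations by \Cref{affineRrD}, it suffices to prove the statement for $S = T$.

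Next I would verify the proportionality itself. Since $D_{\HM}(K,T) = D_{\AM}(K,T_{\HM})$ and $T_{\HM} = \frac{2}{3}T_{\MAX}$, the degree $-1$ homogeneity of the diameter in its second argument (\Cref{contandfactor}) gives
\begin{equation*}
  D_{\HM}(K,T) = D_{\AM}\!\left(K, \tfrac{2}{3}T_{\MAX}\right) = \tfrac{3}{2}\,D_{\AM}(K,T_{\MAX}) = \tfrac{3}{2}\,D_{\MAX}(K,T).
\end{equation*}
Substituting $D_{\MAX}(K,T) = \frac{2}{3}D_{\HM}(K,T)$ into the first four inequalities of \Cref{fulldiamaxT} immediately yields $D_{\HM}\leq 3R$, $3r\leq D_{\HM}$, $0\leq r$, and $R\leq \frac{2}{3}D_{\HM}$, respectively. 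For the fifth inequality I would set $x := D_{\HM}(K,T)/R(K,T)$ so that $D_{\MAX}(K,T)/R(K,T) = \frac{2}{3}x$, and factor $\frac{2}{3}$ out of each bracket using $\frac{2}{3}x - \frac{1}{2} = \frac{2}{3}\left(x - \frac{3}{4}\right)$ and $\frac{3}{2} - \frac{2}{3}x = \frac{2}{3}\left(\frac{9}{4} - x\right)$; this produces the prefactor $\frac{4}{9}$ together with the endpoints $\frac{3}{4}$ and $\frac{9}{4}$ appearing in the corollary.

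To upgrade the five inequalities from a necessary condition to a \emph{full description}, I would observe that $f_{\HM}(K,T) = \Phi\big(f_{\MAX}(K,T)\big)$, where $\Phi(u,v) := \big(u, \frac{3}{2}v\big)$ is a bijective diagonal scaling acting only on the diameter coordinate. Since \Cref{fulldiamaxT} states that $f_{\MAX}(\bar{\CC}^2,T)$ is exactly the planar region cut out by its five inequalities, applying the bijection $\Phi$ shows that $f_{\HM}(\bar{\CC}^2,T) = \Phi\big(f_{\MAX}(\bar{\CC}^2,T)\big)$ is exactly the region cut out by the transformed inequalities.

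I do not expect a genuine obstacle here, as all the structural work was carried out in the $D_{\MAX}$ section; the only point requiring care is to confirm that $\Phi$ is a bijection of the plane, so that the property of being \enquote{fully described} transfers along with the inequalities and not merely the one-sided containment $f_{\HM}(\bar{\CC}^2,T) \subset \{\,\text{the five inequalities}\,\}$.
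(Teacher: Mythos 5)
Your proposal is correct and follows essentially the same route as the paper: the paper also derives this corollary from $T_{\HM}=\frac{2}{3}T_{\MAX}$, hence $D_{\HM}(K,T)=\frac{3}{2}D_{\MAX}(K,T)$, and transfers the full description of \Cref{fulldiamaxT} by rescaling the diameter coordinate. Your explicit verification of the proportionality, the factor-out computation giving $\frac{4}{9}$, $\frac{3}{4}$, $\frac{9}{4}$, and the remark that the diagonal scaling $\Phi$ is a bijection (so that \enquote{fully described} transfers, not just one containment) simply spell out what the paper leaves implicit.
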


\begin{remark}
  \label{jungineqDhm}
  The bound $D_{\HM}(K,C)\geq R(K,C)$ derived from \Cref{cor:jung-MC-all} for the planar case cannot be reached in case of the harmonic diameter. If $D_{\HM}(K,C)=2$, we obtain from the containment chain
  \begin{equation*}
    \frac{1}{2}\left(1+\frac{1}{s(K)}\right) K\subset \frac{K-K}{2} \subset C_{\HM}\subset \frac{2s(C)}{s(C)+1}C,
   \end{equation*}
  that \begin{equation}
    \label{eq:hmjung}
    R(K,C)\leq \frac{4s(K)s(C)}{(s(K)+1)(s(C)+1)}\leq \frac{4n^2}{(n+1)^2}.
   \end{equation}
 Thus,
 \begin{equation*}
   D_{\HM}(K,C)\geq \frac{(n+1)^2}{2n^2} R(K,C),
 \end{equation*} which for $n=2$ gives $D_{\HM}(K,C)\geq \frac{9}{8} R(K,C)>R(K,C)$.
  Furthermore, equailty in \eqref{eq:hmjung} can only be reached if $K$ and $C$ are simplices, but we have shown that for Minkowski-centered triangles $D_{\HM}(K,S)\geq \frac{3}{2} R(K,S)$. Finally, one may recognize that if we consider 0-symmetric planar gauges, \eqref{eq:hmjung} also yields $D_{\HM}(K,C)\geq \frac{3}{2} R(K,C)$.
\end{remark}

  The following system of inequalities provides an upper bound for the union of the diagrams $f_{\HM}(\bar{\CC}^2,C)$ over all Minkowski-centered gauges $C\in\CC^2_0$ (\cf~\Cref{fig:diagramHM}).
  \begin{align*}
      0&\leq r(K,C)\\
      r(K,C)&\leq R(K,C)\\
      D_{\HM}(K,C)&\leq 3 R(K,C)\\
      2r(K,C)&\leq D_{\HM}(K,C)\\
       9R(K,C) &\leq 8D_{\HM}(K,C)\\
       \frac{D_{\HM}(K,C)}{2R(K,C)}\left(1-\frac{D_{\HM}(K,C)}{2R(K,C)}\right) &\leq \frac{r(K,C)}{R(K,C)}.
  \end{align*}
  Moreover, the following parts of the boundary described by the above inequalities are reached:
  \begin{enumerate}[i)]
  \item $r(K,C)=0$ for segments $K=L_D$ for gauges $C$ with $s(C)\in[1,2]$
  \item $r(K,C)= R(K,C)$ for $K=C$ for gauges $C$ with $s(C)\in[1,2]$
  \item $D_{\HM}(K,C)= 3 R(K,C)$ with $C$ being a triangle as in $f_{\HM}(\bar{\CC}^2,S)$
  \end{enumerate}
  The first two inequalities are trivial. The third and forth follow from \Cref{lem:basicineq} and the fifth from \Cref{jungineqDhm}.
  The last inequality follows from \Cref{prop:diagramAM} and \Cref{contandfactor}.
  The equality cases follow from \Cref{linearityrrRD}, \Cref{CDiam} and \Cref{cor:diagramHM}.

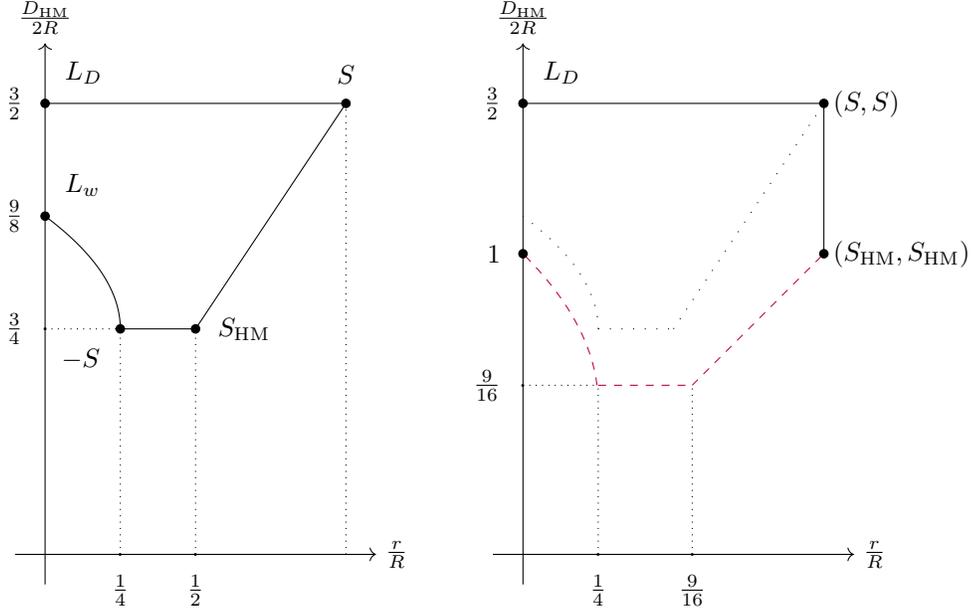
\begin{figure}[ht]
  \centering
  \noindent\begin{minipage}[c]{.4\linewidth}
    \begin{tikzpicture}[scale=4]

      \draw[->] (-0.1,0) -- (1.1,0) node[right] {$\frac{r}{R}$};
      \draw[->] (0,-0.1) -- (0,1.7) node[above] {$\frac{D_{\HM}}{2R}$}
      ;
      \draw[ domain=0:1, smooth, variable=\x] plot ({\x}, {1.5})
      ;
      \draw[ domain=0.5:1, smooth, variable=\x] plot ({\x}, {3/2*\x})
      ;
      \draw[ domain=0.25:0.5, smooth, variable=\x] plot ({\x}, {0.75})
      ;
      \draw[ domain=0.75:1.125, smooth, variable=\y] plot ({(4/3*\y-0.5)*(1.5-4/3*\y)},{\y} )
      ;
      \draw[ domain=0:0.25, dotted, variable=\x] plot ({\x}, {0.75})
      ;
       \node[label=left:$\frac{3}{4}$] (jung) at (0,0.75) {.};
        \draw[ domain=0:1.5, dotted, variable=\y] plot ({1}, {\y})
      ;
      \draw[ domain=0:0.75, dotted, variable=\y] plot ({0.25}, {\y})
      ;
       \node[label=below:$\frac{1}{4}$] (minus) at (0.25,0) {.};
       \draw[ domain=0:0.75, dotted, variable=\y] plot ({0.5}, {\y})
      ;
       \node[label=below:$\frac{1}{2}$] (hm) at (0.5,0) {.};
       \node[label=left:$\frac{3}{2}$] (delta) at (0,1.5) {.};
       \node[label=left:$\frac{9}{8}$] (rho) at (0,1.125) {.};

      \node[label=above:$S$] ($T$) at (1,1.5) {.};
      \draw[fill=black] (1,1.5) circle[radius=0.4pt];

      \node[label=above right:$L_D$] ($L_D$) at (0,1.5) {.};
      \draw[fill=black] (0,1.5) circle[radius=0.4pt];

      \node[label=above right:$L_w$] ($L_w$) at (0,1.125) {.};
      \draw[fill=black] (0,1.125) circle[radius=0.4pt];

      \node[label=right:$S_{\HM}$] (Thm) at (0.5,0.75) {.};
      \draw[fill=black] (0.5,0.75) circle[radius=0.4pt];

      \node[label=below left:$-S$] (Tminus) at (0.25,0.75) {.};
      \draw[fill=black] (0.25,0.75) circle[radius=0.4pt];
  \end{tikzpicture}
  \end{minipage}
  \noindent\begin{minipage}[c]{.4\linewidth}
    \begin{tikzpicture}[scale=4]

      \draw[->] (-0.1,0) -- (1.1,0) node[right] {$\frac{r}{R}$};
      \draw[->] (0,-0.1) -- (0,1.7) node[above] {$\frac{D_{\HM}}{2R}$}
      ;
      \draw[ domain=0:1, smooth, variable=\x] plot ({\x}, {1.5})
      ;
      \draw[ domain=0.5:1, loosely dotted, variable=\x] plot ({\x}, {3/2*\x})
      ;
      \draw[ domain=0.25:0.5, loosely dotted, variable=\x] plot ({\x}, {0.75})
      ;
      ;
      \draw[ domain=0.75:1.125,loosely dotted, variable=\y] plot ({(4/3*\y-0.5)*(1.5-4/3*\y)},{\y} )
      ;
      \draw[ domain=0.5625:1, dashed,purple, variable=\y] plot ({(\y)*(1-\y)},{\y} )
      ;
      \draw[ domain=1:1.5, smooth,black, variable=\y] plot ({1},{\y} )
      ;
      \draw[ domain=0.25:0.5625,dashed,purple, variable=\x] plot ({\x},{0.5625} )
      ;
     \draw[ domain=0.5625:1, smooth,dashed, purple,variable=\x] plot ({\x},{\x} )
      ;
      ;
      \node[label=left:$\frac{9}{16}$] (jung) at (0,0.5625) {.};
      ;
       \node[label=left:$1$] (one) at (0,1) {.};
      \draw[ domain=0:0.5625, dotted, variable=\y] plot ({0.25}, {\y})
      ;
      \draw[ domain=0:0.25, dotted, variable=\x] plot ({\x}, {0.5625})
      ;
       \node[label=below:$\frac{1}{4}$] (minus) at (0.25,0) {.};
       \draw[ domain=0:0.5625, dotted, variable=\y] plot ({0.5625}, {\y})
      ;
       \node[label=below:$\frac{9}{16}$] (minus) at (0.5625,0) {.};
      ;
       \node[label=left:$\frac{3}{2}$] (delta) at (0,1.5) {.};

      \node(T) at (1,1.5) {.};
      \tkzLabelPoint[right](T){$(S,S$)}
      \draw[fill=black] (1,1.5) circle[radius=0.4pt];
      \node(Ts) at (1,1) {.};
      \tkzLabelPoint[right](Ts){$(S_{\HM},S_{\HM}$)}
      \draw[fill=black] (1,1) circle[radius=0.4pt];

      \node[label=above right:$L_D$] ($L_D$) at (0,1.5) {.};
      \draw[fill=black] (0,1.5) circle[radius=0.4pt];
      \draw[fill=black] (0,1) circle[radius=0.4pt];



   \end{tikzpicture}
 \end{minipage}
 \caption{The diagram $f_{\HM}(C^2,S)$ \wrt~a Minkowski-centered triangle $S$ (left) and an upper bound for the union of the diagrams over all Minkowski-centered gauges $C\in\CC^2_0$ (right).}
 \label{fig:diagramHM}
\end{figure}

\begin{remark}
    Since $\delta_{\HM}$ is not the same for every gauge as it is in the arithmetic case, the diagram for the equilateral triangle cannot be dominating. For every $C$, $f_{\HM}(C,C)=(1,\delta_{\HM})$ and this is always the only combination where inradius and circumradius coincide. Thus, we cannot find a single gauge $C$ which defines the union of the diagrams. \end{remark}

\section{The diameter $D_{\MIN}$}
In the case of the minimum $\delta_{\MIN}$ depends solely on the asymmetry of $C$ but $\rho_{\MIN}$ can only be bounded in terms of the asymmetry.
 \cite{reversing}.

\begin{proposition} \label{prop:rhodeltamin}
   \begin{align*}
       1\leq &\rho_{\MIN}\leq \frac{s(C)+1}{2} = \delta_{\MIN}
   \end{align*}
\end{proposition}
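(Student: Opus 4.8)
The plan is to read off both quantities as a circumradius and an inradius via the defining relation $\rho_{\MIN}C_{\MIN}\optc C_{\AM}\optc\delta_{\MIN}C_{\MIN}$, so that $\delta_{\MIN}=R(C_{\AM},C_{\MIN})$ and $\rho_{\MIN}=r(C_{\AM},C_{\MIN})$. Two of the three claimed relations are then almost immediate. Since $C_{\MIN}\optc C_{\HM}\subset C_{\AM}$ by \Cref{prop:firey-chain}, the body $C_{\MIN}$ already fits (undilated) inside $C_{\AM}$, which gives $\rho_{\MIN}\ge 1$; and the chain $\rho_{\MIN}C_{\MIN}\subset C_{\AM}\subset\delta_{\MIN}C_{\MIN}$ forces $\rho_{\MIN}\le\delta_{\MIN}$, so once $\delta_{\MIN}$ is computed the upper bound $\rho_{\MIN}\le\frac{s(C)+1}{2}$ follows for free. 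Hence the whole statement reduces to proving $\delta_{\MIN}=\frac{s(C)+1}{2}$.

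For that, I would first reduce to a concentric containment: because $C_{\AM}$ and $C_{\MIN}$ are both $0$-symmetric, the optimal translate in $R(C_{\AM},C_{\MIN})$ is the origin (average the two inclusions $C_{\AM}\subset t+\rho C_{\MIN}$ and $C_{\AM}=-C_{\AM}\subset -t+\rho C_{\MIN}$). The key step is then the observation that, again by $0$-symmetry of $C_{\AM}$, one has $C_{\AM}\subset\rho C_{\MIN}$ if and only if $C_{\AM}\subset\rho C$: the forward direction is trivial from $C_{\MIN}\subset C$, and conversely $C_{\AM}\subset\rho C$ yields $C_{\AM}=-C_{\AM}\subset-\rho C$ and hence $C_{\AM}\subset\rho C\cap(-\rho C)=\rho C_{\MIN}$. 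Therefore $\delta_{\MIN}=\min\{\rho:C_{\AM}\subset\rho C\}=\sup_{a\neq 0}\frac{h_{C_{\AM}}(a)}{h_C(a)}$, and using $h_{C_{\AM}}(a)=\tfrac12(h_C(a)+h_C(-a))$ this equals $\tfrac12\bigl(1+\sup_{a\neq0}\frac{h_C(-a)}{h_C(a)}\bigr)$.

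Finally I would identify the remaining supremum with $s(C)$. Since $h_C(-a)=h_{-C}(a)$, the quantity $\sup_a\frac{h_C(-a)}{h_C(a)}$ is exactly $\min\{\rho:-C\subset\rho C\}$, the concentric containment factor of $-C$ in $C$. Here the standing Minkowski-centeredness of $C$ is essential: it guarantees $-C\optc s(C)C$ with the origin as an optimal center, so this concentric factor equals $R(-C,C)=R(C,-C)=s(C)$ rather than merely dominating it. Plugging in gives $\delta_{\MIN}=\frac{s(C)+1}{2}$, completing the argument. I expect the only real subtlety to be exactly this last point: one must resist expanding $h_{C_{\MIN}}$ directly, since the support function of an intersection is not the pointwise minimum; the whole argument hinges on trading the intersection $C_{\MIN}$ for the single body $C$ \emph{before} passing to support functions, and on invoking Minkowski-centeredness to pin the ratio $\sup_a h_C(-a)/h_C(a)$ down to $s(C)$.
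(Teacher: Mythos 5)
Your proposal is correct, but there is nothing in the paper to compare it against: the paper states \Cref{prop:rhodeltamin} as a quoted result from \cite{reversing} and gives no proof, so your argument is a self-contained replacement for an omitted proof rather than a variant of one. Checking it on its own merits, every step holds. The reduction to concentric containment is valid: if $C_{\AM}\subset t+\rho C_{\MIN}$, then $C_{\AM}=-C_{\AM}\subset -t+\rho C_{\MIN}$, and averaging the two inclusions using convexity of $C_{\MIN}$ eliminates $t$, so $\delta_{\MIN}=\min\{\rho\geq 0: C_{\AM}\subset\rho C_{\MIN}\}$ and likewise $\rho_{\MIN}$ is a concentric inradius. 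Your key exchange, $C_{\AM}\subset\rho C_{\MIN}$ if and only if $C_{\AM}\subset\rho C$, is the right move and correctly exploits that $C_{\AM}$ is $0$-symmetric; it is indeed what lets you pass to support functions safely, since $h_{C_{\MIN}}$ is \emph{not} $\min(h_C,h_{-C})$ pointwise. From there, $h_{C_{\AM}}(a)=\tfrac12\left(h_C(a)+h_C(-a)\right)$ gives $\delta_{\MIN}=\tfrac12\left(1+\sup_{a\neq 0}h_C(-a)/h_C(a)\right)$, and the identification of that supremum with $s(C)$ is where Minkowski-centeredness enters exactly as you say: $C\optc -s(C)C$ yields the concentric inclusion $-C\subset s(C)C$, while any concentric factor is at least the translation-optimal $R(-C,C)=R(C,-C)=s(C)$, so the two coincide. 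The side claims $1\leq\rho_{\MIN}$ (from $C_{\MIN}\subset C_{\AM}$, via \Cref{prop:firey-chain} or directly from $C_{\MIN}=\tfrac12(C_{\MIN}-C_{\MIN})\subset C_{\AM}$) and $\rho_{\MIN}\leq\delta_{\MIN}$ (from the chain and fulldimensionality) are immediate, as you note.
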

Taking $K=C_{\MIN}$, we know $R(C_{\MIN},C)=1$, $r(C_{\MIN},C)=\frac{1}{s(C)}$ and $D_{\MIN}(C_{\MIN},C)=2$ \cite{reversing}.
The inequalities from \Cref{lem:basicineq} have the form:
\begin{equation}
  D_{\MIN}(K,C)\leq (s(C)+1)R(K,C),
\end{equation}

\begin{equation}
  (s(C)+1)r(K,C) \leq D_{\MIN}(K,C),
\end{equation}

\begin{equation}
  s(C)r(K,C)+R(K,C)\leq\rho_{\m}(s(C)r(K,C)+R(K,C)) \leq (s(C)+1)\frac{D_{\m}(K,C)}{2},
\end{equation}

\begin{equation}
 r(K,C)+R(K,C)\leq D_{\MIN}(K,C),
\end{equation}
\begin{equation}
  \label{eqrRzeromin}
  0 \leq r(K,C).
\end{equation}

Contrary to the other diameters the (dilatated) symmetrization only yields a completion of the gauge $C$ in the trivial case.

\begin{lemma}
  $\rho C_{\MIN}$ is  a completion of $C$ if and only if $\rho=s(C)=1$, which means $C$ is $0$-symmetric.
\end{lemma}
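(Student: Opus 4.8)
The plan is to reduce the statement to the arithmetic diameter with respect to the $0$-symmetric gauge $C_{\MIN}$ and then confront the containment requirement of a completion with the requirement that the diameter be preserved. By \Cref{DminProperties} we have $D_{\MIN}(\cdot,C)=D_{\AM}(\cdot,C_{\MIN})$, and since $C_{\MIN}=C\cap(-C)$ is $0$-symmetric, the set $\rho C_{\MIN}$ is automatically complete (a symmetric gauge is its own completion in the arithmetic case). Hence $\rho C_{\MIN}$ is a completion of $C$ precisely when it contains $C$ and has the same $D_{\MIN}$-diameter as $C$, and these are the two conditions I would extract.

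First I would record the two numerical quantities needed. On the diameter side, \Cref{prop:rhodeltamin} gives $D_{\MIN}(C,C)=2\delta_{\MIN}=s(C)+1$, while by homogeneity and $D_{\MIN}(C_{\MIN},C)=2$ one has $D_{\MIN}(\rho C_{\MIN},C)=2\rho$ (note $D_{\MIN}$ is translation invariant in its first argument, so translating $\rho C_{\MIN}$ changes nothing here). On the containment side, using $r(K,C)=R(C,K)^{-1}$ together with $r(C_{\MIN},C)=\tfrac1{s(C)}$, I obtain the key identity
\[
R(C,C_{\MIN})=r(C_{\MIN},C)^{-1}=s(C).
\]

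For the forward direction, assume $\rho C_{\MIN}$ is a completion of $C$. Equality of diameters forces $2\rho=s(C)+1$, i.e.\ $\rho=\tfrac{s(C)+1}{2}$. The containment $C\subset t+\rho C_{\MIN}$ for some $t$ forces $\rho\ge R(C,C_{\MIN})=s(C)$, so $\tfrac{s(C)+1}{2}\ge s(C)$, which simplifies to $s(C)\le 1$. Since always $s(C)\ge 1$, this yields $s(C)=1$ and hence $\rho=1$; and a Minkowski-centered body with $s(C)=1$ is $0$-symmetric. The backward direction is immediate: if $s(C)=1$ then $C=-C=C_{\MIN}$, so $\rho C_{\MIN}=C$, which is trivially its own completion.

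The argument is short, and its only real content is the interaction of the two constraints; the step I expect to be the crux is the observation that the mere requirement $C\subset\rho C_{\MIN}$ already pins the scaling from below by $s(C)$ through $R(C,C_{\MIN})=s(C)$, so that it is incompatible with the diameter-preserving scaling $\tfrac{s(C)+1}{2}$ unless $s(C)=1$. I do not anticipate a genuine obstacle beyond correctly invoking these recorded values and the reduction via \Cref{DminProperties}.
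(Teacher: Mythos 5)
Your proposal is correct and takes essentially the same approach as the paper: both proofs confront the scaling forced by the containment $C\subset\rho C_{\MIN}$ (you get $\rho\ge R(C,C_{\MIN})=s(C)$; the paper gets $\rho=s(C)$ exactly via \Cref{CCompletion}) with the scaling forced by diameter preservation ($2\rho=D_{\MIN}(C,C)=s(C)+1$), and observes these are compatible only when $s(C)=1$. The differences are purely organizational — the paper pins $\rho=s(C)$ first and then finds the diameter too large, whereas you pin $\rho=\frac{s(C)+1}{2}$ first and then find the containment violated, additionally spelling out the completeness of $\rho C_{\MIN}$ and the backward direction, which the paper leaves implicit.
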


\begin{proof}
  We have
$C \optc s(C) C_{\MIN}$, which shows that $\rho$ must equal $s(C)$. However,
    \[D_{\MIN}(s(C)C_{\MIN},C)=2s(C) \geq s(C)+1=2\delta_{\MIN}=D_{\MIN}(C,C),\]
    with equality if and only if $s(C)=1$.
\end{proof}

Next, we prove a Jung-bound. Bohnenblust \cite{bohnenblust1938convex} shows for symmetric gauges
\begin{proposition}
  \label{prop:bohnenblust}
  \begin{equation}
    \frac{n+1}{2n}\leq\frac{D(K,C)}{2R(K,C)}.
  \end{equation}
\end{proposition}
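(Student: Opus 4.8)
The plan is to normalize by homogeneity and then exploit the optimal-containment characterization from \Cref{opt} together with the self-duality of the gauge. Since both $D$ and $R$ are translation invariant in $K$ and homogeneous of degree $+1$, I would first scale $C$ so that $R(K,C)=1$ and translate $K$ so that its circumcenter is the origin; thus $K\optc C$ and it remains to show $D(K,C)\geq \frac{n+1}{n}$. Applying \Cref{opt} gives touching points $p^1,\dots,p^k\in\ext(K)\cap\bd(C)$, $k\le n+1$, with outer normals $a^i$ and a convex combination $0=\sum_i\mu_i a^i$, $\mu_i\ge 0$, $\sum_i\mu_i=1$.

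The first technical step is to normalize the normals so that $h_C(a^i)=1$, i.e.\ $a^i\in\bd(C^\circ)$. Rescaling each $a^i$ individually would destroy the relation $0\in\conv\{a^i\}$, so instead I would absorb the scale into the coefficients: setting $\lambda_i:=\mu_i h_C(a^i)/\sum_\ell \mu_\ell h_C(a^\ell)$ yields nonnegative weights with $\sum_i\lambda_i=1$ and $0=\sum_i\lambda_i a^i$ for the renormalized $a^i\in\bd(C^\circ)$, with at most $n+1$ positive weights. The support condition gives $(a^i)^Tp^i=h_C(a^i)=1$, while $K\subset C$ gives $(a^i)^Tp^j\le 1$ for all $j$. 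The key pointwise estimate is that, since $\cnorm{v}{C}=h_{C^\circ}(v)=\max_{a\in C^\circ}a^Tv$ and $a^i\in C^\circ$, one has $(a^i)^T(p^i-p^j)\le\cnorm{p^i-p^j}{C}\le D(K,C)$ for every pair, using that for symmetric $C$ the diameter is $\max_{x,y\in K}\cnorm{x-y}{C}$.

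The heart of the argument is then a weighted averaging. For a fixed index $j$ I would compute
\[
\sum_{i}\lambda_i(a^i)^T(p^i-p^j)=\sum_i\lambda_i-\Big(\sum_i\lambda_i a^i\Big)^Tp^j=1-0=1,
\]
where the diagonal term vanishes because $(a^j)^T(p^j-p^j)=0$. Bounding each remaining summand by $\lambda_i D(K,C)$ gives $1\le(1-\lambda_j)D(K,C)$, hence $D(K,C)\ge\frac{1}{1-\lambda_j}$ for every $j$. Choosing $j$ with $\lambda_j$ maximal and using that at most $n+1$ of the weights are positive, so $\max_j\lambda_j\ge\frac1{n+1}$, yields $D(K,C)\ge\frac{1}{1-1/(n+1)}=\frac{n+1}{n}$, which is exactly $\frac{D(K,C)}{2R(K,C)}\ge\frac{n+1}{2n}$. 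I expect the only genuinely delicate point to be the renormalization of the normals in the second step---keeping the origin in their convex hull while arranging $a^i\in\bd(C^\circ)$---since a naive rescaling breaks the convex-combination structure; everything else is a short duality estimate followed by a pigeonhole on the barycentric weights.
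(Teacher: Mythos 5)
Your proof is correct, and it is worth noting that the paper itself gives no proof of this proposition at all: it is quoted as Bohnenblust's classical theorem for symmetric gauges with a citation to \cite{bohnenblust1938convex}, and is only later transferred to non-symmetric gauges (\Cref{jungineqDmin}) by applying it to the symmetrization $C_{\MIN}$. Your argument is thus a self-contained alternative built from the paper's own toolkit, and every step checks out: rescaling each normal $a^i$ to $\bd(C^\circ)$ while reweighting $\lambda_i\propto\mu_i h_C(a^i)$ indeed preserves $0=\sum_i\lambda_i a^i$, $\lambda_i\ge 0$, $\sum_i\lambda_i=1$ (here $h_C(a^i)>0$ because $0\in\inte(C)$); the support condition gives $(a^i)^Tp^i=1$; polarity gives $(a^i)^T(p^i-p^j)\le h_{C^\circ}(p^i-p^j)=\cnorm{p^i-p^j}{C}\le D(K,C)$ since $a^i\in C^\circ$; the averaging identity $\sum_i\lambda_i(a^i)^T(p^i-p^j)=1$ then yields $1\le(1-\lambda_j)D(K,C)$ for each $j$, and the pigeonhole $\max_j\lambda_j\ge 1/k\ge 1/(n+1)$ (valid because \Cref{opt} guarantees $k\le n+1$) finishes the argument. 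This is essentially the standard modern proof of Jung--Bohnenblust-type bounds, close in spirit to the optimal-containment arguments of \cite{NoDimIndep,sharpening} that the paper uses elsewhere, so it fits the paper's framework seamlessly where the authors chose to cite instead. A genuine bonus of your route: symmetry of $C$ is never actually used. The quantity you bound from below is $\max_{x,y\in K}\cnorm{x-y}{C}$, which for an arbitrary gauge $C\in\CC^n_0$ equals $D_{\MIN}(K,C)$; hence your argument proves the inequality of \Cref{jungineqDmin} directly for every gauge, avoiding the paper's detour through $C_{\MIN}$ and the monotonicity step $R(K,C_{\MIN})\ge R(K,C)$ (the equality characterization in that corollary, however, would still require \cite[Theorem 4.1]{sharpening}).
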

For the minimum diameter we obtain the same bound.
\begin{corollary}
  \label{jungineqDmin}
  \begin{equation}
    \frac{n+1}{2n}\leq\frac{D_{\MIN}(K,C)}{2R(K,C)}
  \end{equation}
  Moreover, equality can be obtained only if $K$ is a simplex. For every $s_C \in [1, n]$, there exists $C\in\CC^n_0$, Minkowski-centered, with $s(C) = s_C$ and
a simplex $K$ such that the inequality is tight.
\end{corollary}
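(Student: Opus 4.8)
The plan is to derive the inequality directly from Bohnenblust's bound for symmetric gauges (\Cref{prop:bohnenblust}) together with the identity $D_{\MIN}(K,C)=D(K,C_{\MIN})$ from \Cref{DminProperties}. First I would record that $C_{\MIN}=C\cap(-C)\subseteq C$, so by the monotonicity of the circumradius in its second argument (\Cref{contandfactor}) we have $R(K,C)\le R(K,C_{\MIN})$. Applying \Cref{prop:bohnenblust} to the symmetric gauge $C_{\MIN}$ then yields
\begin{equation*}
  \frac{n+1}{2n}\le\frac{D(K,C_{\MIN})}{2R(K,C_{\MIN})}=\frac{D_{\MIN}(K,C)}{2R(K,C_{\MIN})}\le\frac{D_{\MIN}(K,C)}{2R(K,C)},
\end{equation*}
which is the claim. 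For the equality discussion I would note that equality forces both inequalities in this chain to be tight: the first is exactly equality in Bohnenblust's inequality for the symmetric gauge $C_{\MIN}$, whose equality characterization forces $K$ to be a simplex, while the second forces $R(K,C)=R(K,C_{\MIN})$, i.e.~the optimal enclosure of $K$ by $C$ already uses only boundary shared with $C_{\MIN}$.

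For tightness I would construct, for each target $s_C\in[1,n]$, a suitable pair by starting from the symmetric datum that already makes Bohnenblust sharp: take any simplex $K$ and set $P:=\frac{K-K}{2}$. By affine invariance (\Cref{affineRrD}) I may assume $K$ is a regular simplex, and a direct computation then gives $D(K,P)=2$ and $R(K,P)=\frac{2n}{n+1}$, so that $\frac{D(K,P)}{2R(K,P)}=\frac{n+1}{2n}$. It then remains to produce a Minkowski-centered $C$ with $C\cap(-C)=P$, with $s(C)=s_C$, and with $R(K,C)=R(K,P)$. Writing $R_0=R(K,P)$, the body $K$ touches $\partial(R_0 P)$ in finitely many contact points $q_j$ whose outer normals have $0$ in their convex hull (\Cref{opt}).

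The idea for the gauge is to enlarge $P$ to $C$ only on a boundary region $U$ that is disjoint from its antipode $-U$ and avoids $\pm\{q_j\}$: bulging outward on $U$ while leaving $-U$ untouched keeps $C\cap(-C)=P$, since in each direction the intersection with $-C$ reinstates the offset of $P$, and it preserves the contact configuration of $K$, so that $K\optc R_0 C$ and hence $R(K,C)=R_0=R(K,P)$. To reach every value $s_C\in[1,n]$ I would organize the enlargement as a one-parameter family $C_\tau$, $\tau\in[0,1]$, with $C_0=P$ (hence $s(C_0)=1$) and $C_1$ a simplex (hence $s(C_1)=n$, using the characterization $s(C)=n\iff C$ is a simplex), all satisfying $C_\tau\cap(-C_\tau)=P$ and $R(K,C_\tau)=R(K,P)$. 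Since $s(\cdot)$ is continuous with respect to the Hausdorff distance and takes the endpoint values $1$ and $n$, the intermediate value theorem produces a member with $s(C_\tau)=s_C$, completing the proof.

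The main obstacle I expect is precisely this construction: verifying that the bulging region $U$ can be chosen large enough to drive $C_1$ all the way to a genuine simplex while simultaneously keeping both $C_\tau\cap(-C_\tau)=P$ and the contact set of $K$ intact, and arranging the family so that it is genuinely continuous in $\tau$. The two side conditions pull against each other, because the contact normals of $K$ together with their antipodes block exactly those facets of $P$ one is tempted to relax, so the delicate point is to confirm that enough freedom remains in the complementary directions to exhaust the whole asymmetry range $[1,n]$.
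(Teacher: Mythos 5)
Your derivation of the inequality and of the ``equality only for simplices'' claim coincides with the paper's proof: apply \Cref{prop:bohnenblust} to the symmetric gauge $C_{\MIN}$, use $C_{\MIN}\subset C$ and \Cref{contandfactor} to get $R(K,C)\le R(K,C_{\MIN})$, and invoke the equality characterization of Bohnenblust's inequality (the paper cites \cite[Theorem 4.1]{sharpening}) to force $K$ to be a simplex. No issues there.

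The genuine gap is in the tightness construction, exactly at the step you flag as the ``main obstacle'' and leave unresolved. In fact the obstacle you fear is not there, but clearing it collapses your scheme into an explicit construction (which is the paper's). Normalize $K$ to the regular Minkowski-centered simplex with facet normals $a^1,\dots,a^{n+1}$, $h_K(a^i)=1$, $h_K(-a^i)=n$; then $P=\frac{K-K}{2}=\bigcap_i\set{x:|(a^i)^Tx|\le\frac{n+1}{2}}$, and the contact points of $K$ in $R_0P$, $R_0=\frac{2n}{n+1}$, are the vertices of $K$ with outer normals $-a^i$. Preserving this contact configuration for $C\supseteq P$ means exactly freezing $h_C(-a^i)=h_P(-a^i)=\frac{n+1}{2}$ for all $i$, and this condition alone already gives $C\cap(-C)\subseteq\bigcap_i\set{x:|(a^i)^Tx|\le\frac{n+1}{2}}=P$; so your two side conditions never compete. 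Moreover, the largest admissible $C$ is $\bigcap_i\set{x:-(a^i)^Tx\le\frac{n+1}{2}}=\frac{n+1}{2}(-K)$, a simplex, and the natural nested family between $P$ and it is, up to an irrelevant dilatation, $C_\tau=(-K)\cap\tau K$, $\tau\in[1,n]$. This is precisely the paper's choice ($S:=-K$ a Minkowski-centered simplex, $C=S\cap s_C(-S)$, $K=-S$), for which no intermediate value theorem is needed: $s\bigl((-K)\cap\tau K\bigr)=\tau$ holds exactly (a fact from \cite{reversing} the paper relies on), and then $R(K,C_\tau)=n$ and $D_{\MIN}(K,C_\tau)=D\bigl(K,K\cap(-K)\bigr)=2R\bigl(\frac{K-K}{2},K\cap(-K)\bigr)=n+1$ give tightness for every $\tau$ directly.

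Second, your IVT route is incomplete on a point the statement explicitly requires: the gauge must be Minkowski-centered. Minkowski-centeredness is a property of position; your bulged bodies $C_\tau$ have no reason to satisfy it, and you cannot repair this afterwards by translating the gauge, because $D_{\MIN}(K,C)=D\bigl(K,C\cap(-C)\bigr)$ is not invariant under translations of $C$ — a translation destroys both $C_\tau\cap(-C_\tau)=P$ and the computed functional values. So you would have to prove that the member selected by the intermediate value theorem is Minkowski-centered in its given position. For the explicit family $(-K)\cap\tau K$ this holds (by the symmetry of the regular simplex, the origin is a Minkowski center), but in the generality of your ``bulging on a region $U$'' it is an unproved, and unaddressed, claim.
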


\begin{proof}
 Using \Cref{prop:bohnenblust} for the gauge $C_{\MIN}$ and \Cref{contandfactor}, it follows
\begin{equation}
  \frac{n+1}{2n}\leq\frac{D(K,C_{\MIN})}{2R(K,C_{\MIN})} \leq \frac{D(K,C_{\MIN})}{2R(K,C)}= \frac{D_{\MIN}(K,C)}{2R(K,C)}.
\end{equation}
  By \cite[Theorem 4.1]{sharpening} equailty in Bohnenblust's inequality can only be attained if $K$ is a simplex.
   Let $S$ be a Minkowski-centered simplex. Then, $C=S\cap s_C(-S)$ with $s_C\in[1,n]$ has Minkowski asymmetry $s(C)=s_C$, and for $K=-S$, we have $R(K,C)=n$ and
   $D_{\MIN}(K,C)=D(K,C_{\MIN})=D(S,S\cap(-S))=2R(\frac{S-S}{2},S\cap(-S))=n+1$.
\end{proof}

Since for Minkowski-centered triangles $S_{\MIN}=\frac{2}{3}S_{\AM}$, we know $D_{\MIN}(K,S)=\frac{3}{2}D_{\AM}(K,S)$ and the inequalities from \Cref{prop:diagramAM} can be transferred to describe $f_{\MIN}(\bar{\CC}^2,S)$.
\begin{corollary}
   \label{cor:diagramMIN}
   For every Minkowski-centered triangle $S\in\CC^2$, the diagram $f_{\MIN}(\bar{\CC}^2,S)$ is fully described by the  inequalities
   \begin{align*}
       D_{\MIN}(K,S)&\leq 3R(K,C)\\
       2r(K,S)+R(K,S)&\leq  D_{\MIN}(K,S)\\
       \frac{D_{\MIN}(K, C)}{3R(K,C)} \left(1 -\frac{D_{\MIN}(K, C)}{3R(K,C)}\right) &\leq \frac{r(K, C)}{R(K,C)}.
   \end{align*}
\end{corollary}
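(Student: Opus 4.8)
The plan is to deduce the full description of $f_{\MIN}(\bar{\CC}^2,S)$ directly from the arithmetic diagram of \Cref{prop:diagramAM} by exploiting that, for triangular gauges, $D_{\MIN}$ is a fixed multiple of $D_{\AM}$. First I would reduce to the equilateral triangle $T$: by \Cref{affineRrD} the functionals $r$, $R$ and $D_{\MIN}$ are invariant under simultaneous non-singular transformations (linear in the gauge slot), hence both diagram coordinates $r/R$ and $D_{\MIN}/(2R)$ are unchanged. Since every Minkowski-centered triangle is such an image of $T$, all of them produce the same diagram, and it suffices to treat $S=T$.

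The key step is the identity $D_{\MIN}(K,T)=\tfrac32 D_{\AM}(K,T)$ for all $K$. This rests on the scalar relation $T_{\MIN}=\tfrac23 T_{\AM}$, which holds because both symmetrizations of an equilateral triangle are regular hexagons of the same orientation, and the factor is forced by $\delta_{\MIN}=\tfrac{s(T)+1}{2}=\tfrac32$ from \Cref{prop:rhodeltamin}. Granting this, \Cref{DminProperties} gives $D_{\MIN}(K,T)=D_{\AM}(K,T_{\MIN})=D_{\AM}(K,\tfrac23 T_{\AM})$, and homogeneity of degree $-1$ in the gauge (\Cref{contandfactor}) together with $D_{\AM}(K,T)=D_{\AM}(K,T_{\AM})$ from \Cref{DamProperties} yields $D_{\AM}(K,\tfrac23 T_{\AM})=\tfrac32 D_{\AM}(K,T)$, as claimed.

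Consequently $f_{\MIN}(K,T)=\bigl(\tfrac{r(K,T)}{R(K,T)},\ \tfrac32\cdot\tfrac{D_{\AM}(K,T)}{2R(K,T)}\bigr)$, so $f_{\MIN}(\bar{\CC}^2,T)$ is exactly the image of $f_{\AM}(\bar{\CC}^2,T)$ under the bijective linear map $(x,y)\mapsto(x,\tfrac32 y)$. A bijection of this form carries a region cut out by a system of inequalities onto the region cut out by the substituted inequalities, and in particular transports a full boundary description to a full boundary description; it therefore only remains to substitute $D_{\AM}(K,T)=\tfrac23 D_{\MIN}(K,T)$ into the three inequalities of \Cref{prop:diagramAM}. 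This turns $D_{\AM}\le 2R$ into $D_{\MIN}\le 3R$, turns $4r+2R\le 3D_{\AM}$ into $2r+R\le D_{\MIN}$, and turns $\tfrac{D_{\AM}}{2R}\bigl(1-\tfrac{D_{\AM}}{2R}\bigr)\le \tfrac rR$ into $\tfrac{D_{\MIN}}{3R}\bigl(1-\tfrac{D_{\MIN}}{3R}\bigr)\le\tfrac rR$, which are precisely the three inequalities asserted in the corollary.

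There is essentially no hard step: the whole argument is a transfer of \Cref{prop:diagramAM} under a single axis-rescaling. The only non-formal ingredient is the identity $T_{\MIN}=\tfrac23 T_{\AM}$, which hinges on both symmetrizations being homothetic regular hexagons; once that is fixed, the rescaling is bijective and the three substitutions are routine algebra that I have verified match the stated inequalities verbatim.
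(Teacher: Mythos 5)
Your proposal is correct and follows essentially the same route as the paper: the paper's justification is precisely the observation that $S_{\MIN}=\tfrac{2}{3}S_{\AM}$ for Minkowski-centered triangles, hence $D_{\MIN}(K,S)=\tfrac{3}{2}D_{\AM}(K,S)$, after which the three inequalities of \Cref{prop:diagramAM} are transferred by the substitution $D_{\AM}=\tfrac{2}{3}D_{\MIN}$. You merely supply details the paper leaves implicit (the linear reduction to the equilateral triangle, the verification that $T_{\MIN}$ and $T_{\AM}$ are homothetic regular hexagons with factor forced by $\delta_{\MIN}=\tfrac{3}{2}$, and the axis-rescaling bijection of the diagrams), all of which are sound.
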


  As in the case of the harmonic diameter, since $\delta_{\MIN}$ is not the same for every gauge, the diagram for the equilateral triangle cannot be dominating. There does not exist a single gauge which defines the union of the diagrams.
    The following system of inequalities provides an upper bound for the union of the diagrams $f_{\MIN}(\bar{\CC}^2,C)$ over all Minkowski-centered gauges $C\in\CC^2_0$ (\cf~\Cref{fig:diagramMIN}).
    \begin{align*}
        0&\leq r(K,C)\\
        r(K,C)&\leq R(K,C)\\
        D_{\MIN}(K,C)&\leq 3 R(K,C)\\
        r(K,C)+R(K,C)&\leq D_{\MIN}(K,C)\\
         3 R(K,C) &\leq 2D_{\MIN}(K,C)\\
         \frac{D_{\MIN}(K,C)}{2R(K,C)}\left(1-\frac{D_{\MIN}(K,C)}{2R(K,C)}\right) &\leq \frac{r(K,C)}{R(K,C)}.
    \end{align*}
    Moreover, the following parts of the boundary described by the above inequalities are reached:
    \begin{enumerate}[i)]
    \item $r(K,C)=0$ for segments $K=L_D$ for gauges $C$ with $s(C)\in[1,2]$
    \item $r(K,C)= R(K,C)$ for $K=C$ for gauges $C$ with $s(C)\in[1,2]$
    \item $D_{\MIN}(K,C)= 3 R(K,C)$ with $C$ being a triangle as in $f_{\MIN}(\bar{\CC}^2,S)$
    \item $3 R(K,C) = 2D_{\MIN}(R,C)$ for $K=-S$ and $C=S \cap s(-S)$ with $s\in[1,2]$
    \item $r(K,C)+R(K,C)= D_{\MIN}(K,C)$ for $K=\lambda (-S) +(1-\lambda)S_{\MIN}$ with $\lambda\in[0,1]$ and $C=S_{\MIN}$
    \end{enumerate}
    The first two inequalities are trivial. The third and the fourth follow from \Cref{lem:basicineq} and the fifth from \Cref{jungineqDmin}.
    The last inequality follows from \Cref{prop:diagramAM},\Cref{contandfactor} and the fact that $R(K,C)\leq D_{\MIN}(K,C)$.
    The equality cases follow from \Cref{linearityrrRD}, \Cref{CDiam}, \Cref{cor:diagramMIN}, and the fact that $R(-S,S_{\MIN})=2$.

  \begin{figure}[ht]
    \centering
    \noindent\begin{minipage}[c]{.4\linewidth}
      \begin{tikzpicture}[scale=4]
        \draw[->] (-0.1,0) -- (1.1,0) node[right] {$\frac{r}{R}$};
        \draw[->] (0,-0.1) -- (0,1.7) node[above] {$\frac{D_{\MIN}}{2R}$} ;
        \draw[ domain=0:1, smooth, variable=\x] plot ({\x}, {1.5}) ;
        \draw[ domain=0.25:1, smooth, variable=\x] plot ({\x}, {\x+0.5});
        \draw[ domain=0.75:1.5, smooth, variable=\y] plot ({(4/9)*(\y)*(1.5-\y)},{\y} );
        \draw[ domain=0:0.25, dotted, variable=\x] plot ({\x}, {0.75}) ;
        \node[label=left:$\frac{3}{4}$] (jung) at (0,0.75) {.};
        \draw[ domain=0:1.5, dotted, variable=\y] plot ({1}, {\y}) ;
        \node[label=below:$1$] (one) at (1,0) {.};
        \draw[ domain=0:0.75, dotted, variable=\y] plot ({0.25}, {\y});
        \node[label=below:$\frac{1}{4}$] (minus) at (0.25,0) {.};
        \node[label=left:$\frac{3}{2}$] (delta) at (0,1.5) {.};
        \node[label=above:$S$] (b2) at (1,1.5) {.};
        \draw[fill=black] (1,1.5) circle[radius=0.4pt];
        \node[label=above right:$L$] ($L$) at (0,1.5) {.};
        \draw[fill=black] (0,1.5) circle[radius=0.4pt];
        \node[label=below right:$-S$] (sm) at (0.25,0.75) {.};
        \draw[fill=black] (0.25,0.75) circle[radius=0.4pt];
      \end{tikzpicture}
    \end{minipage}
    \noindent\begin{minipage}[c]{.4\linewidth}
      \begin{tikzpicture}[scale=4]
       \draw[->] (-0.1,0) -- (1.1,0) node[right] {$\frac{r}{R}$};
       \draw[->] (0,-0.1) -- (0,1.7) node[above] {$\frac{D_{\MIN}}{2R}$} ;
       \draw[ domain=0:1, smooth, variable=\x] plot ({\x}, {1.5}) ;
       \draw[ domain=0.25:1, loosely dotted, variable=\x] plot ({\x}, {\x+0.5});
       \draw[ domain=0.75:1.5, loosely dotted, variable=\y] plot ({(4/9)*(\y)*(1.5-\y)},{\y} );
       \draw[ domain=0.25:0.5, smooth, variable=\x] plot ({\x}, {0.75}) ;
       \draw[ domain=0.5:1, smooth, variable=\x] plot ({\x}, {0.5 * \x +0.5}) ;
       \draw[ domain=1:1.5, smooth, variable=\y] plot ({1},{\y} );
       \draw[ domain=0:0.1875, dotted, variable=\x] plot ({\x}, {0.75}) ;
       \draw[ domain=0.75:1, dashed, purple, variable=\y] plot ({(\y)*(1-\y)},{\y} );
       \draw[ domain=0.1875:0.25, dashed, purple, variable=\x] plot ({\x},{0.75} );
       \node[label=left:$\frac{3}{4}$] (jung) at (0,0.75) {.};
       \node[label=left:$1$] (jung) at (0,1) {.};
       \draw[ domain=0:1, dotted, variable=\y] plot ({1}, {\y}) ;
       \draw[ domain=1:1.5,smooth, variable=\y] plot ({0}, {\y}) ;
       \node[label=below:$1$] (one) at (1,0) {.};
       \draw[ domain=0:0.75, dotted, variable=\y] plot ({0.5}, {\y});
       \node[label=below:$\frac{1}{2}$] (minus1) at (0.5,0) {.};
       \node[label=left:$\frac{3}{2}$] (delta) at (0,1.5) {.};
       \node (b2) at (1,1.5) {.};
       \tkzLabelPoint[right](b2){$(S,S)$}
       \draw[fill=black] (1,1.5) circle[radius=0.4pt];
       \node[label=above right:$L_D$] ($L_D$) at (0,1.5) {.};
       \draw[fill=black] (0,1) circle[radius=0.4pt];
       \node[label=right:$L_D$] ($L$) at (0,1) {.};
       \draw[fill=black] (0,1.5) circle[radius=0.4pt];
       \node (sm) at (0.25,0.75) {.};
       \tkzLabelPoint[below](sm){$(-S,S$)}
       \draw[fill=black] (0.25,0.75) circle[radius=0.4pt];
       \node (sm1) at (0.5,0.75) {.};
       \tkzLabelPoint[below right](sm1){$(-S,S_{\MIN}$)}
       \draw[fill=black] (sm1) circle[radius=0.4pt];
       \node (sm2) at (1,1) {.};
       \tkzLabelPoint[right](sm2){$(S_{\MIN},S_{\MIN})$}
       \draw[fill=black] (sm2) circle[radius=0.4pt];
     \end{tikzpicture}
   \end{minipage}
   \caption{The diagram $f_{\MIN}(C^2,S)$ \wrt~a Minkowski-centered triangle $S$ (left) and an upper bound for the union of the diagrams over all Minkowski-centered gauges $C\in\CC^2_0$ (right).}
   \label{fig:diagramMIN}
 \end{figure}

\printbibliography

\bigskip

René Brandenberg -- 
Technical University of Munich, Department of Mathematics, Germany. \\
\textbf{rene.brandenberg@tum.de}

Mia Runge -- 
Technical University of Munich, Department of Mathematics, Germany. \\
\textbf{mia.runge@tum.de}

\vfill\eject
\end{document}